\documentclass{article}
\usepackage[english]{babel}
\usepackage{graphicx}
\usepackage{slashed}
\usepackage{stmaryrd}
\usepackage{amsmath,amsfonts,amssymb}

\usepackage{ntheorem}
\usepackage{dsfont}
\usepackage{mathtools}
\usepackage{verbatim}
\usepackage{float}
\usepackage{accents}
\usepackage{mathrsfs}
\usepackage{amsmath}
\usepackage{amssymb}
\usepackage{enumerate}

\usepackage{hyperref}
\usepackage{tikz}
\usepackage{pgfplots}
\pgfplotsset{compat=1.14}
\usetikzlibrary{patterns}
\usetikzlibrary{positioning,arrows,arrows.meta}
\usepackage[left=2cm, right=2cm, bottom=2cm, top=2cm]{geometry}
\usepackage{titlesec}

\titleformat{\subsection}[runin]
  {\normalfont\bfseries}
  {\thesubsection}
  {1em}
  {}
  [.]

\titleformat{\subsubsection}[runin]
  {\normalfont\itshape}
  {\thesubsubsection}
  {1em}
  {}
  [.]

\usepackage{enumitem}
\theoremseparator{.} 
\newtheorem{Th}{Theorem}[section]
\newtheorem{Def}[Th]{Definition}
\newtheorem{Rq}[Th]{Remark}
\newtheorem{Pro}[Th]{Proposition}
\newtheorem{Cor}[Th]{Corollary}
\newtheorem{Lem}[Th]{Lemma}

\newcommand{\R}{\mathbb{R}}
\newcommand{\E}{\mathbb{E}}
\newcommand{\T}{\mathbf{T}}

\newcommand{\dr}{\mathrm{d}}

\newenvironment{proof}{\noindent\textit{Proof.~}}{\hfill$\square$\bigbreak} 

\title{Homeomorphic modified wave operators for the Vlasov–Poisson system}

\author{L\'eo Bigorgne\footnote{Institut de Recherche Math\'ematique de Rennes (IRMAR) - UMR 6625, CNRS, Universit\'e de Rennes, F-35000 Rennes, France.
{\em E-mail address:} {\tt leo.bigorgne@univ-rennes.fr}} }

\date{}
\begin{document}

\maketitle
    
\begin{abstract}
We prove modified scattering for small data solutions to the Vlasov–Poisson system in a functional framework where the initial data, scattering states, and asymptotic convergence are measured in the same topology. In addition, we show that the corresponding wave operators define homeomorphisms between the spaces of initial and scattering data, while enjoying a local Lipschitz continuity property in weaker norms. As a consequence, in the repulsive case, large spherically symmetric solutions are asymptotically stable. The proof relies in particular on the introduction of a suitable system of dynamic coordinates adapted to the asymptotic nonlinear flow.
\end{abstract}

  \tableofcontents

\section{Introduction}\label{SecIntro}
In this paper, we investigate the dynamics of solutions to the Vlasov--Poisson system, as well as the continuity properties of the associated scattering operators. The equations read
\begin{equation}\label{VP}
\begin{cases}
\partial_t f+v\cdot\nabla_xf -\lambda \nabla_x\phi [f] \cdot \nabla_vf=0,\\ 
\Delta_x \phi [f]=\rho [f],
\end{cases} \tag{VP}
\end{equation}
where
\begin{itemize}
\item the system describes the evolution, in phase space $\R^3_x \times \R^3_v$, of a large number of particles of mass $m=1$. The interaction between the particles is attractive when $\lambda=1$ and repulsive when $\lambda =-1$.
\item The distribution function $f \colon \R_t \times \R^3_x \times \R^3_v \to \R$ gives the density of particles in phase space at time $t$, position $x$ and velocity $v$. Although physically nonnegative, $f$ is allowed to take negative values in this paper. The spatial density $\rho[f]$ is given by
\[ \rho[f](t,x):=\int_{\R_v^3}f(t,x,v) \mathrm{d} v. \]
\item The function $\phi [f] \colon \R_t \times\R^3_x \to \R$ is the potential of the force field $-\nabla_x \phi [f]$, which is self-consistently generated by the particles through the Poisson equation.
\end{itemize}

The solutions to \eqref{VP} are global in time for a broad class of initial data \cite{Pfa,LionsPerthame}. Outside a few specific cases, such as the small data regime \cite{Bardos,HRV,Poisson,rVPWang,Duan,smallSchaeffer}, the spherically symmetric setting \cite{Horst,PankavichSpheri}, and perturbations thereof \cite{SchaefferVP}, little is known about the asymptotic behavior of solutions.

\subsection{Modified scattering dynamics for small data} It was shown by Ionescu-Pausader-Wang-Widmayer \cite{scattPoiss} that small data solutions to the Vlasov-Poisson system satisfy modified scattering. There exists asymptotic states $f_{\pm \infty} \colon \R^3_x \times \R^3_v \to \R$ such that
\begin{equation}\label{eq:introscat}
 \lim_{t \to \pm \infty} f \big(t,x+tv \pm \lambda \nabla_v \phi_{ \infty}[f_{\pm \infty}](v) \log \langle t \rangle , v \big) = f_{\pm \infty} (x,v), \qquad \qquad \Delta _v \phi_\infty \big[f_{\pm \infty} \big] = \int_{\R^3_x} f_{\pm \infty} (x, \cdot) \dr x. 
 \end{equation}
Contrary to the high-dimensional case \cite{Pankavichhigh}, because of long-range effects of the force field, the nonlinear characteristics cannot be asymptotically approximated by a linear one. Indeed, restricting to positive times, one can show, as predicted by the linearised system, that the force field is asymptotically self-similar
\[ \lim_{t \to + \infty} t^2 \nabla_x \phi [f](t,tv)= \nabla_v \phi_\infty[f_{+\infty}](v). \]
At this stage, however, one cannot yet assert that the limiting profile is $\nabla_v \phi_\infty[f_{+\infty}]$. One therefore expects the characteristics $t \mapsto (X,V)(t)$ of \eqref{VP} to satisfy $V(t) \to v_{\infty} \in \R^3_v$, as $t \to +\infty$, and
\[ V(t) \approx v_\infty + \frac{\lambda}{t} \nabla_v \phi_\infty \big[ f_{+\infty} \big](v_\infty), \qquad \qquad  X(t) \approx x_\infty + tv_\infty+\lambda \nabla_v \phi_\infty[f_{+\infty} ](v_\infty) \log (t), \]
which suggests that \eqref{eq:introscat} holds. We also refer to \cite{Choi,Panka} for related results, and to \cite{latetime,VolkerMartin} where the asymptotic expansions of $\nabla_x \phi [f]$ and $\rho [f]$ are shown to be polyhomogeneous.

In \cite{scattmap}, the converse problem to \eqref{eq:introscat} has been addressed. Any sufficiently regular scattering state $f_{+\infty}$ is attained by a unique solution $f$ to \eqref{VP} as in \eqref{eq:introscat}. If $f_{+\infty}$ is small enough, $f$ is in addition defined for all $t \geq 0$. By exploiting the time invariance of the system, this allows to define the small data forward and backward (modified) wave operators $\mathscr{W}_+$ and $\mathscr{W}_-$, as well as the (modified) scattering map $\mathscr{S}$,
\[ \mathscr{W}_- \colon f_{-\infty} \mapsto f_0, \qquad \qquad \qquad \mathscr{W}_+ \colon f_{+\infty} \mapsto f_0, \qquad \qquad \qquad \mathscr{S}= \mathscr{W}_+^{-1} \circ \mathscr{W}_- \colon f_{-\infty} \mapsto f_{+\infty}. \] 
The main two goals of this paper are the following.
\begin{enumerate}
\item Find topologies which make the scattering operators $\mathscr{W}_+$, $\mathscr{W}_-$, and $\mathscr{S}$, endomorphisms. This was listed as an open problem in \cite{scattmap}. Typycally, one assumes that the initial data $f_0$ belongs to a weighted Sobolev space, proves \eqref{eq:introscat} in a weaker topology, and obtains a scattering state $f_{+\infty}$ lying in a weighted Sobolev space with reduced regularity (both in terms of derivatives and weights). A similar loss occurs in the construction of $\mathscr{W}_+$. 

In \cite{scattmap}, the construction of $\mathscr{S}$ requires weighted $W^{2,\infty} (\R^3_x \times \R^3_v)$ regularity for $f_{-\infty}$ whereas the resulting state $f_{+\infty}$ is only controlled in the unweighted space $L^\infty(\R^3_x \times \R^3_v)$.
\item We investigate the regularity properties of the scattering operators. In particular, we prove that they are homeomorphisms in the natural norm of the problem, and locally Lipschitz in a weaker norm. This requires, in particular, to show the asymptotic stability of solutions satisfying modified scattering.

In the repulsive case $\lambda =-1$, this improves upon the result of Schaeffer \cite{SchaefferVP}, who showed that perturbations of spherically symmetric solutions exist globally and decay, but without establishing asymptotic stability.
\end{enumerate}

Let us also mention that modified scattering occurs in the following settings. 
\begin{itemize}
\item For perturbations of a point charge for the repulsive Vlasov-Poisson system \cite{PWY}.
\item For small data solutions to the two-dimensional Vlasov-Poisson system with the external unstable trapping potential $-|x|^2/2$ (see \cite{VPTPscat,HuangKwon}).
\item For the small data solutions to the Vlasov-Riesz system, where the Newtonian kernel $|x|^{-1}$ appearing in \eqref{VP} is replaced by $|x|^{-\alpha}$, with $0 < \alpha <1$. The case $1-\delta < \alpha <1$, for sufficiently small $\delta >0$, was addressed in \cite{HuangKwonRiesz}, and \cite{HongPankavich} extended the result to the range $1/2 < \alpha <1$.
\end{itemize} 
Finally, for the Vlasov-Maxwell system in the small data regime, the distribution function exhibits a modified scattering dynamic \cite{scat,BAP,Emile}, whereas the Maxwell field undergoes linear scattering, with a nontrivial electromagnetic memory effect \cite{scatmap}.

\subsection{Statement of the main results}

We start by introducing the energy norms considered throughout this paper. We fix a Lebesgue exponent $3/2<p<\infty$, and we define $p_\infty \coloneqq \lfloor 3/p \rfloor+1$, so that the Sobolev embedding $W^{p_\infty,p}(\R^3) \hookrightarrow L^\infty (\R^3)$ holds.

\begin{Def}\label{Defnorm}
Let $N \geq 0$, $n=(n_x,n_v) \in \mathbb{Z}^2$ and $g \colon \R^3_x \times \R^3_v \to \R$. Then, we define
\[ \E_N^n[g] \coloneqq \sum_{|\alpha_x|+|\alpha_v| \leq N} \bigg| \int_{\R^3_x} \int_{\R^3_v} \Big| \langle x \rangle^{n_x+|\alpha_x|} \langle v \rangle^{n_v+N-|\alpha_x|} \partial_x^{\alpha_x} \partial_v^{\alpha_v} g(x,v) \Big|^p \dr v \dr x \bigg|^{\frac{1}{p}}. \]
Moreover, 
\begin{itemize}
\item If $g$ does depend on the time variable, we will often write $\E_N^n[g](t)$ for $\E_N^n[ g(t,\cdot , \cdot) ]$,
\item We denote by $\mathcal{B}_N^n \subset W^{N,p}_{\mathrm{loc}} \big( \R^3_x \times \R^3_v \big)$ the Banach space composed by the functions $g$ such that $\E_N^n[g]<+\infty$.
\item If $n=(7,7)$, we will simply write $\E_N[g]$ for $\E_N^n[g]$ and $\mathcal{B}_N$ for $\mathcal{B}_N^n$.
\end{itemize}
\end{Def}
\begin{Rq}
We work in the range $1<p<\infty$ to have access to elliptic regularity (see also Remark \ref{Rqindexp} below). The condition $p>3/2$ implies that the first order derivatives of the Newtonian kernel $|x|^{-1}$ are $L^p$-integrable at infinity. The last assumption could be relaxed, at the expense of additional technicalities in the proof.
\end{Rq}

We now state our main result concerning the small data solutions to the Vlasov-Poisson system.

\begin{Th}\label{Th1}
Let $N \geq 2p_\infty$ and $f_{\mathrm{data}} \in \mathcal{B}_N$. There exists $\varepsilon_0 [N,p]>0$ such that, if $\E_N[f_{\mathrm{data}}] \leq \varepsilon_0$, then
\begin{enumerate}[ label = (\Alph*)]
\item \label{itman} \textnormal{Global existence.} The unique classical solution $f$ to \eqref{VP}, such that $f(0,\cdot , \cdot)=f_{\mathrm{data}}$, is global in time.
\item \label{itunan} \textnormal{Boundedness.} There exists an asymptotic state $f_\infty \in \mathcal{B}_N$ such that the function 
\begin{equation}\label{eq:defgTh}
 g(t,x,v) \coloneqq f \big( t, x+tv+\lambda \nabla_v \phi_\infty [f_\infty](v) \log \langle t \rangle , v \big) 
\end{equation}
 is uniformly bounded,
\[ \forall \, t \geq 0, \qquad \quad \E_N[g](t) \leq 2 \E_N[f_{\mathrm{data}}]. \]
\item \label{itdaou} \textnormal{Modified scattering in the strong norm.} We have $\E_N \big[g(t,\cdot , \cdot ) - f_\infty \big] \to 0$ as $t \to +\infty$, and
\[ \forall \, t \geq 0, \qquad \E_{N-1} \big[g(t,\cdot , \cdot ) - f_\infty \big] \lesssim  \frac{1}{\langle t \rangle^{\frac{1}{3}}}\E_N[f_{\mathrm{data}}]. \]
\end{enumerate}
Conversely, given $f_\infty  \in \mathcal{B}_N$, there exists $\varepsilon_1 [N,p]>0$ such that, if $\E_N[f_\infty] \leq \varepsilon_1$, then 
\begin{itemize}
\item There exists a unique global classical solution $f$ to \eqref{VP} such that $g(t,\cdot , \cdot) \to f_\infty$ as $t \to +\infty$. 
\item The properties \ref{itunan}--\ref{itdaou} hold with $f_{\mathrm{data}}$ replaced by the asymptotic data $f_\infty$.
\end{itemize}
\end{Th}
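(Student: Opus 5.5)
Our approach is a bootstrap argument in dynamically adapted coordinates. Instead of the fixed correction $\lambda \nabla_v\phi_\infty[f_\infty](v)\log\langle t\rangle$, which is unknown at finite times, we work with a time-dependent correction. We define
\[ \Psi(t,v) \coloneqq \lambda\int_0^t \frac{s}{\langle s\rangle^2}\,\nabla_v\phi_\infty\big[\rho_g(s)\big](v)\,\dr s, \qquad \Delta_v\phi_\infty[\rho_g(s)]=\int_{\R^3_x} g(s,x,\cdot)\,\dr x, \]
and set $g(t,x,v)=f(t,x+tv+\Psi(t,v),v)$. The motivation is that the spatial density of $g$ is conserved along the flow. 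Indeed, $\int_x g(t,x,v)\,\dr x$ is exactly transported, so $\Psi$ is driven by an essentially frozen profile.

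In these coordinates $g$ solves
\[ \partial_t g = \lambda\big(\nabla_x\phi[f](t,x+tv+\Psi)-\tfrac{t}{\langle t\rangle^2}\nabla_v\phi_\infty[\rho_g](v)\big)\cdot\big(\nabla_v g - (t+\nabla_v\Psi)\nabla_x g\big) + \text{(lower order terms)}. \]
The main point is the asymptotic expansion
\[ t^2\nabla_x\phi[f](t,tv+x+\Psi)=\nabla_v\phi_\infty[\rho_g](v)+O\big(t^{-1}\langle x\rangle\cdot\big), \]
which we obtain by the change of variables $y=ty+z$ in the Poisson integral and a Taylor expansion. The dangerous term $t\nabla_x g$ is then multiplied by a factor of size $t^{-3}\langle x\rangle$, which is integrable once we pay one weight $\langle x\rangle$. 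This is why the norm carries $\langle x\rangle^{n_x+|\alpha_x|}$ weights attached to $x$-derivatives.

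We bootstrap $\E_N[g](t)\le 2\varepsilon$. For the commuted equation we use the vector fields $\partial_x$ and $\partial_v$ in the new coordinates, where $\partial_v$ of the original function corresponds to $\partial_v + (t+\nabla_v\Psi)\partial_x$. The $L^p$ energy estimates of the transported quantities then involve:
\begin{itemize}
\item derivatives of the field, which we control by elliptic regularity (hence $1<p<\infty$);
\item decay $\|\nabla^k_x\nabla_x\phi\|_{L^\infty}\lesssim \varepsilon t^{-2-k}$, which follows from Sobolev in $v$ (hence $N\ge 2p_\infty$, so that half the derivatives are in $L^\infty$).
\end{itemize}
The top-order terms are the hardest. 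There $\nabla_v^N\Psi$ involves $N+1$ derivatives of $\phi_\infty$, which elliptic regularity provides from $N$ derivatives of $\int g\,\dr x$. Commuting $x$-derivatives with $\phi[f]$ costs nothing, but the $v$-derivatives produce $t\,\partial_x$ of the field, which is exactly offset by the extra $t^{-1}$ decay. We close with an integrable error $\int \langle s\rangle^{-1-\delta}\varepsilon^2\,\dr s$, giving $\E_N[g]\le \E_N[f_{\rm data}]+C\varepsilon^2$ and hence global existence (A) and the bound (B).

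For (C), we estimate $\partial_t g$ in $\E_{N-1}$. Losing one derivative to absorb $t\nabla_x g$ and using weights, we expect $\|\partial_t g\|_{\E_{N-1}}\lesssim\varepsilon\langle t\rangle^{-4/3}$, where the exponent $1/3$ comes from interpolating between $x$-weights and decay. Hence $g(t)$ is Cauchy in $\mathcal{B}_{N-1}$ and converges to some $f_\infty$ at rate $t^{-1/3}$. Boundedness in $\mathcal{B}_N$ together with weak compactness places $f_\infty$ in $\mathcal{B}_N$. The rate also gives $\Psi(t,v)-\lambda\nabla_v\phi_\infty[f_\infty]\log\langle t\rangle\to$ a constant-in-time limit $c(v)$ with $O(t^{-1/3}\log t)$ error. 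We can absorb $c(v)$ into $f_\infty$ by an $x$-translation, which preserves $\int_x f_\infty\,\dr x$ and hence $\phi_\infty$, so we recover exactly the coordinates \eqref{eq:defgTh}. Strong convergence in $\E_N$, without rate, then follows by a density argument: we approximate $f_{\rm data}$ by $\mathcal{B}_{N+1}$ data, apply the rate estimate at one level higher, and use the uniform bound together with an $\E_N$-stability estimate for differences of solutions in these coordinates. We expect this step, strong top-order convergence without loss, to be the main obstacle. It requires the Lipschitz-type difference estimate to close in $\E_N$ uniformly in time, whose top-order field terms must again be handled by elliptic regularity.

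For the converse, we solve backward from $t=T$ with data $g(T)=f_\infty$ in the same coordinates. The same a priori estimates are uniform in $T$. We obtain a solution on $[0,\infty)$ as the limit $T\to\infty$ via the difference estimate in $\E_{N-1}$, which gives convergence and uniqueness. Properties (B) and (C) then follow by the same estimates, with the roles of $t=0$ and $t=\infty$ exchanged.
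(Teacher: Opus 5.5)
Your forward bootstrap uses a genuinely different dynamical correction, and that part is plausible. Compared with the paper's $\lambda\Phi_f(t,v)\log\langle t\rangle$, $\Phi_f=t^2\nabla_x\phi[f](t,tv)$, your $\Psi$ removes the $\log\langle t\rangle\,\partial_t\Phi$ terms of \eqref{type3}. The price is twofold. You must show that $t^{2+|\gamma|}\nabla_x\partial_x^\gamma\phi[f](t,tv)-\partial_v^\gamma\nabla_v\phi_\infty[\rho_g(t)]$ is $O(\langle t\rangle^{-1}\log^C\langle t\rangle)$ in $L^p_v$ up to $|\gamma|=N$; this is true, via \eqref{eq:cdv} and $N$ derivatives of $f_\circ$. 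You also get a residual shift $c(v)$, which you absorb correctly. One small correction: $\int_x g\,\dr x=\int_x f\,\dr x$ is not conserved, since $\partial_t\int_x f\,\dr x=\lambda\nabla_v\cdot\int_x f\,\nabla_x\phi[f]\,\dr x$. It is only frozen up to $O(\varepsilon^2\langle t\rangle^{-2})$, which is all you actually use.

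The genuine gap is the strong convergence $\E_N[g(t,\cdot,\cdot)-f_\infty]\to0$, and the same step in item (C) of your converse. Your density argument needs $\sup_t\E_N[g-g^j](t)\to0$ when $\E_N[f_{\mathrm{data}}-f^j_{\mathrm{data}}]\to0$, via a Lipschitz estimate in $\E_N$. No such estimate closes. The difference $g-g^j$ is driven by (difference of coefficients)$\cdot\nabla_{x,v}g^j$, plus the coordinate mismatch $g^j(t,x,v)-g^j(t,x+\Psi-\Psi^j,v)$ (cf.\ Lemma \ref{CorderivgtofDIFF}). So $N$ derivatives of the source contain $N+1$ derivatives of the \emph{distribution function} $g^j$, whose norm blows up as $j\to\infty$. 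This loss is not on the field, so elliptic regularity does not help. It is exactly why the paper only has Lipschitz bounds in $\E_{N-1}$ (Corollary \ref{Corunique}, $\mathbf{C}^n_{\mathrm{Lip}}$), and needs a Bona--Smith type splitting $g^k_\alpha=M^k_\alpha+R^k_\alpha$ to get mere continuity in $\E_N$ (Section \ref{Sec5}).

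For Theorem \ref{Th1} the paper avoids nonlinear stability altogether. Once $(\phi[f],\Phi)\in\mathcal{W}^N$ is known, each $g_\alpha$ solves the \emph{linear} equation $\overline{\T}_{\phi}(g_\alpha)=F_\alpha$ with $\|F_\alpha\|_{L^p}\lesssim\langle t\rangle^{-4/3}$, using only $N$ derivatives (Proposition \ref{ProboundednessGeneral}). Duhamel along the volume-preserving flow $\varphi_{s,t}$ handles the source. The data term $g_\alpha(T)\circ\varphi_{T,t}$ converges by approximating $g_\alpha(T)$ in $L^p$ by a $C^1_c$ function and using that $\varphi_{T,t}\to\varphi_{T,\infty}$, a volume-preserving Lipschitz homeomorphism (Proposition \ref{Procharac}, Corollary \ref{Corscatstrong}). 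The approximation is taken in $L^p$ for a frozen field, where the solution map is an isometry, so neither extra regularity nor any difference estimate is needed.

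Finally, for the converse, the correction must be anchored at the final time, e.g.\ teleological at $t=T$ as in the paper. A $\Psi(T,\cdot)$ defined as an integral from $0$ depends on the unknown solution on $[0,T]$, so ``$g(T)=f_\infty$ in the same coordinates'' is not a well-posed final condition.
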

\begin{Rq}\label{RqTh1}
If, in addition, $\E_N^{(n_x,n_v)}[f_{\mathrm{data}}]<+\infty$ for some $n_x, \, n_v \geq 7$, then there exists $\mathbf{D}[N,n,p]>0$ such that
\[ \forall t\geq 0, \qquad \E_N^{(n_x,n_v)}[g](t) \leq \mathbf{D} \, \E_N^{(n_x,n_v)}[f_{\mathrm{data}}]. \]
Moreover, the convergences in \ref{itdaou} also hold in the stronger norms $\E_{N-1}^n[\cdot]$ and $\E_N^n[ \cdot ]$. We note that the convergence rate could be improved at the expense of requiring one additional $\langle v \rangle$-weight in the norm on the right-hand side.
\end{Rq}
We are then able to define the (modified) wave operators.

\begin{Def}\label{DefWplus}
Let $N \geq 2p_\infty$, and $n \in \mathbb{Z}^2$. Let $\mathcal{O}_\pm^{N,n} \subset \mathcal{B}_N^n$ be the largest set such that
\[
\begin{aligned}
\mathscr{W}_{\pm} : \mathcal{O}_\pm^{N,n} &\to \mathcal{B}_N^n \\
f_{\pm \infty} &\mapsto f(0, \cdot , \cdot),
\end{aligned}
\]
where $f$ is the unique solution to \eqref{VP} which satisfies modified scattering toward $f_{\pm \infty}$, is well-defined. The maps $\mathscr{W}_+$ and $\mathscr{W}_-$ are the modified wave operators associated to $+\infty$ and $-\infty$, respectively, for the Vlasov-Poisson system.
\end{Def}

We now state our main results concerning the regularity of the scattering operators.

\begin{Th}\label{Th2}
Let $N \geq 2p_\infty$ and $n=(n_x,n_v) \in \mathbb{Z}^2$ such that $n_x , \, n_v \geq 7$. Then,
\begin{itemize}
\item  the set $\mathcal{O}_\pm^{N,n}$ is non-empty and open,
\item $\mathscr{W}_\pm$ is a homeomorphism from $\mathcal{O}_\pm^{N,n}$ onto its image, with respect to the norm $ \E_N^n [\cdot ]$,
\item  $\mathscr{W}_\pm$ and $\mathscr{W}_\pm^{-1}$ are locally Lipschitz with respect to the norm $\E_{N-1}^n [ \cdot ] $,
\item in the case $\lambda=-1$, $\mathcal{O}_\pm^{N,n}$ contains a neighborhood of any spherically symmetric $f_\infty \in C^{N}_c (\R^3_x \times \R^3_v , \R_+)$.
\end{itemize}
\end{Th}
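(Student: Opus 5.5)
We argue for $\mathscr{W}_+$; the case $\mathscr{W}_-$ follows from time reversal $(t,x,v)\mapsto(-t,x,-v)$. Non-emptiness follows from the converse part of Theorem \ref{Th1}: every $f_\infty$ with $\E_N^n[f_\infty]\le\varepsilon_1$ lies in $\mathcal{O}_+^{N,n}$, using Remark \ref{RqTh1} for the weights $n$. The core of the proof is an asymptotic stability statement, which we formulate in the dynamic coordinates of \eqref{eq:defgTh}. Let $f$ be a global solution scattering to $f_\infty\in\mathcal{O}_+^{N,n}$, and let $g$ be its profile. Then $\E_N^n[g](t)$ is bounded, the modified force field decays like $t^{-2}$, and, since $g(t)\to f_\infty$ in $\E_N^n$, the time-dependent part of the field becomes \emph{small} after some large time $T$. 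Fix $T$ with $\sup_{t\ge T}\E_N^n[g(t)-f_\infty]\le\delta$. On $[T,\infty)$ we solve the final-state problem for a nearby $\tilde f_\infty$, writing $\tilde g = g + h$. The perturbation $h$ satisfies a transport equation linearised around $g$, together with quadratic terms in $h$. The field generated by $h$ decays like $t^{-2}\E[h]$ times a logarithmic correction coming from the mismatch of the phases $\lambda\nabla_v\phi_\infty[\cdot]\log t$. We then close a bootstrap for $\E_N^n[h](t)\lesssim \E_N^n[\tilde f_\infty-f_\infty]$ on $[T,\infty)$. On the compact interval $[0,T]$ we use the Cauchy theory: classical solutions depend continuously on data in $\mathcal{B}_N^n$ on finite time intervals, and Lipschitz-continuously in $\mathcal{B}_{N-1}^n$, by a Gronwall argument in which the top derivatives of the difference are controlled using one derivative more of the reference solutions. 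This yields openness of $\mathcal{O}_+^{N,n}$, and continuity (respectively local Lipschitz continuity in $\E_{N-1}^n$) of $\mathscr{W}_+$.

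Injectivity is immediate from uniqueness in Theorem \ref{Th1} together with the well-posedness of the Cauchy problem. For continuity of the inverse, take data $f_0=\mathscr{W}_+(f_\infty)$ and a nearby $\tilde f_0$, and run the forward problem. On $[0,T]$ the Cauchy stability gives closeness of the solutions at time $T$. On $[T,\infty)$ the same perturbative bootstrap gives global existence for $\tilde f$ and uniform closeness of the profiles. The scattering states are limits of the profiles, so closeness passes to the limit. The technical point here is that the asymptotic state enters the definition of $g$ through $\nabla_v\phi_\infty[f_\infty]$. We therefore use the profile adapted to $\tilde f_\infty$ and estimate the change of coordinates: $|\nabla_v\phi_\infty[\tilde f_\infty]-\nabla_v\phi_\infty[f_\infty]|\log\langle t\rangle$ generates $\log t$ growth. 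This growth must be absorbed by decay of the $h$-field at rate $t^{-2}$, which is integrable against $\log t$. Also, each $\partial_v$ derivative of the composed function produces terms $t\,\partial_x$ that must be handled with the weights $\langle x\rangle^{|\alpha_x|}$ and the $v$-weight hierarchy. For the $\E_N^n$ (non-Lipschitz) continuity at top order, we use a density argument. We approximate $f_\infty$ by smoother data, for which Lipschitz control in $\E_N$ holds via the $\E_{N+1}$ norm, and conclude by a Bona–Smith type $\varepsilon/3$ argument. This is why only a homeomorphism, not a Lipschitz map, is claimed in $\E_N^n$.

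For the last item ($\lambda=-1$), take $f_\infty$ spherically symmetric, compactly supported and nonnegative. Solve backward from $f_\infty$: spherical symmetry plus repulsion gives a global classical solution, by the Horst/Batt theory cited above, with dispersive decay $\|\rho(t)\|_\infty\lesssim t^{-3}$ uniformly, since repulsion only accelerates particles outward. This makes $f_\infty\in\mathcal{O}_+^{N,n}$, provided we verify that this large solution satisfies modified scattering in $\mathcal{B}_N^n$. We would show this by propagating derivatives along characteristics, using the decay of $\nabla_x\phi$ and of $\nabla_x^2\phi$, the latter like $t^{-3}\log t$. Openness then gives a neighborhood. The main obstacle I expect is the linearised stability bootstrap around a \emph{large} reference solution on $[T,\infty)$ in the dynamic coordinates. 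The difficulty is the derivative loss $t\partial_x$ in $v$-derivatives, combined with logarithmic phase corrections. I would handle it through the commuting vector fields $t\partial_x+\partial_v$ adapted to the modified flow.
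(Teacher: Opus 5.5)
\textbf{Main gap: the forward direction.} This is what you need for continuity and $\E_{N-1}^n$-Lipschitz continuity of $\mathscr{W}_+^{-1}$, hence for the homeomorphism claim. You propose to run ``the same perturbative bootstrap'' on $[T,\infty)$ with the profile adapted to $\tilde f_\infty$. But in the forward problem $\tilde f_\infty$ is precisely the unknown, so these coordinates are not available.

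Suppose instead you keep the reference phase $\nabla_v\phi_\infty[f_\infty]$ (or no phase) for $\tilde f$. Then the equation for the difference of profiles contains the term $\lambda t\,\nabla_x\phi[\tilde f-f](t,z)\cdot\nabla_x g$ (compare Lemma \ref{LemTphi}). Its size is roughly $\langle t\rangle^{-1}\,\E_{N-1}[\tilde g-g](t)\,\E_N[g](t)$. This term is linear in the small quantity and borderline in time. Its coefficient is of order $\mathbf{\Lambda}$, which is not small for a large reference solution and does not become small as $t\to\infty$. Gr\"onwall then only gives growth like $t^{C\mathbf{\Lambda}}$, and the bootstrap does not close. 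The $t^{-2}$ decay of the $h$-field that you invoke does not help, because the coupling carries a factor $t$.

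An a posteriori argument, when $\tilde f_0$ is already known to lie in the image, does not rescue this. Matching at time $T$ the profiles built with the two teleological phases costs $\log\langle T\rangle\,\E_{N-1}[\tilde f_\infty-f_\infty]\,\E_N[g]$ (Lemma \ref{CorderivgtofDIFF}), which cannot be absorbed into the left-hand side. Without splitting at $T$, the coefficient $C\int_0^\infty\langle t\rangle^{-4/3}\dr t$ in front of $\E_{N-1}[\tilde f_\infty-f_\infty]$ is not small.

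\textbf{The missing idea.} The paper uses the dynamical coordinate system $\Phi_f(t,v)=t^2\nabla_x\phi[f](t,tv)$, built from the solution itself. With it:
\begin{itemize}
\item the borderline term becomes $\frac{t}{\langle t\rangle^2}\big(\langle t\rangle^2\nabla_x\phi[f-\mathbf{f}](t,z_f)-\Phi_{f-\mathbf{f}}\big)\cdot\nabla_x\mathbf{g}=O(\omega_t\langle t\rangle^{-4/3})$ (Corollary \ref{CorforenergypertGeneral});
\item the phase difference at time $t$ is controlled instantaneously through \eqref{pourjusteici}, and absorbed for $t\geq T_0$ thanks to the factor $\langle t\rangle^{-1/2}$;
\item the top-order bound for the perturbed solution, with constants independent of $T_0$, comes from Proposition \ref{Proappendix1} via Lemma \ref{Lemtoporder};
\item since $\Phi_f(t,\cdot)\to\nabla_v\phi_\infty[f_\infty]$ at rate $\log^b\langle t+1\rangle/\langle t\rangle$, the dynamical profiles converge to the scattering states. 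This gives the $\E_{N-1}^n$-Lipschitz bound for $\mathscr{W}_+^{-1}$ (Proposition \ref{Proscatt0}), on which the top-order continuity of the inverse (Proposition \ref{Pro1}) relies.
\end{itemize}

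\textbf{Secondary error in the backward direction.} Your bootstrap $\E_N^n[h]\lesssim\E_N^n[\tilde f_\infty-f_\infty]$ cannot hold. Taking $N$ derivatives of the source $\nabla_x\phi[h]\cdot\nabla_v g$ involves $N+1$ derivatives of $g$. What is true is an $\E_{N-1}^n$-Lipschitz bound for $h$ (Corollary \ref{Corunique}), together with a mere $\E_N^n$ bound for $\tilde g$ (Proposition \ref{Prolocalwp} and Corollary \ref{Corhighnorm}, with $T$ large); this is in effect what you use later anyway. With that correction, your backward argument (perturbative on $[T,\infty)$, finite-time Cauchy stability on $[0,T]$) is a valid and somewhat simpler substitute for the paper's bootstrap in Proposition \ref{ProglobalLarge}.
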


It allows to deduce the next result for the scattering map $\mathscr{S} \colon f_{-\infty} \mapsto f_{+\infty}$.

\begin{Cor}
Let $N \geq 2p_\infty$ and $n=(n_x,n_v) \in \mathbb{Z}^2$ such that $n_x , \, n_v\geq 7$. The scattering map $\mathscr{S} \coloneqq  \mathscr{W}_+^{-1} \circ \mathscr{W}_-$ is a homeomorphism from $\mathcal{O}_-^{N,n}$ onto its image, with respect to $ \E_N^n [\cdot ]$, and is locally Lipschitz with respect to $\E_{N-1}^n [ \cdot ] $.
\end{Cor}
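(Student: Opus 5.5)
The corollary should follow from Theorem \ref{Th2} by composing maps, once the domain of $\mathscr{S}$ is pinned down. Read literally, $\mathscr{S}=\mathscr{W}_+^{-1}\circ\mathscr{W}_-$ is defined on
\[ \mathcal{D} \coloneqq \mathcal{O}_-^{N,n}\cap \mathscr{W}_-^{-1}\big(\mathscr{W}_+(\mathcal{O}_+^{N,n})\big), \]
the set of $f_{-\infty}\in\mathcal{O}_-^{N,n}$ whose associated solution also scatters towards some $f_{+\infty}\in\mathcal{O}_+^{N,n}$. So I will read the statement with $\mathcal{D}$ in place of $\mathcal{O}_-^{N,n}$, which is the implicit convention, and the first task is to check that $\mathcal{D}$ is open in $\mathcal{B}_N^n$.

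For openness, the key input is that the image $\mathscr{W}_+(\mathcal{O}_+^{N,n})$ is open. I would obtain this from the characterization underlying Theorem \ref{Th2}: $f_0$ lies in this image exactly when the forward solution launched from $f_0$ scatters in $\mathcal{B}_N^n$. The asymptotic stability argument used to prove that $\mathcal{O}_\pm^{N,n}$ is open is a continuity argument in the dynamic coordinates. Run from $t=0$ forward, it shows that any $\E_N^n$-small perturbation of such an $f_0$ still scatters. This uses only the forward direction of the same machinery that proves openness of $\mathcal{O}_+$; by time reversal $t\mapsto -t$, $v\mapsto -v$ it gives the analogous statement for $\mathscr{W}_-$. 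Since $\mathscr{W}_-$ is continuous on the open set $\mathcal{O}_-^{N,n}$, the preimage $\mathcal{D}$ of the open image is open.

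On $\mathcal{D}$, $\mathscr{S}$ is a composition of two homeomorphisms onto their images, namely $\mathscr{W}_-$ and the inverse of $\mathscr{W}_+$ restricted to $\mathscr{W}_-(\mathcal{D})\subset \mathscr{W}_+(\mathcal{O}_+^{N,n})$. It is therefore a continuous injection with continuous inverse $\mathscr{W}_-^{-1}\circ\mathscr{W}_+$ on $\mathscr{S}(\mathcal{D})$, which is a homeomorphism for $\E_N^n[\cdot]$. For the Lipschitz statement, local Lipschitz maps compose to local Lipschitz maps. Given $f\in\mathcal{D}$, choose an $\E_{N-1}^n$-neighborhood of $\mathscr{W}_-(f)$ on which $\mathscr{W}_+^{-1}$ is Lipschitz. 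Then shrink the neighborhood of $f$, within the $\E_N^n$-bounded set where Theorem \ref{Th2} gives Lipschitz bounds, until $\mathscr{W}_-$ maps it into the first one; this is possible because $\mathscr{W}_-$ is Lipschitz there.

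The main obstacle is the meaning of "local" in the weak norm. Theorem \ref{Th2} presumably gives Lipschitz constants on sets that are small in $\E_N^n$ and bounded in $\E_N^n$, not on $\E_{N-1}^n$-balls. I would handle this by working throughout in $\E_N^n$-bounded neighborhoods and using that $\mathscr{W}_-$ maps $\E_N^n$-bounded sets to $\E_N^n$-bounded sets, uniformly near a point by continuity. The second point needing care is the openness of the image $\mathscr{W}_+(\mathcal{O}_+^{N,n})$, which requires the forward asymptotic stability statement proved along with Theorem \ref{Th2}.
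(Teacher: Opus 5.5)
Your argument is correct and is essentially the paper's own: the paper states this corollary as an immediate consequence of Theorem \ref{Th2}, obtained by composing $\mathscr{W}_-$ with $\mathscr{W}_+^{-1}$. Your restriction to the domain $\mathcal{O}_-^{N,n}\cap\mathscr{W}_-^{-1}\big(\mathscr{W}_+(\mathcal{O}_+^{N,n})\big)$ is the right reading of what the paper leaves implicit. One small correction: the openness of the image $\mathscr{W}_+(\mathcal{O}_+^{N,n})$ comes from the forward stability argument in dynamic coordinates (Propositions \ref{ProforwardLarge} and \ref{Proscatt0}), whereas the openness of $\mathcal{O}_+^{N,n}$ itself comes from the backward argument in teleological coordinates (Proposition \ref{ProglobalLarge}), so these are two different arguments rather than one argument run in two directions.
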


\subsection{Strategy of the proof}\label{Subsecstrat}

\subsubsection{Small data solutions}

We first discuss the difficulties arising in the proof of the forward evolution part of Theorem \ref{Th1}. One way for proving \eqref{eq:introscat} consists in writing, using \eqref{VP}, that the function $g$ in \eqref{eq:defgTh} satisfies
\[ \partial_t g(t,x,v)= \frac{\lambda t}{\langle t \rangle^2} \Big(  \langle t \rangle^2\nabla_x \phi  \big(t, x+tv+\lambda \nabla_v \phi_\infty[f_\infty](v) \log \langle t \rangle  \big) - \nabla_v \phi_\infty[f_\infty](v) \Big) \cdot \nabla_x g + \frac{b(t,x,v)}{\langle t \rangle^2} \cdot \nabla_{x,v} g(t,x,v), \]
where $|b(t,x,v)| \lesssim \log \langle t +1 \rangle$, and $|\nabla_{x,v} g|$ is uniformly bounded in time. The limiting force field is usually identified before $f_\infty$, and satisfies $  t^2\nabla_x \phi (t,tv) \to  \nabla_v \phi_\infty [f_\infty](v)$ as $t \to +\infty$. The mean value theorem then yields
\begin{equation}\label{eq:scatintrointro}
 |\partial_t g (t,x,v)| \lesssim \frac{\log \langle t+1 \rangle}{\langle t \rangle^2}\Big( \langle x \rangle |\nabla_x g |(t,x,v)+|\nabla_v g |(t,x,v) \Big) . 
 \end{equation}
A similar estimate holds for the derivatives of $g$. For our purposes, namely proving modified scattering in a functional framework where the initial data, the scattering states, and the asymptotic convergence are measured in the same norm, we identify two difficulties.
\begin{enumerate}
\item  There is a $\langle x \rangle$ loss in \eqref{eq:scatintrointro}. Consequently, proving the convergence of $g$ requires assuming that $\nabla_x g$ is bounded in a stronger weighted norm.
\item There is a loss of regularity in \eqref{eq:scatintrointro}. Proving the convergence of $g$ requires sufficiently good control of $\nabla_x g$ and $\nabla_v g$. Consequently, this approach does not allow one to determine whether top order derivatives of $g$ converge. In fact, even proving boundedness for the top order $v$-derivatives of $g$ is not straightforward. These derivatives are usually estimated in terms of top order derivatives of $f(t,x+tv,v)$, which grow logarithmically.
\end{enumerate}

We deal with the first problem by propagating hierarchised weighted norms of $g$. For simplicity, we discuss this issue using the profile $f_\circ (t,x,v)=f(t,x+tv,v)$ instead of $g$ since the analysis is similar but technically simpler. At first glance, one may try to consider the $L^p$-norms of $\langle x \rangle^{n_x+|\alpha_x|}\langle v \rangle^{n_v} \partial_x^{\alpha_x} \partial_v^{\alpha_v} f_\circ$. However, denoting by $\widetilde{\T}_{\phi [f]}$ the Vlasov operator associated with \eqref{VP} in the coordinate system $(t,x+tv,v)$, we have
\begin{equation}\label{eq:fcircintro}
\widetilde{\T}_{\phi [f]} \big ( \partial_{x^i}f_\circ \big) =  - \lambda t \nabla_x \partial_{x^i} \phi [f] (t,x+tv)\cdot \nabla_x f_\circ + \lambda \nabla_x  \partial_{x^i} \phi [f] (t,x+tv) \cdot \nabla_v f_\circ .
\end{equation}
In order to propagate such norms, the second term on the right hand side forces us to bound
\begin{equation}\label{eq:spadecayintro}
\langle x \rangle \big| \nabla_x  \partial_{x^i} \phi  (t,x+tv)  \big| \lesssim  \frac{\langle x \rangle}{\langle t+|x+tv| \rangle^3} 
\end{equation}
by a quantity that decays sufficiently fast in time, uniformly in $(x,v)$. When $|x| \gg t$ and $|x+tv| \ll |x|$, this is unfortunately not possible. One way to circumvent this difficulty is to use the inequality $ |x| \leq |x+tv|+t|v|$, and to absorb the $|v|$-weight by working instead with $\E_N^n[f_\circ]$, namely the $L^p$-norms of $\langle x \rangle^{n_x+|\alpha_x|} \langle v \rangle^{n_v+N-|\alpha_x|} \partial_x^{\alpha_x} \partial_v^{\alpha_v} f_\circ$.

The second difficulty can be decomposed into two different problems. Assuming $\E_N^n[f](0) \leq \varepsilon$, 
\begin{itemize}
\item Can we prove that $\E_N^n[f_\infty] <+\infty$?
\item Can we show that $\E_N^n[g(t,\cdot , \cdot )-f_\infty ] \to 0$, as $t \to +\infty$?
\end{itemize}
Since \eqref{eq:scatintrointro} allows to prove modified scattering in a weaker norm, we would adress the first problem by Banach-Alaoglu theorem if we could prove that $t \mapsto \E_N^n[g](t)$ is uniformly bounded. In order to avoid the loss of regularity arising from \eqref{eq:scatintrointro}, one could be tempted to consider $\overline{\T}_{\phi [f]}$, the Vlasov operator in the coordinate system $(t,x+tv+\lambda \nabla_v \phi_\infty[f_\infty](v)\log \langle t \rangle,v)$, and to commute it $N$ times. However, one of the error term would carry the factor 
\begin{equation}\label{eq:forintrointrointro}
 \nabla_v^2 \partial_v^\gamma \phi_\infty [f_\infty](v), \qquad \qquad |\gamma| =N, 
 \end{equation}
that we do not control since we already face a loss of regularity in the convergence $t^2 \nabla_x \phi [f] (t,tv) \to \nabla_v \phi_\infty [f_\infty](v)$. Moreover, even if the elliptic regularity of the Poisson equation provides an estimate of
\begin{equation}\label{eq:forintroooo}
t^{3+|\gamma|}\nabla_x^2 \partial_x^\gamma \phi [f] (t,tv), \qquad \qquad |\gamma| =N, 
\end{equation}
the usual approaches allow for a small growth in time of the top order derivatives, preventing us to control \eqref{eq:forintrointrointro} by an application of the Banach-Alaoglu theorem. We deal with this issue by working in a \textit{dynamical system of coordinates} in which the distribution function admits a limit. In contrast, the previous system of coordinates, associated to $g$, is teleological as it depends on the asymptotic dynamics of the solution. We consider
\begin{equation}\label{eq:defdyna}
 h (t,x,v) \coloneqq f \big( t,x+tv+ \lambda \Phi_f (t,v) \log \langle t \rangle , v \big), \qquad \qquad \Phi_f (t,v) \coloneqq t^2 \nabla_x \phi [f](t,tv), 
 \end{equation}
so that we also have $h(t,x,v) \to f_\infty (x,v)$ as $t \to +\infty$. Moreover, although estimating top order derivatives of $h$ also requires controlling \eqref{eq:forintroooo}, the use of this dynamical coordinate system allows us to prove that both $\E_N^n[h]$ and
$\|t^{3+|\gamma|}\nabla_x^2 \partial_x^\gamma \phi [f] (t,\cdot)\|_{L^p_x}$, for $|\gamma| \leq N$, are uniformly bounded in time. This yields $\E_N^n[f_\infty] < +\infty$.
\begin{Rq}\label{Rqindexp}
Note that controlling the top order derivatives of $f_\circ$ through \eqref{eq:fcircintro} only requires controlling $\nabla_x \partial_x^\gamma \phi [f]$, for $|\gamma| \leq N$. Obtaining improved asymptotics by using a better coordinate system comes at the cost of controlling one additional derivative of the force field. The crucial use of elliptic estimates at top order then requires us to work with $L^p$-norms, with $1 < p < \infty$. We in fact work in the range $3/2 < p < \infty$ to avoid technical difficulties. As suggested by \eqref{eq:spadecayintro}, we will have to exploit the full spatial decay of the force field. However, for $p \leq 3/2$, $\nabla_x \phi [f](t,\cdot ) \notin L^p(\R^3_x)$.
\end{Rq}
\begin{Rq}
The $v$-derivatives of $h$, when expressed in terms of derivatives of $f$, are reminiscent of the modified vector fields introduced by Smulevici \cite{Poisson} (see also \cite{FJS3,dim3,Renato} for applications to other Vlasov systems). The dynamical coordinate system used in this article seems to capture the advantages of both the modified vector fields introduced in \cite{Poisson} and the coordinate system $(t,x+tv+\lambda \nabla_v \phi_\infty [f_\infty](v) \log \langle t \rangle,v)$, without suffering from their drawbacks.
\end{Rq}

The second problem, namely the convergence of $g$ with respect to the norm $\E_N^n[\cdot]$, requires a method avoiding the loss of regularity in \eqref{eq:scatintrointro}. To this end, we exploit that the transport dynamics singles out a preferred time direction along which no derivative loss occurs. More precisely, we use Duhamel's principle to obtain
\[ \partial_{x,v}^\alpha g (t,x,v) = \partial_{x,v}^\alpha g\big(0, \varphi_{0,t}(x,v) \big) +\int_{s=0}^t\Big[\overline{\T}_{\phi [f]}\big(\partial_{x,v}^\alpha g\big)\Big]\big(s,\varphi_{s,t}(x,v)\big) \dr s, \]
where $\varphi_{s,t}(x,v)$ denotes the flow associated with $\overline{\T}_{\phi[f]}$. Using the information obtained previously, one can then show that the second term on the right hand side converges in a weighted $L^p_{x,v}$-norm as $t \to +\infty$, at a quantitative rate. The convergence of the first term, without assuming additional regularity on the initial data, is proved by an approximation argument together with the convergence of the transport flow to a Lipschitz volume-preserving homeomorphism.

The backward problem can be handled similarly and is simpler in one respect. Indeed, knowing the scattering data allows one to control \eqref{eq:forintrointrointro} directly, and therefore to work immediately with the function $g$.

\subsubsection{Regularity of the scattering operators}

By time-reversibility of the Vlasov--Poisson system, it suffices to focus on $\mathscr{W}_+$. The first step of the proof consists in showing that the modified wave operator defines a map between two open sets. In other words, we need to prove that any solution $\mathbf{f}$ to \eqref{VP} satisfying modified scattering is asymptotically stable under perturbations of both the initial and scattering data. Let us now describe the additional difficulties arising in the analysis of the perturbed solution $f$, compared to the small data regime.
\begin{itemize}
\item The error terms in the commuted equations for the derivatives of $f_\circ - \mathbf{f}_\circ$ are merely linear in the small quantities. Since these error terms exhibit borderline decay, a Grönwall argument would yield a growth of the form $t^A$, with $A$ large, for the derivatives of $f_\circ -\mathbf{f}_\circ$, preventing us from closing the bootstrap argument. As a consequence, we are forced to use \textit{dynamical coordinate systems} in order to prove global existence, whereas for small data solutions they are only truly needed for the scattering analysis. We then study $h-\mathbf{h}$, where $h$ is given by \eqref{eq:defdyna} and
\[ \mathbf{h}(t,x,v) \coloneqq \mathbf{f} \big( t,x+tv+\lambda \Phi_{\mathbf{f}} (t,v) \log \langle t \rangle , v \big), \qquad \Phi_{\mathbf{f}}(t,v) \coloneqq t^2 \nabla_x \phi [\mathbf{f} ] (t,tv). \]

\item Derivatives of $h - \mathbf{h}$ are coupled to higher-order derivatives of $\mathbf{h}$. As a consequence, the top order derivatives of $h$ must be controlled without directly exploiting a small parameter. Moreover, deriving optimal decay estimates for $\nabla_x \phi [f]$ and its derivatives in terms of $h$ leads to a strongly nonlinear dependence on $\E_N[h]$.
\item Denoting by $\mathcal{E}_{N-1}[\Phi_f-\Phi_{\mathbf{f}}]$ a suitable $N-1$-th order norm of $\Phi_f-\Phi_{\mathbf{f}}$, we will show that
\[ \mathcal{E}_{N-1}[\Phi_f-\Phi_{\mathbf{f}}] (t) \lesssim \E_{N-1}[h-\mathbf{h}](t)+\langle t \rangle^{-1/2} \mathcal{E}_{N-1}[\Phi_f-\Phi_{\mathbf{f}}](t).\]
As a consequence, the potentially large implicit constant in $\lesssim$ makes the boundedness of $\Phi_f-\Phi_{\mathbf f}$ nontrivial, and the estimates must therefore be arranged so as to avoid any circular dependence.
\end{itemize}

To deal with these difficulties, we proceed as follows.

\begin{itemize}
\item We first appeal to Cauchy stability to deal with a compact interval of time $[0,T_0]$. If $\E_{N-1}[f_\circ - \mathbf{f}_\circ](0)$ is small enough, then $f$ exists on $[0,T_0]$, and $\E_{N-1}[f_\circ - \mathbf{f}_\circ](T_0) \leq C_{T_0} \E_{N-1}[f_\circ - \mathbf{f}_\circ](0)$.
\item It allows to show that $\E_{N-1}[h-\mathbf{h}](T_0)$ is small. Using $T_0^{-1/2}$ as a small parameter, we are then able to control $\mathcal{E}_{N-1}[\Phi_f-\Phi_{\mathbf{f}}] (t)$ for $t \geq T_0$. It allows us to prove that $f$ is a global solution, and that $\E_{N-1}[h-\mathbf{h}](t)$ remains small for all times.
\item Finally, our uniform bound on $\E_{N-1}[h]$ allows to control $\nabla_x \partial_x^\gamma \phi [f]$, for any $|\gamma| \leq N$, by elliptic estimates. Thus, in view of Remark \ref{Rqindexp}, we are able to show that the top order order derivatives of $f_\circ$ grow logarithmically. From here, one can estimate $\mathcal{E}_N[\Phi_f]$, and then derive boundedness for $\E_N[h]$. 
\end{itemize}

Since the above argument shows that both $\mathscr{W}_+$ and $\mathscr{W}_+^{-1}$ are locally Lipschitz with respect to the norm $\E_{N-1}[\cdot]$, it remains to prove that $\mathscr{W}_+$ is a homeomorphism with respect to the stronger norm $\E_N[\cdot ]$. To this end, we combine the previous ideas with standard arguments for proving local-in-time Cauchy stability for quasilinear PDEs (see, for instance, \cite[Section~4.5]{Bahouri}).

\begin{Rq}
In the perspective of proving analogues of Theorems \ref{Th1} and \ref{Th2} for the Vlasov--Maxwell system, one expects to face the following difficulties.
\begin{itemize}
\item The resonant phenomenon occurring between the electromagnetic field and particles with very high momentum. One way to deal with this issue is to lose powers of $(v,x-t\widehat{v})$, where $\widehat{v}\coloneqq v/\langle v\rangle$, in the estimates, which would be problematic in the present setting. We expect that this difficulty could be avoided by taking advantage of the null structure of the Lorentz force, described for instance in \cite[Lemma~4.1]{massless}.

\item The lack of elliptic regularity. Because of this, the approach developed in the present paper cannot be adapted by setting $
\Phi_f(t,v) \propto t^2[E+\widehat{v}\times B](t,x+t\widehat{v})$, since one would not be able to close the top order estimates. One expects to overcome this difficulty by exploiting the so-called Glassey--Strauss decomposition of the fields. However, the exact expression of $\Phi_f$ remains to be determined.

\item Finding a suitable functional framework for the electromagnetic field. In contrast with \eqref{VP}, the field satisfies an evolutionary equation and, in particular, exhibits linear scattering. One therefore has to identify a Banach space on which the scattering map acts as an endomorphism.
\end{itemize}
\end{Rq}

\subsection{Outline of the paper} The remainder of the article is organised as follows.
\begin{itemize}
\item \textbf{Section \ref{Secprep}}. We introduce the notation used throughout this paper and prove several decay estimates. In particular, we establish decay estimates for velocity averages of solutions to Vlasov equations, as well as for the associated force fields, adapted to our class of coordinate systems.
\item \textbf{Section \ref{SecTphi}}. We study the properties of a transport operator in a dynamical or teleological coordinate system, where the force field is assumed to be sufficiently regular. We prove boundedness and convergence results which, in the analysis of solutions to the Vlasov-Poisson system, correspond to modified scattering. We also show that the associated asymptotic Cauchy problem is well-posed. As an application, we study the long time dynamics of small data solutions to \eqref{VP} and prove Theorem \ref{Th1}.
\item \textbf{Section \ref{Sec3}}. We show that the modified wave operator $\mathscr{W}_+$ is a bijection between two open sets. More precisely, we prove that global solutions to \eqref{VP} that satisfy modified scattering are asymptotically stable under perturbations of both the initial data and the scattering data.
\item \textbf{Section \ref{Sec5}}. We conclude the proof of Theorem \ref{Th2} by proving that $\mathscr{W}_+$ is a homeomorphism between the spaces of scattering and initial data.
\item \textbf{Appendix A}. We prove an energy estimate used to control top order derivatives for large data solutions. We also show higher-order estimates for spherically symmetric solutions to the repulsive Vlasov-Poisson system.
\end{itemize}

\section{Preliminaries}\label{Secprep}

\subsection{Notations}

Let, throughout this paper,  $p > \frac{3}{2}$ and $p_\infty \coloneqq \lfloor 3/p \rfloor +1$, so that the Sobolev embedding $W^{p_\infty ,p} (\R^3) \hookrightarrow L^\infty (\R^3)$ holds. 

For a multi-index $\alpha \in \mathbb{N}^3$, we set $\partial_x^\alpha = \partial_{x^1}^{\alpha_1} \partial_{x^2}^{\alpha_2} \partial_{x^3}^{\alpha_3}$ and $\partial_v^\alpha = \partial_{v^1}^{\alpha_1} \partial_{v^2}^{\alpha_2} \partial_{v^3}^{\alpha_3}$. We further denote the length of $\alpha$ by $|\alpha| \coloneqq \alpha_1+\alpha_2 + \alpha_3$. For derivatives with respect to $(x,v) \in \mathbb{R}^3 \times \mathbb{R}^3$, we use the convention that a multi-index $\alpha \in \mathbb{N}^6$ is written as $\alpha = (\alpha_x,\alpha_v) \in \mathbb{N}^3 \times \mathbb{N}^3$, and we set
\[
\partial_{x,v}^\alpha \coloneqq \partial_x^{\alpha_x}\partial_v^{\alpha_v}, \qquad \qquad |\alpha| \coloneqq |\alpha_x|+|\alpha_v|.
\]
The Hessian of a function $\phi \colon \R^3 \to \R^3$ is denoted by $\nabla^2 \phi$. For $x \in \R$ or $x \in \R^3$, we denote the Japanese bracket of $x$ by $\langle x \rangle \coloneqq \sqrt{1+|x|^2}$.

When introducing a constant $C$, we will write $ C[a_1,\dots,a_i] > 0$ to indicate that it depends only on the parameters $a_1,\dots,a_i$. Finally, we will use the notation $A \lesssim B$ to mean that there exists a constant $C>0$ such that $A \leq C B$. In practice, the constant $C$ will only depend on $p$, $N$, the regularity order of the solutions, $n=(n_x,n_v)$, which measures the decay of the solutions at infinity, and $ \mathbf{\Lambda} $, where $\mathbf{\Lambda}$ quantifies the size of a reference solution. The dependence in $ \mathbf{\Lambda} $ can be removed if $\mathbf{\Lambda}$ is small.

\subsection{Elliptic estimates}

We recall here classical results. We start by a weighted Sobolev inequality.

\begin{Pro}\label{ProSob}
Let $a \geq 0$. For any sufficiently regular function $\psi \colon \R^3 \to \R$, we have, for all $(t,x) \in \R_+ \times \R^3$,
\[ \langle t+|x| \rangle^a |\psi|(x) \lesssim \frac{1}{\langle t+|x| \rangle^{\frac{3}{p}}} \sum_{|\kappa| \leq p_\infty} \Big\| \langle t+|y| \rangle^{a+|\kappa|} \partial_y^\kappa \psi (y)  \Big\|_{L^p(\R^3_y)} . \]
\end{Pro}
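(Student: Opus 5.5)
We prove the inequality by localizing around the point $x$ at the natural scale $r \coloneqq \langle t+|x| \rangle$ and applying the unweighted Sobolev embedding $W^{p_\infty,p}(\R^3)\hookrightarrow L^\infty(\R^3)$ to a rescaled function. Fix $(t,x)$. The key observation is that the weight $\langle t+|y|\rangle$ is comparable to $r$ on a suitable neighbourhood of $x$. If $|x| \geq 1$, on the ball $B(x,|x|/2)$ we have $|y| \in [|x|/2, 3|x|/2]$, so $\langle t+|y| \rangle \approx r$. If $|x| \leq 1$ but $t$ is large, the same holds on $B(x,1)$. In general, we use a ball $B(x,\delta)$ with $\delta \coloneqq c\, \max(1,|x|)$, on which $\langle t+|y|\rangle \approx r$ with constants independent of $(t,x)$.

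The weight is not constant, so we do not apply Sobolev on $B(x,\delta)$ directly. Instead we define $\Psi(z) \coloneqq \psi(x+\delta z)$ for $z \in B(0,1)$. The Sobolev embedding on the unit ball (extension domain, constants uniform) gives
\[ |\psi(x)| = |\Psi(0)| \lesssim \sum_{|\kappa|\leq p_\infty} \|\partial_z^\kappa \Psi\|_{L^p(B(0,1))} = \sum_{|\kappa|\leq p_\infty} \delta^{|\kappa|-\frac{3}{p}} \|\partial^\kappa_y\psi\|_{L^p(B(x,\delta))}. \]
Multiplying by $r^a$ and inserting $r^{a+|\kappa|}\approx \langle t+|y|\rangle^{a+|\kappa|}$ on the ball, we need
\[ r^a \delta^{|\kappa|-3/p} \lesssim r^{a+|\kappa|-3/p}\cdot\frac{1}{r^{|\kappa|}}\cdot r^{|\kappa|}, \]
that is, $\delta^{|\kappa|-3/p}\lesssim r^{|\kappa|-3/p}\, r^{-|\kappa|}\cdot r^{|\kappa|}$. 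This holds only when $\delta\approx r$, which is the main obstacle: when $t\gg |x|$ the ball of radius $\max(1,|x|)$ is much smaller than $r$.

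To handle the regime $t \gtrsim |x|$, we choose $\delta \coloneqq c\,r$ and use the ball $B(x,cr)$. On it we only have the one-sided bound $\langle t+|y|\rangle \leq \langle t+|x|+cr\rangle \lesssim r$, and also $\langle t+|y|\rangle \geq \langle t\rangle \gtrsim r$ since $t\gtrsim r$ when $|x|\lesssim t$. Hence two-sided comparability still holds. Thus the case split is:
\begin{itemize}
\item $t \geq |x|$: use $\delta = r/4$; then $t \geq r/4$ roughly, so $\langle t+|y|\rangle\approx r$.
\item $|x|\geq t$ and $|x|\geq 1$: use $\delta=|x|/2\approx r$.
\item $r\lesssim 1$: this case is trivial, with $\delta=1$.
\end{itemize}
In every case $\delta\approx r$, so $r^a\delta^{|\kappa|-3/p}\approx r^{-3/p}\, r^{a+|\kappa|}$. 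Summing over $|\kappa|\leq p_\infty$ and enlarging the $L^p(B(x,\delta))$ norms to $L^p(\R^3)$ then yields the claimed estimate. The only delicate point is verifying these comparabilities uniformly, which the case distinction above ensures.
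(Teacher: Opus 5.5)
Your proof is correct and follows essentially the paper's route: rescale a ball centred at $x$ of radius comparable to $\langle t+|x|\rangle$ to the unit ball, apply the local Sobolev embedding $W^{p_\infty,p}\hookrightarrow L^\infty$ there, and use that $\langle t+|y|\rangle\approx\langle t+|x|\rangle$ on that ball. The differences are cosmetic. The paper treats all cases with $t+|x|\geq 1$ at once, using the radius $(t+|x|)/2$ and the bound $t+|y|\geq t+|x|-|x-y|$, where you split into cases. The paper also keeps the weight inside the function to which Sobolev is applied, whereas you pull it out by comparability; this is, if anything, slightly cleaner, since it never differentiates the non-smooth weight $\langle t+|y|\rangle$.
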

\begin{proof}
If $t+|x| \leq 1$, this follows from the standard Sobolev embedding $W^{p_\infty ,p} (\R^3) \hookrightarrow L^\infty (\R^3)$. Otherwise, we apply a local Sobolev inequality, in the ball $\{|y| \leq 1/2 \}$, to the function
\[ y \mapsto \big\langle t+|x+y(t+|x|)| \big\rangle^a \psi \big( x+y(t+|x|) \big). \]
Then, we observe that, on the domain of integration, $t+|x| \lesssim t+|x+y(t+|x|)|$, and we perform the change of variables $z(y)=x+y(t+|x|)$.
\end{proof}

Next, we recall elliptic estimates. 

\begin{Pro}\label{Proellip}
Assume that $\rho \colon \R^3 \to \R$ is sufficiently regular and consider the unique solution to $\Delta \phi = \rho$ such that $\phi (x) \to 0$ as $|x| \to + \infty$. Then, the following estimates holds,
\begin{align*}
\big\| \langle x \rangle^a \nabla_x^2 \phi (x) \big\|_{L^p (\R^3_x)} & \lesssim \big\| \langle x \rangle^a \rho (x) \big\|_{L^p (\R^3_x)}, \qquad \qquad \qquad -\frac{3}{p} < a <3\frac{p-1}{p}, \\
\big\| \langle \tau +|x| \rangle^2 \nabla_x \phi \big\|_{L^\infty (\R^3_x)} & \lesssim \big\| \langle \tau +|x| \rangle^3 \rho \big\|_{L^\infty (\R^3_x)},
\end{align*}
where the last one holds uniformly in $\tau \geq 0$.
\end{Pro}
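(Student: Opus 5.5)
The two estimates are independent, and I will treat them separately.

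\emph{Weighted second-derivative bound.} Here $\nabla_x^2\phi = \nabla^2(-\Delta)^{-1}(-\rho)$ is given by a vector of double Riesz transforms, $\partial_i\partial_j\phi = -R_iR_j\rho$. These are Calder\'on--Zygmund operators. The plan is to invoke weighted $L^p$ boundedness of Calder\'on--Zygmund operators with Muckenhoupt weights. The weight $w(x)=\langle x\rangle^{ap}$ belongs to the class $A_p(\R^3)$ exactly when $-3<ap<3(p-1)$, that is, $-3/p<a<3(p-1)/p$. This is precisely the stated range. I would verify the $A_p$ condition directly:
\begin{itemize}
\item For balls of radius $r\lesssim 1+|x_0|$, the weight is comparable to a constant on the ball.
\item For large balls, the averages of $\langle x\rangle^{ap}$ and of $\langle x\rangle^{-ap/(p-1)}$ behave like $r^{ap}$ and $r^{-ap/(p-1)}$. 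This requires local integrability of the powers, which is exactly the strict inequalities above.
\end{itemize}
The Coifman--Fefferman theorem then yields $\|R_iR_j\rho\|_{L^p(w)}\lesssim\|\rho\|_{L^p(w)}$. For sufficiently regular $\rho$ (decaying, say Schwartz), the solution vanishing at infinity is the Newtonian potential, so this identification is legitimate.

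\emph{Pointwise force bound.} Set $M\coloneqq\|\langle\tau+|y|\rangle^3\rho\|_{L^\infty}$ and write
\[ |\nabla\phi(x)|\lesssim\int \frac{|\rho(y)|}{|x-y|^2}\,dy\le M\int\frac{dy}{|x-y|^2\langle\tau+|y|\rangle^{3}}. \]
The goal is to bound this integral by $\langle\tau+|x|\rangle^{-2}$. I would split $\R^3$ into three regions.
\begin{itemize}
\item On $|y-x|\le |x|/2$, we have $\langle\tau+|y|\rangle\sim\langle\tau+|x|\rangle$. The contribution is then at most $\langle\tau+|x|\rangle^{-3}\int_{|z|\le|x|/2}|z|^{-2}\,dz\lesssim |x|\,\langle\tau+|x|\rangle^{-3}\le\langle\tau+|x|\rangle^{-2}$.
\item On $|y|\le |x|/2$, we have $|x-y|\sim|x|$. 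The contribution is at most $|x|^{-2}\int_{|y|\le|x|/2}\langle\tau+|y|\rangle^{-3}\,dy$.
\item On the remaining region, $|x-y|\gtrsim|y|$, so the contribution is bounded by $\int_{|y|\geq |x|/2}|y|^{-2}\langle\tau+|y|\rangle^{-3}\,dy$.
\end{itemize}

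The main obstacle is obtaining uniformity in $\tau$ together with a possible logarithm. In the second region, $\int_{|y|\le R}\langle\tau+|y|\rangle^{-3}\,dy\lesssim \min(R^3\langle\tau\rangle^{-3},\log\langle R\rangle)$, which gives a logarithmic loss when $\tau\ll|x|$. To handle this I would use the extra room carefully, and separately handle the case $|x|\le \tau$ by the cruder bound $\int |x-y|^{-2}\langle\tau+|y|\rangle^{-3}\,dy\lesssim\langle\tau\rangle^{-2}$, obtained by splitting at $|x-y|\sim\tau$. If the logarithm genuinely persists for $|x|\gg\tau$, the fallback is to exploit that only $\nabla\phi$ and not $\phi$ is needed: first subtract the mean $\int\rho$ times the gradient of the kernel, then control the remainder using the spatial decay of $\rho$.

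The third region is simply $\lesssim\langle\tau+|x|\rangle^{-2}$ after distinguishing $|x|\lesssim\tau$ from $|x|\gtrsim\tau$. Multiplying through by $M$ then gives the stated bound, uniform in $\tau\ge0$.
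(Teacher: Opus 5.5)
Your treatment of the Hessian bound is the paper's argument: double Riesz transforms, together with the fact that $\langle x\rangle^{ap}\in A_p$ exactly for $-3<ap<3(p-1)$. The pointwise force bound is where the proposal breaks down, and it cannot be repaired, because the second estimate is false as stated. The logarithm you found in the region $\{|y|\le|x|/2\}$ genuinely persists. Since the integrand is positive, that region gives a lower bound whenever $\rho\ge0$. Concretely, take $\tau=0$ and $\rho_R(y)\coloneqq\chi(|y|/R)\langle y\rangle^{-3}$, where $\chi\in C^\infty(\R,[0,1])$ equals $1$ on $[0,1]$ and vanishes on $[2,+\infty[$. Then $\|\langle y\rangle^3\rho_R\|_{L^\infty}\le1$. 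On the other hand, Gauss's law for the radial density $\rho_R$ gives, for $|x|\ge 2R$,
\[ \nabla\phi_R(x)=\frac{x}{4\pi|x|^3}\int_{\R^3}\rho_R(y)\,\dr y, \qquad \langle x\rangle^2\big|\nabla\phi_R(x)\big|\ge\frac{1}{4\pi}\int_{|y|\le R}\frac{\dr y}{\langle y\rangle^{3}}\ge\log R-1, \]
which is unbounded as $R\to+\infty$. Replacing $\langle y\rangle^{-3}$ by $\langle\tau+|y|\rangle^{-3}$ gives a loss of order $\log(R/\langle\tau\rangle)$ for every fixed $\tau$. Your fallback cannot help. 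Outside the support, the field \emph{is} the monopole term $\frac{x}{4\pi|x|^3}\int\rho$ that you propose to subtract, and $|\int\rho|$ is not controlled by $\|\langle\tau+|y|\rangle^3\rho\|_{L^\infty}$; under that hypothesis alone it may even be infinite.

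What your three-region decomposition does prove is
\[ \langle\tau+|x|\rangle^{2}\big|\nabla\phi(x)\big|\lesssim\log\Big(2+\frac{|x|}{\langle\tau\rangle}\Big)\,\big\|\langle\tau+|y|\rangle^{3}\rho\big\|_{L^\infty}. \]
The logarithm disappears if the right-hand side is strengthened, for instance:
\begin{itemize}
\item to $\|\langle\tau+|y|\rangle^{3}\langle y\rangle^{\delta}\rho\|_{L^\infty}$ with $\delta>0$;
\item or by adding $\|\rho\|_{L^1}$, which bounds the contribution of $\{|y|\le|x|/2\}$ by $|x|^{-2}\|\rho\|_{L^1}$.
\end{itemize}
The paper's own proof contains exactly this gap. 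It asserts $\int_{\R^3}|y-x|^{-2}\langle\tau+|y|\rangle^{-3}\,\dr y\lesssim\langle x\rangle^{-2}$ for all $x$, which fails by a factor $\log\langle x\rangle$ already at $\tau=0$. Only its rescaled bound $\lesssim\tau^{-2}$ for $|x|\le\tau$ is correct, and that part coincides with your treatment of that case. The places where the paper invokes this estimate appear to have enough slack to absorb the loss. Only a $\langle v\rangle$ weight is needed on the left in Lemma \ref{LemdtPhi} and Corollary \ref{Corphiphiinfty}. In Appendix \ref{AppA}, $\rho[f](t,\cdot)$ is supported in $|x|\lesssim\langle t\rangle$. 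Even so, the proposition itself needs to be corrected rather than proved.
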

\begin{proof}
For the first estimate, we use $1<p<\infty$ and that $x \mapsto \langle x \rangle^{ap}$ is in Muckenhoupt's class $A_p$ if $-3<ap<3(p-1)$. For the second one, we use that $\phi = |x|^{-1} \ast \rho$, and
\[ \int_{\R^3_y} \frac{\dr y}{| y - x |^2 \langle \tau+|y| \rangle^3} \lesssim \frac{1}{\langle x \rangle^2}, \qquad \qquad \int_{\R^3_y} \frac{\dr y}{| y - x |^2 \langle \tau+|y| \rangle^3} \lesssim \frac{1}{\tau^2}\int_{\R^3_z} \frac{\dr z}{|z|^2(\tau^{-1}+1+|z-x/\tau|)^3} \lesssim \frac{1}{\tau^2}, \]
where the last inequality holds for $|x| \leq \tau$.
\end{proof}

We will in fact use consequences of this result. We recall that $p>3/2$.

\begin{Cor}\label{Corellip}
Let $\rho \colon \R^3 \to \R$ be a sufficiently regular function and $\phi$ be the unique solution to $\Delta \phi = \rho$. Then,
for any multi-index $\gamma$ and all $\tau \geq 0$, we have
\begin{align*}
\big\| \langle \tau+ |x| \rangle^{1+|\gamma|} \nabla_x^2 \partial_x^\gamma \phi (x) \big\|_{L^p (\R^3_x)} & \lesssim \sum_{|\alpha| \leq |\gamma|} \big\| \langle \tau +|x| \rangle^{1+|\alpha|} \partial_x^\alpha \rho (x) \big\|_{L^p (\R^3_x)},  \\
\big\| \langle \tau +|x| \rangle^{2+|\gamma|} \nabla_x \partial_x^\gamma \phi (x) \big\|_{L^\infty (\R^3_x)} & \lesssim \sum_{|\alpha| \leq |\gamma|+p_\infty} \big\| \langle \tau +|x| \rangle^{3\frac{p-1}{p}+|\alpha|} \partial_x^\alpha \rho (x) \big\|_{L^p(\R^3_x)}.
\end{align*}
\end{Cor}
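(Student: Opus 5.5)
The plan is to reduce both estimates to Proposition \ref{Proellip} by commuting derivatives with the Laplacian and redistributing the weights $\langle \tau+|x|\rangle$. For the first estimate, note that $\partial_x^\gamma \phi$ solves $\Delta \partial_x^\gamma\phi = \partial_x^\gamma \rho$. Applying Proposition \ref{Proellip} directly with weight $\langle x\rangle^{a}$, $a=1+|\gamma|$, fails once $a \geq 3(p-1)/p$, so I will not do that. Instead I will use the identity $x^j \partial_{x^k}\Delta = \Delta(x^j\partial_{x^k}) - 2\partial_{x^j}\partial_{x^k}$ to move weights inside. This gives an induction on $|\gamma|$ in which each factor of $\langle \tau+|x|\rangle$ is traded for one derivative.

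1. \emph{Base case.} I first treat $\gamma=0$, i.e. the bound $\|\langle\tau+|x|\rangle \nabla^2\phi\|_{L^p}\lesssim \|\langle \tau+|x|\rangle\rho\|_{L^p}$. Since $\langle \tau+|x|\rangle \sim \tau+\langle x\rangle$, it suffices to combine Proposition \ref{Proellip} with $a=0$ (multiplied by $\tau$) and with $a=1$. The latter is admissible because $1<3(p-1)/p$ exactly when $p>3/2$.

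2. \emph{Induction step.} For larger $\gamma$, write $\gamma=\gamma'+e_k$ and use the pointwise bound $\langle\tau+|x|\rangle^{1+|\gamma|}\lesssim \tau^{|\gamma|}\langle\tau+|x|\rangle + \langle\tau+|x|\rangle |x|^{|\gamma|}$.
   (a) The $\tau$ part is handled by the induction hypothesis.
   (b) For the $|x|^{|\gamma|}$ part, consider $\psi = x^\beta \partial^\gamma \phi$ with $|\beta|=|\gamma|$. Then $\Delta\psi = x^\beta\partial^\gamma\rho + \sum(\text{terms } x^{\beta'}\partial^{\gamma''}\nabla^2\phi$ with $|\beta'| = |\beta|-1$, $|\gamma''|=|\gamma|-1)$, plus terms with $|\beta'|=|\beta|-2$ and $\partial^\gamma \phi$, which are of the same homogeneity.
   (c) Apply the base case to $\psi$. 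The source terms are controlled by the induction hypothesis, since they carry one fewer weight and one fewer derivative.
   (d) Finally, $x^\beta\nabla^2\partial^\gamma\phi$ differs from $\nabla^2\psi$ by terms $x^{\beta'}\nabla\partial^\gamma \phi$ with $|\beta'|=|\beta|-1$, again of lower order.

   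A cleaner alternative uses the scaling vector field $S=x\cdot\nabla$ and the rotations $\Omega_{ij}$, which satisfy $\Delta S=(S+2)\Delta$ and $[\Delta,\Omega]=0$, together with the pointwise bound $|x||\nabla \partial^\gamma\phi|\lesssim |S\partial^\gamma \phi|+\sum|\Omega\partial^\gamma\phi|$. I would use whichever version keeps the bookkeeping simpler.

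3. \emph{Second estimate.} Apply Proposition \ref{ProSob} with $a=2+|\gamma|$ to $\psi=\nabla\partial^\gamma\phi$. This costs $p_\infty$ extra derivatives, each with an extra weight, and gains a factor $\langle \tau+|x|\rangle^{-3/p}$. It therefore suffices to bound $\|\langle\tau+|y|\rangle^{2+|\gamma|+|\kappa|-3/p}\nabla\partial^{\gamma+\kappa}\phi\|_{L^p}$. Writing $2-3/p = 1+\big(3\tfrac{p-1}{p}-2\big)$ is not quite of the form of the first estimate, so the cleanest route is as follows.
   (a) For $|\gamma+\kappa|\ge1$, write $\nabla\partial^{\gamma+\kappa}\phi=\nabla^2\partial^{\gamma+\kappa-e}\phi$ and apply the first estimate, in a version with weight $\langle\tau+|x|\rangle^{b+|\gamma|}$ where $b=3(p-1)/p-1$. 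This version is proved as above, because the base case $a=b\in(-3/p,3(p-1)/p)$ is admissible. It yields $\langle\tau+|x|\rangle^{3(p-1)/p+|\alpha|}\partial^\alpha\rho$ on the right.
   (b) For $\gamma=\kappa=0$, use the $L^\infty$ bound of Proposition \ref{Proellip} combined with the Sobolev estimate of Proposition \ref{ProSob} applied to $\rho$ with $a=3$. Note that $3 = 3(p-1)/p + 3/p$.

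The main obstacle is the commutation bookkeeping in the induction, and in particular verifying that every weight exponent used with Proposition \ref{Proellip} lies in the Muckenhoupt range. This is exactly where the hypothesis $p>3/2$ enters.
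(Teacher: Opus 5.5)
Your proof of the first estimate is correct and differs from the paper's only in how weights are commuted past $\Delta$. The paper uses $S$ and $\Omega_{ij}$, for which $\Delta Z^\beta\phi$ is an exact combination of the $Z^{\beta'}\rho$. You instead commute monomial weights $x^\beta$ and absorb the terms $x^{\beta'}\nabla\partial^\gamma\phi$ and $x^{\beta''}\partial^\gamma\phi$ by induction, since each carries one weight and one derivative fewer. In both routes Proposition \ref{Proellip} is only used with $a\in\{0,1\}$.

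The second estimate, however, does not go through for $|\gamma|\geq 1$. After Proposition \ref{ProSob} you must bound $\|\langle\tau+|y|\rangle^{2-3/p+m}\nabla\partial^{\widetilde{\gamma}}\phi\|_{L^p}$, with $\widetilde{\gamma}=\gamma+\kappa$ and $m=|\widetilde{\gamma}|$. This quantity is in general infinite. Take $\rho\in C^\infty_c$ with $\int\rho\neq 0$. Then $\nabla\partial^{\widetilde{\gamma}}\phi$ is a nonzero multiple of $\nabla\partial^{\widetilde{\gamma}}|x|^{-1}$ up to $O(|x|^{-3-m})$, which is homogeneous of degree $-2-m$ and nonvanishing on an open cone. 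So the $p$-th power of the integrand is $\gtrsim |y|^{-3}$ there and the norm diverges logarithmically, while the right-hand side of the corollary is finite.

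This is exactly the Muckenhoupt endpoint. Writing $\nabla\partial^{\widetilde{\gamma}}\phi=\nabla^2\partial^{\widetilde{\gamma}-e}\phi$, the weight is $\langle\tau+|x|\rangle^{3(p-1)/p+|\widetilde{\gamma}-e|}$. The ``version of the first estimate'' you invoke therefore has base exponent $3(p-1)/p$, which Proposition \ref{Proellip} excludes. With the admissible $b=3(p-1)/p-1$ the weights come out one power lower on both sides, so it neither closes the reduction nor yields the claimed right-hand side.

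The remedy is to apply the Sobolev embedding to the density, not to the potential. Your case (b) already does this for $\gamma=0$, and the paper does it in general:
\begin{enumerate}
\item Bound $\langle x\rangle^{|\gamma|}|\nabla\partial^\gamma\phi|$ by $\sum_{|\beta|\leq|\gamma|}|\nabla Z^\beta\phi|$ with $Z\in\{\partial_{x^k},\Omega_{ij},S\}$. The $\tau$-weights are handled by leaving the corresponding derivatives on $\rho$.
\item Apply the pointwise bound of Proposition \ref{Proellip} to $Z^\beta\phi$, whose Laplacian is a combination of the $Z^{\beta'}\rho$.
\item Only then apply Proposition \ref{ProSob}, with $a=3(p-1)/p+|\kappa'|$, to the resulting terms $\langle\tau+|x|\rangle^{3+|\kappa'|}\partial^{\kappa'}\rho$.
\end{enumerate}
Here the vector fields are needed, not merely convenient. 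With monomial weights, $\Delta(x^\beta\partial^\gamma\phi)$ contains $2\nabla(x^\beta)\cdot\nabla\partial^\gamma\phi$. Weighting it by $\langle\tau+|x|\rangle^3$ gives back exactly $\langle\tau+|x|\rangle^{2+|\gamma|}|\nabla\partial^\gamma\phi|$, so the pointwise induction would be circular.
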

\begin{proof}
We will use the relations $\Delta \partial_x^{\gamma} \phi = \partial_x^\gamma \rho $,
\[ [\Delta, \Omega_{ij} ]=0, \quad [\Delta, S]=2\Delta, \qquad \quad \; \Omega_{ij} \coloneqq x^i \partial_{x^j}-x^j\partial_{x^i}, \quad S \coloneqq \sum_{1 \leq k \leq 3}   x^k \partial_{x^k}, \qquad \quad \; |x|^2 \partial_{x^k} = x^k S+\sum_{1 \leq i \leq 3} x^i \Omega_{ik}. \]
The last identity in particular implies that, for any smooth function $\psi$,
\begin{equation*}
\langle x \rangle^{|\gamma|} \big| \partial_x^\gamma \psi \big|(x) \lesssim  \sum_{|\beta| \leq |\gamma|} \big| Z^\beta \psi \big|(x) \lesssim  \sum_{|\kappa| \leq |\gamma|} \langle x \rangle^{|\kappa|} \big| \partial_x^\kappa \psi \big|(x) , 
\end{equation*}
where the sum is taken over all differential operators $Z^\beta$ obtained as compositions of at most $|\gamma|$ vector fields in $\{ \partial_{x^1},\partial_{x^2},\partial_{x^3}, \Omega_{12}, \Omega_{13}, \Omega_{23}, S \}$. As $[\partial_{x^k},S]=\partial_{x^k}$ and $[\partial_{x^k},\Omega_{ij}]=\delta_{k,i}\partial_{x^j}-\delta_{k,j} \partial_{x^i}$, Proposition \ref{Proellip}, applied with $a=1$, yields
\[ \big\| \langle x \rangle^{1+|\gamma|} \nabla^2_x \partial_x^{\gamma} \phi \big\|_{L^p(\R^3_x)} \lesssim \sum_{|\beta| \leq |\gamma|} \big\| \langle x \rangle \nabla^2_x Z^\beta \phi \big\|_{L^p(\R^3_x)} \lesssim   \sum_{|\beta| \leq |\gamma|} \big\| \langle x \rangle  Z^\beta \rho \big\|_{L^p(\R^3_x)} \lesssim  \sum_{|\kappa| \leq |\gamma|} \big\| \langle x \rangle^{1+|\kappa|}  \partial_x^\kappa \rho \big\|_{L^p(\R^3_x)}. \]
We conclude the $L^p$ estimate by using $\langle \tau +|x| \rangle \lesssim \tau + \langle x \rangle \lesssim \langle \tau +|x| \rangle$ and Proposition \ref{Proellip}, applied with $a=0$. Using in addition Proposition \ref{ProSob}, we similarly obtain the $L^\infty$ estimate.
\end{proof}

\subsection{Estimates for velocity averages}

We fix, for the remainder of this section, $N \geq 2p_\infty-1$ and a constant $\mathbf{\Lambda}  \geq 0$. We consider further
\begin{itemize}
\item A constant $a \geq 0$, and a function $(t,v) \mapsto \Phi (t,v) \in \R^3$, such that, for all $t \in \R_+$,
 \begin{equation}\label{EQ:AssumpPhi}
 \mathcal{E}_{N}[\Phi](t) \coloneqq  \big\| \langle v \rangle^2 \Phi (t,v) \big\|_{L^\infty (\R^3_v)} + \sum_{ |\gamma| \leq N}  \big\|\langle v \rangle^{1+|\gamma|} \nabla_v \partial_v^\gamma \Phi (t,v) \big\|_{ L^p ( \R^3_v)} \leq \mathbf{\Lambda} \log^a \langle t + 1 \rangle .  \tag{H$\Phi$}
  \end{equation}
\item The functions, sometimes vue as variables, 
\[ z(t,x,v) \coloneqq x+tv+\lambda \Phi (t,v) \log \langle t \rangle, \qquad \qquad \mathcal{Z}(t,y,v) \coloneqq y-tv-\lambda \Phi (t,v) \log \langle t \rangle. \]
 In what follows, we will often write $z$ for $z(t,x,v)$, and $\mathcal{Z}$ for $\mathcal{Z}(t,x,v)$.
\item A sufficiently regular function $f: \R_+ \times \R^3_x \times \R^3_v \to \R$, and $g(t,x,v) \coloneqq f \big(t,z(t,x,v),v \big)$.
\end{itemize} 
The estimates proved in this section will be applied to various choices of the function $\Phi$, including $\Phi_f$ and $\nabla_v \phi_\infty[f_\infty]$, introduced in Section \ref{Subsecstrat}. Although in most applications we will have $a=0$, the resulting logarithmic growth is harmless. Accordingly, we do not attempt to optimize the logarithmic losses in the estimates below. We start by relating the derivatives of $f$ to the ones of $g$,
\begin{equation}
 \begin{aligned}
\partial_t g (t,x,v) & = \big[ \partial_t f+v \cdot \nabla_x f +\lambda t \langle t \rangle^{-2} \Phi \cdot \nabla_x f+ \lambda \log \langle t \rangle \partial_t \Phi \cdot \nabla_x f \big] \big( t,z(t,x,v),v \big), \\
\partial_{x^i} g (t,x,v) & = \big[ \partial_{x^i} f \big] \big( t,z(t,x,v),v \big), \\
\partial_{v^i} g (t,z,v) & = \big[ t \partial_{x^i}f+\partial_{v^i} f + \lambda \log \langle t \rangle \partial_{v^i} \Phi  \cdot \nabla_x f \big]\big( t,z (t,x,v),v \big). 
\end{aligned}
\label{eq:derivftogv}
\end{equation}
 We will also make use of the function
\begin{equation}\label{eq:deffcirc}
 f_\circ (t,x,v) \coloneqq f(t,x+tv,v), \qquad \qquad \partial_x^{\alpha_x} \partial_v^{\alpha_v} f_\circ (t,x,v) = \big[ \partial_x^{\alpha_x} \big( t \partial_x+\partial_v \big)^{\alpha_v} f \big](t,x+tv,v), 
 \end{equation}
which corresponds to $f$ composed by the linear flow of $\partial_t+v \cdot \nabla_x$, and coincides with $g$ if $\Phi \equiv 0$. These relations allow to express a transport operator, defined in the coordinate system $(t,x,v)$, using the coordinates $(t,z,v)$.
\begin{Lem}\label{LemTbar}
Let $\phi : \R_+\times \R^3_x \to \R$ be a potential, and 
\[ \T_\phi \coloneqq \partial_t + v \cdot \nabla_x - \lambda \nabla_x \phi \cdot \nabla_v . \]
 Then, $\T_\phi (f)=F$ if and only if $\overline{\T}_\phi (g)=F(t,z(t,x,v),v)$, where
\[ \overline{\T}_{\phi } = \partial_t + \frac{\lambda t}{\langle t \rangle^2} \Big(  \langle t \rangle^2\nabla_x \phi  (t,z) - \Phi (t,v) \Big) \cdot \nabla_x  -\lambda \log \langle t \rangle \partial_t \Phi \cdot \nabla_x +   \log \langle t \rangle \nabla_v \Phi \cdot \nabla_x \phi (t,z ) \cdot \nabla_x -\lambda \nabla_x \phi (t,z) \cdot \nabla_v   . \]
\end{Lem}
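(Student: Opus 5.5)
The plan is a direct chain-rule computation: rewrite every derivative of $f$ appearing in $\T_\phi(f)$, evaluated at $(t,z(t,x,v),v)$, in terms of derivatives of $g$, using the relations \eqref{eq:derivftogv}. Since for each fixed $t$ the map $(x,v)\mapsto (z(t,x,v),v)$ is a bijection of $\R^3_x\times\R^3_v$ with inverse $(y,v)\mapsto(\mathcal{Z}(t,y,v),v)$, the identity $\T_\phi(f)=F$ on all of $\R_+\times\R^3\times\R^3$ is equivalent to the identity $[\T_\phi(f)](t,z(t,x,v),v)=F(t,z(t,x,v),v)$ for all $(t,x,v)$. So it suffices to prove
\[ [\T_\phi (f)](t,z(t,x,v),v) = \overline{\T}_\phi(g)(t,x,v). \]

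The key step is to invert \eqref{eq:derivftogv}. The second relation gives $[\nabla_x f](t,z,v)=\nabla_x g(t,x,v)$. Substituting this into the third relation yields
\[ [\partial_{v^i} f](t,z,v) = \partial_{v^i} g - t\,\partial_{x^i} g - \lambda \log\langle t\rangle\, \partial_{v^i}\Phi\cdot\nabla_x g . \]
Similarly, the first relation gives
\[ [\partial_t f+v\cdot\nabla_x f](t,z,v)=\partial_t g - \lambda t\langle t\rangle^{-2}\,\Phi\cdot\nabla_x g-\lambda\log\langle t\rangle\,\partial_t\Phi\cdot\nabla_x g . \]

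I then plug both expressions into
\[ \T_\phi f=(\partial_t f+v\cdot\nabla_x f)-\lambda\nabla_x\phi\cdot\nabla_v f, \]
with $\nabla_x\phi$ evaluated at $(t,z)$. The term $-\lambda\nabla_x\phi\cdot\nabla_v f$ produces three contributions:
\begin{itemize}
\item $-\lambda\nabla_x\phi(t,z)\cdot\nabla_v g$;
\item $+\lambda t\,\nabla_x\phi(t,z)\cdot\nabla_x g$, which combines with $-\lambda t\langle t\rangle^{-2}\Phi\cdot\nabla_x g$ into $\frac{\lambda t}{\langle t\rangle^2}\big(\langle t\rangle^2\nabla_x\phi(t,z)-\Phi\big)\cdot\nabla_x g$;
\item $\lambda^2\log\langle t\rangle\,\nabla_v\Phi\cdot\nabla_x\phi(t,z)\cdot\nabla_x g$, which reduces to the stated form because $\lambda^2=1$.
\end{itemize}
Together with $\partial_t g$ and $-\lambda\log\langle t\rangle\,\partial_t\Phi\cdot\nabla_x g$, this gives exactly $\overline{\T}_\phi(g)$.

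There is no real obstacle. The only points requiring care are the index contraction in $\nabla_v\Phi\cdot\nabla_x\phi\cdot\nabla_x$, which should be read as $\sum_{i,j}\partial_{v^i}\Phi^j\,\partial_{x^i}\phi\,\partial_{x^j}$, and the use of $\lambda^2=1$. One also needs the regularity of $\Phi$ in $t$ and $v$ to justify the chain rule. Finally, the bijectivity of the coordinate change must be stated explicitly to obtain the ``if and only if''; it follows because $v$ is unchanged and $z$ is $x$ plus a function of $(t,v)$.
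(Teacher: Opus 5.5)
Your proposal is correct and is essentially the paper's argument: the paper presents this lemma as an immediate consequence of the chain-rule relations \eqref{eq:derivftogv}. You invert those relations to express $[\nabla_v f]$ and $[\partial_t f+v\cdot\nabla_x f]$ at $(t,z,v)$ in terms of derivatives of $g$, and $\lambda^2=1$ then yields the term $\log\langle t\rangle\,\nabla_v\Phi\cdot\nabla_x\phi(t,z)\cdot\nabla_x$. The paper leaves implicit your remark that $(x,v)\mapsto(z(t,x,v),v)$ is a bijection for each $t$, which is what the ``if and only if'' needs.
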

\begin{Rq}
For all choices that will be made in this paper, $\nabla_v \Phi$ is a symmetric matrix-valued function. Consequently, there is no ambiguity in the notation $\nabla_v \Phi \cdot \nabla_x \phi (t,z ) \cdot \nabla_x$, which stands for $\sum_{i,j} \partial_{v^i} \Phi^j \partial_{x^i} \phi (t,z ) \partial_{x^j}$.
\end{Rq}
We now control the derivatives of $f_\circ$ by the ones of $g$. Recall from Definition \ref{Defnorm} the expression of $\E_N^n [\cdot]$.

\begin{Lem}\label{Corderivgtof}
Let $n =(n_x,n_v) \in \mathbb{Z}^2$. We have, for all $t \in \R_+$,
\begin{align*}
\E_N^n [ f_\circ](t) \lesssim \E_N^n[g](t)+ \log^{(a+1)(2N+n_x)-a} \hspace{-0.06em} \langle t+1 \rangle \, \mathcal{E}_{N}[\Phi](t)  \cdot \E_N^n[g](t). 
 \end{align*} 
\end{Lem}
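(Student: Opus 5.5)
Write $f_\circ(t,x,v)=g(t,\mathcal{Y}(t,x,v),v)$ with $\mathcal{Y}(t,x,v)\coloneqq x-\lambda\Phi(t,v)\log\langle t\rangle$. Indeed $z(t,\mathcal{Y},v)=x+tv$. The map $(x,v)\mapsto(\mathcal{Y},v)$ is a shear: for each fixed $v$ it is a translation in $x$. It therefore preserves Lebesgue measure, and the change of variables $(x,v)\mapsto(\mathcal{Y},v)$ in the $L^p_{x,v}$ integrals is free.

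\textbf{Derivatives.} By the chain rule,
\[ \partial_x f_\circ=(\partial_x g)(t,\mathcal{Y},v), \qquad \partial_{v^i} f_\circ=\big(\partial_{v^i} g-\lambda\log\langle t\rangle\,\partial_{v^i}\Phi\cdot\nabla_x g\big)(t,\mathcal{Y},v). \]
Iterating with Faà di Bruno, $\partial_x^{\alpha_x}\partial_v^{\alpha_v} f_\circ$ is the term $(\partial_x^{\alpha_x}\partial_v^{\alpha_v}g)(t,\mathcal{Y},v)$ plus a sum of terms of the form
\[ \log^{k}\langle t\rangle\prod_{j=1}^k \partial_v^{\beta_j}\nabla_v\Phi\cdot(\partial_x^{\alpha_x+\kappa}\partial_v^{\alpha_v'}g)(t,\mathcal{Y},v), \qquad 1\le k, \quad |\kappa|=k, \quad \sum_j(|\beta_j|+1)+|\alpha_v'|=|\alpha_v|. \]
Each $x$-derivative falling on $g$ is paid for by one $v$-derivative of $\Phi$, so the total order on $g$ stays at most $N$. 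This bookkeeping is consistent with the anisotropic weights of $\E_N^n$. Each extra $\partial_x$ on $g$ needs one more $\langle x\rangle$ and one fewer $\langle v\rangle$, and the $v$-weights $\langle v\rangle^{1+|\beta_j|}$ of $\nabla_v\partial_v^{\beta_j}\Phi$ in \eqref{EQ:AssumpPhi} supply the missing $v$-weight.

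\textbf{Spatial weight.} The weight $\langle x\rangle^{n_x+|\alpha_x|}$ must be converted to $\langle\mathcal{Y}\rangle$. Since $|x|\le|\mathcal{Y}|+|\Phi|\log\langle t\rangle$ and $\langle v\rangle^2|\Phi|\le\mathcal{E}_N[\Phi]\lesssim\mathbf\Lambda\log^a\langle t+1\rangle$, we get
\[ \langle x\rangle\lesssim\langle\mathcal{Y}\rangle\log^{a+1}\langle t+1\rangle . \]
Raising this to the power $n_x+|\alpha_x|+|\kappa|\le n_x+2N$ accounts for the logarithmic exponent in the statement. The factor $\mathcal{E}_N[\Phi]$ appears linearly, and the remaining factors of $\Phi$ (powers of $\mathcal{E}_N[\Phi]$ and of $\mathbf\Lambda$) are absorbed into the constant, since $\lesssim$ may depend on $\mathbf\Lambda$. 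For $n_x<0$ the inequality $\langle\mathcal{Y}\rangle\lesssim\langle x\rangle\log^{a+1}$ is used instead.

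\textbf{Main obstacle: placing the products in $L^p$.} In each product at most one factor $\nabla_v\partial_v^{\beta_j}\Phi$ has high order, $|\beta_j|>N-p_\infty$. That factor is taken in weighted $L^p_v$, and the corresponding $g$-factor then carries few derivatives. That low-order $g$-factor is put in $L^\infty_v L^p_x$ via a Sobolev embedding in $v$ (Proposition \ref{ProSob}, with $t=0$) at the cost of $p_\infty$ extra $v$-derivatives. This is affordable because $N\ge2p_\infty-1$ and because the index splitting forces $|\alpha_v'|$ to be small when $|\beta_j|$ is large. All other $\Phi$-factors have low order and are bounded in $L^\infty_v$ by Sobolev applied to $\mathcal{E}_N[\Phi]$.

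This splitting is where I expect the main difficulty. The Sobolev embedding in $v$ also generates weights, and the $\langle v\rangle^{1+|\gamma|}$ weights must exactly compensate the $\langle v\rangle$ lost each time a $v$-derivative is traded for an $x$-derivative on $g$. Checking this weight balance carefully, term by term, would yield the stated bound.
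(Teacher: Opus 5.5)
Your expansion and your placement of the correction terms in $L^p$ follow the paper's argument exactly. You keep one factor $\nabla_v\partial_v^{\xi}\Phi$, in $L^p_v$ when $|\xi|>N-p_\infty$, with the low-order $g$-factor sent to $L^\infty_vL^p_x$ by Sobolev in $v$. You bound all other factors pointwise and then use the measure-preserving shear. Your $v$-weight bookkeeping for these terms is correct.

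The gap is in the leading term $(\partial_x^{\alpha_x}\partial_v^{\alpha_v}g)(t,\mathcal{Y},v)$. You convert the weight through the multiplicative bound $\langle x\rangle\lesssim\langle\mathcal{Y}\rangle\log^{a+1}\langle t+1\rangle$, which already spends $|\Phi|\lesssim\mathbf{\Lambda}\log^a\langle t+1\rangle$. For the leading term this gives $\log^{(a+1)(n_x+|\alpha_x|)}\langle t+1\rangle\,\E_N^n[g](t)$, with no factor $\mathcal{E}_N[\Phi](t)$ in front. That does not imply the lemma: there the first term $\E_N^n[g]$ carries no logarithm, and every logarithmic loss is multiplied by $\mathcal{E}_N[\Phi]$. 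For instance, when $\mathcal{E}_N[\Phi](t)\log^{(a+1)(2N+n_x)-a}\langle t+1\rangle\lesssim 1$, the statement gives $\E_N^n[f_\circ](t)\lesssim\E_N^n[g](t)$ uniformly in $t$, and your estimate does not.

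The missing step is the additive comparison used in the paper (item (a) of its proof). Set $s\coloneqq\mathcal{E}_N[\Phi](t)\log\langle t\rangle$; then $\big|\langle x\rangle-\langle\mathcal{Y}\rangle\big|\le|\Phi(t,v)|\log\langle t\rangle\le s$. For $m=n_x+|\alpha_x|\geq 1$ this gives $\langle x\rangle^m\le\langle\mathcal{Y}\rangle^m(1+s)^m\le\langle\mathcal{Y}\rangle^m\big(1+ms(1+s)^{m-1}\big)$. Only the $m-1$ remaining factors are bounded with $\mathcal{E}_N[\Phi]\lesssim\mathbf{\Lambda}\log^a\langle t+1\rangle$. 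Hence the leading term contributes at most $\E_N^n[g]+C\log^{(a+1)m-a}\langle t+1\rangle\,\mathcal{E}_N[\Phi]\,\E_N^n[g]$, which is where the $-a$ in the exponent comes from. For $m<0$, exchange the roles of $x$ and $\mathcal{Y}$.

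For the correction terms your multiplicative conversion is harmless, but the non-principal $\Phi$-factors are not absorbed into a constant. Each costs $\mathbf{\Lambda}\log^a\langle t+1\rangle$, and the chain rule adds $\log^k\langle t\rangle$. So these terms carry $\log^{(a+1)(n_x+|\alpha_x|+k)-a}\langle t+1\rangle\,\mathcal{E}_N[\Phi]\,\E_N^n[g]$ with $|\alpha_x|+k\le N$, which is still within the stated exponent.
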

\begin{Rq}\label{RqCorderivgtof}
Since $f_\circ$ and $g$ have symmetric roles, up to changing $\Phi$ in $-\Phi$, we also have 
\[\E_N^n [ g](t) \lesssim \log^{(a+1)(2N+n_x)} \hspace{-0.06em} \langle t+1 \rangle  \cdot \E_N^n[f_\circ](t). \]
\end{Rq}
\begin{proof}
Let $|\alpha| \leq N$. Iterating \eqref{eq:derivftogv}, we can write $\partial_x^{\alpha_x} \partial_v^{\alpha_v}g(t,x,v)-\partial_x^{\alpha_x}\partial_v^{\alpha_v} f_\circ(t,x+\lambda \Phi (t,v) \log \langle t \rangle,v)$ as a linear combination of the terms
\[ \log^{|\beta_x|}\langle t \rangle \,  \overline{\Phi} (t,v) \, \partial_x^{\alpha_x} \partial_{x}^{\beta_x} \partial_v^{\beta_v} g  (t,x,v), \qquad \overline{\Phi} (t,v) = \prod_{1 \leq i \leq |\beta_x|} \partial_{v^{j_i}}\partial_v^{\xi_i} \Phi^{k_i},  \qquad \sum_{1 \leq i\leq |\beta_x|} |\xi_i|+ |\beta| = |\alpha_v| , \quad |\beta_x| \geq 1 .\]
Then, if $|\beta_x| \geq 2$, we can bound all the factors in $\overline{\Phi}$ but one since $\|\langle v \rangle \nabla_v \partial_v^\gamma \Phi\|_{L^\infty_v} \lesssim \mathbf{\Lambda}\log^a \langle t+1 \rangle$ for any $|\gamma| \leq N-p_\infty$. This follows from \eqref{EQ:AssumpPhi}, the Sobolev embedding $W^{p_\infty,p}(\R^3_v) \hookrightarrow L^\infty (\R^3_v)$, and $N \geq 2p_\infty -1$. We obtain
\[ \big|\partial_x^{\alpha_x} \partial_v^{\alpha_v}g(t,x,v)-\partial_x^{\alpha_x}\partial_v^{\alpha_v} f_\circ \big( t,x+\lambda \Phi (t,v) \log \langle t \rangle,v \big) \big| \lesssim \log^b \langle t+1\rangle \sum_{|\xi|+|\beta| \leq |\alpha_v|} \frac{ \big|\langle v \rangle \nabla_v \partial_v^\xi \Phi (t,v) \big| }{\langle v \rangle^{|\beta_x|}}  \big| \partial_x^{\alpha_x} \partial_{x,v}^{\beta}  g \big|(t,x,v) , \]
where $|\beta| \geq 1$ in the sum, and $b \leq a(N-1)+N$. Next, we obtain, with $b' \coloneqq (a+1)(2N+n_x)-a$,
\begin{equation}\label{eq:auxiliaire}
\bigg\| \frac{\langle x \rangle^{n_x+|\alpha_x|} \langle v \rangle^{n_v+N}}{\langle v \rangle^{|\alpha_x|}} \Big( \partial_x^{\alpha_x} \partial_v^{\alpha_v}f_\circ (t,x,v)- \big[\partial_x^{\alpha_x}\partial_v^{\alpha_v} g \big] \big( t,x-\lambda \Phi (t,v) \log \langle t \rangle,v \big)  \Big) \bigg\|_{L^p_{x,v}} \lesssim \log^{b'} \! \langle t+1 \rangle \, \mathcal{E}_N [\Phi](t)  \E_N^n[g](t)
\end{equation}
from the following properties.
\begin{enumerate}[label = (\textbf{\alph*})]
\item \label{itemtraou111} We have $\big| \big\langle x - \lambda \Phi (t,v) \log \langle t \rangle \big\rangle -\langle x \rangle \big| \leq \| \Phi (t,v) \|_{L^\infty_v} \log \langle t \rangle \lesssim \log^{a+1} \langle t +1 \rangle$.
\item If $|\xi| \leq N-p_\infty$, we bound $\langle v \rangle | \nabla_v \partial_x^\xi \Phi| (t,v)$ by the Sobolev embedding $W^{p_\infty,p}(\R^3_v) \hookrightarrow L^\infty (\R^3_v)$.
\item If $|\alpha_x|+|\beta_x|+|\beta_v| \leq p_\infty -1$, we have, for almost all $v \in \R^3_v$,
\[ \int_{\R^3_x} \! \frac{\langle x \rangle^{n_x+|\alpha_x|} \langle v \rangle^{N+n_v}}{\langle v \rangle^{|\alpha_x|+|\beta_x|}}  \big| \partial_x^{\alpha_x} \partial_{x,v}^{\beta}  g (t,x,v)\big|^p \dr x \lesssim \bigg\| \frac{\langle x \rangle^{n_x+|\alpha_x|} \langle v \rangle^{N+n_v}}{\langle v \rangle^{|\alpha_x|+|\beta_x|}}  \partial_x^{\alpha_x} \partial_{x,v}^{\beta}  g (t,x,v) \bigg\|_{W^{p_\infty,p}_v L^p_x}^p \! \leq \big|\E_N^n[g](t)\big|^p , \]
by using the Sobolev embedding for Banach-valued functions $W^{p_\infty,p}(\R^3_v,L^p(\R^3_x)) \hookrightarrow L^\infty (\R^3_v,L^p(\R^3_x))$.
\end{enumerate}
We conclude the proof using \eqref{eq:auxiliaire}, \ref{itemtraou111} and by performing a change of variables.
\end{proof}

We now relate the velocity averages of $f$ to the ones of $g$.

\begin{Lem}\label{Lemrelintvgtof}
Let $|\alpha| \leq N$. Then, denoting $\mathcal{Z}(t,x,v)$ by $\mathcal{Z}$, we have, for all $(t,x) \in \R_+ \times \R^3$,
\begin{equation*}
\langle t+|x| \rangle^{|\alpha|} \big| \partial_{x}^{\alpha}  \rho [f] \big| (t,x)    \lesssim  \sum_{|\xi|+|\beta| \leq |\alpha|}   \int_{\R^3_v} \bigg( 1+\frac{\langle v \rangle^{1+|\xi|}}{\langle t \rangle^{\frac{3}{4}}}\big|\nabla_v \partial_v^{\xi} \Phi \big|(t,v)\bigg) \cdot \big\langle \mathcal{Z}\big\rangle^{|\beta_x|}\langle v \rangle^{|\beta_v|} \big| \partial_{x,v}^\beta g \big| \big(t, \mathcal{Z} ,v \big) \dr v .
 \end{equation*}
\end{Lem}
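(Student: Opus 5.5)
The plan is to express $\rho[f]$ through $g$ via the change of variables $f(t,x,v)=g\big(t,\mathcal{Z}(t,x,v),v\big)$. This gives
\[ \partial_x^\alpha \rho[f](t,x)=\int_{\R^3_v} \big[\partial_x^\alpha g\big]\big(t,\mathcal{Z},v\big)\dr v , \]
and the task is to convert each $x$-derivative into either a gain of $\langle t \rangle$ or a weight $\langle \mathcal{Z}\rangle$. If $t \leq 1$ and $|x|\leq 2$, there is nothing to prove. Otherwise we use
\[ \langle t+|x|\rangle \lesssim \langle t\rangle\langle v\rangle+\langle \mathcal{Z}\rangle+|\Phi(t,v)|\log\langle t\rangle , \]
which follows from $x=\mathcal{Z}+tv+\lambda\Phi\log\langle t\rangle$, together with $\|\Phi\|_{L^\infty_v}\lesssim \mathbf{\Lambda}\log^a\langle t+1\rangle$ from \eqref{EQ:AssumpPhi}. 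After expanding $\langle t+|x|\rangle^{|\alpha|}$, each of the $|\alpha|$ weight factors is paired with one $x$-derivative. If it is $\langle \mathcal{Z}\rangle$, we keep $\partial_x$ acting on $g$, which produces the $\langle\mathcal{Z}\rangle^{|\beta_x|}$ weight. If it is $\langle t\rangle\langle v\rangle$, we trade $t\,\partial_x$ for a $v$-derivative. The logarithmic terms are absorbed by the same mechanism as the $\Phi$-error terms below.

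The key identity comes from differentiating the composition in $v$:
\[ t\,\big[\partial_{x^i} g\big](t,\mathcal{Z},v) = -\partial_{v^i}\Big(g(t,\mathcal{Z},v)\Big) + \big[\partial_{v^i} g\big](t,\mathcal{Z},v) - \lambda\log\langle t\rangle\, \partial_{v^i}\Phi(t,v)\cdot \big[\nabla_x g\big](t,\mathcal{Z},v) . \]
Under the $v$-integral, the total derivative is removed by integration by parts and falls on the weight $\langle v\rangle^{k}$ and on the $\Phi$-coefficients. Since $|\partial_v\langle v\rangle^k|\lesssim\langle v\rangle^{k-1}$, a factor $\langle t\rangle\langle v\rangle$ paired with $\partial_x$ becomes, in the good term, $\langle v\rangle\,\partial_v g$. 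This matches the weight $\langle v\rangle^{|\beta_v|}$ in the statement. We iterate this $|\alpha|$ times, which is cleanest by induction on $|\alpha|$ using the commuted form of the formula. The outcome is a linear combination of integrals of
\[ \frac{\log^k\langle t\rangle}{\langle t\rangle^{m}}\,\prod_i \partial_v\partial_v^{\xi_i}\Phi \cdot \langle\mathcal{Z}\rangle^{|\beta_x|}\langle v\rangle^{|\beta_v|+\cdots}\,\partial_{x,v}^\beta g(t,\mathcal{Z},v), \]
with $\sum|\xi_i|+|\beta|\leq|\alpha|$. Each $\Phi$-factor arises with at least one uncompensated power $\langle t\rangle^{-1}$, because it always comes from replacing a $t\partial_x$ by $\log\langle t\rangle\,\nabla_v\Phi\cdot\nabla_x$.

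The main obstacle is controlling these $\Phi$-error terms so that only one factor $\langle v\rangle^{1+|\xi|}|\nabla_v\partial_v^\xi\Phi|$, multiplied by $\langle t\rangle^{-3/4}$, survives. The argument has three parts.
\begin{enumerate}[label=(\roman*)]
\item In a product of several derivatives of $\Phi$, at most one factor carries more than $N-p_\infty$ derivatives, because $N\geq 2p_\infty-1$. All other factors $\langle v\rangle^{1+|\xi|}|\nabla_v\partial_v^\xi\Phi|$ are bounded in $L^\infty_v$ by $\mathbf{\Lambda}\log^a\langle t+1\rangle$, using \eqref{EQ:AssumpPhi} and the Sobolev embedding $W^{p_\infty,p}(\R^3_v)\hookrightarrow L^\infty(\R^3_v)$, exactly as in the proof of Lemma \ref{Corderivgtof}.
\item The $\langle v\rangle$-weights must be balanced. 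Each $\Phi$-factor appears with a $\langle v\rangle$ gained from the pairing with $\langle t\rangle\langle v\rangle$, and a $v$-derivative hitting $\partial_v^\xi\Phi$ raises $|\xi|$ by one. The bookkeeping then produces precisely $\langle v\rangle^{1+|\xi|}$ for the surviving factor, with the leftover weights going to $\langle v\rangle^{|\beta_v|}$.
\item All logarithmic powers, including those from the bounded factors and from $|\Phi|\log\langle t\rangle$ in the weight splitting, are absorbed using $\log^{k}\langle t+1\rangle\lesssim_k\langle t\rangle^{1/4}$. This turns the spare $\langle t\rangle^{-1}$ into $\langle t\rangle^{-3/4}$.
\end{enumerate}
I expect this weight and derivative bookkeeping in the induction to be the delicate part. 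In particular, one must check that the integration by parts never produces an $x$-derivative without a matching $\langle\mathcal{Z}\rangle$ or $\langle t\rangle$ weight.
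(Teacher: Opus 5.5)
\textbf{Gap: the weight-to-derivative step.} Your core mechanism is the right one: trade $t\partial_{x^i}$ for $\partial_{v^i}$ through the identity you wrote, integrate by parts, bound all but one $\Phi$-factor in $L^\infty$, and absorb logarithms with $\langle t\rangle^{1/4}$. However, the step that converts the spatial weight into derivatives does not work as described.

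The bound $\langle t+|x|\rangle\lesssim\langle t\rangle\langle v\rangle+\langle\mathcal{Z}\rangle+|\Phi|\log\langle t\rangle$ holds pointwise in $v$. But $\langle t+|x|\rangle^{|\alpha|}$ multiplies the \emph{signed} integral $\int_{\R^3_v}[\partial_x^\alpha g](t,\mathcal{Z},v)\,\dr v$. To use a $v$-dependent bound you must first move the absolute value inside the $v$-integral, and then you can no longer integrate by parts. The total derivative $-\partial_{v^i}\big(g(t,\mathcal{Z},v)\big)$ in your identity only disappears under a signed integral; in absolute value it is comparable to $t|\nabla_x g|$, so nothing is gained.

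For the same reason, the weight $\langle v\rangle^k$ ``onto which the total derivative falls'' never appears in a signed integral; only exact polynomial factors such as $v^k$ can sit there. So ``expanding $\langle t+|x|\rangle^{|\alpha|}$ and pairing each factor with an $x$-derivative'' has to be replaced by an exact identity.

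\textbf{Repair within your approach.} Bound $\langle t+|x|\rangle^{|\alpha|}\lesssim 1+t^{|\alpha|}+\sum_k|x^k|^{|\alpha|}$, and insert the exact relation $x^k=\mathcal{Z}^k+tv^k+\lambda\Phi^k(t,v)\log\langle t\rangle$ under the integral. Absorb the $\mathcal{Z}$-powers into the integrand, $\mathcal{Z}^{\mu}h(t,\mathcal{Z},v)=[y^\mu h](t,\mathcal{Z},v)$, and commute them past the $x$-derivatives before trading. Each commutator removes one weight and one derivative, so the pairing stays balanced. If instead you keep the $\mathcal{Z}^k$ as coefficients, the integration by parts also hits them. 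Since $\partial_{v^i}\mathcal{Z}^k=-t\delta_{ik}-\lambda\log\langle t\rangle\,\partial_{v^i}\Phi^k$, this produces a factor $t$ that your bookkeeping does not track.

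\textbf{The paper's route.} The paper avoids all this with vector fields. Using $t\partial_{x^k}$, $\Omega_{ij}$, $S$ and $|x|^2\partial_{x^k}=x^kS+\sum_i x^i\Omega_{ik}$, it gets $(t+|x|)^{|\alpha|}|\partial_x^\alpha\rho[f]|\lesssim\sum|Z^\gamma\rho[f]|\lesssim\sum|\rho[\widehat Z^\kappa f]|$ by exact integrations by parts in $v$. The lifts $\widehat G_k,\widehat\Omega_{ij},\widehat S$ commute with the free flow, so $\widehat Z^\kappa f=[\partial_v^{\alpha_v}\widehat\Omega^{\alpha_\Omega}\widehat S^qf_\circ](t,x-tv,v)$. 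Then $\widehat\Omega,\widehat S$ acting on $g$ produce exactly the weights $\langle\mathcal{Z}\rangle^{|\beta_x|}\langle v\rangle^{|\beta_v|}$. The $\Phi$-terms come only from the shift $f_\circ\to g$; each carries at least one extra $x$-derivative, which is traded once via your identity.

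\textbf{A smaller inaccuracy.} A $\Phi$-factor does not come with a spare $\langle t\rangle^{-1}$ automatically. Replacing $t\partial_{x^i}$ by $-\lambda\log\langle t\rangle\,\partial_{v^i}\Phi\cdot\nabla_x$ uses up the $t$. The decay appears only when the newly created, unweighted $\nabla_x$ is traded a second time, which costs $t^{-1}$ and is therefore done only for $t\geq1$; for $t\leq1$ the coefficient is simply bounded. Your points (i)--(iii) on the $\Phi$-bookkeeping are otherwise consistent with the paper.
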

\begin{proof}
Note that the case $t+|x| \leq 1$ follows from $\partial_x^\alpha f(t,x,v)=\partial_x^\alpha g (t,\mathcal{Z},v)$, so that we assume $t+|x| \geq 1$. To obtain the spatial decay, we found convenient to use, in this proof, a weighted-derivatives approach. We define by $\mathbb{G}$ the set of the vector fields
\begin{equation}\label{eq:gainx}
  G_k \coloneqq t\partial_{x^k}, \qquad  \Omega_{ij} \coloneqq x^i \partial_{x^j}-x^j\partial_{x^i}, \qquad S \coloneqq \sum_{1 \leq \ell \leq 3}   x^\ell \partial_{x^\ell}, \qquad \qquad |x|^2 \partial_{x^k} = x^k S+\sum_{1 \leq i \leq 3} x^i \Omega_{ik} , 
  \end{equation}
where $1 \leq k \leq 3$ and $1 \leq i < j \leq 3$. We introduce further the set $\widehat{\mathbb{G}}$ of the $(x,v)$-weighted derivatives
\[   \widehat{G}_k \coloneqq t\partial_{x^k}+\partial_{v^k}, \qquad \widehat{\Omega}_{ij} \coloneqq  x^i \partial_{x^j}-x^j\partial_{x^i}+v^i\partial_{v^j}-v^j \partial_{v^i}, \qquad \widehat{S} \coloneqq \sum_{1 \leq \ell \leq 3}   x^\ell \partial_{x^\ell}+v^\ell \partial_{v^\ell}. \]
We consider an ordering on $\mathbb{G}$ and $\widehat{\mathbb{G}}$. Moreover, given a multi-index $\gamma \in \llbracket 1 , 7 \rrbracket^q$ of length $|\gamma|=q$, we define $Z^\gamma$ and $\widehat{Z}^\gamma$ as $Z^{\gamma_1} \dots Z^{\gamma_q}$ and $\widehat{Z}^\gamma = \widehat{Z}^{\gamma_1} \dots \widehat{Z}^{\gamma_q}$ respectively. Then, using first \eqref{eq:gainx}, and performing then integration by parts, we get
\begin{equation}\label{eq:decayrhox0}
(t+|x|)^{|\alpha|} \big| \partial_x^\alpha \rho [f] \big|(t,x)  \lesssim \sum_{|\gamma| \leq |\alpha|} \big| Z^\gamma \rho [f] \big|(t,x) \lesssim \sum_{|\kappa| \leq |\alpha|} \big|  \rho \big[ \widehat{Z}^\kappa f \big] \big|(t,x) . 
\end{equation}
It will be convenient to use that we can, without loss of generality, assume that $\widehat{Z}^\kappa$ is of the form
\[ \widehat{Z}^\kappa =  \widehat{G}^{\alpha_v} \widehat{\Omega}^{\alpha_\Omega} \widehat{S}^q, \qquad |\alpha_v|+|\alpha_\Omega|+q = |\kappa|,\]
where $G^{\alpha_v}=G_1^{\alpha_v^1}G_2^{\alpha_v^2}G_3^{\alpha_v^3}$, and $\Omega^{\alpha_\Omega}$ is a differential operator of order $|\alpha_\Omega|$ composed by the rotational vector fields $\Omega_{ij}$, $1 \leq i < j \leq 3$. We can assume that since the only non trivial commutation relations between the elements of $\widehat{\mathbb{G}}$ are
\begin{align*}
  [\widehat{G}_k, \widehat{\Omega}_{ij} ]=\delta_{i,k} \widehat{G}_j-\delta_{j,k} \widehat{G}_i, \quad \; \; [\widehat{\Omega}_{ij}, \widehat{\Omega}_{k \ell} ] =  \delta_{j,k} \widehat{\Omega}_{i \ell}+\delta_{j,\ell} \widehat{\Omega}_{ki}+\delta_{i,\ell} \widehat{\Omega}_{jk}+\delta_{i,k} \widehat{\Omega}_{\ell j}, \quad \; \;  [\widehat{G}_k , \widehat{S}]=\widehat{G}_k, \quad \; \; [\widehat{\Omega}_ {ij}, \widehat{S} ]= \widehat{\Omega}_{ij} .
\end{align*}
We fix such a differential operator $\widehat{Z}^\kappa=  \widehat{G}^{\alpha_v} \widehat{\Omega}^{\alpha_\Omega} \widehat{S}^q$, with $|\kappa| \leq |\alpha|$, and let us write $\widehat{Z}^\kappa f$ in terms of derivatives of $g$. We observe first 
\[  \widehat{G}_k f (t,x,v)=\partial_{v^k} f_\circ (t,x-tv,v), \qquad \widehat{\Omega}_{ij} f (t,x,v)= \big[\widehat{ \Omega}_{ij} f_\circ \big](t,x-tv,v)  , \qquad \widehat{S} f (t,x,v) = \big[\widehat{ S} f_\circ \big](t,x-tv,v), \]
so that
\begin{equation} 
\widehat{Z}^\kappa f(t,x,v) =  \big[ \partial_v^{\alpha_v} \widehat{\Omega}^{\alpha_\Omega} \widehat{S}^q  f_\circ \big] (t,x-tv,v)  . \label{EQ:forapres}
\end{equation}
Using that $f_\circ (t,x,v)=g(t,x-\lambda \Phi (t,v) \log \langle t \rangle,v)$, we can write $\widehat{Z}^\kappa f(t,x,v)- \partial_v^{\alpha_v} \widehat{\Omega}^{\alpha_\Omega} \widehat{S}^q g (t,\mathcal{Z},v)$ as a linear combination of the terms
\[\log^{|\beta_x|}\langle t \rangle \cdot \overline{\Phi} (t,v) \cdot \big[  \partial_x^{\beta_x} \partial_v^{\beta_v} \widehat{\Omega}^{\beta_\Omega} \widehat{S}^r g \big]\big( t, \mathcal{Z}(t,x,v) , v \big), \qquad \overline{\Phi} \coloneqq \prod_{1 \leq i \leq |\beta_x|}  \partial_v^{\gamma^i} \Omega^{\gamma_\Omega^i} S^{m^i} \Phi^{j_i} , \]
where we have $1 \leq j_i \leq 3$, $|\beta_x| \geq 1$, and
\[ |\beta_v|+|\beta_\Omega|+r+\sum_{1 \leq i \leq |\beta_x| } |\gamma^i|+|\gamma_\Omega^i|+m^i \leq |\alpha_v|+|\alpha_\Omega|+q , \qquad \qquad |\gamma^i|+|\gamma_\Omega^i|+m^i \geq 1 . \]
The key idea is that a new factor of $\overline{\Phi}$ is generated whenever $\widehat{G}_k$, $\widehat{\Omega}_{ij}$, or $\widehat{S}$ differentiate the $\Phi (t,v)$-dependent part of $\mathcal{Z}$. This creates a new $x$-derivative applied to $g$. We now have to deal with the logarithmical growth of such terms. Let us show
\begin{align}
\hspace{-3.2mm} \bigg| \rho \big[ \widehat{Z}^\kappa f \big] (t,x) - \! \int_{\R^3_v} \! \big[ \partial_v^{\alpha_v} \widehat{\Omega}^{\alpha_\Omega} \widehat{S}^q g \big] \big(t,\mathcal{Z},v \big)  \dr v \bigg| \! \lesssim \! \sum_{|\xi|+|\kappa'| \leq |\kappa|} \int_{\R^3_v}\frac{\langle v \rangle^{1+|\xi|}}{\langle t \rangle^{\frac{3}{4}}}\big|\nabla_v \partial_v^{\xi} \Phi  \big| \cdot \langle \mathcal{Z}\rangle^{|\kappa'_x|}\langle v \rangle^{|\kappa'_v|} \big| \partial_{x,v}^{\kappa'} g \big| \big(t, \mathcal{Z} ,v \big) \dr v ,  \label{eq:decayrhox}
 \end{align}
which, in view of the inequality \eqref{eq:23456789} stated below, will imply the result. For this, observe that, for a function $h$ and since $\mathcal{Z}=x-tv-\lambda \Phi (t,v) \log \langle t \rangle$,
\begin{align*}
  \partial_{x^i} h \big( t,\mathcal{Z},v \big)  = - \frac{1}{t} \Big( \partial_{v^i} \big[ h (t,\mathcal{Z},v ) \big]+\lambda \log \langle t \rangle \partial_{v^i} \Phi \cdot \nabla_x h \big(t, \mathcal{Z},v \big)-\partial_{v^i} h \big(t, \mathcal{Z},v \big) \Big) .
  \end{align*}
 We then exploit that $|\beta_x| \geq 1$, and we perform integration by parts. We then deal with the $\Phi$ coefficients by noting that, for $1 \leq |\gamma|+|\gamma_\Omega|+m \leq N+1$, we have
\[ \big|  \partial_v^{\gamma} \Omega^{\gamma_\Omega} S^{m} \Phi^{j} \big| (t,v) \lesssim \sum_{|\xi| \leq |\gamma|+|\gamma_\Omega|+m-1} \langle v \rangle^{1+|\xi|} \big| \nabla_v \partial_x^\xi \Phi \big| (t,v). \]
Moreover, by \eqref{EQ:AssumpPhi}, we have $\langle v \rangle^{1+|\xi|}|\nabla_v \partial_v^{\xi} \Phi| \lesssim \mathbf{\Lambda}\log^a \langle t+1 \rangle$ for all $|\xi| \leq N-p_\infty$. In particular, as $N \geq 2 p_\infty-1$, we can estimate pointwise all but one $\Phi$ factors. We finally obtain \eqref{eq:decayrhox} by using $\log^{N+1+Na} \langle t+1 \rangle \lesssim \langle t \rangle^{1/4}$, and by noticing
\begin{equation}\label{eq:23456789} 
\Big|\partial_x^{\beta_x} \partial_v^{\beta_v} \widehat{\Omega}^{\beta_\Omega} \widehat{S}^r g \Big| (t,x,v) \lesssim \sum_{|\kappa| \leq |\kappa_x|+|\beta_v|+|\beta_\Omega|+r} \langle x \rangle^{|\kappa_x|} \langle v \rangle^{|\kappa_v|} \big| \partial_x^{\kappa_x} \partial_v^{\kappa_v} g \big| (t,x,v) . 
\end{equation}
\end{proof}

For proving decay estimates, we will make use, throughout this paper, of the following change of variables.

\begin{Lem}\label{Lemcdv}
Let $x \in \R^3_x$. Then, there exists $C[p]>0$ such that, if $t > C \mathbf{\Lambda} \log^{a+1} \langle \mathbf{\Lambda} \rangle$, the map
\[v \mapsto x+tv+ \lambda \Phi(t,v) \log \langle t \rangle \] 
is a $C^1$ diffeomorphism of $\R^3$ onto $\R^3$, whose jacobian determinant is bounded between $t^3/2$ and $2t^3$.
\end{Lem}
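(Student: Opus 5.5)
The plan is to write the map as $\Psi_t(v) \coloneqq x+tv+\lambda \Phi(t,v)\log\langle t\rangle = t\big(v + \varepsilon_t(v)\big) + x$, where $\varepsilon_t(v) \coloneqq \lambda t^{-1}\log\langle t\rangle\,\Phi(t,v)$. We then show that $\varepsilon_t$ is a small $C^1$ perturbation of zero, in the sense that $\|\nabla_v \varepsilon_t\|_{L^\infty_v}$ is small once $t$ is large. From this, the diffeomorphism property and the Jacobian bounds follow by standard arguments.

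\textbf{Step 1: uniform bound on $\nabla_v \Phi$.} By \eqref{EQ:AssumpPhi}, $\|\langle v\rangle^{1+|\gamma|}\nabla_v\partial_v^\gamma\Phi(t,\cdot)\|_{L^p_v} \le \mathbf{\Lambda}\log^a\langle t+1\rangle$ for all $|\gamma|\le N$. Since $N\ge 2p_\infty-1\ge p_\infty$, the Sobolev embedding $W^{p_\infty,p}(\R^3_v)\hookrightarrow L^\infty(\R^3_v)$ gives
\[ \|\nabla_v\Phi(t,\cdot)\|_{L^\infty_v} \le C_0[p]\,\mathbf{\Lambda}\log^a\langle t+1\rangle . \]
Here the weights $\langle v\rangle$ only help, since their derivatives are bounded by the weights themselves. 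Hence
\[ \|\nabla_v\varepsilon_t\|_{L^\infty_v}\le C_0\mathbf{\Lambda}\, t^{-1}\log^{a+1}\langle t+1\rangle . \]

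\textbf{Step 2: choice of $C$.} We must check that $t> C\mathbf{\Lambda}\log^{a+1}\langle\mathbf{\Lambda}\rangle$ forces $C_0\mathbf{\Lambda}\,t^{-1}\log^{a+1}\langle t+1\rangle\le \delta$, with $\delta$ a small absolute constant (for instance $\delta=1/10$). This is an elementary calculus fact, because $t/\log^{a+1}\langle t+1\rangle$ is eventually increasing. If $t\ge C\mathbf{\Lambda}\log^{a+1}\langle\mathbf{\Lambda}\rangle$, then $\log\langle t+1\rangle\lesssim \log\langle\mathbf{\Lambda}\rangle+\log C+\log\log\langle\mathbf{\Lambda}\rangle$. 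So $\log^{a+1}\langle t+1\rangle/t\lesssim C^{-1}(1+\log C)^{a+1}/\mathbf{\Lambda}$, which is small for $C$ large.

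One point needs care. For small $\mathbf{\Lambda}$ the threshold is small, and one may have to handle $t\lesssim 1$. There $\log\langle t\rangle\le t^2$ directly, so $t^{-1}\log\langle t\rangle\le t\lesssim 1$, while $\mathbf{\Lambda}$ tiny compensates. Note that $\varepsilon_t$ contains $\log\langle t\rangle$, not $\log\langle t+1\rangle$, which helps near $t=0$. This bookkeeping, namely making $C$ depend only on $p$ (and implicitly on $a$ and $N$), is the main, if minor, obstacle.

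\textbf{Step 3: conclusion.} With $\|\nabla_v\varepsilon_t\|_{L^\infty}\le\delta<1$, the map $v\mapsto v+\varepsilon_t(v)$ is injective, since
\[ |v-w+\varepsilon_t(v)-\varepsilon_t(w)|\ge(1-\delta)|v-w| . \]
It is also surjective: for each target $y$, the map $v\mapsto y-\varepsilon_t(v)$ is a contraction, so Banach's fixed point theorem gives a solution. The map is $C^1$ with invertible differential $I+\nabla_v\varepsilon_t$, so by the inverse function theorem it is a $C^1$ diffeomorphism, and hence so is $\Psi_t$. Finally, $\det\nabla_v\Psi_t=t^3\det(I+\nabla_v\varepsilon_t)$, and $|\det(I+M)-1|\le (1+\|M\|)^3-1\le 3\delta+3\delta^2+\delta^3<1/2$ for $\|M\|\le\delta=1/10$. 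This gives the bounds $t^3/2\le \det\nabla_v\Psi_t\le 2t^3$.
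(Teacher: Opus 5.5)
Your proposal is correct and follows the same route as the paper. The paper's proof simply observes that \eqref{EQ:AssumpPhi} makes $v \mapsto v+\lambda \Phi(t,v)\log\langle t\rangle/t$ a small $C^1$ perturbation of the identity for $t$ large, with Jacobian between $1/2$ and $2$; you supply the details it omits (the Sobolev bound on $\nabla_v\Phi$, injectivity and surjectivity via the contraction argument, the determinant estimate, and the small versus non-small $\mathbf{\Lambda}$ case split in choosing $C$), and you rightly note that $C$ must also depend on $a$, though your bound on $\log\langle t+1\rangle$ in Step 2 holds only at the threshold time and is then extended to all larger $t$ by the monotonicity you invoke.
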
 
\begin{proof}
This follows from \eqref{EQ:AssumpPhi}, which implies that for $t$ large enough, $v \mapsto v+ \lambda \Phi(t,v) \frac{\log \langle t \rangle}{t}$ is a $C^1$ diffeomorphism of $\R^3$ onto $\R^3$, whose jacobian determinant is bounded between $1/2$ and and $2$.
\end{proof}

We prove a last result, which, together with Lemma \ref{Lemrelintvgtof}, will directly implies decay for $\rho [f]$.

\begin{Lem}\label{Lem236458}
Let $|\xi|+|\beta| \leq N$. Then, for all $t \geq 0$, we have
\begin{align*}
\bigg\| \langle t+|x| \rangle^{3 \frac{p-1}{p}} \int_{\R^3_v} \big\langle \mathcal{Z}\big\rangle^{|\beta_x|}\langle v \rangle^{|\beta_v|} \big| \partial_{x,v}^\beta g \big| \big(t, \mathcal{Z} ,v \big) \dr v \bigg\|_{L^p(\R^3_x)} & \lesssim  \E_N^{(6,6)}[g](t), \\
\bigg\| \langle t+|x| \rangle^{3 \frac{p-1}{p}} \int_{\R^3_v} \frac{\langle v \rangle^{1+|\xi|}}{\langle t \rangle^{\frac{3}{4}}}\big|\nabla_v \partial_v^{\xi} \Phi (t,v) \big| \cdot \big\langle \mathcal{Z}\big\rangle^{|\beta_x|}\langle v \rangle^{|\beta_v|} \big| \partial_{x,v}^\beta g \big| \big(t, \mathcal{Z} ,v \big) \dr v \bigg\|_{L^p(\R^3_x)} & \lesssim  \frac{\mathcal{E}_N [\Phi ](t)}{\langle t \rangle^{\frac{3}{4}}} \cdot \E_N^{(6,6)}[g](t).
\end{align*}
\end{Lem}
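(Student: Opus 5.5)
The plan is to estimate the velocity integral pointwise by Hölder's inequality in $v$, and then to recover the weight $\langle t+|x|\rangle^{3(p-1)/p}$ from two sources: for small $t$, the $\langle \mathcal{Z}\rangle$ and $\langle v\rangle$ weights contained in $\E_N^{(6,6)}[g]$; for large $t$, the Jacobian of the change of variables of Lemma \ref{Lemcdv}. Write $G_\beta(t,y,v)\coloneqq \langle y\rangle^{|\beta_x|}\langle v\rangle^{|\beta_v|}|\partial^\beta_{x,v}g|(t,y,v)$, and let $w(t,v)$ stand for either $1$ or $\langle t\rangle^{-3/4}\langle v\rangle^{1+|\xi|}|\nabla_v\partial_v^\xi\Phi|(t,v)$.

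The first step is the pointwise weight inequality. Since $x=\mathcal{Z}+tv+\lambda\Phi\log\langle t\rangle$ and $\|\Phi\|_{L^\infty}\lesssim \mathbf{\Lambda}\log^a\langle t+1\rangle$, we have
\[
\langle t+|x|\rangle\lesssim \langle t\rangle\langle\mathcal{Z}\rangle\langle v\rangle+\log^{a+1}\langle t+1\rangle.
\]
For $t$ bounded, for instance $t\le T:=C\mathbf{\Lambda}\log^{a+1}\langle\mathbf{\Lambda}\rangle$ with $C$ as in Lemma \ref{Lemcdv}, this reduces to $\langle x\rangle\lesssim\langle\mathcal{Z}\rangle\langle v\rangle$. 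Hölder in $v$ then gives
\[
\int w\,G_\beta\,\dr v\le \Big\|\langle v\rangle^{-4}w\Big\|_{L^{p'}_v}\Big\|\langle v\rangle^{4}G_\beta(t,\mathcal{Z},v)\Big\|_{L^p_v}.
\]
The first factor is bounded by $1$, or, when $w$ carries the $\Phi$ factor, by $\langle t\rangle^{-3/4}\mathcal{E}_N[\Phi]$. For $|\xi|\le N-p_\infty$ this follows from the Sobolev embedding, and otherwise one places $\Phi$ in $L^p$ instead and uses $L^\infty$ control of $G_\beta$ via Sobolev in $x$; this is possible because $|\beta|\le p_\infty-1$ in that case, since $N\ge 2p_\infty-1$. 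Taking the $L^p_x$ norm and changing variables $x\mapsto \mathcal{Z}$ at fixed $v$ (a translation) leaves $\|\langle y\rangle^{3}\langle v\rangle^{7}G_\beta\|_{L^p_{y,v}}$. This is bounded by $\E_N^{(6,6)}[g]$, since $|\beta_x|+3\le 6+|\beta_x|$ and $|\beta_v|+7\le 6+N-|\beta_x|$ when $N\ge1$ and $|\beta|\le N$; the spare weights can be adjusted because the range is generous.

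The second step, the main obstacle, is the case $t\ge T$, where the full power $t^{3(p-1)/p}$ must be gained. Split $\R^3_x$ into $|x|\ge 2t$ and $|x|\le 2t$. In the region $|x|\ge 2t$, one has $t|v|\gtrsim |x|$ unless $\langle\mathcal{Z}\rangle\gtrsim|x|$, so the weight $\langle x\rangle^{3(p-1)/p}$ is absorbed by $\langle\mathcal{Z}\rangle^{3}\langle v\rangle^{3}$ exactly as in the first step. In the region $|x|\le 2t$, we need
\[
\Big\|\int wG_\beta(t,\mathcal{Z},v)\,\dr v\Big\|_{L^p_x}\lesssim t^{-3(p-1)/p}.
\]
Here Hölder is applied with the weight $\langle \mathcal{Z}\rangle^{-4}$, namely $\int wG_\beta\,\dr v\le \|\langle\mathcal{Z}\rangle^{-4}w\|_{L^{p'}_v}\|\langle \mathcal{Z}\rangle^4G_\beta\|_{L^p_v}$. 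Using Lemma \ref{Lemcdv} to change variables $v\mapsto\mathcal{Z}$, with Jacobian $\simeq t^{-3}$, gives $\|\langle\mathcal{Z}\rangle^{-4}\|_{L^{p'}_v}\lesssim t^{-3/p'}=t^{-3(p-1)/p}$, and in the $\Phi$-weighted case the factor $\langle t\rangle^{-3/4}\mathcal{E}_N[\Phi]$ is extracted first in $L^\infty_v$. The remaining quantity $\|\langle\mathcal{Z}\rangle^4G_\beta(t,\mathcal{Z},v)\|_{L^p_{x,v}}$ is again a translation in $x$, and is therefore controlled by $\E_N^{(6,6)}[g]$.

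The delicate point is the $L^\infty_v$ bound on $\langle v\rangle^{1+|\xi|}\nabla_v\partial^\xi_v\Phi$ when $|\xi|>N-p_\infty$. In that case I would instead put $\Phi$ in $L^p_v$, take $G_\beta$ in $L^\infty_v L^p_x$ through the Banach-valued Sobolev embedding as in Lemma \ref{Corderivgtof}, and redo the Hölder splitting with the exponents exchanged. The weight budget of $\E^{(6,6)}_N$ should accommodate the extra $p_\infty$ derivatives in $v$ needed for this embedding.
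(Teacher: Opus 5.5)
Your small-time step and your near region $|x|\le 2t$ are fine; the latter is exactly the paper's mechanism. The step that fails is the far region $|x|\ge 2t$ for large $t$. In the subcase $t|v|\gtrsim|x|$, the identity $x=\mathcal{Z}+tv+\lambda\Phi\log\langle t\rangle$ only gives $\langle x\rangle\lesssim t\langle v\rangle$, not $\langle x\rangle\lesssim\langle\mathcal{Z}\rangle\langle v\rangle$.

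For instance, at $|x|\approx 2t$, $|v|\approx 2$ and $\langle\mathcal{Z}\rangle\lesssim 1$, the weight $\langle t+|x|\rangle^{3\frac{p-1}{p}}$ is of size $t^{3\frac{p-1}{p}}$, while $\langle\mathcal{Z}\rangle^{3}\langle v\rangle^{3}\lesssim 1$. Running Hölder with $\langle v\rangle^{-4}$ ``exactly as in the first step'' produces the factor $\|\langle v\rangle^{-4}\|_{L^{p'}_v}=O(1)$, which has no decay in $t$. You are therefore left with an uncompensated $t^{3\frac{p-1}{p}}$. Near such $x$, the function $v\mapsto g(t,\mathcal{Z},v)$ is concentrated on a ball of radius about $t^{-1}$, and only the Jacobian gain of Lemma \ref{Lemcdv} captures this. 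So the far region needs the same argument as the near one.

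Once you use the Jacobian bound there too, the region splitting becomes superfluous, and you recover the paper's proof. First, $\langle t+|x|\rangle\lesssim\langle t\rangle\langle\mathcal{Z}\rangle\langle v\rangle$ holds for all $(t,x,v)$: trivially when $|x|\lesssim\log^{a+1}\langle t+1\rangle$, and from the identity above otherwise. Then a single Hölder inequality with weight $\langle\mathcal{Z}\rangle^{-3}\langle v\rangle^{-3}$ gives
\[ \Big(\int_{\R^3_v}\frac{\dr v}{\langle\mathcal{Z}\rangle^{3p'}\langle v\rangle^{3p'}}\Big)^{\frac{1}{p'}}\lesssim\langle t\rangle^{-3\frac{p-1}{p}}. \]
For bounded $t$ this uses the $\langle v\rangle$ factor; for large $t$ it uses the change of variables $v\mapsto\mathcal{Z}$ of Lemma \ref{Lemcdv}. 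After the translation $x\mapsto\mathcal{Z}$, the leftover weight $\langle x\rangle^{6-\frac{3}{p}}\langle v\rangle^{6-\frac{3}{p}}$ fits into $\E_N^{(6,6)}[g]$.

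Your own weight count $|\beta_v|+7\le 6+N-|\beta_x|$ fails when $|\beta|=N$. This is why the exponent $3$ (admissible since $3p'>3$) is needed rather than $4$.

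Finally, in the case $|\xi|>N-p_\infty$ you need neither an exchange of Hölder exponents nor a Sobolev embedding in $x$. Keep the $\Phi$ factor $w$ inside the $L^p_v$ factor and translate. Then use $\|w(v)F(x,v)\|_{L^p_{x,v}}\le\|w\|_{L^p_v}\sup_v\|F(\cdot,v)\|_{L^p_x}$, with $F=\langle x\rangle^{6-\frac{3}{p}}\langle v\rangle^{6-\frac{3}{p}}G_\beta$. The supremum is controlled by $W^{p_\infty,p}_vL^p_x\hookrightarrow L^\infty_vL^p_x$, which the derivative budget allows because $|\beta|\le p_\infty-1$ there.
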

\begin{proof}
Note first that if $|x| \geq 2  \mathbf{\Lambda}  \log^{a+1} \langle t+1 \rangle$, we have $|x| \leq 2 \langle t \rangle \langle \mathcal{Z} \rangle \langle v \rangle$ since
\[ | x | \leq \big|\mathcal{Z}(t,x,v) \big|+t|v|+\|\Phi \|_{L^\infty_{t,v}} \log \langle t \rangle \leq \big|\mathcal{Z}(t,x,v) \big|+t|v|+|x|/2 . \]
We focus now on the velocity average of $h(t,\mathcal{Z}(t,x,v),v)$, for a sufficiently regular distribution function $h$. We have, using the previous estimate, together with the Hölder inequality,
\begin{equation*}
\bigg\| \langle t+|x| \rangle^{3 \frac{p-1}{p}} \! \int_{\R^3_v} | h | \big(t, \mathcal{Z}  ,v\big)  \dr v  \bigg\|_{L^p_x }  \leq \langle t \rangle^{3 \frac{p-1}{p}} \bigg\| \int_{\R^3_v}  \frac{ \dr v }{\langle \mathcal{Z} (t,x,v) \rangle^{\frac{3p}{p-1}}\langle v \rangle^{\frac{3p}{p-1}} } \bigg\|_{L^\infty_x }^{\frac{p-1}{p}} \Big\|  \langle \mathcal{Z} \rangle^{6- \frac{3}{p}} \langle v \rangle^{6- \frac{3}{p}}  h \big(t, \mathcal{Z} ,v \big) \Big\|_{L^p_{x,v}} . 
\end{equation*}
According to Lemma \ref{Lemcdv}, if $t$ is large enough, we can perform the change of variables $y(v)= \mathcal{Z}(t,x,v)$ and obtain
\[ \int_{\R^3_v}  \frac{ \dr v }{\langle \mathcal{Z} (t,x,v) \rangle^{3 \frac{p-1}{p}} } \lesssim \frac{2}{t^3} \int_{\R^3_y} \frac{\dr y}{\langle y \rangle^{3 \frac{p-1}{p}}} \lesssim \frac{1}{t^3}, \qquad \qquad \qquad \int_{\R^3_v}  \frac{ \dr v }{\langle v \rangle^{3 \frac{p-1}{p}} } < +\infty  . \]
We then deduce that
\begin{equation}\label{eq:111111} 
\bigg\| \langle t+|x| \rangle^{3 \frac{p-1}{p}} \! \int_{\R^3_v} | h | \big(t, \mathcal{Z} (t,x,v),v \big)  \dr v  \bigg\|_{L^p (\R^3_x \times \R^3_v) } \leq \Big\| \langle x \rangle^{6- \frac{3}{p}} \langle v \rangle^{6- \frac{3}{p}} h ( t,x,v)  \Big\|_{L^p (\R^3_x \times \R^3_v) } ,
\end{equation}
where we performed the change of variables $x'(x)=\mathcal{Z}(t,x,v)$. Then, we apply \eqref{eq:111111} to
\[ h(t,x,v) = \langle x \rangle^{ |\beta_x|} \langle v \rangle^{|\beta_v|} \partial_{x,v}^\beta g, \qquad  h(t,x,v) = \langle v \rangle^{1+|\xi|} \big|\nabla_v \partial^\xi_v \Phi (t,v)\big| \langle x \rangle^{ |\beta_x|} \langle v \rangle^{|\beta_v|} \partial_{x,v}^\beta g ,\]
 for $|\xi|+|\beta|\leq |\alpha|$, and we recall from Definition \ref{Defnorm} the expression of $\E_N^{(6,6)}[\cdot ]$. The latter quantities require an additional step to be bounded. If $|\xi| \leq N-p_\infty$, we use a Sobolev embedding to get $\langle v \rangle^{1+|\xi|}|\nabla_v \partial_v^\xi \Phi| \lesssim \mathcal{E}_N[\Phi]$. Otherwise $|\beta| \leq p_\infty-1$ and by the Sobolev embedding $W^{p_\infty,p}(\R^3_v,L^p(\R^3_x)) \hookrightarrow L^\infty ( \R^3_v,L^p(\R^3_x))$, we have
\begin{align*}
 \big\| \langle x \rangle^{6- \frac{3}{p}} \langle v \rangle^{6- \frac{3}{p}}  h (t,x,v) \big\|_{L^p_{x,v} } & \lesssim \big\|\langle x \rangle^{6- \frac{1}{p} + |\beta_x|} \langle v \rangle^{6- \frac{1}{p}+|\beta_v|} \partial_{x,v}^\beta g \big\|_{ W^{p_\infty,p}_v L^p_x} \cdot \big\| \langle v \rangle^{1+|\xi|} \nabla_v \partial_v^\xi \Phi \big\|_{L^p_x} . 
 \end{align*}
\end{proof}

Combining Lemmata \ref{Lemrelintvgtof} and \ref{Lem236458} with \eqref{EQ:AssumpPhi} yields the following result.

\begin{Pro}\label{Prodecayintvbis}
For any $|\alpha| \leq N$ and all $t \geq 0$, we have
\[  \Big\| \langle t+|x| \rangle^{3\frac{p-1}{p}+|\alpha|} \partial_x^\alpha \rho [f] (t,x) \Big\|_{L^p(\R^3_x)} \lesssim  \E_N^{(6,6)}[g](t) . \]
\end{Pro}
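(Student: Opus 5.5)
The plan is to multiply the pointwise bound of Lemma \ref{Lemrelintvgtof} by $\langle t+|x| \rangle^{3\frac{p-1}{p}}$, take the $L^p(\R^3_x)$-norm, and then apply Lemma \ref{Lem236458} to each resulting term.

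Fix $|\alpha| \leq N$. Lemma \ref{Lemrelintvgtof} gives, for all $(t,x)$,
\[ \langle t+|x| \rangle^{|\alpha|} \big| \partial_x^\alpha \rho[f] \big|(t,x) \lesssim \sum_{|\xi|+|\beta| \leq |\alpha|} \big( I_\beta(t,x) + J_{\xi,\beta}(t,x) \big), \]
where
\[ I_\beta \coloneqq \int_{\R^3_v} \langle \mathcal{Z} \rangle^{|\beta_x|} \langle v \rangle^{|\beta_v|} \big| \partial_{x,v}^\beta g \big|(t,\mathcal{Z},v) \dr v \]
and $J_{\xi,\beta}$ is the same integral with the extra factor $\langle t \rangle^{-3/4} \langle v \rangle^{1+|\xi|} |\nabla_v \partial_v^\xi \Phi|(t,v)$ inside. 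Multiplying by $\langle t+|x| \rangle^{3\frac{p-1}{p}}$ and using the triangle inequality in $L^p_x$ over this finite sum of multi-indices, it suffices to bound the $L^p_x$-norm of $\langle t+|x| \rangle^{3\frac{p-1}{p}} I_\beta$ and of $\langle t+|x| \rangle^{3\frac{p-1}{p}} J_{\xi,\beta}$.

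Both are exactly the quantities estimated in Lemma \ref{Lem236458}, since $|\xi|+|\beta| \leq |\alpha| \leq N$.
\begin{itemize}
\item The first estimate of that lemma bounds the $I_\beta$ term by $\E_N^{(6,6)}[g](t)$.
\item The second bounds the $J_{\xi,\beta}$ term by $\langle t \rangle^{-3/4} \mathcal{E}_N[\Phi](t)\, \E_N^{(6,6)}[g](t)$.
\end{itemize}

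It remains to absorb the factor $\langle t \rangle^{-3/4} \mathcal{E}_N[\Phi](t)$ into a constant. By \eqref{EQ:AssumpPhi}, this factor is at most $\mathbf{\Lambda} \log^a \langle t+1 \rangle \langle t \rangle^{-3/4}$, which is bounded uniformly in $t \geq 0$ by a constant depending only on $\mathbf{\Lambda}$ and $a$. This dependence is allowed by the convention on $\lesssim$. Summing over the finitely many multi-indices gives the claimed estimate.

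The only point needing care is the range of validity. Lemma \ref{Lem236458} relies on the change of variables of Lemma \ref{Lemcdv}, which is only available for $t > C\mathbf{\Lambda}\log^{a+1}\langle \mathbf{\Lambda} \rangle$. If its statement is read as holding only in that range, the remaining compact range $t \leq T$ must be handled directly. There I would use $\langle t+|x| \rangle \lesssim_T \langle x \rangle \lesssim \langle \mathcal{Z} \rangle \langle v \rangle$, with losses depending only on $T$, $\mathbf{\Lambda}$ and $a$. I would then use Hölder in $v$ against $\langle v \rangle^{-3p/(p-1)}$ and change variables $x \mapsto \mathcal{Z}(t,x,v)$, which for fixed $v$ is a translation. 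This bounds the integrals by weighted $L^p_{x,v}$-norms of $\langle x \rangle^{|\beta_x|} \langle v \rangle^{|\beta_v|} \partial_{x,v}^\beta g$, which are controlled by $\E_N^{(6,6)}[g]$. The $\Phi$ factor is handled by the same Sobolev splitting used in the proof of Lemma \ref{Lem236458}.
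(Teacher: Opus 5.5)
Your proposal is correct and is exactly the paper's argument: the proposition follows by inserting the pointwise bound of Lemma \ref{Lemrelintvgtof} into the two estimates of Lemma \ref{Lem236458}, and then absorbing $\langle t \rangle^{-3/4}\mathcal{E}_N[\Phi](t) \leq \mathbf{\Lambda}\log^a\langle t+1\rangle\langle t\rangle^{-3/4} \lesssim 1$ via \eqref{EQ:AssumpPhi}. Your separate treatment of bounded times is a harmless safeguard rather than a needed step, since Lemma \ref{Lem236458} is already stated for all $t \geq 0$; for bounded $t$, the factor $\langle t \rangle^{3\frac{p-1}{p}}$ is bounded and the $\langle v \rangle$-integral in the H\"older step suffices, exactly as you describe.
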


\subsection{Estimates for the force field}

We start by studying the asymptotic force field, defined as the solution to $\Delta \phi_\infty [f_\infty] = \int_x f_\infty (x, \cdot ) \dr x$, for a sufficiently regular function $f_\infty \colon \R^3_x \times \R^3_v \to \R$.

\begin{Pro}\label{Prophiinfty}
We have, for any $|\alpha| \leq N$,
\[ \big\| \langle v \rangle^{1+|\alpha|} \nabla^2_v \partial_v^{\alpha} \phi_\infty [f_\infty] \big\|_{L^p (\R^3_v)}  \lesssim \sup_{|\kappa| \leq |\alpha|} \big\| \langle x \rangle^3 \langle v \rangle^{1+|\kappa|} \partial_v^\kappa f_\infty \big\|_{L^p (\R^3_x \times \R^3_v)} . \]
For any $|\gamma| \leq N-p_\infty$, we have 
\[\langle v \rangle^{2+|\gamma|}\big|\nabla_v \partial_v^\gamma \phi_\infty [ f_\infty]\big|(v) \lesssim  \sup_{|\kappa| \leq |\gamma|+p_\infty} \big\| \langle x \rangle^3 \langle v \rangle^{3+|\kappa|} \partial_v^\kappa f_\infty \big\|_{L^p (\R^3_x \times \R^3_v)}.\]
\end{Pro}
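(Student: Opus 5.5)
The plan is to reduce everything to Corollary \ref{Corellip}, applied in the variable $v$ with $\tau=0$, to the source
\[ \rho_\infty(v) \coloneqq \int_{\R^3_x} f_\infty(x,v) \, \dr x, \qquad \qquad \Delta_v \phi_\infty[f_\infty]=\rho_\infty . \]
Since the $x$-integration commutes with $\partial_v$, we have $\partial_v^\kappa \rho_\infty = \int_x \partial_v^\kappa f_\infty \, \dr x$. The only extra ingredient is a Hölder inequality in $x$ that trades spatial integration for the weight $\langle x\rangle^3$.

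\textbf{Step 1 (Hölder in $x$).} For any weight $w(v)$ and any $\kappa$, Hölder in $x$ gives
\[ w(v)\,|\partial_v^\kappa \rho_\infty|(v) \le \big\|\langle x\rangle^{-3}\big\|_{L^{p'}_x}\, \big\| \langle x\rangle^3 w(v)\,\partial_v^\kappa f_\infty(\cdot,v)\big\|_{L^p_x}. \]
Here $\langle x\rangle^{-3}\in L^{p'}(\R^3)$ because $3p'>3$ for every finite $p$, where $p'$ is the conjugate exponent. Taking the $L^p_v$ norm then yields
\[ \big\| w\, \partial_v^\kappa \rho_\infty\big\|_{L^p_v} \lesssim \big\|\langle x\rangle^3 w(v)\,\partial_v^\kappa f_\infty\big\|_{L^p_{x,v}}. \]

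\textbf{Step 2 (first estimate).} Apply the first inequality of Corollary \ref{Corellip} with $\tau=0$, $\gamma=\alpha$, in the variable $v$. This bounds $\|\langle v\rangle^{1+|\alpha|}\nabla_v^2\partial_v^\alpha\phi_\infty\|_{L^p_v}$ by $\sum_{|\kappa|\le|\alpha|}\|\langle v\rangle^{1+|\kappa|}\partial_v^\kappa\rho_\infty\|_{L^p_v}$. Step 1 with $w=\langle v\rangle^{1+|\kappa|}$ then gives the claim.

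\textbf{Step 3 (pointwise estimate).} Apply the second inequality of Corollary \ref{Corellip} with $\tau=0$, $\gamma$ with $|\gamma|\le N-p_\infty$. This gives
\[ \langle v\rangle^{2+|\gamma|}|\nabla_v\partial_v^\gamma\phi_\infty|\lesssim \sum_{|\kappa|\le|\gamma|+p_\infty}\big\|\langle v\rangle^{3\frac{p-1}{p}+|\kappa|}\partial_v^\kappa\rho_\infty\big\|_{L^p_v}. \]
Since $3\frac{p-1}{p}\le 3$, the weight is at most $\langle v\rangle^{3+|\kappa|}$, and Step 1 concludes.

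\textbf{Main obstacle.} There is no serious difficulty. The point to verify is that the uniqueness and decay normalization of $\phi_\infty$ matches the one in Proposition \ref{Proellip}. I would check this by taking $\phi_\infty[f_\infty]=-\frac{1}{4\pi}|\cdot|^{-1}\ast\rho_\infty$. It is also worth noting that the weight $a=1$ used in Corollary \ref{Corellip} lies in the admissible Muckenhoupt range precisely because $p>3/2$, so the corollary applies as stated.
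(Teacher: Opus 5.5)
Your proposal is correct and follows the same route as the paper: Corollary \ref{Corellip} with $\tau=0$ in the variable $v$, applied to $\rho_\infty=\int_{\R^3_x} f_\infty\,\dr x$, combined with H\"older in $x$ against $\langle x\rangle^{-3}\in L^{p'}(\R^3_x)$. Your remarks that the finite sum over $\kappa$ is equivalent to the supremum and that $3\frac{p-1}{p}\le 3$ are exactly the bookkeeping the paper leaves implicit.
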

\begin{proof}
This follows from the elliptic estimates of Corollary \ref{Corellip}, applied with $\tau=0$, and the Hölder inequality.
\end{proof}

We now control the force field $\nabla_x \phi [f]$, satisfying $\Delta \phi [f]=\rho [f]$. We note $\partial_x^\alpha \phi [f]=\phi [\partial_x^\alpha f]$.

\begin{Pro}\label{Proforcefield}
Let $|\alpha| \leq N$. For all $t \geq 0$, we have
\[ \langle t \rangle^{\frac{2p-3}{p}} \Big\| \langle t+|x| \rangle^{1+|\alpha|} \nabla_x^2  \phi \big[ \partial_x^\alpha f \big] \Big\|_{L^p(\R^3_x)} \lesssim \E_N[g](t)  . \]
For any $|\gamma| \leq N-p_\infty$, we have, for all $(t,x) \in \R_+ \times \R^3_x$,
\[ \langle t+|x| \rangle^{2+|\gamma|}  \big| \nabla_x  \phi \big[ \partial_x^{\gamma} f \big] \big|(t,x) \lesssim  \E_N[g](t). \]
\end{Pro}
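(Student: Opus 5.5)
The plan is to combine the elliptic estimates of Corollary \ref{Corellip} with the decay of velocity averages from Proposition \ref{Prodecayintvbis}, using the identity $\partial_x^\alpha \phi[f] = \phi[\partial_x^\alpha f]$, so that $\Delta \phi[\partial_x^\alpha f] = \partial_x^\alpha \rho[f]$. We also use $\E_N^{(6,6)}[g] \leq \E_N[g]=\E_N^{(7,7)}[g]$. All estimates are taken at a fixed time $t$, and we use $\tau = t$ in Corollary \ref{Corellip}.

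For the $L^p$ estimate, we apply the first inequality of Corollary \ref{Corellip} with $\tau = t$ and $\gamma = \alpha$. This gives
\[ \big\| \langle t+|x| \rangle^{1+|\alpha|} \nabla_x^2 \phi[\partial_x^\alpha f] \big\|_{L^p_x} \lesssim \sum_{|\kappa| \leq |\alpha|} \big\| \langle t+|x| \rangle^{1+|\kappa|} \partial_x^\kappa \rho[f] \big\|_{L^p_x}. \]
Next we factor out weights:
\[ \langle t+|x| \rangle^{1+|\kappa|} = \langle t+|x| \rangle^{1-3\frac{p-1}{p}} \langle t+|x| \rangle^{3\frac{p-1}{p}+|\kappa|}. \]
The exponent $1 - 3\frac{p-1}{p} = -\frac{2p-3}{p}$ is negative because $p>3/2$. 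Hence this factor is bounded by $\langle t \rangle^{-\frac{2p-3}{p}}$. Proposition \ref{Prodecayintvbis} then bounds each remaining term by $\E_N^{(6,6)}[g](t) \leq \E_N[g](t)$, which yields the first claim.

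For the pointwise estimate with $|\gamma| \leq N-p_\infty$, we use the second inequality of Corollary \ref{Corellip} with $\tau = t$, again via $\phi[\partial_x^\gamma f] = \partial_x^\gamma \phi[f]$. It bounds $\langle t+|x|\rangle^{2+|\gamma|}|\nabla_x\partial_x^\gamma \phi[f]|$ by
\[ \sum_{|\kappa| \leq |\gamma|+p_\infty} \big\| \langle t+|x| \rangle^{3\frac{p-1}{p}+|\kappa|}\partial_x^\kappa \rho[f] \big\|_{L^p_x}. \]
Since $|\kappa| \leq N$, Proposition \ref{Prodecayintvbis} applies directly and gives the bound $\E_N[g](t)$.

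There is no genuine obstacle here. The only points to check are the following.
\begin{itemize}
\item The weight range in Proposition \ref{Proellip} must be respected. Corollary \ref{Corellip} uses $a=1$ and $a=0$, and both lie in $(-3/p, 3(p-1)/p)$ precisely because $p>3/2$.
\item The implicit constants must be uniform in $\tau=t$, which Corollary \ref{Corellip} guarantees.
\item The constant in Proposition \ref{Prodecayintvbis} may depend on $\mathbf{\Lambda}$, consistent with the paper's conventions.
\end{itemize}
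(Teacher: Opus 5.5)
Your proposal is correct and follows the paper's proof exactly: apply Corollary \ref{Corellip} with $\tau=t$, then Proposition \ref{Prodecayintvbis} together with $\E_N^{(6,6)}[g]\leq \E_N[g]$. Your split of the weight, $\langle t+|x|\rangle^{1-3\frac{p-1}{p}}=\langle t+|x|\rangle^{-\frac{2p-3}{p}}\leq \langle t\rangle^{-\frac{2p-3}{p}}$, supplies the detail that the paper's one-line proof leaves implicit.
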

\begin{proof}
For both inequalities, we apply the elliptic estimates of Corollary \ref{Corellip} together with Proposition \ref{Prodecayintvbis}.
\end{proof}

\subsubsection{Improved estimates for the force field}

Let $\phi \colon \R^3 \to \R$ be a sufficiently regular potential and recall from Lemma \ref{LemTbar} the definition of $\T_\phi$. We assume now that $f$ is a solution to $\T_\phi (f)=0$ and let us show that improved estimates for $\phi [f]$ can be obtained. Compared with Proposition \ref{Proforcefield}, we will control one derivative less.

\begin{Lem}\label{LemComfcirc}
Recall that $f_\circ (t,x,v) \coloneqq f(t,x+tv,v)$. Then, almost everywhere and for any $|\alpha| \leq N-1$, 
\[ \int_{\R^3_y} \big[\partial_t \partial_v^\alpha f_\circ \big] \Big( t,y,v-\frac{y}{t} \Big) \dr y = 0 . \]
\end{Lem}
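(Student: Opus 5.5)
The proof is a direct computation. The plan is to rewrite the Vlasov equation for $f_\circ$ in divergence form with respect to the vector fields $\partial_{v^i}-t\partial_{x^i}$, and then to observe that these vector fields become total $y$-derivatives once the function is evaluated at $(y,v-y/t)$.

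\textbf{Step 1: the equation for $f_\circ$.} Set $E(t,x)\coloneqq\nabla_x\phi(t,x)$. From \eqref{eq:deffcirc} we have $\nabla_x f_\circ=[\nabla_x f](t,x+tv,v)$ and $\nabla_v f_\circ=[t\nabla_x f+\nabla_v f](t,x+tv,v)$. Hence $[\nabla_v f](t,x+tv,v)=(\nabla_v-t\nabla_x)f_\circ$. Combining this with $\partial_t f_\circ=[\partial_t f+v\cdot\nabla_x f](t,x+tv,v)$ and $\T_\phi(f)=0$ gives
\[ \partial_t f_\circ(t,x,v)=\lambda\, E^i(t,x+tv)\,\big(\partial_{v^i}-t\partial_{x^i}\big)f_\circ(t,x,v). \]

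\textbf{Step 2: divergence form.} The key observation is that $(\partial_{v^i}-t\partial_{x^i})$ annihilates every function of $x+tv$, since $\partial_{v^i}[E^j(t,x+tv)]=t\,\partial_{x^i}E^j(t,x+tv)$. Therefore
\[ \partial_t f_\circ=\lambda\sum_i\big(\partial_{v^i}-t\partial_{x^i}\big)\big[E^i(t,x+tv)f_\circ\big]. \]
The operators $\partial_v^\alpha$ and $\partial_{v^i}-t\partial_{x^i}$ have constant coefficients in $(x,v)$, so they commute. Thus, for $|\alpha|\le N-1$,
\[ \partial_t\partial_v^\alpha f_\circ=\lambda\sum_i\big(\partial_{v^i}-t\partial_{x^i}\big)H_i,\qquad H_i\coloneqq\partial_v^\alpha\big[E^i(t,x+tv)f_\circ\big]. \]
Each $H_i$ contains at most $N-1$ derivatives, and applying $\partial_{v^i}-t\partial_{x^i}$ adds one more, so only $N$ derivatives of $f$ and of the field are involved. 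This is why the statement is restricted to $|\alpha|\le N-1$.

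\textbf{Step 3: total derivative.} Fix $t>0$ and $v$, and set $K_i(y)\coloneqq H_i(t,y,v-y/t)$. By the chain rule,
\[ \partial_{y^i}K_i(y)=\big[\partial_{x^i}H_i-t^{-1}\partial_{v^i}H_i\big](t,y,v-y/t). \]
This gives
\[ \big[(\partial_{v^i}-t\partial_{x^i})H_i\big](t,y,v-y/t)=-t\,\partial_{y^i}K_i(y). \]
Integrating over $y\in\R^3$, each term is the integral of a divergence, which vanishes provided $K_i$ and $\nabla_yK_i$ are integrable in $y$.

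\textbf{Main obstacle: justifying the integration by parts.} This requires integrability of $K_i$ and $\nabla_y K_i$ in $y$, for almost every $v$. The functions $H_i$ and $(\partial_{v^i}-t\partial_{x^i})H_i$ lie in weighted $L^p_{x,v}$ spaces, because $f_\circ$ carries the weights of $\E_N[\cdot]$ and $E$ and its derivatives are bounded by Proposition \ref{Proforcefield}. For fixed $t>0$, the change of variables $(y,v)\mapsto(y,v-y/t)$ is measure preserving. The weights $\langle x\rangle^{7}\langle v\rangle^{7}$ together with H\"older's inequality then give $L^1_{y,v}$-integrability locally uniformly in $v$. By Fubini, for almost every $v$ the functions $K_i$ and $\nabla_yK_i$ lie in $L^1(\R^3_y)$, so the integral of $\partial_{y^i}K_i$ vanishes; this is the source of the "almost everywhere" in the statement. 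If needed, I would first argue for smooth compactly supported data and conclude by density.
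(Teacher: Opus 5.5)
Your proof is correct and follows essentially the same route as the paper. Both arguments use the identity $(\partial_{v^i}-t\partial_{x^i})H=-t\,\partial_{y^i}\big[H(t,y,v-y/t)\big]$, together with the fact that the field, being a function of $x+tv$, is constant along $y\mapsto(y,v-y/t)$. The difference is only in packaging. The paper expands $\partial_v^\alpha$ by Leibniz and pulls the factors $t^{|\gamma|}\nabla_x\partial_x^\gamma\phi(t,tv)$ out of the $y$-integral. You instead absorb the field into the derivative through the divergence form, which is the infinitesimal version of the same fact. Your integrability remarks also justify the integration by parts, which the paper leaves implicit.
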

\begin{proof}
We use that $\T_\phi (f)=0$, so that $\partial_t f+ v\cdot \nabla_x f= \lambda \nabla_x \phi \cdot \nabla_v f$, and
\[ \partial_t f_\circ = \big[ \partial_t+v\cdot \nabla_x f\big](t,x+tv,v), \qquad \partial_{x^i} f_\circ = \partial_{x^i} f (t,x+tv,v), \qquad \partial_{v^i} f_\circ = \big[t\partial_{x^i}f+\partial_{v^i}f \big](t,x+tv,v). \]
By Leibniz formula, we then deduce
\[ \partial_t \partial_v^\alpha f_\circ = \lambda \sum_{\gamma + \beta = \alpha}{\alpha \choose \beta}  t^{|\gamma|}\nabla_x \partial_x^\gamma \phi (t,x+tv) \cdot \nabla_v \partial_v^\beta f_\circ - {\alpha \choose \beta} t^{|\gamma|+1}\nabla_x \partial_x^\gamma \phi (t,x+tv) \cdot \nabla_x  \partial_v^\beta f_\circ  . \]
It remains to evaluate this relation at $(t,y,v-y/t)$ and to note that 
\[ t\nabla_y \Big[ \big[\partial_v^\beta f_\circ \big] \Big( t,y,v - \frac{y}{t} \Big) \Big]=t\big[\nabla_x\partial_v^\beta f_\circ \big] \Big( t,y,v - \frac{y}{t} \Big)-\big[\nabla_v\partial_v^\beta f_\circ \big] \Big( t,y,v - \frac{y}{t} \Big).\]
\end{proof}

In the next two results, we will make use of the relation
\begin{equation}\label{eq:cdv}
 t^3 \rho [h] (t,x) =t^3 \int_{\R^3_v} h (t,x,v)  \dr v = \int_{\R^3_y} h_\circ \Big( t,y,\frac{x-y}{t} \Big) \dr y, 
\end{equation}
which is obtained by performing the change of variables $y=x-tv$.

\begin{Lem}\label{LemdtPhi}
Let $|\gamma| \leq N-1$. Then, for all $t \geq 1$, we have
\[ \Big\| \langle v \rangle \partial_t \Big[  t^{3+|\gamma|} \nabla^2_x \phi  \big[ \partial_x^\gamma f \big](t,tv) \Big] \Big\|_{L^p(\R^3_v)} \lesssim  \frac{\log^{b}\langle t \rangle}{ t^2}\E_{N}[g](t) , \]
where $b \coloneqq (a+1)(2N+7)$. For any $|\kappa| \leq N-p_\infty$ and all $v \in \R^3_v$, we have
\[ \langle v \rangle \Big|   \partial_t \Big[  t^{2+|\kappa|} \nabla_x \phi  \big[ \partial_x^\kappa f \big](t,tv) \Big] \Big| \lesssim  \frac{\log^{b}\langle t \rangle}{t ^2}\E_{N}[g](t)  . \]
\end{Lem}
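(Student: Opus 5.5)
Our plan is to write the rescaled field as a convolution in $y$ with a kernel independent of $t$, so that the time derivative falls only on the profile $f_\circ$, where Lemma \ref{LemComfcirc} applies. By \eqref{eq:cdv} applied to $h=\partial_x^\gamma f$, and since $(\partial_x^\gamma f)_\circ=\partial_x^\gamma f_\circ$, we have $t^3\rho[\partial_x^\gamma f](t,tw)=\int_y \partial_x^\gamma f_\circ(t,y,w-y/t)\,\dr y$. We then rewrite $x$-derivatives as $v$-derivatives of the shifted function: $t^{|\gamma|}\partial_x^\gamma$ of $x\mapsto\rho(t,x)$ at $x=tw$ equals $\partial_w^\gamma[\rho(t,tw)]$. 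Hence
\[ t^{3+|\gamma|}\partial_x^\gamma\rho[f](t,tw)=\partial_w^\gamma\Big[\int_{\R^3_y} f_\circ\Big(t,y,w-\frac{y}{t}\Big)\dr y\Big]=\int_{\R^3_y}\big[\partial_v^\gamma f_\circ\big]\Big(t,y,w-\frac{y}{t}\Big)\dr y .\]
Since $\nabla_x^2\phi$ at scale $t$ satisfies $t^{3+|\gamma|}\nabla_x^2\phi[\partial_x^\gamma f](t,tv)=\nabla_v^2\Delta_v^{-1}\big[t^{3+|\gamma|}\partial_x^\gamma\rho(t,t\cdot)\big](v)$ (the Poisson equation is scale covariant: $\nabla^2\Delta^{-1}$ is homogeneous of degree $0$, and $\nabla\Delta^{-1}$ gains exactly the factor $t$ accounting for $t^{2+|\kappa|}$), it suffices to control $\partial_t$ of the density $R_\gamma(t,w)\coloneqq\int_y[\partial_v^\gamma f_\circ](t,y,w-y/t)\,\dr y$.

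Differentiating under the integral, two contributions arise: the term $\int_y[\partial_t\partial_v^\gamma f_\circ](t,y,w-y/t)\,\dr y$, which vanishes by Lemma \ref{LemComfcirc} (here we use $|\gamma|\le N-1$), and the term coming from $\partial_t(w-y/t)=y/t^2$, namely
\[ \partial_t R_\gamma(t,w)=\frac{1}{t^2}\int_{\R^3_y} y\cdot\big[\nabla_v\partial_v^\gamma f_\circ\big]\Big(t,y,w-\frac{y}{t}\Big)\dr y .\]
This already displays the $t^{-2}$ decay, at the cost of one extra $v$-derivative and one $\langle y\rangle$ weight on $f_\circ$. Changing back the variables ($v=w-y/t$), it is $t^{3}t^{-2}$ times the velocity average at $x=tw$ of $h\coloneqq[x\cdot\nabla_v\partial_v^\gamma f_\circ](t,x-tv,v)$. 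We then bound it, after applying the elliptic estimates (Proposition \ref{Proellip} with weight $a=1$ in $v$, i.e.\ Corollary \ref{Corellip} with $\tau=0$ in the variable $v$, using $1<p<\infty$), by the $L^p_w$ norm of $\langle w\rangle\,\partial_tR_\gamma$. For the $L^\infty$ statement we use the second estimate of Corollary \ref{Corellip} together with Sobolev in $w$, which explains the restriction $|\kappa|\le N-p_\infty$.

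The main step is then the weighted $L^p_w$ bound of $\langle w\rangle\int_y\langle y\rangle|\nabla_v\partial_v^\gamma f_\circ|(t,y,w-y/t)\,\dr y$. We will use $\langle w\rangle\le\langle v\rangle+\langle y\rangle$ (for $t\ge1$), Hölder in $y$ with weight $\langle y\rangle^{-3}$-type integrability, and Minkowski, followed by the change of variables $(y,w)\mapsto(y,v)$ of unit Jacobian, to reduce to $\|\langle x\rangle^{2+3(p-1)/p+}\langle v\rangle\nabla_v\partial_v^\gamma f_\circ\|_{L^p_{x,v}}\lesssim\E_N^{(6,6)}[f_\circ]$ up to the allowed weights ($n=(7,7)$ gives room). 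Finally Lemma \ref{Corderivgtof} converts $\E_N[f_\circ]$ into $\E_N[g]$ at the price $\log^{(a+1)(2N+7)-a}\langle t+1\rangle\,\mathcal{E}_N[\Phi]$, which with \eqref{EQ:AssumpPhi} produces exactly the factor $\log^{b}\langle t\rangle$, $b=(a+1)(2N+7)$. The delicate point I expect is the weight bookkeeping: checking that one $\langle y\rangle$ and one $\langle w\rangle$ together with the integrability loss fit within $n=(7,7)$ and the $\langle v\rangle^{N-|\alpha_x|}$ structure of $\E_N$ (the extra $v$-derivative is of order $\le N$, fine).
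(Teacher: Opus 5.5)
Your $L^p$ argument is the paper's: the scaling identity from \eqref{eq:cdv}, the vanishing of the $\partial_t\partial_v^\gamma f_\circ$ term by Lemma \ref{LemComfcirc}, Hölder in $y$ with the unit-Jacobian change of variables for the $y/t^2$ term, Lemma \ref{Corderivgtof} to pass from $f_\circ$ to $g$, and Proposition \ref{Proellip} with $a=1$. The gap is in the pointwise estimate. Suppose you apply the second estimate of Corollary \ref{Corellip} in $w$ (with $\tau=0$, $\gamma=\kappa$) to $\Delta_w^{-1}\partial_t R_0$. Its right-hand side contains $\|\langle w\rangle^{3\frac{p-1}{p}+|\alpha|}\partial_w^\alpha\partial_tR_0\|_{L^p_w}$ for all $|\alpha|\le|\kappa|+p_\infty$, hence $|\alpha|=N$ in the top case $|\kappa|=N-p_\infty$. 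But $\partial_w^\alpha\partial_tR_0=t^{-2}\int_{\R^3_y}y\cdot[\nabla_v\partial_v^\alpha f_\circ](t,y,w-y/t)\,\dr y$ then contains $N+1$ velocity derivatives of $f_\circ$, which $\E_N[g]$ does not control. This is exactly why your own $L^p$ step, and Lemma \ref{LemComfcirc}, stop at order $N-1$. So your count ``$|\kappa|+p_\infty\le N$'' is off by one. In addition, the weights $\langle w\rangle^{3\frac{p-1}{p}+|\alpha|}$ of that corollary become, via $\langle w\rangle\lesssim\langle v\rangle\langle y\rangle$ and Hölder in $y$, $\langle y\rangle$-moments of order about $|\alpha|+1+6\frac{p-1}{p}$ on velocity derivatives of $f_\circ$. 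However, $\E_N$ only provides $\langle x\rangle^{7}$ there, and the statement needs just one power of $\langle v\rangle$, so this tool is far too lossy.

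The repair, which is what the paper does, is to split into two cases. For $|\kappa|\ge1$, $t^{2+|\kappa|}\nabla_x\phi[\partial_x^\kappa f](t,tv)$ is a component of $t^{3+|\kappa'|}\nabla_x^2\phi[\partial_x^{\kappa'}f](t,tv)$ with $|\kappa'|=|\kappa|-1$. Apply $W^{p_\infty,p}(\R^3_v)\hookrightarrow L^\infty(\R^3_v)$ to $\langle v\rangle$ times its time derivative, and use $\partial_{v^i}[\psi(tv)]=t\partial_{x^i}\psi(tv)$. Each $\partial_v^\beta$ then produces $\partial_t[t^{3+|\kappa'+\beta|}\nabla_x^2\phi[\partial_x^{\kappa'+\beta}f](t,tv)]$ with $|\kappa'+\beta|\le N-1$, which is exactly what your $L^p$ bound covers, with only a $\langle v\rangle$ weight. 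This is where the restriction $|\kappa|\le N-p_\infty$ really comes from. For $\kappa=0$, use instead the pointwise bound $\|\langle v\rangle^2\nabla_v\psi\|_{L^\infty}\lesssim\|\langle v\rangle^3\Delta_v\psi\|_{L^\infty}$ of Proposition \ref{Proellip}. Then control $\langle v\rangle^3\partial_tR_0$ in $L^\infty_v$ by a Sobolev embedding applied directly to $v\mapsto\int_y y\cdot\nabla_v f_\circ(t,y,v-y/t)\,\dr y$, followed by Hölder in $y$ and the change of variables. This costs only $p_\infty+1$ derivatives and the weights $\langle v\rangle^3\langle y\rangle^7$.
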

\begin{proof}
The proof relies on elliptic estimates, the relation $\partial_{v^i} [\psi (tv)]=t \partial_{x^i} \psi (tv)$, and
\[ \Delta_v \big[ \partial_t \big( t^{1+|\gamma|} \phi  \big[ \partial_x^\gamma f \big](t,tv) \big) \big] = \partial_t \big( t^{3+|\gamma|} \rho \big[ \partial_x^{\gamma}  f \big](t,tv) \big)= \partial_t \big( t^{3}  \rho \big[ (t \partial_x+\partial_v)^\gamma f \big](t,tv) \big). \]
Recall that $\partial_v^\gamma f_\circ (t,x,v)=[(t \partial_x+\partial_v)^\gamma f](t,x+tv,v)$, so that \eqref{eq:cdv}, applied to $h=(t \partial_x+\partial_v)^\gamma f$, yields
\begin{align*}
 \partial_t \big( t^{3}  \rho \big[ (t \partial_x+\partial_v)^\gamma f \big](t,tv) \big) &  =\int_{\R^3_y} \big[ \partial_t \partial_v^\gamma f_\circ \big] \Big( t,y,v-\frac{y}{t} \Big) +\frac{y}{t^2} \cdot \big[ \nabla_v \partial_v^\gamma f_\circ \big] \Big( t,y,v-\frac{y}{t} \Big)   \dr y . 
 \end{align*}
According to the previous Lemma \ref{LemComfcirc}, the first term on the right hand side vanishes. For the second term, we apply Hölder inequality in $y$ to get
\[  \bigg\|\langle v \rangle \int_{\R^3_y} \frac{y}{t^2} \cdot \big[ \nabla_v \partial_v^\gamma f_\circ \big] \Big( t,y,v-\frac{y}{t} \Big)  \dr y \bigg\|_{L^p_v} \leq \frac{1}{t^2} \Big\| \langle v \rangle \langle y \rangle^4 \big[ \nabla_v \partial_v^\gamma f_\circ \big] \Big( t,y,v-\frac{y}{t} \Big)  \Big\|_{L^p_{y,v}} = \frac{1}{t^2} \Big\| \langle v \rangle \langle y \rangle^5  \nabla_v \partial_v^\gamma f_\circ  ( t,y,v )  \Big\|_{L^p_{y,v}}  , \]
where, in the last step, we performed a change of variables and use the inequality $\langle z+w \rangle \leq \langle z \rangle\langle w \rangle$.
The last term can then be expressed in terms of $g$ through Lemma \ref{Corderivgtof}. It allows to derive the $L^p$ bound by the elliptic estimates of Proposition \ref{Proellip}, applied with $a=1$. 

By the Sobolev embedding $W^{p_\infty,p}(\R^3_v) \hookrightarrow L^\infty (\R^3_v)$, we get the pointwise bounds for $|\kappa| \geq 1$. For the last case, we will use the pointwise bound in Proposition \ref{Proellip} for $\tau=0$. For this, observe that by using first a Sobolev embedding in $v$, and then the Hölder inequality in $y$, we have
\begin{align}
 \bigg\|\langle v \rangle^3 \int_{\R^3_y} \frac{y}{t^2} \cdot \big[ \nabla_v  f_\circ \big] & \Big( t,y,v-\frac{y}{t} \Big)  \dr y \bigg\|_{L^\infty_v}  \leq \sum_{|\beta| \leq p_\infty}\frac{1}{t^2}\bigg\| \langle v \rangle^3 \int_{\R^3_y} y \cdot \big[ \nabla_v \partial_v^\beta f_\circ \big] \Big( t,y,v-\frac{y}{t} \Big)  \dr y \bigg\|_{L^p_v} \label{EQUATI:1} \\
& \lesssim  \sum_{|\beta| \leq p_\infty} \frac{1}{t^2}\bigg\| \langle v \rangle^3 \langle y \rangle^4 \big[\nabla_v  \partial_v^\beta f_\circ \big] \Big( t,y,v-\frac{y}{t} \Big)   \bigg\|_{L^p_{y,v}} \lesssim \sum_{|\beta| \leq p_\infty} \frac{1}{t^2}\bigg\| \langle v \rangle^3 \langle y \rangle^7 \big[\nabla_v  \partial_v^\beta f_\circ \big] ( t,y,v )   \bigg\|_{L^p_{y,v}}. \nonumber
 \end{align}
 We conclude by using again Lemma \ref{Corderivgtof}.
\end{proof}

We complement here the last result by dealing with the small times. It will in particular allow to control the error terms of \eqref{type3} in the commutation formula determined below. We use again the constant $b \coloneqq (a+1)(2N+7)$.

\begin{Lem}\label{Lemdpphismall}
Assume $N \geq 2p_\infty$. We have, for all $(t,v) \in \R_+ \times \R^3_v$,
\begin{alignat*}{2}
 \Big\| \langle v \rangle \partial_t \Big[  t^{3+|\gamma|} \nabla^2_x  \phi  \big[ \partial_x^\gamma f \big](t,tv) \Big] \Big\|_{L^p(\R^3_v)} & \lesssim \frac{\log^{b}\langle t+1 \rangle}{ \langle t \rangle^2} \E_{N}[g](t) , \qquad \qquad && p_\infty -1 \leq |\gamma| \leq N-1 , \\
 \langle v \rangle \Big|   \partial_t \Big[  t^{2+|\kappa|} \nabla_x  \phi  \big[ \partial_x^\kappa f \big](t,tv) \Big] \Big|& \lesssim \frac{\log^{b}\langle t+1 \rangle}{ \langle t \rangle^2} \E_{N}[g](t) , \qquad \qquad && |\kappa| \leq N-p_\infty.
 \end{alignat*}
\end{Lem}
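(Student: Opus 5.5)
The plan is to split into large and small times. For $t \geq 1$ both inequalities are exactly Lemma \ref{LemdtPhi}, because $\log^b\langle t\rangle/t^2 \lesssim \log^b \langle t+1\rangle/\langle t\rangle^2$ when $t\ge 1$. So the whole task is the range $0 \leq t \leq 1$. There the right-hand side is comparable to $\E_N[g](t)$, and $\langle t+|x|\rangle \simeq \langle x\rangle$. It therefore suffices to bound the left-hand sides by $\E_N[g](t)$, with no decay needed.

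For $t \le 1$ I expand the time derivative by the product and chain rules, writing $\psi_\gamma := \nabla_x^2\phi[\partial_x^\gamma f]$:
\[ \partial_t\big[t^{3+|\gamma|}\psi_\gamma(t,tv)\big] = (3+|\gamma|)t^{2+|\gamma|}\psi_\gamma(t,tv) + t^{3+|\gamma|}[\partial_t\psi_\gamma](t,tv) + t^{3+|\gamma|} v\cdot[\nabla_x\psi_\gamma](t,tv). \]
For the second term I use that $\T_\phi(f)=0$ and that $\int_v \nabla_x\phi\cdot\nabla_v h\,\dr v=0$ for every $h$. Since $\partial_x^\gamma$ commutes with this structure, this gives $\partial_t\rho[\partial_x^\gamma f] = -\nabla_x\cdot\rho[v\,\partial_x^\gamma f]$. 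Hence $\partial_t\psi_\gamma = -\nabla_x^2\partial_{x^j}\phi\big[v^j\partial_x^\gamma f\big]$, which is a second derivative of the potential generated by $\partial_{x^j}$ of a density, with total order $|\gamma|+1\le N$.

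Each term is then estimated in $L^p_v$ through the change of variables $x=tv$, which gives $\|F(tv)\|_{L^p_v}=t^{-3/p}\|F\|_{L^p_x}$, together with $\langle v\rangle\le t^{-1}\langle x\rangle$ for $t\le1$. The first term becomes $t^{1+|\gamma|-3/p}\|\langle x\rangle\psi_\gamma\|_{L^p_x}$. This is exactly where the hypothesis $|\gamma|\ge p_\infty-1=\lfloor 3/p\rfloor$ is used: it makes the power of $t$ nonnegative, so $t^{1+|\gamma|-3/p}\lesssim1$ on $[0,1]$. The factor $\|\langle x\rangle\psi_\gamma\|_{L^p_x}$ is bounded by Proposition \ref{Proforcefield}. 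The other two terms carry even more powers of $t$. The third term has $t^{3+|\gamma|}|v| = t^{2+|\gamma|}|x|$, so it gives $\|\langle x\rangle^2\nabla^3_x\phi[\partial_x^\gamma f]\|_{L^p}$, again controlled by Corollary \ref{Corellip} and Proposition \ref{Proforcefield} since $|\gamma|+1\le N$. The second term needs the bound
\[ \big\|\langle x\rangle^{a}\,\partial_x\rho[v\,\partial_x^{\gamma}f]\big\|_{L^p_x} \lesssim \E_N[g](t), \]
which I get by repeating Lemmata \ref{Lemrelintvgtof} and \ref{Lem236458} with one extra $\langle v\rangle$ weight; this is affordable because $\E_N$ uses $n=(7,7)$ while Lemma \ref{Lem236458} only needs $(6,6)$.

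The pointwise bound for $|\kappa|\le N-p_\infty$ follows the same expansion. Here there is no loss from the change of variables, and the prefactors $t^{1+|\kappa|}$, $t^{2+|\kappa|}$, $t^{2+|\kappa|}|tv|$ are all harmless for $t\le1$ once we use $\langle v\rangle t\lesssim\langle tv\rangle$. Each resulting factor, namely $\langle x\rangle^{2}|\nabla_x\phi[\partial_x^\kappa f]|$, $\langle x\rangle^{2}|\nabla^2_x\phi[\partial_x^\kappa f]|$, and the $\partial_t$ term rewritten as above, is bounded by the $L^\infty$ estimate of Corollary \ref{Corellip} with $\tau=0$. That estimate requires $|\kappa|+1+p_\infty\le N+1$ derivatives of densities; I expect these to be available because $|\kappa|\le N-p_\infty$, with the top density derivative again handled via Proposition \ref{Prodecayintvbis} and its $\langle v\rangle$-weighted variant.

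The main obstacle is bookkeeping rather than a new idea. Two points need care: keeping the exponent of $t$ nonnegative after $v\mapsto tv$, which is the reason for the lower bound $|\gamma|\ge p_\infty-1$, and making sure the extra derivative produced by $\partial_t$ (through $\nabla\cdot\rho[v\,\cdot]$) stays within the $N$ derivatives and the $(7,7)$ weights controlled by $\E_N[g]$.
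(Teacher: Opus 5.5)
Your treatment of $t\ge 1$ and your $L^p$ estimate for $0\le t\le 1$ are correct and follow the paper's route: the same product-rule expansion, the continuity equation $\partial_t\rho[f]=-\nabla_x\cdot\rho[vf]$ to rewrite $\partial_t\phi$, the change of variables $x=tv$ with $|\gamma|\ge p_\infty-1$ keeping $t^{1+|\gamma|-3/p}$ bounded, and the spare $\langle v\rangle$ weight of $\E_N^{(7,7)}$ over $\E_N^{(6,6)}$. One small correction: the third term does not carry more powers of $t$ than the first. After $\langle v\rangle|v|\le t^{-2}\langle x\rangle^2$ it comes with the same factor $t^{1+|\gamma|-3/p}$, which is still harmless.

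The gap is in the pointwise estimate at the top order $|\kappa|=N-p_\infty$. You bound $\langle x\rangle^2|\nabla_x^2\phi[\partial_x^\kappa f]|$ and $\langle x\rangle|\partial_t\nabla_x\phi[\partial_x^\kappa f]|=\langle x\rangle|\nabla_x\partial_{x^j}\phi[v^j\partial_x^\kappa f]|$ by the $L^\infty$ estimate of Corollary \ref{Corellip}. That estimate involves $\partial_x^\alpha\rho$ for $|\alpha|\le |\kappa|+1+p_\infty$, which equals $N+1$ here. However, $\E_N[g]$ only controls $\partial_x^\alpha\rho[f]$ and $\partial_x^\alpha\rho[vf]$ for $|\alpha|\le N$: Proposition \ref{Prodecayintvbis} spends $N$ derivatives of $g$ on $N$ derivatives of $\rho$. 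Your hope that these derivatives are available is therefore false. As written, your argument only covers $|\kappa|\le N-1-p_\infty$, which is exactly the range of the paper's \eqref{EQU:dtsmall2}.

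The paper closes the top case by Sobolev embedding in $v$ from the $L^p$ bound you already proved. One has $\partial_v^\beta\big[t^{2+|\kappa|}\nabla_x\phi[\partial_x^\kappa f](t,tv)\big]=t^{2+|\kappa|+|\beta|}\nabla_x\phi[\partial_x^{\kappa+\beta}f](t,tv)$. Absorbing one $x$-derivative into $\nabla_x^2$ (possible since $|\kappa|\ge p_\infty\ge 1$), this is of the form $t^{3+|\gamma|}\nabla_x^2\phi[\partial_x^\gamma f](t,tv)$ with $|\gamma|=|\kappa|+|\beta|-1$. For $|\beta|\le p_\infty$ and $N\ge 2p_\infty$, $|\gamma|$ lies in $[p_\infty-1,N-1]$, so $W^{p_\infty,p}(\R^3_v)\hookrightarrow L^\infty(\R^3_v)$ reduces the top pointwise bound to your $L^p$ bound. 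Alternatively, replace the $L^\infty$ bound of Corollary \ref{Corellip} by Proposition \ref{ProSob} applied to the $L^p$ elliptic estimate. That exploits the two-derivative gain of the $L^p$ estimate and costs only $|\kappa|+p_\infty\le N$ derivatives of the density.
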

\begin{proof}
In view of the previous Lemma \ref{LemdtPhi}, it suffices to deal with small times $0 \leq t \leq 1$. We will use the relation
\begin{align}
\langle v \rangle \big|\partial_t \big( t^{2+|\gamma|} \nabla_x \phi  \big[ \partial_x^\gamma f \big](t,tv) \big) \big| & \lesssim t \langle v \rangle \big| t^{|\gamma|} \nabla_x \phi  \big[ \partial_x^\gamma f \big](t,tv) \big|+t^2 \langle v \rangle^2 \big|t^{| \gamma|} \nabla^2_x \phi [ \partial_x^\gamma f] (t,tv) \big| \nonumber \\
& \quad \, +t \langle v \rangle \big| t^{1+|\gamma|} \partial_t \nabla_x \phi [ \partial_x^\gamma f] (t,tv) \big|. \label{EQU:truc}
\end{align}
We then need to control $\partial_t \nabla_x \phi [f]$. We exploit for this the conservation of the number of particles. More precisely, as $ \partial_t f+ v \cdot \nabla_x f+\nabla_v \cdot \big(f \nabla_x \phi \big)=0$, we have
\[ \Delta \partial_t \phi [f] = \partial_t \rho [f]= -\nabla_x \cdot \rho [ v f ]=-\rho \big[ v \cdot \nabla_x f \big]. \]
Note further that $v \cdot \nabla_x f(t,z(t,x,v),v)= v \cdot \nabla_x g (t,x,v)$. Then, we can apply Proposition \ref{Prodecayintvbis} to estimate $\rho \big[ v f \big]$ and its derivatives up to order $N$. Then, proceeding as in Proposition \ref{Proforcefield}, one obtains through elliptic estimates that, for all $(t,x) \in \R_+ \times \R^3_x$,
\begin{alignat}{2}
 \langle t \rangle^{\frac{2p-3}{p}} \Big\| \langle t+|x| \rangle^{2+|\alpha|} \nabla_x^2 \partial_t  \phi \big[ \partial_x^\alpha f \big] \Big\|_{L^p(\R^3_x)}& \lesssim \E_N[g](t) , \qquad && |\alpha| \leq N-1, \label{EQU:dtsmall} \\
 \langle t+|x| \rangle^{3+|\kappa|}  \big| \nabla_x \partial_t  \phi \big[ \partial_x^{\kappa} f \big] \big|(t,x) & \lesssim  \E_N[g](t), \qquad \qquad && |\kappa| \leq N-1-p_\infty , \label{EQU:dtsmall2}
\end{alignat}
where we used that $\E_N[g]=\E_N^{(7,7)}[g]$.

Using $t \langle x/t \rangle \leq t+|x|$, Proposition \ref{Proforcefield}, and \eqref{EQU:dtsmall2}, we obtain the pointwise decay estimates for $|\gamma| \leq N-1-p_\infty$. For the $L^p$ estimate, note that for a function $\psi (x)$,
\[ \big\| t^s \langle v \rangle^s t^q \psi (tv) \big\|_{L^p(\R^3_v)} \leq t^{q-\frac{3}{p}} \big\| \langle t+|x| \rangle^s \psi (x) \big\|_{L^p(\R^3_x)}, \qquad q, \, s \geq 0  . \]
This is obtained from the change of variables $x=tv$, and $t \langle x/t \rangle = (t^2+|x|^2)^{1/2}$. Using Proposition \ref{Proforcefield} and \eqref{EQU:dtsmall}, we can then bound in $L^p(\R^3_v)$, uniformly in $0 \leq t \leq 1$, the three terms in the right hand side of \eqref{EQU:truc}, provided that $|\gamma| \geq 3/p$. We finally derive the pointwise estimates for $|\kappa|=N-p_\infty$ through a Sobolev embedding.
\end{proof}

We can now prove that, for suitable functions $f$, $t^2 \nabla_x \phi[f] (t,tv)$ converges as $t \to + \infty$. Recall for this that $\phi_\infty [f_\infty]$ is the unique potential, going to $0$ as $|x| \to +\infty$, satisfying $\Delta_v \phi_\infty [f_\infty]= \int_x f_\infty (x,\cdot) \dr x$.

\begin{Cor}\label{Corphiphiinfty}
Assume that $N \geq 2 p_\infty$, that $\E_N[g]$ is uniformly bounded, and that there exists $f_\infty \colon \R^3_x \times \R^3_v \to \R$ such that
\[   \lim_{t \to + \infty} \big\| \langle v \rangle^3 \langle x \rangle^4 \big( g(t,x,v)-f_\infty (x,v) \big) \big\|_{L^\infty ( \R^3_x \times \R^3_v)}=0 .\]
 Then,  for all $t \geq 0$ and any $p_\infty -1 \leq  |\gamma| \leq N-1$,
 \[ \Big\| \langle v \rangle \Big(  t^{3+|\gamma|} \nabla^2_x \phi  \big[ \partial_x^\gamma f \big](t,tv) - \nabla^2_v \phi_\infty \big[ \partial_x^\gamma f \big](v)  \Big) \Big\|_{L^p(\R^3_v)} \lesssim  \frac{\log^{b}\langle t+1 \rangle}{ \langle t \rangle } \sup_{\tau \geq 0} \E_{N}[g](\tau), \]
where $b \coloneqq (a+1)(2N+7)$. Moreover, for all $(t,v) \in [1,+\infty[ \times \R^3_v$ and any $|\kappa| \leq N-p_\infty$,
\[ \langle v \rangle \Big|  t^{2+|\kappa|} \nabla_x \phi \big[ \partial_x^\kappa f \big] (t,tv) - \nabla_v  \phi_\infty \big[ \partial_x^\kappa f_\infty \big] (v) \Big| \lesssim  \frac{\log^{b}\langle t +1 \rangle}{\langle t \rangle} \sup_{\tau \geq 0}\E_{N}[g](\tau).\]
\end{Cor}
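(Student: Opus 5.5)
The plan is to integrate in time the derivative bounds of Lemmas \ref{LemdtPhi} and \ref{Lemdpphismall}, and then to identify the limit using the convergence of $g$ to $f_\infty$.

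\textbf{Existence of a limit with a rate.} Set $\Psi_\gamma(t,v) \coloneqq t^{3+|\gamma|}\nabla_x^2\phi[\partial_x^\gamma f](t,tv)$. Lemma \ref{Lemdpphismall} gives
\[ \|\langle v\rangle \partial_t \Psi_\gamma(t,\cdot)\|_{L^p_v} \lesssim \log^b\langle t+1\rangle \langle t\rangle^{-2}\sup_\tau \E_N[g](\tau), \]
which is integrable on $\R_+$. Hence $\Psi_\gamma(t,\cdot)$ is Cauchy in the weighted $L^p_v$ space and converges to some $\Psi_\gamma^\infty$. Integrating the derivative bound from $t$ to $+\infty$ gives
\[ \|\langle v\rangle(\Psi_\gamma(t)-\Psi_\gamma^\infty)\|_{L^p_v}\lesssim \int_t^\infty \log^b\langle s+1\rangle\langle s\rangle^{-2}\,\dr s \lesssim \log^b\langle t+1\rangle\langle t\rangle^{-1}. \]
The pointwise quantities $t^{2+|\kappa|}\nabla_x\phi[\partial_x^\kappa f](t,tv)$ are treated identically, using the second bound of Lemma \ref{Lemdpphismall}, for $t\ge1$.

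\textbf{Identification of the limit.} It remains to show that $\Psi_\gamma^\infty = \nabla_v^2\phi_\infty[\partial_x^\gamma f_\infty]$, reading the statement's $\partial_x^\gamma f$ as $\partial_x^\gamma f_\infty$; and similarly for the first-order quantities. By the proof of Lemma \ref{LemdtPhi}, $t^{1+|\gamma|}\phi[\partial_x^\gamma f](t,tv)$ solves a Poisson equation in $v$ with source
\[ t^3\rho[(t\partial_x+\partial_v)^\gamma f](t,tv)=\int_{\R^3_y}\partial_v^\gamma f_\circ\big(t,y,v-\tfrac{y}{t}\big)\,\dr y, \]
where the equality is \eqref{eq:cdv}. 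The key step is to show that this source converges, in a suitable sense, to $\int_x \partial_v^\gamma f_\infty(x,v)\,\dr x$, which is the source for $\partial_v^\gamma\phi_\infty[f_\infty]$; note that $\nabla^2_v\phi_\infty[\partial_x^\gamma f_\infty]$ should be interpreted as $\nabla_v^2\partial_v^\gamma\phi_\infty[f_\infty]$.

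I would do this in the distributional sense, since any distributional limit must coincide with the strong limit $\Psi_\gamma^\infty$. Pair the source with a test function $\chi(v)$ and integrate by parts in $v$ to move all $\partial_v^\gamma$ onto $\chi$. After the change of variables $v\mapsto v+y/t$, the pairing becomes
\[ (-1)^{|\gamma|}\iint f_\circ(t,y,v)\,\partial^\gamma\chi\big(v+\tfrac{y}{t}\big)\,\dr y\,\dr v. \]
Then write $f_\circ(t,y,v)=g(t,y-\lambda\Phi\log\langle t\rangle,v)$ and shift $y$; since the shift is $O(\log t)$, it costs only $O(\log t/t)$ inside $\chi$. Finally use the assumed weighted $L^\infty$ convergence $\langle v\rangle^3\langle x\rangle^4(g-f_\infty)\to0$. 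The weight $\langle x\rangle^{-4}\langle v\rangle^{-3}$ is integrable, and $\partial^\gamma\chi(v+y/t)\to\partial^\gamma\chi(v)$ boundedly, so by dominated convergence the pairing tends to $(-1)^{|\gamma|}\iint f_\infty\,\partial^\gamma\chi$.

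Elliptic uniqueness then transfers this to the fields. The Hessians $\nabla_v^2$ of the potentials are given by Calderón–Zygmund operators, which are continuous on distributions of this type and commute with the limit. Concretely, I would pair with $\nabla^2\Delta^{-1}\chi$ for $\chi\in C_c^\infty$; this is a Schwartz-type function with enough decay, since the sources are integrable with weights. This identifies $\Psi_\gamma^\infty$. The first-order quantities are identified the same way, with $\nabla\Delta^{-1}$.

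\textbf{Main obstacle.} I expect the delicate point to be the low-order range $|\gamma|<p_\infty-1$, where Lemma \ref{Lemdpphismall} gives no $L^p$ bound near $t=0$. The statement restricts to $|\gamma|\ge p_\infty-1$ for this reason, and for the pointwise estimates it only asks for $t\ge1$, where Lemma \ref{LemdtPhi} applies directly. The other point to check is justifying the integrations by parts and the use of $\nabla^2\Delta^{-1}\chi$ with the available decay. This is routine given that $\E_N[g]$ carries $(7,7)$ weights.
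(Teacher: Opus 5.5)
Your first step, integrating the $\partial_t$ bounds of Lemmas \ref{LemdtPhi} and \ref{Lemdpphismall} from $t$ to $+\infty$, is the same as the paper's. Your identification of the limit takes a different route.

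\emph{The paper's identification.} It works only at order zero, and quantitatively:
\begin{itemize}
\item From \eqref{eq:cdv} and the fundamental theorem of calculus in the velocity slot, $\langle v\rangle^3\big|t^3\rho[f](t,tv)-\int_x f_\circ(t,x,v)\,\dr x\big|\lesssim t^{-1}\log^b\langle t+1\rangle\,\E_N[g](t)$ uniformly in $v$, estimated as in \eqref{EQUATI:1}.
\item Then $\int_x f_\circ\,\dr x=\int_x g\,\dr x$, and the hypothesis gives $\langle v\rangle^3\int_x(g-f_\infty)\,\dr x\to0$ uniformly.
\item The $L^\infty$ elliptic estimate of Proposition \ref{Proellip} (with $\tau=0$) turns this into uniform convergence of $t^2\nabla_x\phi[f](t,tv)$ to $\nabla_v\phi_\infty[f_\infty]$.
\end{itemize}
Higher orders follow automatically, since $t^{2+|\kappa|}\nabla_x\phi[\partial_x^\kappa f](t,tv)=\partial_v^\kappa\big[t^2\nabla_x\phi[f](t,tv)\big]$ and $\partial_v$ commutes with distributional limits. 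This also explains the weights in the hypothesis: $\langle x\rangle^4$ gives $x$-integrability, and $\langle v\rangle^3$ is what Proposition \ref{Proellip} needs.

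\emph{Your identification.} You use distributional convergence of the sources, then duality with $\nabla^2\Delta^{-1}\chi$ or a Liouville-type uniqueness argument. This avoids \eqref{EQUATI:1} and the pointwise elliptic bound, and loses nothing, since the rate comes entirely from step one. The cost is the duality bookkeeping for the non-compactly supported $\nabla^2\Delta^{-1}\chi$. Treating each $\gamma$ separately is unnecessary: your own integration by parts shows the order-$\gamma$ source is $\partial_v^\gamma\big[t^3\rho[f](t,tv)\big]$.

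One justification is wrong as written. $\langle x\rangle^{-4}\langle v\rangle^{-3}$ is not integrable on $\R^3_x\times\R^3_v$, because $\int_{\R^3}\langle v\rangle^{-3}\,\dr v$ diverges logarithmically, so it cannot be your dominating function. Split the pairing into a $(g-f_\infty)$ part and an $f_\infty$ part.
\begin{itemize}
\item In the $(g-f_\infty)$ part, $v$-integrability must come from the test function. For $t\ge1$, $v\mapsto\partial^\gamma\chi\big(v+(y+\lambda\Phi(t,v)\log\langle t\rangle)/t\big)$ is supported in a ball of fixed radius around $-y/t$. So this part is bounded by a constant times $\|\langle x\rangle^4\langle v\rangle^3(g-f_\infty)\|_{L^\infty}\int\langle y\rangle^{-4}\,\dr y\to0$.
\item With $\psi=\nabla^2\Delta^{-1}\chi$ in place of $\chi$, note $\psi$ is not Schwartz, only $|\partial^\gamma\psi(w)|\lesssim\langle w\rangle^{-3-|\gamma|}$. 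Use $\sup_{a\in\R^3}\int_{\R^3}\langle v\rangle^{-3}\langle v+a\rangle^{-3}\,\dr v<\infty$ instead.
\item In the $f_\infty$ part, $f_\infty\in L^1(\R^3_x\times\R^3_v)$. It inherits the pointwise decay that the uniform bound on $\E_N[g]$ gives $g(t)$ via Sobolev. Dominated convergence then applies with dominating function $\|\partial^\gamma\chi\|_{L^\infty}|f_\infty|$, and likewise with $\psi$.
\end{itemize}
With this repair your argument is complete.
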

\begin{proof}
The assumption on $\E_N[g]$, together with the previous Lemma \ref{LemdtPhi} and the Minkowski integral inequality, imply that $ t^2 \nabla_x \phi [f](t,tv)$ converges with the stated rate of convergence.

To identify the limit, we note that \eqref{eq:cdv} and the fundamental theorem of calculus imply
\begin{align*} 
\bigg\| \langle v \rangle^3  t^3 \rho [f](t,tv)- \langle v \rangle^3 \int_{\R^3_x} f_\circ (t,x,v) \dr x \bigg\|_{ L^\infty (\R^3_v)} & \leq \frac{1}{t} \int_{\theta=0}^1\bigg\| \langle v \rangle^3\int_{\R^3_y} y \cdot \nabla_v   f_\circ \Big(t,y, v - \theta\frac{y}{t} \Big) \dr y \bigg\|_{L^\infty  ( \R^3_v)} \dr \theta \\
& \lesssim \frac{1}{t} \log^{b} \langle t +1 \rangle \E_N[g](t) ,
\end{align*}
where, in the last step, we proceed as in \eqref{EQUATI:1}. Using that the spatial average of $g$ and $f_\circ$ correspond and
\[ \Big\| \langle v \rangle^2 \nabla_v \Big( t \phi [f] (t,tv)-\phi_\infty [f_\infty] (v) \Big) \Big\|_{L^\infty(\R^3_v)} \lesssim \bigg\| \langle v \rangle^3 \bigg(  t^3 \rho [f] (t,tv)- \int_{\R^3_x} f_\infty (x,v) \dr x \bigg)  \bigg\|_{ L^\infty (\R^3_v)} ,\]
which follows from Proposition \ref{Proellip}, we finally derive that $t^2 \nabla_x \phi [f](t,tv)$ converges uniformly to $\nabla_v \phi_\infty [f_\infty]$.
\end{proof}

\subsection{Commutation formula}

Let $ \phi \colon \R_+ \times \R^3 \to \R$ be a potential. The goal is to compute the commutators of $\overline{\T}_\phi$, the transport operator introduced in Lemma \ref{LemTbar}. To simplify the notation, we introduce the following convention.
\begin{Def}\label{DefpolynomPhi}
For $r \in \mathbb{N}$ and $q \leq N$, we will denote by $P_{r,q}(\Phi)$ a quantity of the form
\[ \prod_{1 \leq k \leq r} \partial_{v^{j_k}}  \partial_{v}^{\alpha_k} \Phi^{i_k}(t,v), \qquad \qquad \sum_{1 \leq k \leq r} |\alpha_k| =q, \qquad 1 \leq i_k, \, j_k \leq 3, \]
where $P_{0,0}(\Phi)=1$ by convention.
\end{Def}

We obtain the first order commutation formula by using the expression of $\overline{\T}_{\phi}$ given in Lemma \ref{LemTbar}, that $\partial_{x^i}z^j= \delta_{i,j}$, and $\partial_{v^i} z^j = t \delta_{i,j}+\lambda \partial_{v^i} \Phi^j (t,v) \log \langle t \rangle$.
 
 \begin{Lem}\label{LemCom}
 We have
 \begin{align*}
 \big[ \T_{\phi } , \partial_{x^i} \big] & = - \lambda t \nabla_x \partial_{x^i} \phi  (t,z)\cdot \nabla_x  - \log \langle t \rangle  \nabla_v \Phi \cdot \nabla_x \partial_{x^i}\phi(t,z) \cdot \nabla_x + \lambda \nabla_x  \partial_{x^i} \phi (t,z) \cdot \nabla_v
 \end{align*}
 and 
 \begin{align*}
 \big[ \T_{\phi } , \partial_{v^i} \big]  = & - \frac{\lambda t}{ \langle t \rangle^2}\Big( \langle t \rangle^2 t\nabla_x \partial_{x^i} \phi  (t,z) - \partial_{v^i}\Phi \Big) \cdot \nabla_x - t \log \langle t \rangle  \partial_{v^i} \Phi  \cdot \nabla_x^2 \phi (t,z) \cdot \nabla_x  + \lambda \log \langle t \rangle \partial_t \big( \partial_{v^i} \Phi\big) \cdot \nabla_x \\
 & -  \log \langle t \rangle \nabla_v \partial_{v^i} \Phi \cdot \nabla_x \phi (t,z) \cdot \nabla_x   -  t \log \langle t \rangle \nabla_v \Phi \cdot \nabla_x  \partial_{x^i} \phi(t,z) \cdot \nabla_x \\
 & - \sum_{1 \leq k \leq 3} \lambda \log^2\langle t \rangle \partial_{v^i} \Phi \cdot \nabla_v \Phi^k \cdot \nabla_x^2 \phi (t,z)  \cdot \partial_{x^k}  +\lambda t \nabla_x  \partial_{x^i} \phi (t,z) \cdot \nabla_v + \log \langle t \rangle \partial_{v^i} \Phi \cdot \nabla_x^2 \phi (t,z)\cdot  \nabla_v .
 \end{align*}
 \end{Lem}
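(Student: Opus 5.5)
The plan is a direct computation with $\overline{\T}_\phi$ written as in Lemma \ref{LemTbar}; we read $\T_\phi$ in the statement as $\overline{\T}_\phi$. Write
\[ \overline{\T}_\phi=\partial_t+A(t,x,v)\cdot\nabla_x+B(t,x,v)\cdot\nabla_v, \]
where
\[ A^j=\frac{\lambda t}{\langle t\rangle^2}\big(\langle t\rangle^2\partial_{x^j}\phi(t,z)-\Phi^j(t,v)\big)-\lambda\log\langle t\rangle\,\partial_t\Phi^j+\log\langle t\rangle\,\partial_{v^k}\Phi^j\,\partial_{x^k}\phi(t,z), \qquad B^j=-\lambda\partial_{x^j}\phi(t,z), \]
with summation over repeated indices. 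For any coordinate derivative $\partial\in\{\partial_{x^i},\partial_{v^i}\}$, the derivative commutes with $\partial_t,\nabla_x,\nabla_v$. Hence
\[ [\overline{\T}_\phi,\partial]=-(\partial A)\cdot\nabla_x-(\partial B)\cdot\nabla_v, \]
and the proof reduces to computing $\partial A$ and $\partial B$ with the chain rule, using $\partial_{x^i}z^j=\delta_{ij}$ and $\partial_{v^i}z^j=t\delta_{ij}+\lambda\log\langle t\rangle\,\partial_{v^i}\Phi^j$.

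\textbf{The $x$-derivative.} Here $\Phi$ and $\partial_t\Phi$ do not depend on $x$, so only the terms containing $\phi(t,z)$ are differentiated. This gives
\[ \partial_{x^i}A^j=\lambda t\,\partial_{x^i}\partial_{x^j}\phi(t,z)+\log\langle t\rangle\,\partial_{v^k}\Phi^j\,\partial_{x^k}\partial_{x^i}\phi(t,z), \qquad \partial_{x^i}B^j=-\lambda\partial_{x^j}\partial_{x^i}\phi(t,z). \]
Inserting these into $-(\partial A)\cdot\nabla_x-(\partial B)\cdot\nabla_v$ produces exactly the three terms of the first formula. The symmetry of $\nabla_v\Phi$ is used to write the middle term as $\nabla_v\Phi\cdot\nabla_x\partial_{x^i}\phi\cdot\nabla_x$.

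\textbf{The $v$-derivative.} Each term of $A$ and $B$ is differentiated separately.
\begin{itemize}
\item The term $\frac{\lambda t}{\langle t\rangle^2}\big(\langle t\rangle^2\partial_{x^j}\phi(t,z)-\Phi^j\big)$ gives $\lambda t\,\partial_{v^i}z^k\,\partial_{x^k}\partial_{x^j}\phi(t,z)-\frac{\lambda t}{\langle t\rangle^2}\partial_{v^i}\Phi^j$.
 \begin{itemize}
 \item The $t\delta_{ik}$ part of $\partial_{v^i}z^k$, together with the $\Phi$ term, yields the first bracket of the statement.
 \item The $\log$ part of $\partial_{v^i}z^k$ yields $t\log\langle t\rangle\,\partial_{v^i}\Phi\cdot\nabla_x^2\phi(t,z)\cdot\nabla_x$, using $\lambda^2=1$.
 \end{itemize}
\item The term $-\lambda\log\langle t\rangle\,\partial_t\Phi^j$ gives $-\lambda\log\langle t\rangle\,\partial_t\partial_{v^i}\Phi^j$. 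After the overall minus sign this becomes $+\lambda\log\langle t\rangle\,\partial_t(\partial_{v^i}\Phi)\cdot\nabla_x$.
\item The term $\log\langle t\rangle\,\partial_{v^k}\Phi^j\,\partial_{x^k}\phi(t,z)$ is a product, so the Leibniz rule gives two contributions.
 \begin{itemize}
 \item Differentiating $\partial_{v^k}\Phi^j$ yields the $\nabla_v\partial_{v^i}\Phi\cdot\nabla_x\phi$ term.
 \item Differentiating $\partial_{x^k}\phi(t,z)$ through $\partial_{v^i}z$ yields two pieces. The $t$-piece gives $t\log\langle t\rangle\,\nabla_v\Phi\cdot\nabla_x\partial_{x^i}\phi\cdot\nabla_x$. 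The $\log$-piece gives $\lambda\log^2\langle t\rangle\,\partial_{v^i}\Phi^\ell\,\partial_{v^k}\Phi^j\,\partial_{x^\ell}\partial_{x^k}\phi$, which is the $\log^2$ sum, again using the symmetry of $\nabla_v\Phi$.
 \end{itemize}
\item Finally, $B^j=-\lambda\partial_{x^j}\phi(t,z)$ gives $\partial_{v^i}B^j=-\lambda t\,\partial_{x^i}\partial_{x^j}\phi-\log\langle t\rangle\,\partial_{v^i}\Phi^k\,\partial_{x^k}\partial_{x^j}\phi$. After the minus sign these are the last two terms of the statement.
\end{itemize}

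\textbf{Where care is needed.} There is no analytic obstacle; the risk is purely in the bookkeeping of signs, the factors $\lambda$ (with $\lambda^2=1$), and index contractions. The point most likely to cause trouble is the chain-rule contribution through the $\lambda\log\langle t\rangle\,\nabla_v\Phi$ part of $\partial_v z$. That piece is what generates the $\log^2$ term and the $\partial_{v^i}\Phi\cdot\nabla_x^2\phi$ terms, and it relies on the symmetry of $\nabla_v\Phi$ noted in the remark after Lemma \ref{LemTbar}. I would check the final signs against the first formula, which is the special case without $\Phi$-dependence.

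In the first bracket of the statement, the coefficient of $\nabla_x\partial_{x^i}\phi$ should come out as $\frac{\lambda t}{\langle t\rangle^2}\langle t\rangle^2 t=\lambda t^2$, consistent with the factor $\langle t\rangle^2 t$ appearing there. I would verify this, and likewise the sign of the $\log^2$ term, against the direct computation rather than trust the displayed formula blindly.
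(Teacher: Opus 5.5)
Your proposal is correct and is essentially the paper's own argument. The paper obtains the lemma by the same direct chain-rule computation of $-(\partial A)\cdot\nabla_x-(\partial B)\cdot\nabla_v$ from the expression of $\overline{\T}_\phi$ in Lemma~\ref{LemTbar}, using $\partial_{x^i}z^j=\delta_{ij}$ and $\partial_{v^i}z^j=t\delta_{ij}+\lambda\log\langle t\rangle\,\partial_{v^i}\Phi^j$, and your reading of $\T_\phi$ as $\overline{\T}_\phi$ is the intended one.

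One minor remark: the paper's convention is $\nabla_v\Phi\cdot\nabla_x\psi\cdot\nabla_x=\sum_{i,j}\partial_{v^i}\Phi^j\,\partial_{x^i}\psi\,\partial_{x^j}$. With it, the index contractions you obtain already match the statement, so the symmetry of $\nabla_v\Phi$ is never actually needed.
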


By iterating the above, one can derive the following high order commutation relation.

\begin{Cor}\label{CorComm}
Let $|\alpha_x|+|\alpha_v| \leq N$. Then, $\big[ \, \overline{\T}_{\phi }, \partial_x^{\alpha_x} \partial_v^{\alpha_v} \big]$ is a linear combination of the following terms.
\begin{itemize}
\item The ones of \eqref{type1}
\begin{align}
& \frac{t}{\langle t \rangle^2}\Big( \langle t \rangle^2 t^{|\gamma|}\nabla_x \partial_x^\gamma \phi  (t,z) -  \partial_{v}^{\gamma}\Phi (t,v) \Big) \cdot \nabla_x \partial_x^{\alpha_x} \partial_v^{\beta_v}, \qquad \qquad |\gamma| \geq 1, \quad \gamma+\beta_v = \alpha_v. \label{type1} \tag{\text{type}--$1$}
\end{align}
\item The ones of \eqref{type2}, with the convention that $\partial_{x^k}^\delta \partial_{v^k}^{1-\delta}=\partial_{x^k}$ if $\delta=1$ and $\partial_{x^k}^\delta \partial_{v^k}^{1-\delta}=\partial_{v^k}$ if $\delta=0$,
\begin{align*}
& t^{\delta+|\alpha_v|-|\beta_v|-r-q} \log^r \langle t \rangle P_{r,q}(\Phi) \partial_{x^i}  \partial_x^{\gamma}  \phi (t,z) \cdot \partial_{x^k}^\delta \partial_{v^k}^{1-\delta}\partial_x^{\beta_x} \partial_v^{\beta_v},  \label{type2} \tag{\text{type}--$2$}
\end{align*}
where $r \leq |\alpha_v|+1$ and $q+|\gamma|+|\beta| = |\alpha|$. Moreover, we have $|\beta_x| \leq |\alpha_x|$, $|\beta_v| \leq |\alpha_v|$, $|\beta|<|\alpha|$, and the additional conditions
\begin{enumerate}
\item $\delta+|\alpha_v|-|\beta_v|-r-q \geq 0$, so that there are no negative powers of $t$,
\item if $|\beta_x|=|\alpha_x|$, then $r \geq 1$.
\end{enumerate}
\item The ones of \eqref{type3}, which vanish if $\Phi$ does not depend on $t$,
\begin{equation}\label{type3} 
\log \langle t \rangle \partial_t \partial_{v}^\gamma (\Phi) \cdot \nabla_x \partial_x^{\alpha_x} \partial_{v}^{\beta_v}  , \qquad \qquad |\gamma| \geq 1, \quad \gamma+\beta_v = \alpha_v \tag{\text{type}--$3$}
\end{equation}
\end{itemize}
\end{Cor}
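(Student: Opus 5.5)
The natural route is induction on $|\alpha|=|\alpha_x|+|\alpha_v|$, iterating the first order formulas of Lemma \ref{LemCom} (which are really commutators of $\overline{\T}_\phi$, written in the $(t,x,v)$ coordinates of $g$). We would use the identity
\[
\big[\overline{\T}_\phi,\partial^{\alpha}\partial_{w}\big]=\big[\overline{\T}_\phi,\partial^{\alpha}\big]\partial_{w}+\partial^{\alpha}\big[\overline{\T}_\phi,\partial_{w}\big], \qquad \partial_w\in\{\partial_{x^i},\partial_{v^i}\},
\]
where $\partial^\alpha=\partial_x^{\alpha_x}\partial_v^{\alpha_v}$. Since $x$- and $v$-derivatives commute, we may order them freely. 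We apply the $\partial_{x^i}$ first and the $\partial_{v^i}$ last, so that the $v$-derivatives act on the coefficients of the $x$-commutators.

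The base case is Lemma \ref{LemCom}, with each term classified as follows.
\begin{itemize}
\item In $[\overline{\T}_\phi,\partial_{v^i}]$, the term $-\frac{\lambda t}{\langle t\rangle^2}(\langle t\rangle^2 t\nabla_x\partial_{x^i}\phi(t,z)-\partial_{v^i}\Phi)\cdot\nabla_x$ is of \eqref{type1} with $|\gamma|=1$.
\item The term $\lambda\log\langle t\rangle\,\partial_t\partial_{v^i}\Phi\cdot\nabla_x$ is of \eqref{type3}.
\item Every other term is a product of $t^{s}\log^r\langle t\rangle$, a monomial $P_{r,q}(\Phi)$, one derivative $\partial_{x^j}\partial_x^\gamma\phi(t,z)$ and a first order operator $\partial_{x^k}$ or $\partial_{v^k}$. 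These are of \eqref{type2}, and one checks the exponent $s=\delta+|\alpha_v|-|\beta_v|-r-q$ directly. For instance, $t\log\langle t\rangle\,\partial_{v^i}\Phi\cdot\nabla_x^2\phi\cdot\nabla_x$ has $\delta=1$, $|\alpha_v|=1$, $\beta=0$, $r=1$, $q=0$, hence $s=1$.
\item For $[\overline{\T}_\phi,\partial_{x^i}]$ one has $|\beta_x|=0<|\alpha_x|$, so condition 2 is void.
\end{itemize}

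For the induction step, $\partial^{\alpha}$ acting on a coefficient is distributed by Leibniz over three kinds of factors.
\begin{itemize}
\item \emph{Derivatives of $\phi(t,z)$.} Using $\partial_{x^j}z=e_j$ and $\partial_{v^j}z=te_j+\lambda\log\langle t\rangle\,\partial_{v^j}\Phi$, a $\partial_x$ raises $|\gamma|$ by one. A $\partial_v$ either produces $t\,\partial_x$ on $\phi$, raising the $t$-power and $|\gamma|$ by one each, or produces $\log\langle t\rangle\,\partial_v\Phi\cdot\nabla_x$, raising $r$ and $|\gamma|$ by one each.
\item \emph{Derivatives of $P_{r,q}(\Phi)$.} These raise $q$ by one.
\item \emph{The differential operator.} Here the previous commutator terms are simply composed with the new derivative, increasing $\beta$.
\end{itemize}
In each case the bookkeeping $q+|\gamma|+|\beta|=|\alpha|$ is preserved. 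The power of $t$ shifts consistently with the formula $\delta+|\alpha_v|-|\beta_v|-r-q$, since a $v$-derivative increases $|\alpha_v|$ by one and exactly one of $t$-power, $r$, $q$ or $|\beta_v|$ by one. Hence nonnegativity of the exponent is inherited.

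For the first order terms and their iterates, $(t^{|\gamma|}\langle t\rangle^2\nabla_x\partial_x^\gamma\phi(t,z)-\partial_v^\gamma\Phi)$ is differentiated in $v$. The leading part, with $\partial_v$ hitting $\phi$ through the $t e_j$ component of $\partial_v z$, combines with $\partial_v$ hitting $\Phi$ to give the \eqref{type1} term with $\gamma$ raised. The remainder, where $\partial_v z$ contributes $\log\langle t\rangle\,\partial_v\Phi$, is of \eqref{type2} with $r\geq1$. Differentiating \eqref{type3} in $v$ stays within \eqref{type3}, as $\partial_t$ commutes with $\partial_v$. Differentiating \eqref{type3} in $x$ acts only on the operator part, because the coefficient depends on $v$ alone.

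The main obstacle is verifying condition 2: if $|\beta_x|=|\alpha_x|$, then $r\geq1$. The idea is that a term not losing any $x$-derivative must come from a commutator of some $\partial_{v^i}$. In Lemma \ref{LemCom}, the $\partial_{v^i}$-commutator terms with $r=0$ are exactly $\lambda t\nabla_x\partial_{x^i}\phi(t,z)\cdot\nabla_v$ and the \eqref{type1} term.
\begin{itemize}
\item For $\lambda t\nabla_x\partial_{x^i}\phi(t,z)\cdot\nabla_v$, the output operator is $\partial_{v^k}$ ($\delta=0$), so $|\beta_v|$ is unchanged while $|\alpha_v|$ dropped. Its $x$-part then has $|\beta_x|<|\alpha_x|$ only if $x$-derivatives were consumed. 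We would track this carefully: otherwise the term has $\delta=0$ and $|\beta_x|=|\alpha_x|$.
\item If that fails, the fallback is to observe that the $r=0$ terms of this kind are harmless for later estimates. One then checks that the paper's convention counts $\nabla_x$ acting together with $\partial_x^{\beta_x}$, with the bookkeeping arranged so that condition 2 holds.
\end{itemize}
We expect this counting, rather than the algebra, to be the delicate part.
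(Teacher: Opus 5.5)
Your induction is the paper's argument: Leibniz applied to the first-order formulas of Lemma \ref{LemCom}. Your bookkeeping for $q+|\gamma|+|\beta|=|\alpha|$, for $r$, and for the power of $t$ is correct, and putting all $\partial_v$'s on the left ensures that \eqref{type1} coefficients are only differentiated in $v$.

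The gap is condition 2, which you leave open. Your suspicion there is justified: as written, condition 2 is false, so no careful tracking will prove it. For $\alpha=(0,e_i)$, the term $\lambda t\,\partial_{x^j}\partial_{x^i}\phi(t,z)\,\partial_{v^j}$ of $[\overline{\T}_\phi,\partial_{v^i}]$ can only be written as a \eqref{type2} term with $\delta=0$, $\beta=0$, $r=q=0$. Hence $|\beta_x|=|\alpha_x|$ while $r=0$. More generally, for every $\alpha=(0,\alpha_v)$ the $\partial_{v^j}$-coefficient of $[\overline{\T}_\phi,\partial_v^{\alpha_v}]$ contains $\lambda t^{|\alpha_v|}\partial_{x^j}\partial_x^{\alpha_v}\phi(t,z)$. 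The paper's proof does not settle this either; it only remarks that $|\beta_x|=|\alpha_x|$ arises solely from $\partial_{v^i}$-commutators.

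Your fallback (count the $\delta$-derivative together with $\partial_x^{\beta_x}$) is the right fix, but it has to be stated as a modified claim and proved. The correct condition is: if $\delta=1$ and $|\beta_x|=|\alpha_x|$, then $r\geq 1$. This is exactly what the first case of the proof of Corollary \ref{Corenergy} uses. When $\delta=0$ no lower bound on $r$ is needed: the power of $t$ plus $|\alpha_x|-|\beta_x|$ equals $|\gamma|-r\leq|\gamma|$, so \eqref{eq:x} suffices.

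The restricted condition follows from monotonicity.
\begin{itemize}
\item Once a term is created, $\delta$ is frozen and $r$ never decreases.
\item $|\alpha_x|-|\beta_x|$ never decreases. A new $\partial_x$ lands either on the operator, raising both counts, or on $\phi$, raising only $|\alpha_x|$. A new $\partial_v$ changes neither count.
\end{itemize}
It remains to check the condition where terms are created.
\begin{itemize}
\item Terms from $[\overline{\T}_\phi,\partial_{x^i}]$ have $|\beta_x|<|\alpha_x|$.
\item Every $\delta=1$ \eqref{type2} term of $[\overline{\T}_\phi,\partial_{v^i}]$ carries a factor $\log\langle t\rangle\,\partial_{v^j}\partial_v^\xi\Phi^k$ with $|\xi|\leq1$, so $r\geq1$. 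The only $r=0$ \eqref{type2} term there is the $\delta=0$ term above.
\item The \eqref{type2} remainder produced when $\partial_v$ hits a \eqref{type1} coefficient through $\lambda\log\langle t\rangle\,\partial_v\Phi$ has $\delta=1$ and $r=1$.
\end{itemize}
With condition 2 replaced in this way, your argument closes.
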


\begin{proof}
We argue by induction on $ |\alpha|=|\alpha_x|+|\alpha_v| $ and we note that the first order case $ |\alpha|=1 $ is treated by Lemma \ref{LemCom}. In particular, for the second additional condition for the terms of \eqref{type2}, the case $|\beta_x| = |\alpha_x|$ can only occur when commuting with $\partial_{v^i}$. Assume now that the result holds for all multi-indices of length $m$, and let $ |\alpha_x|+|\alpha_v|=m+1$. Write
\[ \partial_x^{\alpha_x}\partial_v^{\alpha_v} = \partial_{w}\partial_x^{\kappa_x}\partial_v^{\kappa_v}, \qquad |\kappa_x|+|\kappa_v|=m, \]
where $\partial_w$ denotes either one space derivative $\partial_{x^i}$ or one velocity derivative $\partial_{v^i}$. Then
\[ \big[ \,  \overline{\T}_{\phi[f]},\partial_x^{\alpha_x}\partial_v^{\alpha_v} \big] = \big[  \, \overline{\T}_{\phi[f]},\partial_w \big]\partial_x^{\kappa_x}\partial_v^{\kappa_v} + \partial_w\Big( \big[ \, \overline{\T}_{\phi[f]},\partial_x^{\kappa_x}\partial_v^{\kappa_v} \big]\Big). \]
The first term is of the required form by the first order commutation formula of Lemma \ref{LemCom}. It then remains to check that the three classes of terms are stable under one additional derivative.

If $\partial_w=\partial_{x^i}$, then no new factor of $t$, $\log \langle t \rangle$, or $\Phi$ is created, since $\partial_{x^j} z^k=\delta_{j,k}$. As a consequence, differentiating any term of \eqref{type1}, \eqref{type2}, or \eqref{type3} simply increases the number of $x$-derivatives falling either on $\partial_x^\gamma \phi $ or on the final differential operator. The structure is thus preserved.

Assume now that $\partial_w=\partial_{v^i}$. Then, there are four cases to consider.

\begin{itemize}
\item The derivative $\partial_{v^i}$ hits the differential operator $\nabla_x \partial_x^{\alpha_x}\partial_v^{\beta_v}$ or $\partial_{x^k}^\delta \partial_{v^k}^{1-\delta}\partial_x^{\beta_x} \partial_v^{\beta_v}$. Then $|\beta_v|$ increases by one, which preserves the structure of the terms \eqref{type1}--\eqref{type3}.
\item The derivative $\partial_{v^i}$ hits the factor $P_{r,q}(\Phi)$. In that case, this produces $r$ quantities of the form $P_{r,q+1}(\Phi)$. Thus the structure of the terms of \eqref{type2} is again preserved. 
\item The derivative $\partial_{v^i}$ hits the force field $\nabla_x \partial_x^\gamma \phi(t,z)$. Using $\partial_{v^i} z^j = t\delta_{ij}+\lambda \log \langle t \rangle\,\partial_{v^i}\Phi^j$,
we obtain
\[ \partial_{v^i}\big[\partial_{x^k}\partial_x^\gamma \phi (t,z)\big] = t \partial_{x^i}\partial_{x^k}\partial_x^\gamma \phi(t,z) + \lambda \log \langle t \rangle \sum_{1 \leq j \leq 3} \partial_{v^i}\Phi^j \partial_{x^j}\partial_{x^k} \partial_x^\gamma \phi (t,z). \]
Hence, when the velocity derivative $\partial_{v^i}$ falls on $\nabla_x \partial_x^\gamma \phi(t,z)$, it can only
\begin{enumerate}[ label = (\roman*)]
\item \label{itemitem1}  produce one additional power of $t$, together with one more $x$-derivative on $\phi$,
\item \label{itemitem2} or produce one additional factor $\log \langle t \rangle \partial_{v^i} \Phi$, and again one more $x$-derivative on $\phi$.
\end{enumerate}
For the terms of \eqref{type2}, this is exactly the origin of the exponent for the factor $t^{\delta+|\alpha_v|-|\beta_v|-r-q}$, and this proves that the constraint $q+|\gamma|+|\beta|=|\alpha|$ is preserved.

A term of \eqref{type1} gives rise to quantity with the same structure when \ref{itemitem1} occurs. Indeed, in that case $\partial_{v^i}$ also hits $\partial_v^\gamma \Phi$. If \ref{itemitem2} occurs, it gives rise to a \eqref{type2} term.
\item The derivative $\partial_{v^i}$ hits $\partial_t \partial_v^\gamma \Phi$. In that case, the structure of the terms of \eqref{type3} is preserved.
\end{itemize}

Therefore each additional derivative sends any admissible term of order $m$ into a linear combination of terms of \eqref{type1}--\eqref{type3} of order $m+1$. This concludes the induction.
\end{proof}

In practice, and in view of the hirarchised norms that we will propagate, we will appeal to the next result.

\begin{Cor}\label{Corenergy}
We can bound $\langle x \rangle^{|\alpha_x|} \langle v \rangle^{-|\alpha_x|}\big| \big[ \T_{\phi } , \partial_x^{\alpha_x} \partial_v^{\alpha_v} \big](g) \big|$ by a linear combination of the terms
\begin{alignat*}{2}
& \frac{\langle v \rangle}{\langle t \rangle  \langle x \rangle}  \Big| \langle t \rangle^2 t^{|\gamma|} \nabla_x \partial_{x}^{\gamma}  \phi  (t,z) - \partial_{v}^{\gamma} \Phi \Big| \cdot \frac{\langle x \rangle^{|\beta_x|}}{\langle v \rangle^{|\beta_x|}} \big| \partial_x^{\beta_x} \partial_v^{\beta_v} g \big|, \qquad && |\gamma| \geq 1, \quad \gamma+\beta_v = \alpha_v, \quad |\beta_x|=|\alpha_x|+1, \\ 
&\log^Q \! \langle t+1 \rangle  \big( 1+| \langle v \rangle \nabla_v \partial_v^{\xi} \Phi |\big) \Big| \langle t+|z| \rangle^{|\gamma|} \nabla_x \partial_x^\gamma \phi (t,z) \Big| \frac{\langle x \rangle^{|\beta_x|}}{\langle v \rangle^{|\beta_x|}} \big| \partial_x^{\beta_x} \partial_v^{\beta_v} g \big|, \quad \;  && |\gamma|+|\xi| +|\beta| \leq |\alpha|+1, \; \; |\xi|, \,|\gamma|, \, |\beta| \leq |\alpha|, \\
& \log \langle t \rangle  \big| \langle  v \rangle \partial_t \big( \partial_v^{\gamma} \Phi \big) \big| \cdot \frac{\langle x \rangle^{|\beta_x|}}{\langle v \rangle^{|\beta_x|}} \big| \partial_x^{\beta_x} \partial_v^{\beta_v} g \big|, \qquad &&|\gamma| \geq 1, \quad \big( |\gamma|-1 \big) +|\beta| \leq |\alpha| ,
\end{alignat*}
where $Q= (a+1)N+1$ and $|\gamma| \leq |\alpha|$.
\end{Cor}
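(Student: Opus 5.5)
The plan is to take Corollary \ref{CorComm} as the starting point: the commutator $[\overline{\T}_\phi,\partial_x^{\alpha_x}\partial_v^{\alpha_v}]$ is a linear combination of terms of \eqref{type1}, \eqref{type2} and \eqref{type3}. I will multiply each one by the weight $\langle x\rangle^{|\alpha_x|}\langle v\rangle^{-|\alpha_x|}$ and match it with one of the three families in the statement. The basic bookkeeping step is this. Whenever the final differential operator carries $\beta_x$ spatial derivatives, I rewrite
\[ \frac{\langle x\rangle^{|\alpha_x|}}{\langle v\rangle^{|\alpha_x|}}=\frac{\langle x\rangle^{|\beta_x|}}{\langle v\rangle^{|\beta_x|}}\cdot \frac{\langle v\rangle^{|\beta_x|-|\alpha_x|}}{\langle x\rangle^{|\beta_x|-|\alpha_x|}} , \]
and then control the leftover ratio.

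For \eqref{type1}, the operator is $\nabla_x\partial_x^{\alpha_x}\partial_v^{\beta_v}$, so $|\beta_x|=|\alpha_x|+1$. The leftover factor is exactly $\langle v\rangle/\langle x\rangle$. Together with $t\langle t\rangle^{-2}\le\langle t\rangle^{-1}$, this gives the first family verbatim.

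For \eqref{type3}, again $|\beta_x|=|\alpha_x|+1$, so the leftover ratio $\langle v\rangle/\langle x\rangle\le \langle v\rangle$ is absorbed into $|\langle v\rangle\partial_t\partial_v^\gamma\Phi|$. The index count $(|\gamma|-1)+|\beta|\le|\alpha|$ follows from $\gamma+\beta_v=\alpha_v$ and $|\beta|=|\alpha_x|+1+|\beta_v|$.

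The real work is in \eqref{type2}. There $|\beta_x|$ can be smaller than $|\alpha_x|$, possibly plus one if $\delta=1$, so the leftover ratio can be $\langle x\rangle^{m}\langle v\rangle^{-m}$ with $m=|\alpha_x|-|\beta_x|+$ (a correction of at most one). I also have to absorb the power $t^{\delta+|\alpha_v|-|\beta_v|-r-q}$. I will use two tools:
\begin{itemize}
\item the inequality $\langle x\rangle\lesssim \langle z\rangle+t\langle v\rangle+\log^{a+1}\langle t+1\rangle$, which follows from $x=z-tv-\lambda\Phi\log\langle t\rangle$ and $\|\Phi\|_{L^\infty}\lesssim \mathbf{\Lambda}\log^a\langle t+1\rangle$ by \eqref{EQ:AssumpPhi};
\item its consequence $\langle x\rangle/\langle v\rangle\lesssim \log^{a+1}\langle t+1\rangle\,\langle t+|z|\rangle$.
\end{itemize}
With these, each excess power of $\langle x\rangle/\langle v\rangle$, and each power of $t$, is converted into a factor $\langle t+|z|\rangle$ at the cost of logarithms. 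The total number of such factors is at most the spatial derivative count left on $\phi$.

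Precisely, the relation $q+|\gamma|+|\beta|=|\alpha|$ (with $\gamma$ counting the derivatives in $\partial_x^\gamma$, plus the extra $\partial_{x^i}$) shows that
\[ \big(\delta+|\alpha_v|-|\beta_v|-r-q\big)+\big(|\alpha_x|-|\beta_x|\big)\le |\gamma| \]
up to the shift due to $\delta$ and $r\ge0$. So all growth is absorbed into $\langle t+|z|\rangle^{|\gamma|}|\nabla_x\partial_x^\gamma\phi(t,z)|$. The case $\delta=0$, which gives a $\partial_v$ derivative with no extra $\langle x\rangle$, only helps. Condition 2 of Corollary \ref{CorComm} handles the borderline case $|\beta_x|=|\alpha_x|$: there $r\ge1$, which removes one power of $t$.

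Finally, I treat the factor $P_{r,q}(\Phi)$. All but one of its factors are bounded pointwise by $\mathbf{\Lambda}\log^a\langle t+1\rangle$, using the Sobolev embedding and $N\ge 2p_\infty-1$ as in Lemma \ref{Corderivgtof}. The remaining factor is kept as $1+|\langle v\rangle\nabla_v\partial_v^\xi\Phi|$, where the extra $\langle v\rangle$ weight is harmless since one may also spend it from $\langle v\rangle^{-|\beta_x|}$ bookkeeping. Collecting the logarithms, at most $r\le|\alpha_v|+1$ factors of $\log\langle t\rangle$ from $P_{r,q}$, $(r-1)a$ from the bounded $\Phi$-factors, and $(a+1)$ per converted weight, gives the exponent $Q=(a+1)N+1$.

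The main obstacle is the exponent count in \eqref{type2}. I must check carefully that the number of converted $\langle x\rangle/\langle v\rangle$ and $t$ factors never exceeds $|\gamma|$, especially when $\delta=1$. I also need to verify that the index constraints $|\xi|,|\gamma|,|\beta|\le|\alpha|$ and $|\gamma|+|\xi|+|\beta|\le|\alpha|+1$ follow from $q+|\gamma|+|\beta|=|\alpha|$ with $|\xi|\le q$.
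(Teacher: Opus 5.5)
Your route is the paper's: you start from Corollary \ref{CorComm}, read the first and third families directly off \eqref{type1} and \eqref{type3}, and treat \eqref{type2} using $\langle x\rangle\lesssim\langle t+|z|\rangle\langle v\rangle$ together with a pointwise bound on all but one factor of $P_{r,q}(\Phi)$. The gap is in the one delicate case of \eqref{type2}, namely $\delta=1$ and $|\beta_x|=|\alpha_x|$.

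In that case the final operator carries $|\alpha_x|+1$ spatial derivatives, so the leftover ratio is $\langle v\rangle\langle x\rangle^{-1}$. This is a surplus $\langle v\rangle$. Your conversion inequality cannot handle it, because it only trades excess powers of $\langle x\rangle/\langle v\rangle$ for $\langle t+|z|\rangle$. You invoke $r\geq 1$ only to bring the power of $t$, which here is $1+|\gamma|-r$, down to at most $|\gamma|$. What is missing is that $r\geq 1$ also guarantees a factor $\nabla_v\partial_v^\xi\Phi$ inside $P_{r,q}(\Phi)$, hence
\[ \big|P_{r,q}(\Phi)\big|\lesssim \log^{(r-1)a}\langle t+1\rangle\,\langle v\rangle^{-1}\big|\langle v\rangle\nabla_v\partial_v^\xi\Phi\big| , \]
and this $\langle v\rangle^{-1}$ is what absorbs the surplus. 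So the $\langle v\rangle$ in $1+|\langle v\rangle\nabla_v\partial_v^\xi\Phi|$ is not a harmless extra weight; in this case it is the gain the argument needs.

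The term $-\log\langle t\rangle\,\nabla_v\partial_{v^i}\Phi\cdot\nabla_x\phi(t,z)\cdot\nabla_x g$ of $[\overline{\T}_\phi,\partial_{v^i}]$ shows this cannot be avoided. The weight is $1$ and the target contains $\langle x\rangle\langle v\rangle^{-1}|\nabla_x g|$. The power of $t$ is $0$ and $\gamma=0$, so only $\langle v\rangle|\nabla_v\partial_{v^i}\Phi|$ can supply the factor $\langle v\rangle\langle x\rangle^{-1}$. You use exactly this mechanism for \eqref{type3}, but not here.

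Your exponent count is also off by $\delta$. The leftover spatial weight is $|\alpha_x|-|\beta_x|-\delta$, and the exact identity is
\[ \big(\delta+|\alpha_v|-|\beta_v|-r-q\big)+\big(|\alpha_x|-|\beta_x|-\delta\big)=|\alpha|-|\beta|-q-r=|\gamma|-r . \]
So whenever $|\alpha_x|-|\beta_x|-\delta\geq 0$ you get $t^{\delta+|\alpha_v|-|\beta_v|-r-q}\langle t+|z|\rangle^{|\alpha_x|-|\beta_x|-\delta}\leq\langle t+|z|\rangle^{|\gamma|-r}$ with no hedging. The only remaining case is $|\alpha_x|-|\beta_x|-\delta=-1$, treated above.

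Finally, the $\log^{a+1}$ you pay per converted weight is unnecessary. Since $\log^{a+1}\langle t+1\rangle\lesssim\langle t\rangle$, you get $\langle x\rangle\lesssim\langle t+|z|\rangle\langle v\rangle$ directly. Your lossier count still stays below $Q$.
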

\begin{proof}
We will use the following inequality, which follows from $z=x+tv+\Phi \log \langle t \rangle$ and $|\Phi| \leq \mathbf{\Lambda} \log^a \langle t+1 \rangle$,
\begin{align}
\langle x \rangle & \lesssim \langle z \rangle+ t \, \langle v \rangle +  \log^{a+1} \langle t+1 \rangle \lesssim   \langle t+ |z| \rangle \, \langle v \rangle . \label{eq:x}
\end{align}
We consider first the terms of \eqref{type2}. Since $\| \nabla_v \Phi\|_{W^{N-p_\infty,\infty}_v} \lesssim \mathbf{\Lambda} \log^a \langle t+1 \rangle$ by \eqref{EQ:AssumpPhi}, we have for $1 \leq r \leq N+1$, 
\begin{equation}\label{eq:Ppq}
 \big| P_{r,q} (\Phi) \big| \lesssim \log^{(r-1)a} \langle t+1 \rangle \big|  \nabla_v \partial_v^\xi  \Phi \big| \lesssim \frac{\log^{Na} \langle t+1 \rangle}{\langle v \rangle}\big| \langle v \rangle \nabla_v \partial_v^\xi  \Phi \big|  , \qquad \qquad |\xi| \leq q.
\end{equation}
We then need to bound
\[ \mathcal{T} \coloneqq  \langle t \rangle^{\delta +|\alpha_v|-|\beta_v|-r-q} \big| \partial_{x^i}  \partial_x^\gamma \phi (t,z) \big| \cdot \langle x \rangle^{|\alpha_x|} \langle v \rangle^{-|\alpha_x|} \big|\partial_{x^i}^\delta \partial_{v^i}^{1-\delta} \partial_x^{\beta_x} \partial_v^{\beta_v} g \big|, \qquad \quad \delta \in \{ 0,1 \}, \quad  |\beta_x| \leq |\alpha_x|, \quad |\beta_v| \leq |\alpha_v|, \]
$q+|\gamma|+|\beta| = |\alpha|$, and $|\beta| < |\alpha|$. We note that we used the first additional condition, namely that no term of \eqref{type2} involves negative powers of $t$. Let $\kappa$ such that $\partial_x^{\kappa_x} \partial_v^{\kappa_v} = \partial_{x^i}^\delta \partial_{v^i}^{1-\delta} \partial_x^{\beta_x} \partial_v^{\beta_v}$.
\begin{itemize}
\item Assume first that $\delta=1$ and $|\beta_x|=|\alpha_x|$, so that $|\kappa_x|=|\alpha_x|+1$, and $r \geq 1$ by the second additional condition. We have
\[
\mathcal{T} = \langle v \rangle   \langle t \rangle^{1+|\alpha_v|-|\beta_v|-r-q}  \big| \partial_{x^i}  \partial_x^\gamma \phi (t,z) \big| \cdot \langle x \rangle^{|\kappa_x|-1} \langle v \rangle^{-|\kappa_x|} \big|  \partial_x^{\kappa_x} \partial_v^{\kappa_v} g \big|.
\]
Then, note that in that case, $|\gamma| = |\alpha_v|-|\beta_v|-q$, and $1-r \leq 0$. Moreover, the extra factor $\langle v \rangle$ is absorbed by \eqref{eq:Ppq} since $r \geq 1$.
\item Otherwise $|\beta_x|+\delta=|\kappa_x | \leq |\alpha_x|$ and we use \eqref{eq:x} to get
\[
\mathcal{T} \lesssim  \langle t \rangle^{\delta+|\alpha_v|-|\beta_v|-r-q} \langle t+| z| \rangle^{|\alpha_x|-|\kappa_x|}  \big| \partial_{x^i}  \partial_x^\gamma \phi (t,z) \big| \cdot \langle x \rangle^{|\kappa_x|} \langle v \rangle^{-|\kappa_x|} \big|  \partial_x^{\kappa_x} \partial_v^{\kappa_v} g \big|.
\]
As $|\gamma| = |\alpha|-|\beta|-q$, we have $\langle t \rangle^{\delta+|\alpha_v|-|\beta_v|-r-q} \langle t + |z| \rangle^{|\alpha_x|-|\kappa_x|} \leq \langle t+|z| \rangle^{|\gamma|-r}$, which allows to conclude the analysis of this error term.
\end{itemize}
For the terms of \eqref{type1} and \eqref{type3}, we simply observe that the differential operator applied to $g$ is $\partial_x^{\beta_x} \partial_{v}^{\beta_v}=\nabla_x \partial_x^{\alpha_x} \partial_{v}^{\beta_v} $, where $|\beta_x|=|\alpha_x|+1$. In the latter case, we additionnaly use $ \langle x \rangle^{-1} \leq  1$.
\end{proof}

\subsection{Energy inequality} The next energy inequality will be used for both the forward and backward problems.

\begin{Pro}\label{Proenergy}
Let $0 \leq t_1 \leq t_2 \leq +\infty$. Then, for any $p \geq 1$,
\[ \Big| \big\| g(t_1,\cdot , \cdot ) \big\|_{L^p(\R^3_x \times \R^3_v)} - \big\| g(t_2,\cdot , \cdot ) \big\|_{L^p(\R^3_x \times \R^3_v)} \Big| \leq p \int_{t=t_1}^{t_2} \big\| \overline{\T}_\phi (g) (t,\cdot,\cdot) \big\|_{L^p(\R^3_x \times \R^3_v)} \dr t. \]
\end{Pro}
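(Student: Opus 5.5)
The plan is to differentiate $t \mapsto \|g(t)\|_{L^p}^p$ in time and to exploit that the vector field part of $\overline{\T}_\phi$ is divergence free in $(x,v)$. From Lemma \ref{LemTbar} we write $\overline{\T}_\phi = \partial_t + Y$. The field
\[ Y = A(t,x,v)\cdot \nabla_x - \lambda \nabla_x\phi(t,z)\cdot\nabla_v \]
has an $x$-component $A$ collecting all coefficients of $\nabla_x$. The first step is to check that $\nabla_{x,v}\cdot Y = 0$. This is immediate if we recall that $\overline{\T}_\phi$ is the pushforward of $\T_\phi$ under the map $(x,v)\mapsto (z(t,x,v),v)$. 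This map has Jacobian determinant $1$, since $z = x + (\text{function of } t,v)$, so it is volume preserving. The operator $\T_\phi$ has divergence-free spatial part, because $\nabla_v\cdot\nabla_x\phi(t,x)=0$ and $\nabla_x\cdot v = 0$. Hence $Y$ is divergence free. Alternatively, one can verify this directly: the coefficients of $\nabla_x$ depend on $x$ only through $\nabla_x\phi(t,z)$, and the resulting $x$-divergence cancels against the $v$-divergence of $-\lambda\nabla_x\phi(t,z)$, using $\partial_{v^i}z = t e_i + \lambda\log\langle t\rangle\partial_{v^i}\Phi$ and the symmetry of $\nabla_v\Phi$.

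Next, for smooth, sufficiently decaying $g$, we compute
\[ \frac{\dr}{\dr t}\int |g|^p \,\dr x\,\dr v = p\int |g|^{p-2}g\,\partial_t g = p\int |g|^{p-2}g\,\overline{\T}_\phi(g) - \int Y\cdot\nabla_{x,v}|g|^p . \]
The last integral vanishes after integrating by parts, using the divergence-free property; boundary terms disappear thanks to the decay of $g$. Hölder's inequality then gives
\[ \Big|\frac{\dr}{\dr t}\|g\|_{L^p}^p\Big| \le p\,\|g\|_{L^p}^{p-1}\,\|\overline{\T}_\phi g\|_{L^p}. \]
Writing $\frac{\dr}{\dr t}\|g\|_{L^p}^p = p\|g\|_{L^p}^{p-1}\frac{\dr}{\dr t}\|g\|_{L^p}$ where $\|g\|_{L^p}\neq 0$, we get $\big|\frac{\dr}{\dr t}\|g\|_{L^p}\big| \le \|\overline{\T}_\phi g\|_{L^p}$, which is even better than the stated factor $p$. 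To avoid dividing by zero, we differentiate $(\|g\|_{L^p}^p+\epsilon)^{1/p}$ instead and let $\epsilon\to0$. Integrating between $t_1$ and $t_2$ yields the claim for finite $t_2$. The case $t_2=+\infty$ is understood with $\|g(+\infty)\|_{L^p}$ the limit, or with the inequality read for a limit state. It follows by letting $t_2\to\infty$ whenever the right-hand side is finite, since the bound then shows that $t\mapsto\|g(t)\|_{L^p}$ is Cauchy.

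An alternative that avoids regularity issues altogether is the Duhamel formula along the flow $\varphi_{s,t}$ of $\overline{\T}_\phi$, which preserves Lebesgue measure. This gives $g(t_2)=g(t_1)\circ\varphi_{t_1,t_2}+\int_{t_1}^{t_2}\overline{\T}_\phi g(s)\circ\varphi_{s,t_2}\,\dr s$. Minkowski's inequality and measure preservation then yield the estimate directly with constant $1$.

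The main obstacle is the justification, namely the divergence-free computation and the integration by parts or measure-preservation step, for merely $W^{N,p}$ solutions. I would handle it by density, proving the estimate for smooth compactly supported data, for which the flow is a $C^1$ volume-preserving diffeomorphism. The flow is well defined because $\nabla_x\phi$ and $\Phi$ are Lipschitz by \eqref{EQ:AssumpPhi} and the Sobolev embedding. I would then pass to the limit.
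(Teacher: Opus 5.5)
Your proposal is correct and is essentially the paper's argument. The paper integrates $\T_\phi(|f|^p)$ over $[t_1,t_2]\times\R^3_x\times\R^3_v$ in the physical coordinates, where the field $(v,-\lambda\nabla_x\phi)$ is obviously divergence free, applies Hölder, and then transfers the estimate to $g$ through the volume-preserving shear $x\mapsto x+tv+\lambda\Phi(t,v)\log\langle t\rangle$; this is exactly your pushforward justification of $\nabla_{x,v}\cdot Y=0$. Your differentiated version with the regularization $(\|g\|_{L^p}^p+\epsilon)^{1/p}$, as well as your Duhamel--Minkowski variant (the mechanism the paper itself uses in the proof of Corollary \ref{Corscatstrong}), even gives the constant $1$ in place of $p$, a harmless sharpening.
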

\begin{proof}
We start by working with the function $f$. As $\T_\phi = \partial_t+v\cdot \nabla_x-\lambda \nabla_x \phi \cdot \nabla_v$, integration by parts provide
\begin{align*}
\Big| \big\| f(t_1,\cdot , \cdot) \big\|_{L^p_{x,v}}^p-\big\| f(t_2,\cdot , \cdot) \big\|_{L^p_{x,v}}^p \Big| & = \bigg| \int_{t=t_1}^{t_2} \int_{\R^3_x} \int_{\R^3_v} \T_\phi \big(  |f|^p \big) \dr v \dr x \dr t \bigg| \\
& \leq p \sup_{t_1 \leq t \leq t_2} \big\| f(t_1,\cdot , \cdot) \big\|_{L^p_{x,v}}^{p-1} \int_{t=t_1}^{t_2} \big\| \T_\phi (f) (t,\cdot,\cdot) \big\|_{L^p(\R^3_x \times \R^3_v)} \dr t ,
\end{align*}
where, in the last step, we used $\T_\phi (|f|^p)=p \T_\phi (f) f|f|^{p-2}$ and the Hölder inequality. It allows to derive the result, since the $L^p$ norms of $f$ (respectively $\T_\phi (f)$) and $g$ (respectively $\overline{\T}_\phi (g)$) coincide, by the change of variables $x \mapsto x+tv+\lambda \Phi (t,v) \log \langle t \rangle$.
\end{proof}

\section{Analysis of the transport operator $\overline{\T}_\phi$}\label{SecTphi}

The purpose of this section is to study, given a sufficiently regular potential $\phi$ and function $\Phi$, the properties of the solutions to $\overline{\T}_\phi (g)=0$ arising from sufficiently regular initial or scattering data. In particular, we will show boundedness for $g$, convergence as $t \to +\infty$, and that the asymptotic Cauchy problem is well-posed. 

\subsection{Assumptions on the force field} Let, for all this section, $N \geq 2p_\infty$. We begin by specifying the class of potentials $\phi$ and coordinates $(t,z(t,x,v),v)$ under consideration. Our motivation for considering such a class of potential is given in Remark \ref{Rqcomment} below.

\begin{Def}\label{Defforcefield}
Let $I$ be an interval and $t \mapsto \Lambda_t$ be a nonnegative function. We will say that the couple $(\phi,\Phi)$, composed by a potential $\phi \colon I \times \R^3 \to \R$ and a function $\Phi \colon I \times \R^3_v \to \R^3$, belongs to the classe $\mathcal{W}^N(I,\Lambda)$ if it satisfies the following decay estimates. 
\begin{itemize}[leftmargin=1.5em]
\item For any $|\kappa| \leq N-p_\infty$, $|\gamma| \leq N$, and all $(t,x) \in I \times \R^3_x$,
\begin{align}
 \langle t+|x| \rangle^{2+|\kappa|} \big| \nabla_x \partial_x^\kappa \phi \big|(t,x) & \leq \Lambda_t , \label{eq:phiassump1} \tag{$\phi$, $L^\infty$, weak} \\
\langle t\rangle^{2- \frac{3}{p}}\big\| \langle t+|x| \rangle^{1+|\gamma|} \nabla_x^2 \partial_x^\gamma \phi (t,x) \big\|_{L^p(\R^3_x)} & \leq \Lambda_t . \label{eq:phiassump2} \tag{$\phi$, $L^p$, weak} 
\end{align}
\item For all $t \in I$, 
\begin{equation}\label{EQU:HPhi}
\mathcal{E}_N[\Phi](t) = \Big\| \langle v \rangle^2 \Phi (t,v) \Big\|_{L^\infty (\R^3_v)}+ \sum_{|\gamma| \leq N} \Big\| \langle v \rangle^{1+|\gamma|} \nabla_v \partial_v^\gamma \Phi (t,v) \Big\|_{L^p(\R^3_v)}  \leq \Lambda_t .  \tag{$\Phi$, bound}
\end{equation}
\item For any $|\kappa| \leq N-p_\infty$, $p_\infty \leq |\alpha| \leq N$, and all $(t,v) \in I \times \R^3_v$,
\begin{align}
\frac{ \langle v \rangle t}{\langle t \rangle^2}\Big| \langle t \rangle^2 t^{|\kappa|} \nabla_x \partial_x^{\kappa} \phi ( t,tv ) -  \partial_v^\kappa \Phi (t,v) \Big| + \langle v \rangle\Big| \partial_t  \partial_v^\kappa \Phi \Big|(t,v) &  \leq  \frac{ \Lambda_t}{\langle t \rangle^{\frac{9}{5}}}, \label{eq:phiassump3} \tag{$L^\infty \hspace{-0.3mm}$, sharp} \\
\hspace{-2em}  \frac{t}{\langle t \rangle^2}\Big\| \langle v \rangle \Big( \langle t \rangle^2 t^{1+|\alpha|} \nabla_x \partial_x^{\alpha} \phi (t,tv)-  \partial_v^\alpha \Phi (t,v) \Big) \Big\|_{L^p(\R^3_v)} \! + \Big\| \langle v \rangle \partial_t  \partial_v^\alpha \Phi (t,v) \Big\|_{L^p(\R^3_v)} &   \leq  \frac{\Lambda_t}{ \langle t \rangle^{\frac{9}{5}}}. \label{eq:phiassump4} \tag{$L^p \hspace{-0.3mm}$, sharp}
\end{align}
\end{itemize}
Associated to the function $\Phi$, we define the coordinate system $(t,z(t,x,v),v)$, where
\[ z(t,x,v) \coloneqq x+tv+\lambda \Phi (t,v) \log \langle t \rangle. \]
We will often write $z$ instead of $z(t,x,v)$.
\end{Def}
\begin{Rq}\label{Rq:phiassump2}
By \eqref{eq:phiassump2} and the weighted Sobolev embedding of Proposition \ref{ProSob}, we have
\[\langle t \rangle^2\langle t+|x| \rangle^{|\kappa|} \big|\nabla_x \partial_x^\kappa \phi \big| (t,x)\lesssim \Lambda_t , \qquad \qquad |\kappa|=N+1-p_\infty.\]
\end{Rq}

Let us show two results, which motivate Definition \ref{Defforcefield}. The next one will be useful in Sections \ref{Secmodwave} and \ref{SubsecForward} when we will perform a bootstrap argument during the analysis of a solution $f$ to the Vlasov-Poisson system. 

\begin{Cor}\label{Rqclass}
Let $f  \colon I \times \R^3_x \times \R^3_v \to \R$, and $\Phi \colon I \times \R^3_v \to \R^3_v$ be two functions, and assume that for all $t \in I$,
\[ \mathcal{E}_N[\Phi](t) \leq \mathbf{\Lambda}\log^a \langle t+1 \rangle, \qquad \qquad \qquad  \E_N [g](t) <+ \infty, \]
for some constants $a, \, \mathbf{\Lambda} \geq 0$, where the function $g$ is defined by
\[ g(t,x,v) \coloneqq f \big( t, x+tv+\lambda \Phi(t,v) \log \langle t \rangle , v \big)  . \]
Let further $\Phi_f (t,v) \coloneqq t^2 \nabla_x \phi [f] (t,tv)$. Then, there exists $C [N,p,\mathbf{\Lambda},a]>0$ such that $(\phi [f],\Phi_f) \in \mathcal{W}^N \big( I, C\E_N[g] \big)$.
\end{Cor}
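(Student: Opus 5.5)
We verify the five estimates of Definition \ref{Defforcefield} for the pair $(\phi[f],\Phi_f)$ in turn, taking $\Lambda_t = C\E_N[g](t)$. We use the preliminary results of Section \ref{Secprep}, applied with the given $\Phi$ (which satisfies \eqref{EQ:AssumpPhi}) and the given $g$. We implicitly assume, as in the applications, that $f$ solves $\T_{\phi[f]}(f)=0$; this is needed to invoke Lemmata \ref{LemdtPhi} and \ref{Lemdpphismall}.

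\textbf{Weak bounds.} Estimates \eqref{eq:phiassump1} and \eqref{eq:phiassump2} are exactly the two statements of Proposition \ref{Proforcefield}. Note that $\nabla_x\partial_x^\gamma\phi[f]=\nabla_x\phi[\partial_x^\gamma f]$, and that $\langle t\rangle^{2-3/p}=\langle t\rangle^{(2p-3)/p}$.

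\textbf{Bound on $\Phi_f$.} For \eqref{EQU:HPhi} we use $\partial_v^\gamma\Phi_f(t,v)=t^{2+|\gamma|}(\partial_x^\gamma\nabla_x\phi[f])(t,tv)$. The pointwise part follows from \eqref{eq:phiassump1} with $\kappa=0$ and $t^2\langle v\rangle^2\le \langle t+|tv|\rangle^2$. For the $L^p$ part, we change variables $x=tv$ as in the proof of Lemma \ref{Lemdpphismall}:
\[
\|\langle v\rangle^{1+|\gamma|}t^{3+|\gamma|}\nabla_x^2\partial_x^\gamma\phi(t,tv)\|_{L^p_v}\le t^{2-3/p}\|\langle t+|x|\rangle^{1+|\gamma|}\nabla_x^2\partial_x^\gamma\phi\|_{L^p_x}.
\]
By Proposition \ref{Proforcefield} this is bounded by $\E_N[g]$, since $t^{2-3/p}\lesssim\langle t\rangle^{2-3/p}$.

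\textbf{Sharp bounds.} The first term in \eqref{eq:phiassump3} vanishes identically when $t\geq 1$, because $t^{2+|\kappa|}\nabla_x\partial_x^\kappa\phi(t,tv)=\partial_v^\kappa\Phi_f$. The remaining discrepancy is the factor $\langle t\rangle^2$ in place of $t^2$. We therefore get
\[
\frac{\langle v\rangle t}{\langle t\rangle^2}\big|(\langle t\rangle^2-t^2)\,t^{|\kappa|}\nabla_x\partial_x^\kappa\phi(t,tv)\big|\lesssim \frac{\langle v\rangle t^{1+|\kappa|}}{\langle t\rangle^2}\,\frac{\Lambda_t}{\langle t+t|v|\rangle^{2+|\kappa|}}\lesssim \frac{\Lambda_t}{\langle t\rangle^{3}},
\]
which is better than $\langle t\rangle^{-9/5}$. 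The $L^p$ version in \eqref{eq:phiassump4} is handled the same way, using the rescaled $L^p$ bound above, and gives a gain of order $\langle t\rangle^{-3}$ times a bounded power. The $\partial_t$ terms are exactly Lemma \ref{Lemdpphismall}, applied with $\kappa$ and with $\gamma=\alpha-e_j$, so that $p_\infty-1\le|\gamma|\le N-1$. That lemma gives $\log^b\langle t+1\rangle\langle t\rangle^{-2}\E_N[g]\lesssim\langle t\rangle^{-9/5}\E_N[g]$, where the implicit constant depends on $a,N,p,\mathbf{\Lambda}$.

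The main point to check is the index bookkeeping in the $L^p$ sharp estimate: $\partial_v^\alpha\Phi_f$ with $|\alpha|\le N$ involves $\nabla_x^2\partial_x^{\alpha-e_j}\phi$ only, and the $\partial_t$ estimate requires $|\alpha|\ge p_\infty$. This constraint is precisely why Definition \ref{Defforcefield} imposes $p_\infty\le|\alpha|$, with the low-order range covered by the pointwise estimates through Lemma \ref{Lemdpphismall}.
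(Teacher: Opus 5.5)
Your overall route is the paper's: the weak bounds from Proposition \ref{Proforcefield}, the bound on $\Phi_f$ via the change of variables $x=tv$, the $\partial_t$ terms from Lemma \ref{Lemdpphismall}, and reducing the sharp discrepancy to $(\langle t\rangle^2-t^2)\,t^{|\kappa|}\nabla_x\partial_x^\kappa\phi[f](t,tv)=t^{|\kappa|}\nabla_x\partial_x^\kappa\phi[f](t,tv)$. Your sentence that this term ``vanishes identically when $t\geq 1$'' is wrong, since $\langle t\rangle^2-t^2=1$ for every $t$. It is harmless, because your subsequent $L^\infty$ computation is valid for all $t\geq 0$. Your remark that $f$ must solve a Vlasov equation for Lemma \ref{Lemdpphismall} to apply is correct; the paper leaves this implicit.

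The genuine gap is the first term of \eqref{eq:phiassump4} at small times. Write $\alpha=\gamma+e_j$; the term is then $\langle t\rangle^{-2}\|\langle v\rangle t^{2+|\gamma|}\nabla_x^2\partial_x^\gamma\phi[f](t,tv)\|_{L^p_v}$. You propose to handle it ``the same way, using the rescaled $L^p$ bound above''. That means either dividing your rescaled bound by $t$, or absorbing $t\langle v\rangle\, t^{|\gamma|}$ into $\langle t+|x|\rangle^{1+|\gamma|}$ as in your $L^\infty$ step. Either way, after $x=tv$ you are left with a factor $t^{1-3/p}\langle t\rangle^{-(4-3/p)}$. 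This is unbounded as $t\to 0$ whenever $p<3$, for instance $p=2$, $|\gamma|=1$. So the estimate is not established on $]0,1]$ for $3/2<p<3$.

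The fix, as in the paper's \eqref{eq:521479}, is to use $t\langle v\rangle\leq t+|x|$ only once and keep the remaining power $t^{1+|\gamma|}$ explicitly:
\[ \frac{1}{\langle t\rangle^2}\big\|\langle v\rangle t^{2+|\gamma|}\nabla^2_x\partial^\gamma_x\phi[f](t,tv)\big\|_{L^p_v}\leq\frac{t^{1+|\gamma|-\frac{3}{p}}}{\langle t\rangle^{4+|\gamma|-\frac{3}{p}}}\,\langle t\rangle^{2-\frac{3}{p}}\big\|\langle t+|x|\rangle^{1+|\gamma|}\nabla^2_x\partial^\gamma_x\phi[f]\big\|_{L^p_x}. \]
This is bounded near $t=0$ precisely because $|\gamma|\geq p_\infty-1=\lfloor 3/p\rfloor>3/p-1$. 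So the restriction $|\alpha|\geq p_\infty$ in Definition \ref{Defforcefield} is needed for this non-$\partial_t$ term too, not only for the $\partial_t$ terms as your last paragraph asserts.
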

\begin{Rq}
In practice, we will apply Corollary \ref{Rqclass} either with $\Phi=0$, or with $\Phi=\Phi_f$. 
\end{Rq}
\begin{proof}
The assumptions allow us to work within the framework of Section \ref{Secprep}. 
\begin{itemize}
\item We obtain \eqref{eq:phiassump1}--\eqref{eq:phiassump2} from Proposition \ref{Proforcefield}. 
\item Then, by the change of variables $x=tv$ and $t \langle x/t\rangle \leq t+|x|$, there holds
\begin{equation}\label{toremember}
 \Big\| \langle v \rangle^{1+|\gamma|} t^{3+|\gamma|} \nabla^2_x \partial_x^\gamma \phi [f] (t,tv) \Big\|_{L^p(\R^3_v)}  \leq t^{2-\frac{3}{p}} \Big\| (t+|x|)^{1+|\gamma|} \nabla^2_x \partial_x^\gamma \phi [f]  (t,x) \Big\|_{L^p(\R^3_x)} . 
 \end{equation}
Note then that $t^{2-\frac{3}{p}} \leq \langle t \rangle^{2-\frac{3}{p}}$ since $p >3/2$. Also, we have $\langle v \rangle^2 |\Phi_f (t,v) | \leq ( t+|tv| )^2 |\nabla_x \phi [f]|(t,tv)$ by using $t\langle v \rangle \leq t+t|v|$. As \eqref{eq:phiassump1}--\eqref{eq:phiassump2} holds, this gives \eqref{EQU:HPhi}.
\item The estimates for $\partial_t \Phi$ are given by Lemma \ref{Lemdpphismall}. Finally, the last estimates to establish consists in controlling 
\[   \frac{ \langle v \rangle t}{\langle t \rangle^2}\Big| \langle t \rangle^2 t^{|\kappa|} \nabla_x \partial_x^{\kappa} \phi [f] ( t,tv ) -  \partial_v^\kappa \Phi (t,v) \Big| =\frac{ \langle v \rangle t}{\langle t \rangle^2}\Big| t^{|\kappa|} \nabla_x \partial_x^{\kappa} \phi [f] ( t,tv ) \Big| . \] For the $L^\infty$ case, we use again $t \langle v \rangle \leq t+t|v|$ and \eqref{eq:phiassump1}. For the $L^p$ estimate, we perform the change of variables $x=tv$ to get , for $p_\infty - 1 \leq |\alpha| \leq N-1$,
\begin{align}
\frac{1}{\langle t \rangle^2} \Big\| \langle v \rangle t^{2+|\alpha|} \nabla_x^2  \partial_x^\alpha \phi [  f ](t,tv) \Big\|_{L^p(\R^3_v)} & \leq  \frac{1}{\langle t \rangle^2}\Big\| (t+|x|) t^{1+|\alpha|-\frac{3}{p}} \nabla_x^2  \partial_x^\alpha \phi [  f ](t,x) \Big\|_{L^p(\R^3_x)} \nonumber \\
& \leq \frac{t^{1+|\alpha|-\frac{3}{p}}}{\langle t \rangle^{4+|\alpha|-\frac{3}{p}}} \cdot \langle t \rangle^{2-\frac{3}{p}} \Big\| \langle t+|x| \rangle^{1+|\alpha|} \nabla^2_x \partial_x^\alpha \phi [f]  (t,x) \Big\|_{L^p(\R^3_x)} .  \label{eq:521479}
\end{align}
Then, to bound the right hand side for $0 \leq t \leq 1$, we require $|\alpha| \geq p_\infty-1 = \lfloor 3/p \rfloor$, and we use \eqref{eq:phiassump2}.
\end{itemize}
\end{proof}

We now prove a similar result, which will be useful for the construction of solutions arising from scattering data. In that setting, the coordinate system used to control the solutions is teleological rather than dynamical.

\begin{Cor}\label{Rqclassteleo}
Let $f_\infty \colon  \R^3_x \times \R^3_v \to \R$ and $f  \colon I \times \R^3_x \times \R^3_v \to \R$ be two functions, and assume that
\[ \E_N[f_\infty] = \mathbf{\Lambda}, \qquad \qquad \qquad   \forall \, t \in I, \quad  \E_N [g](t)<+\infty, \]
for some constant $ \mathbf{\Lambda} \geq 0$, where the function $g$ is defined by
\[ g(t,x,v) \coloneqq f \big( t, x+tv+\lambda \nabla_v \phi_\infty [f_\infty](v) \log \langle t \rangle , v \big)  . \]
Then, there exists $C [N,p,\mathbf{\Lambda}]>0$ such that $\big(\phi [f], \nabla_v \phi_\infty [f_\infty] \big) \in \mathcal{W}^N \big(  I, C (\mathbf{\Lambda}+\E_N[g]) \big)$.
\end{Cor}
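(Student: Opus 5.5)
We verify the items of Definition~\ref{Defforcefield} one after another, with $\Phi \coloneqq \nabla_v \phi_\infty[f_\infty]$. This $\Phi$ does not depend on $t$. Therefore the $\partial_t \Phi$ contributions in \eqref{eq:phiassump3}--\eqref{eq:phiassump4} vanish, and \eqref{EQ:AssumpPhi} has to be checked with $a=0$.

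\textbf{Step 1: the bound on $\Phi$.} By Proposition~\ref{Prophiinfty} and Hölder's inequality in $x$ (the weight $\langle x\rangle^7$ in $\E_N$ dominates $\langle x\rangle^3$), we get
\[
\mathcal{E}_N[\Phi] \lesssim \E_N[f_\infty]=\mathbf{\Lambda}.
\]
For the $L^\infty$ part $\|\langle v\rangle^2\Phi\|_{L^\infty}$, we use the second estimate of Proposition~\ref{Prophiinfty} with $\gamma=0$, which requires $N\geq p_\infty$. For the $L^p$ part, we use the first estimate; the weights $\langle v\rangle^{1+|\kappa|}$ with $|\kappa|\le N$ are controlled by $\langle v\rangle^{7+N}$ since $\alpha_x=0$ in this part of $\E_N$. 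This gives \eqref{EQU:HPhi}, and hence \eqref{EQ:AssumpPhi} with $a=0$ and a constant depending on $\mathbf{\Lambda}$. The framework of Section~\ref{Secprep} then applies to $g$.

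\textbf{Step 2: the weak estimates on $\phi[f]$.} Proposition~\ref{Proforcefield} gives \eqref{eq:phiassump1}--\eqref{eq:phiassump2} with bound $C\E_N[g](t)$, and the constant depends on $\mathbf{\Lambda}$ through \eqref{EQ:AssumpPhi}.

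\textbf{Step 3: the sharp estimates.} These are the main point, since the difference between $\langle t\rangle^2 t^{|\kappa|}\nabla_x\partial_x^\kappa\phi[f](t,tv)$ and the time-independent $\partial_v^\kappa\Phi$ does not cancel as it did in Corollary~\ref{Rqclass}.

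For $t\le 1$ there is nothing subtle. We bound the two terms separately: the $\phi[f]$ part exactly as in \eqref{eq:521479} and the $L^\infty$ case of Corollary~\ref{Rqclass} (this uses $|\alpha|\geq p_\infty-1$), and the $\partial_v^\kappa\Phi$ part by Step~1.

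For $t\geq1$, I would write
\[
t^{2+|\kappa|}\nabla_x\partial_x^\kappa\phi[f](t,tv)-\nabla_v\phi_\infty[\partial_x^\kappa f_\infty](v)=t^{2+|\kappa|}\nabla_x\phi\big[\partial_x^\kappa f\big](t,tv)-\nabla_v\phi_\infty\big[\partial_x^\kappa f_\infty\big](v).
\]
One could invoke Corollary~\ref{Corphiphiinfty}, but that requires uniform boundedness and convergence of $g$, which are not assumed here. Instead I would argue at fixed $t$. Using \eqref{eq:cdv} and the fundamental theorem of calculus as in the proof of Corollary~\ref{Corphiphiinfty}, we have
\[
t^3\rho[f](t,tv)=\int_x f_\circ(t,x,v)\,\dr x+O\big(t^{-1}\log^b\langle t\rangle\,\E_N[g](t)\big).
\]
Since $\int_x f_\circ\,\dr x=\int_x g\,\dr x$, this reduces to comparing $\int_x g(t,x,\cdot)\,\dr x$ with $\int_x f_\infty\,\dr x$.

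This comparison is the expected obstacle: without a hypothesis linking $g$ to $f_\infty$, the difference $\int_x (g - f_\infty)\,\dr x$ is only bounded by $\mathbf{\Lambda}+\E_N[g]$, not small. If it cannot be made to decay, I would treat the non-decaying part by absorbing it into the constant. The plan is to bound it by $\langle t\rangle^{-1}(\mathbf{\Lambda}+\E_N[g])$ times the prefactor $t\langle t\rangle^{-2}$ in \eqref{eq:phiassump3}, which already gives $\langle t\rangle^{-1}$ decay. Each piece, $\langle t\rangle^2 t^{|\kappa|}\nabla_x\partial^\kappa_x\phi[f]$ and $\partial_v^\kappa\Phi$, is bounded by $C(\mathbf{\Lambda}+\E_N[g])$ via Steps~1--2, giving a total decay $\langle t\rangle^{-1}$. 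That falls short of the required $\langle t\rangle^{-9/5}$, so this is where the real work lies. I would check whether the class's rate can be met using the $\log$ and $\partial_t$ estimates of Lemma~\ref{LemdtPhi} integrated from $t$ to $\infty$. Failing that, I would try to extract the genuine cancellation from \eqref{type1}'s structure. In either case, the $L^p$ version follows by the same decomposition, with elliptic estimates (Proposition~\ref{Proellip}, $a=1$) and the change of variables $x=tv$.
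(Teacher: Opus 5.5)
Steps~1 and~2 are correct and match the paper: Proposition~\ref{Prophiinfty} gives \eqref{EQU:HPhi}, and Proposition~\ref{Proforcefield} gives \eqref{eq:phiassump1}--\eqref{eq:phiassump2}. Your treatment of $t\le 1$ is also fine.

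The gap is Step~3. You do not establish \eqref{eq:phiassump3}--\eqref{eq:phiassump4} for large $t$, and your fallback only gives $\langle t\rangle^{-1}$. Your diagnosis is right, and under the literal hypotheses the obstruction is fatal. No argument can close it: not a cancellation coming from \eqref{type1}, not a fixed-$t$ comparison of $\int_x g$ with $\int_x f_\infty$, nor anything else.

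Here is a counterexample. Take $I=\R_+$ and $f\equiv 0$; this is even a solution of \eqref{VP}, with $g\equiv 0$ and $\E_N[g]=0$. Take $0\le f_\infty\in C^\infty_c$ nonzero, so that $\Phi=\nabla_v\phi_\infty[f_\infty]\not\equiv 0$. Then the left-hand side of \eqref{eq:phiassump3} for $\kappa=0$ equals $t\langle t\rangle^{-2}\langle v\rangle|\Phi(v)|$, which is not $O(\langle t\rangle^{-9/5})$.

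The corollary tacitly assumes the setting in which the paper applies it:
\begin{itemize}
\item $I=[T,+\infty[$;
\item $f$ solves a Vlasov equation, which is needed for Lemmata~\ref{LemComfcirc}--\ref{LemdtPhi};
\item $\sup_{I}\E_N[g]<+\infty$;
\item $g(t)\to f_\infty$ as in Corollary~\ref{Corphiphiinfty}.
\end{itemize}

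Under these hypotheses the paper's proof is short. Since $\langle t\rangle^2-t^2=1$, the sharp quantity is bounded by
\[ \frac{\langle v\rangle}{\langle t\rangle}\Big|t^{2+|\kappa|}\nabla_x\partial_x^\kappa\phi[f](t,tv)-\partial_v^\kappa\Phi(v)\Big|+\frac{\langle v\rangle t}{\langle t\rangle^2}\Big|t^{|\kappa|}\nabla_x\partial_x^\kappa\phi[f](t,tv)\Big| . \]
Corollary~\ref{Corphiphiinfty} bounds the first bracket by $\log^b\langle t+1\rangle\,\langle t\rangle^{-1}\sup_{\tau}\E_N[g](\tau)$. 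Together with the prefactor $\langle t\rangle^{-1}$, this is $\lesssim\langle t\rangle^{-9/5}$. The second term decays like $\langle t\rangle^{-3}$ by \eqref{eq:phiassump1}, using \eqref{eq:521479} for the $L^p$ version. Finally, $\partial_t\Phi=0$.

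Your first suggestion, integrating Lemma~\ref{LemdtPhi} from $t$ to $+\infty$, is exactly how Corollary~\ref{Corphiphiinfty} obtains this rate. What you were missing is that one compares $t^2\nabla_x\phi[f](t,tv)$ with its limit as $t\to+\infty$. That limit is identified as $\nabla_v\phi_\infty[f_\infty]$ through the convergence $g\to f_\infty$; one does not compare $\int_x g(t)$ with $\int_x f_\infty$ at a fixed time. Note also that the resulting $\Lambda_t$ involves $\sup_\tau\E_N[g](\tau)$ rather than $\E_N[g](t)$.
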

\begin{proof}
The strategy is slightly different than the one of the previous proof.
\begin{itemize}
\item According to Proposition \ref{Prophiinfty}, there exists $C_1[N,p]>0$ such that $\mathcal{E}_N\big[\nabla_v \phi_\infty [f_\infty] \big] \leq C_1 \mathbf{\Lambda}$. In particular, \eqref{EQU:HPhi} holds, which allows us to apply the results of Section \ref{Secprep} with $\Phi = \nabla_v \phi_\infty [f_\infty]$.
\item We obtain that $\phi [f]$ satisfies \eqref{eq:phiassump1}--\eqref{eq:phiassump2} from Proposition \ref{Proforcefield}.
\item For \eqref{eq:phiassump3}--\eqref{eq:phiassump4}, note first that $\partial_t \phi_\infty[f_\infty]=0$. Then, write
\begin{align*}
 \frac{ \langle v \rangle t}{\langle t \rangle^2}\Big| \langle t \rangle^2 t^{|\kappa|} \nabla_x \partial_x^{\kappa} \phi [f] ( t,tv ) -  \nabla_v \partial_v^\kappa  \phi_\infty [f_\infty](v) \Big| & \leq \frac{ \langle v \rangle }{\langle t \rangle}\Big| t^{2+|\kappa|} \nabla_x \partial_x^{\kappa} \phi [f] ( t,tv ) -  \nabla_v \partial_v^\kappa  \phi_\infty [f_\infty](v) \Big|  \\
 & \quad \, + \frac{ \langle v \rangle t}{\langle t \rangle^2}\Big|  t^{|\kappa|} \nabla_x \partial_x^{\kappa} \phi [f] ( t,tv )\big| . 
 \end{align*}
 The first term on the right hand side can be estimated using Corollary \ref{Corphiphiinfty}. For the second term, we apply the final step in the proof of the previous Corollary \ref{Rqclass}. See in particular \eqref{eq:521479}.
\end{itemize}
\end{proof}

\begin{Rq}\label{Rqcomment}
In this paper, in addition to the cases previously discussed, we will consider two other pairs $(\phi, \Phi)$.
\begin{itemize}
\item A potential of the form $\phi [f]$, with $f$ a solution to \eqref{VP}, and $\Phi \equiv 0$. In this case, $\Lambda_t$ will be proportional to $\E_N[f_\circ]$, and allowed to grow logarithmically.
\item A potential of the form $\phi [f-\mathbf{f}]$, with $f$ and $\mathbf{f}$ two solutions to \eqref{VP}, and a function $\Phi = \Phi_{f-\mathbf{f}}$. In this case, $\Lambda_t$ will be proportional to $\E_N[g-\mathbf{g}](t)$.
\end{itemize}
\end{Rq}

We fix, for all this section, an interval $I \subset \R_+$, a nonnegative function $t \mapsto \Lambda_t$, and $(\phi, \Phi) \in \mathcal{W}^N \big(I,\Lambda \big)$. We assume further that there exists $\mathbf{\Lambda} , \, a \geq 0$ such that $\Lambda_t \leq \mathbf{\Lambda} \log^a \langle t+1 \rangle$, so that \eqref{EQU:HPhi} allows us to work within the framework of Section \ref{Secprep}.

\subsection{Energy estimates}

Motivated by the definition of the norms $\E_N^n [\cdot]$, we introduce the following shorthand notation.

\begin{Def}\label{Defgalpha}
Let $|\alpha| \leq N$, $(n_x ,  n_v ) \in \mathbb{Z}^2$, and $\mathfrak{g} \colon \R_+ \times \R^3_x \times \R^3_v \to \R$. We define the function $\mathfrak{g}_\alpha$ as
\[  \mathfrak{g}_\alpha (t,x,v) \coloneqq \langle x \rangle^{n_x+|\alpha_x|} \langle v \rangle^{n_v+N-|\alpha_x|} \partial_x^{\alpha_x} \partial_v^{\alpha_v} \mathfrak{g} (t,x,v) . \]
\end{Def}

We now prove a result which will in particular allow to prove boundedness for the solutions to $\overline{\T}_\phi (g)=0$.

\begin{Pro}\label{ProboundednessGeneral}
Recall that $(\phi, \Phi) \in \mathcal{W}^N(I,\Lambda)$. Let $g \colon I \times \R^3_x \times \R^3_v \to \R$ be a sufficiently regular function, and $n=(n_x,n_v) \in \mathbb{Z}^2$. Then, there exists a constant $C_0[N,n,p,\mathbf{\Lambda}]>0$ such that,
\[ \forall \, t \in I, \qquad \quad \Big\| \Big[ \, \overline{\T}_\phi , \langle x \rangle^{n_x+|\alpha_x|}\langle v \rangle^{n_v+N-|\alpha_x|} \partial_x^{\alpha_x} \partial_v^{\alpha_v} \Big](g) (t,\cdot , \cdot) \Big\|_{L^p(\R^3_x \times \R^3_v)} \leq C_0  \frac{\Lambda_t}{\langle t\rangle ^{\frac{4}{3}}} \E_N^n[g](t). \]
\end{Pro}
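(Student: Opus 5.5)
The commutator splits into two pieces:
\[ \big[ \, \overline{\T}_\phi , w_\alpha \partial_x^{\alpha_x}\partial_v^{\alpha_v} \big] = w_\alpha \big[ \, \overline{\T}_\phi, \partial_x^{\alpha_x}\partial_v^{\alpha_v}\big] + \big(\overline{\T}_\phi w_\alpha\big)\, \partial_x^{\alpha_x}\partial_v^{\alpha_v}, \qquad w_\alpha \coloneqq \langle x \rangle^{n_x+|\alpha_x|}\langle v \rangle^{n_v+N-|\alpha_x|}. \]
I will bound each piece in $L^p_{x,v}$ by $\Lambda_t \langle t \rangle^{-4/3} \E_N^n[g](t)$.

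\emph{Weight term.} By Lemma \ref{LemTbar},
\[ \overline{\T}_\phi w_\alpha = \big( \nabla_x\text{-coefficient of } \overline{\T}_\phi \big) \cdot \nabla_x w_\alpha - \lambda \nabla_x\phi(t,z)\cdot \nabla_v w_\alpha . \]
Since $|\nabla_x w_\alpha| \lesssim w_\alpha \langle x\rangle^{-1}$ and $|\nabla_v w_\alpha| \lesssim w_\alpha \langle v\rangle^{-1}$, it suffices to bound the coefficients pointwise.

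For the $v$-part, \eqref{eq:phiassump1} gives $|\nabla_x\phi(t,z)| \lesssim \Lambda_t \langle t\rangle^{-2}$.

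For the $x$-part:
\begin{itemize}
\item the factor $\frac{t}{\langle t\rangle^2}(\langle t\rangle^2\nabla_x\phi(t,z)-\Phi)$ is handled by writing $\nabla_x\phi(t,z) = \nabla_x\phi(t,tv) + O\big(|x+\lambda\Phi\log\langle t\rangle|\,\|\nabla^2_x\phi\|\big)$, with $|\nabla^2_x \phi(t,\cdot)|\lesssim \Lambda_t\langle t\rangle^{-3}$;
\item \eqref{eq:phiassump3} with $\kappa=0$ gives $\Lambda_t \langle t \rangle^{-9/5}\langle v\rangle^{-1}$, while the mean-value remainder is $\lesssim \Lambda_t\langle x\rangle \log^{a+1}\langle t+1 \rangle\, \langle t\rangle^{-2}$, and the $\langle x\rangle$ is absorbed by the $\langle x\rangle^{-1}$ from $\nabla_x w_\alpha$;
\item the $\partial_t\Phi$ and $\nabla_v\Phi\cdot\nabla_x\phi$ terms give $\Lambda_t\log\langle t\rangle\langle t\rangle^{-9/5}$ and $\Lambda_t\mathbf{\Lambda}\log^{a+1}\langle t \rangle\,\langle t\rangle^{-2}$.
\end{itemize}
All of these are $\lesssim \Lambda_t\langle t\rangle^{-4/3}$ after absorbing logarithms. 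There is no derivative loss here.

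\emph{Commutator term.} Corollary \ref{Corenergy} bounds $w_\alpha|[\overline{\T}_\phi,\partial^\alpha]g|$ by $\langle x\rangle^{n_x}\langle v\rangle^{n_v+N}$ times the three listed families. Each family has the form $(\text{coefficient})\cdot \langle x\rangle^{|\beta_x|}\langle v\rangle^{-|\beta_x|}|\partial^\beta g|$, so it is $(\text{coefficient})\cdot|g_\beta|$ with $|\beta|\le N$ (for the first family $|\beta|=|\alpha|+1-|\gamma|\le|\alpha|$). For each family I use the following split.
\begin{itemize}
\item When the derivative count on $\phi$ or $\Phi$ is at most $N-p_\infty$, I bound the coefficient in $L^\infty$ and $g_\beta$ in $L^p$.
\item When it is larger, $|\beta|\le p_\infty-1$. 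I then put the coefficient in $L^p_v$ (after a change of variables, or directly in $v$) and $g_\beta$ in $L^\infty_vL^p_x$ via the vector-valued Sobolev embedding, as in the proof of Lemma \ref{Corderivgtof}. This uses $N\ge 2p_\infty$.
\end{itemize}

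The three families are controlled as follows.
\begin{itemize}
\item \textbf{First family.} Compare $\nabla_x\partial^\gamma_x\phi(t,z)$ with its value at $(t,tv)$. The sharp bounds \eqref{eq:phiassump3}/\eqref{eq:phiassump4} give $\Lambda_t\langle t\rangle^{-9/5}$. The remainder costs one more $x$-derivative of $\phi$ (decaying like $\langle t\rangle^{-3-|\gamma|}$) times $\langle x\rangle\log$, and the $\langle x\rangle$ is absorbed by the explicit $\langle x\rangle^{-1}$. At top order, where that extra derivative is $\nabla^2_x\partial^\gamma_x\phi$ with $|\gamma| = N$, I use \eqref{eq:phiassump2} in $L^p$ after the change of variables of Lemma \ref{Lemcdv}.
\item \textbf{Third family.} Use \eqref{eq:phiassump3}/\eqref{eq:phiassump4} directly.
\item \textbf{Second family.} Use $|\langle t+|z|\rangle^{|\gamma|}\nabla_x\partial^\gamma_x\phi(t,z)|\lesssim \Lambda_t\langle t\rangle^{-2}$ pointwise, or, at high $|\gamma|$, \eqref{eq:phiassump2} combined with the change of variables $x\mapsto z$ in $x$ (Jacobian $1$). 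That change yields $\langle t \rangle^{-2+3/p}\langle t+|z|\rangle^{-1}$ in $L^p_x$, and the $\langle t+|z|\rangle^{-1}$ is usable when paired with Hölder in $x$ against $L^\infty_x$ bounds on $g_\beta$ for low $\beta$. The factor $\log^Q(1+|\langle v\rangle\nabla_v\partial^\xi_v\Phi|)$ is bounded by $\log^Q\cdot\Lambda_t$ in the right space.
\end{itemize}

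\emph{Main obstacle.} The hard case is the second family at top order for $\phi$, namely $|\gamma| = N$, where only the $L^p$ bound \eqref{eq:phiassump2} is available. There the decay obtained is $\langle t\rangle^{-2+3/p}$ up to the $\langle t+|z|\rangle$ weight. To reach $\langle t\rangle^{-4/3}$ I would combine this weight with $\langle x\rangle\lesssim\langle t+|z|\rangle\langle v\rangle$ from \eqref{eq:x}. I would also use the extra $\langle x\rangle^{-1}$ available whenever $|\beta_x|<|\alpha_x|+1$, and Hölder in $x$ with $g_\beta\in L^\infty_x$ (Sobolev in $x$, since $|\beta|$ is small), gaining $t^{-3/p}$ from integrating $\langle t+|z|\rangle^{-?}$ in $z$. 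If this falls short, I would instead exploit that $p>3/2$ leaves room, tracking exponents to reach $\min(9/5,\,2-\epsilon)\ge 4/3$.
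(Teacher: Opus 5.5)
Your weight term and third family match the paper. The commutator estimate, however, has two genuine gaps.

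\textbf{Second family at high $|\gamma|$.} This is the case you flag yourself: $|\gamma|\ge N+1-p_\infty$, where only \eqref{eq:phiassump2} is available. That bound controls exactly $\|\langle t+|y|\rangle^{|\gamma|}\nabla_x\partial_x^\gamma\phi(t,y)\|_{L^p_y}\le\Lambda_t\langle t\rangle^{-2+3/p}$. This is precisely the weight appearing in Corollary \ref{Corenergy}, so there is no spare factor $\langle t+|z|\rangle^{-1}$ left to integrate. Translating $x\mapsto z$ for fixed $v$ and putting $g_\beta$ in $L^\infty_x$ then gives only $\Lambda_t\langle t\rangle^{-2+3/p}\E_N^n[g](t)$. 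That is worse than $\langle t\rangle^{-4/3}$ for every $p<9/2$ and gives essentially no decay as $p\to 3/2$, so the condition $p>3/2$ does not leave the room you hope for.

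The missing idea is to exploit dispersion in $v$.
\begin{itemize}
\item After the change of variables $y=z(t,x,v)$, put the force field in $L^p_y$.
\item Bound $\sup_y\int_{\R^3_v}|g_\beta|^p\big(t,y-tv-\lambda\Phi(t,v)\log\langle t\rangle,v\big)\,\dr v$ by changing variables $v\mapsto y-tv-\lambda\Phi(t,v)\log\langle t\rangle$. By Lemma \ref{Lemcdv} this has Jacobian $\simeq t^3$, which gains $t^{-3}$, hence $t^{-3/p}$ after taking the $p$-th root.
\item The remaining mixed norm of $g_\beta$ is controlled by $\E_N^n[g](t)$ through Sobolev embedding in $v$, since $|\beta|\le p_\infty$ and $N\ge 2p_\infty$.
\end{itemize}
The result is $\Lambda_t t^{-2}\E_N^n[g](t)$ for $t\ge t_0$. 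Small times are handled by Sobolev embedding in $x$.

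\textbf{First family remainder.} Its prefactor is $t\langle v\rangle/(\langle t\rangle^2\langle x\rangle)$, and the $\langle v\rangle$ cannot be discarded: it compensates the lower $v$-weight of $g_\beta$ when $|\beta_x|=|\alpha_x|+1$. Your mean value argument absorbs $\langle x\rangle$. But with the uniform bound $|\nabla_x^2\partial_x^\gamma\phi|\lesssim\Lambda_t\langle t\rangle^{-3-|\gamma|}$ it leaves $\Lambda_t\langle v\rangle\log\langle t+1\rangle\langle t\rangle^{-2}|g_\beta|$, which is not controlled by $\E_N^n[g](t)$. The sharp part is fine, since \eqref{eq:phiassump3}--\eqref{eq:phiassump4} already contain the $\langle v\rangle$.

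The paper handles this by splitting phase space into two regions.
\begin{itemize}
\item On $\mathbf{D}_t=\{t_1+|v|\le\sqrt t\}$, the mean value theorem costs only $\langle v\rangle\lesssim\sqrt t$ and yields $\Lambda_t\log^{a+1}\langle t+1\rangle\langle t\rangle^{-3/2}$. At high $|\gamma|$ the same $v$-change of variables is used.
\item On the complement, the values at $tv$ and at $z$ are estimated separately. The first uses $\langle tv\rangle\gtrsim\langle t\rangle^{3/2}$, the second uses \eqref{eq:techni}.
\end{itemize}
This splitting limits the rate to $\langle t\rangle^{-3/2}$ up to logarithms, which is where the $\langle t\rangle^{-4/3}$ in the statement comes from.
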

\begin{Rq}\label{Rqpoly}
As can be checked from the proof of Corollary \ref{Corenergy}, the dependence of $C_0$ on $\mathbf{\Lambda}$ is polynomial. More precisely, there exists $C_1[N,n,p]>0$ and $q[N] \in \mathbb{N}$ such that $C_0=C_1 \langle \mathbf{\Lambda} \rangle^q$.
\end{Rq}
\begin{proof}
Note first that, for all $t \in I$ and all $(x,v) \in \R^3_x \times \R^3_v$,
\begin{equation}\label{eq:Txv}
 \big| \overline{\T}_{\phi} \big( \langle v \rangle \big) \big| \lesssim  \frac{\Lambda_t}{\langle t \rangle^2} , \qquad \qquad  \big| \overline{\T}_{\phi} \big( \langle x \rangle \big) \big| \lesssim \Lambda_t \frac{\log \langle t+1 \rangle}{\langle t \rangle^{\frac{9}{5}}} \langle x \rangle.
 \end{equation}
Indeed, using the expression for $\overline{\T}_\phi$ provided by Lemma \ref{LemTbar}, we have
\begin{align*}
\overline{\T}_{\phi} \big( \langle v \rangle \big)& = -\lambda \nabla_x \phi (t,z) \cdot \frac{v}{\langle v \rangle}, \\
\overline{\T}_{\phi} \big( \langle x \rangle \big)& = \frac{\lambda t}{\langle t \rangle^2} \Big( \langle t \rangle^2\nabla_x \phi  (t,z) - \Phi (t,v) \Big) \cdot \frac{x}{\langle x \rangle} -\lambda \log \langle t \rangle \partial_t \Phi \cdot \frac{x}{\langle x \rangle}+   \log \langle t \rangle \nabla_v \Phi (t,v) \cdot \nabla_x \phi (t,z) \cdot \frac{x}{\langle x \rangle}.
\end{align*}
It remains to use \eqref{eq:phiassump1} and \eqref{eq:phiassump3}, which in particular imply, together with the mean value theorem,
\begin{equation}\label{eq:29}
\frac{t}{\langle t \rangle^2} \Big| \langle t \rangle^2\nabla_x \phi [f] (t,z) - \Phi (t,v) \Big| \lesssim t \Big| \nabla_x \phi [f] (t,z) -  \nabla_x \phi (t,tv) \Big|+ \Lambda_t\frac{\log\langle t+1 \rangle}{\langle t \rangle^{\frac{9}{5}}} \lesssim  \Lambda_t \frac{\log \langle t+1 \rangle }{\langle t \rangle^{\frac{9}{5}}} \langle x \rangle. 
 \end{equation}
As a consequence, we have
\[ \Big\| \big[ \, \overline{\T}_\phi , g_\alpha \big] (t,\cdot , \cdot) \Big\|_{L^p_{x,v}} \lesssim  \Lambda_t \frac{\log \langle t +1 \rangle}{\langle t \rangle^{\frac{9}{5}}}\big\| g_\alpha (t,\cdot , \cdot) \big\|_{L^p_{x,v}}+ \Big\|  \langle x \rangle^{n_x+|\alpha_x|} \langle v \rangle^{n_v+N-|\alpha_x|}  \big[ \, \overline{\T}_\phi , \partial_x^{\alpha_x} \partial_v^{\alpha_v} g \big] (t,\cdot , \cdot) \Big\|_{L^p_{x,v}}  . \]
According to Corollary \ref{Corenergy}, we are then lead to bound the quantities
\begin{align*}
 \mathfrak{I}_1 & \coloneqq   \bigg\| \frac{t \langle v \rangle}{\langle t \rangle^2 \langle x \rangle}  \Big| \langle t \rangle^2 t^{|\gamma|} \nabla_x \partial_{x}^{\gamma} \phi (t,z) -  \partial_{v}^{\gamma} \Phi (t,v) \Big| \cdot   g_\beta \bigg\|_{L^p(\R^3_x \times \R^3_v)} , \\
 \mathfrak{I}_3 & \coloneqq   \Big\| \langle v \rangle  \big| \partial_t \partial_v^\gamma \Phi (t,v) \big| \cdot   g_\beta \Big\|_{L^p(\R^3_x \times \R^3_v)}  ,  
 \end{align*}
 where $1 \leq |\gamma| \leq |\alpha|$ and $\big( |\gamma|-1 \big) +|\beta| \leq |\alpha|$, and
 \begin{align*}
  \mathfrak{I}_2 & \coloneqq     \Big\| \Big(1+ \big| \langle v \rangle \nabla_v \partial_v^{\xi} \Phi (t,v) \big| \Big) \Big| \langle t+|z| \rangle^{|\gamma|} \nabla_x \partial_x^{\gamma} \phi (t,z) \Big|  \cdot   g_\beta \Big\|_{L^p(\R^3_x \times \R^3_v)}  ,
 \end{align*}
 where $|\xi|, \, |\gamma|, \, |\beta| \leq |\alpha|$ and $|\xi|+|\gamma|+|\beta| \leq |\alpha|+1$. We start by dealing with $\mathfrak{I}_2$. 
 \begin{enumerate}[label = (\alph*)]
 \item Assume first that $|\xi|, \, |\gamma| \leq N-p_\infty$. Then, by \eqref{EQU:HPhi}, together with a Sobolev embedding, and \eqref{eq:phiassump1},
 \[ \mathfrak{I}_2 \lesssim  (1+\Lambda_t)\frac{ \Lambda_t}{\langle t \rangle^2}  \big\|  g_\beta (t,\cdot, \cdot) \big\|_{L^p(\R^3_x \times \R^3_v)}  \leq  (1+\Lambda_t ) \frac{ \Lambda_t}{\langle t \rangle^2} \E_N^n[g](t) . \]
 \item Assume now that $|\xi| \geq N+1-p_\infty$. Then, $|\gamma|+|\beta| \leq p_\infty$ so we can still estimate the force field pointwise. We use in addition the Sobolev embedding for Banach-valued functions $W^{p_\infty,p}(  \R^3_v,L^p(\R^3_x)) \hookrightarrow L^\infty ( \R^3_v,L^p(\R^3_x))$ to get $\int_x |g_\beta|^p \dr x \lesssim |\E_N^n[g]|^p$, so that
\[ \mathfrak{I}_2  \lesssim \frac{ \Lambda_t}{\langle t \rangle^2}  \big\|  g_\beta (t,\cdot, \cdot) \big\|_{L^p(\R^3_x \times \R^3_v)}+ \frac{ \Lambda_t}{\langle t \rangle^2} \big\| \langle v \rangle \nabla_v \partial_v^{\xi} \Phi (t,v)  \big\|_{L^p_v}  \bigg\|\int_{\R^3_x} \big| g_\beta  \big|^p (t,x,v) \dr x \bigg\|^{\frac{1}{p}}_{L^\infty(\R^3_v)}    \lesssim  (1+\Lambda_t)\frac{ \Lambda_t}{\langle t \rangle^2} \E_N^n[g](t) . \]
 \item \label{it3} Otherwise, we have $|\gamma| \geq N+1-p_\infty$ so that $|\xi|+|\beta| \leq p_\infty$ and $\langle v \rangle| \nabla_v \partial_v^{\xi} \Phi |(t,v) \lesssim \Lambda_t$. We start by performing the affine change of variables $y(x)= z(t,x,v)$. It yields
  \begin{align*}
  \mathfrak{I}_2 & \lesssim (1+\Lambda_t ) \Big\| \langle t+|y| \rangle^{|\gamma|} \nabla_x \partial_x^\gamma \phi (t,y) \Big\|_{L^p(\R^3_y)} \bigg\|  \int_{\R^3_v}  \big|  g_\beta \big|^p \big(t,y -tv-\lambda \Phi(t,v) \log \langle t \rangle ,v \big) \dr v \bigg\|_{L^\infty (\R^3_y)}^{\frac{1}{p}}.
  \end{align*}
We estimate the force field using \eqref{eq:phiassump2}. For the distribution function, according to Lemma \ref{Lemcdv}, we can perform the change of variables $x(v)= y -tv-\lambda \Phi(t,v) \log \langle t \rangle$ for $t \geq t_0[p,\mathbf{\Lambda}]$, where $t_0 \geq 1$. We obtain 
 \begin{align*}
  \mathfrak{I}_2 & \lesssim \frac{(1+\Lambda_t) \Lambda_t}{\langle t \rangle^{2-\frac{3}{p}}} \cdot \frac{1}{t^{\frac{3}{p}}} \bigg\|  \int_{\R^3_x}  \big|  g_\beta \big|^p(t,x,w) \dr x \bigg\|_{L^\infty (\R^3_w)}^{\frac{1}{p}} \lesssim \frac{(1+\Lambda_t) \Lambda_t}{t^2}  \E_N^n[g](t)   .
  \end{align*}
  Otherwise, we have $0 \leq t \leq t_0$. Then, by a Sobolev embedding, and using that $t$ is bounded above, we have
  \begin{align*}
  \mathfrak{I}_2 & \lesssim (1+ \Lambda_t ) \Lambda_t \langle t \rangle^{-2+\frac{3}{p}} \big\| g_\beta \big\|_{W^{p_\infty,p}_x L^p_v} \lesssim (1+\Lambda_t)  \Lambda_t \langle t \rangle^{-2} \, \E_N^n[g](t)  .
  \end{align*}
\end{enumerate}  
We conclude the analysis of $\mathfrak{I}_2$ using $1+\Lambda_t \leq 1+\mathbf{\Lambda} \log^a \langle t+1 \rangle$. We now focus on $\mathfrak{I}_1$, and we note that $\mathfrak{I}_1 \leq \mathfrak{I}_1^1+\mathfrak{I}_1^2$, where
\begin{align}
\mathfrak{I}_1^1 & \coloneqq   \bigg\| \frac{\langle v \rangle t}{ \langle x \rangle }  \Big| t^{|\gamma|} \nabla_x \partial_{x}^{\gamma} \phi (t,z) - t^{|\gamma|} \nabla_x \partial_{x}^{\gamma} \phi (t,tv) \Big| \cdot   g_\beta \bigg\|_{L^p(\R^3_x \times \R^3_v)}   ,  \label{eq:frakI11} \tag{$\mathfrak{I}_1^1$}\\
\mathfrak{I}_1^2 & \coloneqq \bigg\| \frac{\langle v \rangle t}{ \langle t \rangle^2 }  \Big| \langle t \rangle^2 t^{|\gamma|} \nabla_x \partial_{x}^{\gamma} \phi (t,tv) - \partial_{v}^{\gamma}\Phi (t,v) \Big| \cdot   g_\beta \bigg\|_{L^p(\R^3_x \times \R^3_v)} . \nonumber
\end{align}
If $|\gamma| \leq N-p_\infty$, we estimate pointwise the force field using \eqref{eq:phiassump3} to get
\[ \mathfrak{I}_1^2 \lesssim \frac{ \Lambda_t }{\langle t \rangle^{\frac{9}{5}}} \big\| g_\beta (t,\cdot , \cdot) \big\|_{L^p (\R^3_x \times \R^3_v)}  \lesssim   \frac{ \Lambda_t}{\langle t \rangle^{\frac{9}{5}}} \E_N^n[g](t) .  \]
Otherwise $|\gamma| \geq N+1-p_\infty$, so that $|\beta| \leq p_\infty$ and $\int_x |g_\beta|^p \dr x \lesssim |\E_N^n[g]|^p$. Using \eqref{eq:phiassump4}, we obtain
\[ \mathfrak{I}_1^2 \lesssim  \frac{t}{\langle t \rangle^2} \Big\| \langle v \rangle \Big( \langle t \rangle^2 t^{|\gamma|} \nabla_x \partial_{x}^{\gamma} \phi (t,tv) -  \partial_{v}^{\gamma} \Phi (t,v) \Big) \Big\|_{L^p(\R^3_v)}  \bigg\| \int_{\R^3_x} | g_\beta|^p  \dr x \bigg\|_{L^\infty(\R^3_v)}^{\frac{1}{p}}  \lesssim  \frac{ \Lambda_t}{\langle t \rangle^{\frac{9}{5}}} \E_N^n[g](t). \]

Next, we deal with $\mathfrak{I}_1^1$. To this end, we split the phase space $\R^3_x \times \R^3_v$ into $\mathbf{D}_t \sqcup \mathbf{D}_t^{\mathrm{c}}$, where $\mathbf{D}_t \coloneqq \{t_1+ | v| \leq \sqrt{t} \}$, and $t_1[p,\mathbf{\Lambda}] \geq 1$ is a constant to be fixed later. We then introduce
\begin{align*}
\mathfrak{Q}_1 & \coloneqq   \bigg\| \frac{\langle v \rangle t}{ \langle x \rangle}  \big|t^{|\gamma|} \nabla_x \partial_{x}^{\gamma} \phi (t,z) - t^{|\gamma|} \nabla_x \partial_{x}^{\gamma} \phi (t,tv) \big| \cdot   g_\beta \bigg\|_{L^p(\mathbf{D}_t)}   , \\
\mathfrak{Q}_2 & \coloneqq \bigg\| \frac{\langle v \rangle t}{ \langle x \rangle}  \big|t^{|\gamma|} \nabla_x \partial_{x}^{\gamma} \phi (t,tv) \big| \cdot   g_\beta \bigg\|_{L^p(\mathbf{D}_t^{\mathrm{c}})} , \\
\mathfrak{Q}_3 & \coloneqq \bigg\| \frac{\langle v \rangle t}{ \langle x \rangle}  \big|t^{|\gamma|} \nabla_x \partial_{x}^{\gamma} \phi (t,z) \big| \cdot   g_\beta \bigg\|_{L^p(\mathbf{D}_t^{\mathrm{c}})}  .
\end{align*}
\begin{itemize}
\item  \textit{The domain $\{ t_1+|v| \leq \sqrt{t} \}$.} The fundamental theorem of calculus yields
\[\mathfrak{Q}_1  =    \bigg\|\int_{\theta =0}^1 \frac{t \langle v \rangle|x+\lambda \Phi (t,v) \log \langle t \rangle|}{ \langle x \rangle} \Big| t^{|\gamma|} \nabla_x^2 \partial_{x}^{\gamma} \phi \Big(t,tv+\theta \big(x+\lambda \Phi (t,v) \log \langle t \rangle \big) \Big)\Big| \dr \theta  \cdot   g_\beta \bigg\|_{L^p(\mathbf{D}_t)} . \]
Then, we note that on $\mathbf{D}_t$, $t \langle v \rangle|x+\lambda \Phi (t,v) \log \langle t \rangle| \lesssim t^{3/2} \langle x \rangle \log^{a+1} \langle t+1 \rangle$. If $|\gamma| \leq N-p_\infty$, we estimate pointwise the force field using \eqref{eq:phiassump3} and Remark \ref{Rq:phiassump2} to get
\[ \mathfrak{Q}_1 \lesssim  \Lambda_t\frac{\log^{a+1} \langle t+1 \rangle}{\langle t \rangle^{\frac{3}{2}}} \big\| g_\beta (t,\cdot , \cdot) \big\|_{L^p (\R^3_x \times \R^3_v)}  \leq   \Lambda_t\frac{\log^{a+1} \langle t+1 \rangle}{\langle t \rangle^{\frac{3}{2}}} \big\| g_\beta (t,\cdot , \cdot) \big\|_{L^p (\R^3_x \times \R^3_v)} .  \]
We now consider the case $|\gamma | \geq N+1- p_\infty$, so that $|\beta| \leq p_\infty$. The analysis shares some similarities with the one carried out in \ref{it3}. If $t_1$ is chosen sufficiently large, one can apply Lemma \ref{Lemcdv} to perform the change of variables $y(v)=tv+\theta (x+\lambda \Phi (t,v) \log \langle t \rangle )$. It yields, using in addition \eqref{eq:phiassump2},
\begin{align*}
 \mathfrak{I}_1^1 & \lesssim t^{\frac{3}{2}} \log^{a+1} \langle t+1 \rangle \cdot \frac{1}{t^{\frac{3}{p}}} \Big\|t^{|\gamma|} \nabla_x^2 \partial_x^\gamma \phi (t,y) \Big\|_{L^p(\R^3_y)} \bigg\|  \int_{\R^3_x}  \big|  g_\beta \big|^p(t,x,w) \dr v \bigg\|_{L^\infty (\R^3_w)}^{\frac{1}{p}}  \lesssim \Lambda_t \frac{\log^{a+1} \langle t+1 \rangle}{\langle t \rangle^{\frac{3}{2}}} \E_N^n[g](t)  .
 \end{align*}
 
\item  \textit{The integral $\mathfrak{Q}_2$, over the domain $\{ t_1+|v| > \sqrt{t} \}$.}  Assume first that $|\gamma| \leq N+1-p_\infty$, so that \eqref{eq:phiassump1} and Remark \ref{Rq:phiassump2} give
\[ \mathfrak{Q}_2 \lesssim \Lambda_t \bigg\| \frac{\langle v \rangle t}{ \langle t+t|v| \rangle^2}   g_\beta \bigg\|_{L^p(\mathbf{D}_t^{\mathrm{c}})} \leq \Lambda_t \big\|  \langle t+t|v| \rangle^{-1}  \big\|_{L^\infty(\mathbf{D}_t^{\mathrm{c}})} \big\| g_\beta \big\|_{L^p(\R^3_x \times \R^3_v)} \lesssim \frac{\Lambda_t}{\langle t \rangle^{\frac{3}{2}}} \E_N^n[g](t) , \]
where we use $t \langle v \rangle \leq t+t|v|$ and that $\langle tv \rangle \gtrsim \langle t \rangle^{3/2}$ on $\mathbf{D}_t^{\mathrm{c}}$. If $|\gamma| \geq N+2-p_\infty$, we have $|\beta| \leq p_\infty-1$ so that we can use again $\int_x |g_\beta|^p \dr x \lesssim |\E_N^n[g](t)|^p$. In view of the domain of integration, it yields
\[ \mathfrak{Q}_2 \lesssim \frac{t^2}{\langle t \rangle^{\frac{|\gamma|}{2}-1}} \Big\|t^{|\gamma|-1} \langle v \rangle^{|\gamma|-1} \nabla_x \partial_x^\gamma \phi (t,tv)  \Big\|_{L^p (\R^3_v)} \bigg\| \int_{\R^3_x} |g_\beta |^p (t,x,v) \dr x \bigg\|_{L^\infty (\R^3_v)}^{\frac{1}{p}}   . \]
Note now that $|\gamma| \geq 3$. Therefore, performing the change of variables $x=tv$ and using then \eqref{eq:phiassump2}, we obtain
\[ \mathfrak{Q}_2 \lesssim \frac{t^2}{\langle t \rangle^{\frac{|\gamma|}{2}-1} t^{\frac{3}{p}}}  \Big\|(t+|x|)^{|\gamma|-1} \nabla_x \partial_x^\gamma \phi (t,x)  \Big\|_{L^p (\R^3_x)} \E_N^n[g](t) \lesssim \Lambda_t   \frac{t^{2-\frac{3}{p}}}{\langle t \rangle^{\frac{|\gamma|}{2}-1} \langle t \rangle^{3-\frac{3}{p}}}  \E_N^n[g](t) \leq \frac{\Lambda_t}{\langle t \rangle^{\frac{3}{2}}} \E_N^n[g](t). \]

\item  \textit{The integral $\mathfrak{Q}_3$.} Assume first $|\gamma| \leq N+1-p_\infty$, so that by \eqref{eq:phiassump1} and Remark \ref{Rq:phiassump2},
\[ \mathfrak{Q}_3 \lesssim \Lambda_t \bigg\| \frac{ \langle v \rangle t}{ \langle x \rangle \langle t+|z| \rangle^2} \bigg\|_{L^\infty(\mathbf{D}_t^{\mathrm{c}})}  \big\| g_\beta (t, \cdot , \cdot) \big\|_{L^p(\R^3_x \times \R^3_v)} \lesssim  \frac{\Lambda_t}{\langle t \rangle^{\frac{3}{2}}} \E_N^n [g](t)  , \]
provided that, for all $t \in I$, $x \in \R^3_x$ and $|v| \geq \sqrt{t}-t_1$,
\begin{equation}\label{eq:techni}
\frac{ \langle v \rangle t}{ \langle x \rangle \langle t+|z| \rangle^2} \lesssim \frac{1}{\langle t \rangle^{\frac{3}{2}}}.
\end{equation}
As $t \langle v \rangle \leq t+t|v|$, $tv=z-x-\lambda \Phi (t,v) \log \langle t \rangle$ and $|\Phi (t,v)| \lesssim \log^a \langle t+1 \rangle$, we have
\[ \frac{ \langle v \rangle t}{ \langle x \rangle \langle t+|z| \rangle^2} \lesssim \frac{\langle t \rangle+\langle z \rangle}{\langle x \rangle \langle t+|z| \rangle^2}+\frac{1}{\langle t+|z| \rangle^2} \lesssim  \frac{1}{\langle x \rangle  \langle t+|z| \rangle}+\frac{1}{\langle t \rangle^2} . \]
If $|z| \geq t^{\frac{3}{2}}/2$, the right hand side of the previous inequality is bounded by $ \langle t \rangle^{-\frac{3}{2}}$. Otherwise $|z| \leq t^{\frac{3}{2}}/2$, so
\[ |x| \geq  \big| tv - \lambda \Phi (t,v) \log \langle t \rangle \big| - |z| \geq t^{\frac{3}{2}}-t_1t- \mathbf{\Lambda}  \log^{a+1} \langle t+1 \rangle - \frac{1}{2}t^{\frac{3}{2}},\]
which allows to deduce \eqref{eq:techni}. Finally, assume that $|\gamma| \geq N+2-p_\infty$, so that $|\beta| \leq p_\infty-1$. Here, we proceed as in \ref{it3} to get
 \begin{align*}
  \mathfrak{Q}_3 & \lesssim   \bigg\| \frac{\langle v \rangle t^4}{\langle x \rangle \langle t+|z| \rangle^2} \bigg\|_{L^\infty (\mathbf{D}_t^{\mathrm{c}})} \big\| \langle t+|y| \rangle^2  t^{|\gamma|-3} \nabla_x \partial_x^\gamma \phi (t,y) \big\|_{L^p_y} \bigg\|\int_{\R^3_v}  \big|  g_\beta \big|^p \big(t,y-tv-\lambda \Phi (t,v) \log \langle t \rangle,v \big) \dr v \bigg\|_{L^\infty_y }^{\frac{1}{p}}  . 
  \end{align*}
  We bound the first factor by $t^{3/2}$ using \eqref{eq:techni}. The second one is controlled by $\Lambda_t \langle t \rangle^{-3+\frac{3}{p}}$ through \eqref{eq:phiassump2}. For the last factor, we separate between the case $t \leq t_0$ and $t \geq t_0$ to bound it by $\langle t \rangle^{-3/p} \E_N^n[g](t)$.
\end{itemize}

We finally deal with $\mathfrak{I}_3$. We then fix $1 \leq |\gamma| \leq N$ and $|\beta| \leq |\alpha|+1-|\gamma|$. Assume first that $|\gamma| \leq N-p_\infty$, so that, by \eqref{eq:phiassump3},
\[ \mathfrak{I}_3 \lesssim \frac{\Lambda_t}{\langle t \rangle^{\frac{9}{5}}} \|g_\beta\|_{L^p(\R^3_x \times \R^3_v)} \leq \frac{\Lambda_t}{\langle t \rangle^{\frac{9}{5}}} \E_N^n[g](t). \]
Otherwise, $|\gamma| \geq N+1-p_\infty$ and $|\beta| \leq p_\infty$. Using \eqref{eq:phiassump4} and $\int_x |g_\beta |^p \dr x \lesssim |\E_N^n [g]|^p$, we have
\[ \mathfrak{I}_3 \lesssim  \big\| \langle v \rangle  \partial_t  \partial_v^{\gamma} \Phi (t,v)  \big\|_{L^p( \R^3_v)} \E_N^n[g](t) \lesssim \frac{\Lambda_t}{\langle t \rangle^{\frac{9}{5}}} \E_N^n[g](t). \] 
\end{proof}

As a consequence of the previous analysis, one can estimate $\partial_t g$ and its derivatives up to order $N-1$. For global solutions, it will allow to obtain convergence to a limiting distribution function. From now on, we assume that $a=0$, that is $\|\Lambda_t \|_{L^\infty(I)}\leq \mathbf{\Lambda}$.

\begin{Cor}\label{CorscattGeneral}
Recall that $(\phi, \Phi) \in \mathcal{W}^N(I,\mathbf{\Lambda})$, let $g \colon I \times \R^3_x \times \R^3_v \to \R$ be a sufficiently regular solution to $\overline{\T}_\phi (g)=0$, and let $n=(n_x,n_v) \in \mathbb{Z}^2$. For any $|\alpha| \leq N-1$, we have
\[ \forall \, t \in I, \qquad  \big\| \partial_t g_\alpha \big\|_{L^p(\R^3_x \times \R^3_v)} \lesssim  \frac{\mathbf{\Lambda} }{\langle t \rangle^{\frac{4}{3}}} \E_N^n[g](t). \]
\end{Cor}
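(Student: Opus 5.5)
Since $\overline{\T}_\phi(g)=0$, we have $\overline{\T}_\phi(g_\alpha)=[\,\overline{\T}_\phi,g_\alpha](g)$, where $g_\alpha$ is the weighted derivative of Definition \ref{Defgalpha}. Using the expression of $\overline{\T}_\phi$ from Lemma \ref{LemTbar}, we can therefore write
\[ \partial_t g_\alpha = [\,\overline{\T}_\phi , g_\alpha](g) - \Big( \frac{\lambda t}{\langle t \rangle^2}\big(\langle t\rangle^2\nabla_x\phi(t,z)-\Phi\big) - \lambda\log\langle t\rangle \partial_t\Phi + \log\langle t\rangle\nabla_v\Phi\cdot\nabla_x\phi(t,z)\Big)\cdot\nabla_x g_\alpha + \lambda\nabla_x\phi(t,z)\cdot\nabla_v g_\alpha . \]
The first term is bounded in $L^p$ by Proposition \ref{ProboundednessGeneral} by $C_0\mathbf{\Lambda}\langle t\rangle^{-4/3}\E_N^n[g](t)$. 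It remains to estimate the transport terms acting on $g_\alpha$. This is where one derivative is lost, which is why we restrict to $|\alpha|\le N-1$.

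\textbf{The $\nabla_v$ term.} We expand $\nabla_v g_\alpha$. It is a combination of $g_{\alpha'}$ with $\alpha'=\alpha+(0,e_k)$, $|\alpha'|\le N$, together with lower-order terms where the derivative hits $\langle v\rangle^{n_v+N-|\alpha_x|}$; these are bounded by $|g_\alpha|$. The weights match exactly, since $\alpha'_x=\alpha_x$. By \eqref{eq:phiassump1}, $|\nabla_x\phi(t,z)|\le\Lambda_t\langle t\rangle^{-2}$, so this contribution is $\lesssim \mathbf{\Lambda}\langle t\rangle^{-2}\E_N^n[g]$.

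\textbf{The $\nabla_x$ terms.} Here $\nabla_x g_\alpha$ equals $\langle x\rangle^{-1}\langle v\rangle\, g_{\alpha''}$ with $\alpha''=\alpha+(e_k,0)$, plus a term bounded by $|g_\alpha|/\langle x\rangle$. So we need to bound the coefficient multiplied by $\langle v\rangle/\langle x\rangle$. Three pieces arise.
\begin{itemize}
\item The piece $\log\langle t\rangle\,\partial_t\Phi$ is handled by \eqref{eq:phiassump3} with $\kappa=0$: $\langle v\rangle|\partial_t\Phi|\le\Lambda_t\langle t\rangle^{-9/5}$.
\item The piece $\log\langle t\rangle\nabla_v\Phi\cdot\nabla_x\phi(t,z)$ is bounded by $\Lambda_t^2\langle t\rangle^{-2}\log\langle t\rangle\,\langle v\rangle^{-1}$, using the Sobolev bound on $\langle v\rangle\nabla_v\Phi$ from \eqref{EQU:HPhi} and \eqref{eq:phiassump1}.
\item The main piece $\frac{t}{\langle t\rangle^2}\langle v\rangle\langle x\rangle^{-1}|\langle t\rangle^2\nabla_x\phi(t,z)-\Phi|$ is treated exactly as $\mathfrak{I}_1$ (with $\gamma=0$) in the proof of Proposition \ref{ProboundednessGeneral}. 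We split it into the difference $\nabla_x\phi(t,z)-\nabla_x\phi(t,tv)$, handled via the decomposition $\mathbf{D}_t\sqcup\mathbf{D}_t^{\mathrm c}$ with the quantities $\mathfrak{Q}_1,\mathfrak{Q}_2,\mathfrak{Q}_3$, and the sharp term, handled by \eqref{eq:phiassump3}.
\end{itemize}
Only pointwise bounds on the force field are needed, since $\gamma=0$ and $N-p_\infty\ge0$. This yields $\lesssim\mathbf{\Lambda}\langle t\rangle^{-3/2}\log\langle t+1\rangle\,\E_N^n[g]$, which is $\lesssim\mathbf{\Lambda}\langle t\rangle^{-4/3}\E_N^n[g]$.

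\textbf{Main obstacle.} The delicate step is this last coefficient. Its $\langle x\rangle$ growth (see \eqref{eq:29}) must be absorbed by the $\langle x\rangle^{-1}$ gained from the hierarchy of weights. For $|v|\gtrsim\sqrt t$ we use \eqref{eq:techni} and the decay $\langle tv\rangle^{-1}$ instead of the mean value theorem. Since this has already been carried out in $\mathfrak{I}_1^1,\mathfrak{I}_1^2$, I would simply invoke that argument with $\gamma=0$ and $\beta=\alpha''$. Summing all contributions, and using $\Lambda_t\le\mathbf{\Lambda}$ (with $a=0$) so that $\Lambda_t^2\lesssim\mathbf{\Lambda}$ with constants depending on $\mathbf{\Lambda}$, concludes the proof.
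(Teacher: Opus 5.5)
Your proof is correct and follows the paper's argument. You rewrite $\partial_t g_\alpha$ via Lemma~\ref{LemTbar}, bound $\overline{\T}_\phi(g_\alpha)=[\,\overline{\T}_\phi,g_\alpha](g)$ by Proposition~\ref{ProboundednessGeneral}, and control the remaining transport terms using \eqref{eq:phiassump1}, \eqref{eq:phiassump3} and the $\mathfrak{I}_1^1$/$\mathfrak{I}_1^2$ estimates with $\gamma=0$. One small correction: the mean-value step in $\mathfrak{Q}_1$ needs a pointwise bound on $\nabla_x^2\phi$, which requires $N-p_\infty\geq 1$ rather than $N-p_\infty\geq 0$; this holds since $N\geq 2p_\infty$.
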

\begin{proof}
We use the expression of $\overline{\T}_\phi$, given by Lemma \ref{LemTbar}, to rewrite $\partial_t g_\alpha$. As $|\langle v \rangle \nabla_v \Phi |(t,v) \lesssim \mathbf{\Lambda}$, it yields
\begin{align*}
 | \partial_t g_\alpha |  \lesssim \! \big| \overline{\T}_\phi \big( g_\alpha \big) \big|+\frac{t}{\langle t \rangle^2} \Big| \langle t \rangle^2\nabla_x \phi  (t,z) - \Phi (t,v) \Big|  | \nabla_x  g_\alpha |+   | \nabla_x \phi  | (t,z ) \bigg( \frac{\log \langle t \rangle}{\langle v \rangle} | \nabla_x g_\alpha | +  | \nabla_v g_\alpha | \bigg)+\log \langle t \rangle | \partial_t \Phi || \nabla_x g_\alpha | .
\end{align*}
Using \eqref{eq:phiassump3} to bound $\partial_t \Phi$, and \eqref{eq:phiassump1}, we get
\begin{align*}
 | \partial_t g_\alpha | \! \lesssim \big| \overline{\T}_\phi \big( g_\alpha \big) \big|+\! \sum_{|\beta|\leq N} \! \frac{\langle v \rangle t}{ \langle x \rangle  \langle t \rangle^2} \Big| \langle t \rangle^2\nabla_x \phi  (t,z)-\langle t \rangle^2 \nabla_x \phi (t,tv)+\langle t \rangle^2 \nabla_x \phi (t,tv) - \Phi (t,v) \Big|  |  g_\beta |+ \mathbf{\Lambda} \frac{ \log \! \langle t+1 \rangle}{\langle t \rangle^{\frac{9}{5}}} |g_\beta|.
\end{align*}
We now observe that bound derived for \eqref{eq:frakI11} applies in the case $|\gamma|=0$. Thus, using also \eqref{eq:phiassump3}, we get
\[ \big\| \partial_t g_\alpha \big\|_{L^p ( \R^3_x \times \R^3_v) } \lesssim \big\| \overline{\T}_\phi \big( g_\alpha \big) \big\|_{L^p ( \R^3_x \times \R^3_v) } + \mathbf{\Lambda} \frac{ \log \langle t +1 \rangle}{\langle t \rangle^{\frac{3}{2}}} \E_N^n[g](t)    . \]
It remains to apply Proposition \ref{ProboundednessGeneral}.
\end{proof}

We will combine the last estimate with the next result to obtain scattering.

\begin{Lem}\label{Lemforscat}
Let $T \geq 0$ and $\mathfrak{g} \colon [T,+\infty[ \times \R^3_x \times \R^3_v \to \R$ be a function satisfying $\| \partial_t \mathfrak{g} \|_{L^p_{x,v}} \in L^1_t$ and $\mathfrak{g}(T,\cdot , \cdot) \in L^p_{x,v}$. Then, there exists $\mathfrak{g}_\infty \in L^p(\R^3_x \times \R^3_v)$ such that
\[ \forall \, t \in [T,+\infty[, \qquad   \big\| \mathfrak{g}(t,\cdot , \cdot)- \mathfrak{g}_\infty  \big\|_{L^p(\R^3_x \times \R^3_v)}  \leq \int_{\tau=t}^{+\infty} \big\| \partial_t \mathfrak{g} (\tau , \cdot , \cdot) \big\|_{L^p(\R^3_x \times \R^3_v)} \dr \tau . \]
\end{Lem}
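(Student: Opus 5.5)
The plan is a Cauchy-criterion argument in the Banach space $L^p(\R^3_x \times \R^3_v)$, based on the fundamental theorem of calculus in time with values in $L^p_{x,v}$.

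First, for $T \leq t_1 \leq t_2$, write
\[ \mathfrak{g}(t_2,x,v)-\mathfrak{g}(t_1,x,v) = \int_{t_1}^{t_2} \partial_t \mathfrak{g}(\tau,x,v) \, \dr \tau , \]
which holds for almost every $(x,v)$ since $\mathfrak{g}$ is taken sufficiently regular (for instance $C^1$ in $t$, or absolutely continuous in $t$ for almost every $(x,v)$). Minkowski's integral inequality then gives
\[ \big\| \mathfrak{g}(t_2)-\mathfrak{g}(t_1) \big\|_{L^p_{x,v}} \leq \int_{t_1}^{t_2} \big\| \partial_t \mathfrak{g}(\tau) \big\|_{L^p_{x,v}} \dr \tau . \]

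Second, this has three consequences.
\begin{itemize}
\item Taking $t_1 = T$ and using $\mathfrak{g}(T) \in L^p_{x,v}$, we get $\mathfrak{g}(t) \in L^p_{x,v}$ for every $t \geq T$.
\item Since $\| \partial_t \mathfrak{g} \|_{L^p_{x,v}} \in L^1_t$, the tail $\int_{t_1}^{+\infty} \| \partial_t \mathfrak{g}(\tau) \|_{L^p_{x,v}} \, \dr \tau$ tends to $0$ as $t_1 \to +\infty$.
\item Hence $(\mathfrak{g}(t))_{t \geq T}$ is Cauchy in $L^p_{x,v}$ as $t \to +\infty$, and by completeness there is $\mathfrak{g}_\infty \in L^p_{x,v}$ with $\mathfrak{g}(t) \to \mathfrak{g}_\infty$.
\end{itemize}

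Finally, fix $t$ and let $t_2 \to +\infty$ in the Minkowski bound, with $t_1 = t$. The left-hand side converges to $\| \mathfrak{g}(t) - \mathfrak{g}_\infty \|_{L^p_{x,v}}$ by continuity of the norm. The right-hand side increases to $\int_t^{+\infty} \| \partial_t \mathfrak{g}(\tau) \|_{L^p_{x,v}} \, \dr \tau$. This is exactly the stated estimate.

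The only delicate point is justifying the vector-valued identity $\mathfrak{g}(t_2)-\mathfrak{g}(t_1) = \int_{t_1}^{t_2} \partial_t \mathfrak{g}$ in $L^p$. I would handle it pointwise in $(x,v)$ as above and then apply Minkowski. Alternatively, one can pair against test functions $\psi \in L^{p'}_{x,v}$, use Fubini, and conclude by duality. Everything else is routine.
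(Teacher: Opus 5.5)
Your proof is correct and follows the same route as the paper: Minkowski's integral inequality applied to $\mathfrak{g}(t_2)-\mathfrak{g}(t_1)=\int_{t_1}^{t_2}\partial_t\mathfrak{g}\,\dr\tau$ shows that $t\mapsto\mathfrak{g}(t,\cdot,\cdot)$ is Cauchy in $L^p_{x,v}$, and completeness then provides $\mathfrak{g}_\infty$. You also spell out the final limit $t_2\to+\infty$ that gives the stated tail estimate, and the observation that $\mathfrak{g}(t)\in L^p_{x,v}$ for every $t$; the paper leaves both implicit.
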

\begin{proof}
By the Minkowski integral inequality, we have, for all $t_2 \geq t_1 \geq T$,
\[   \big\| \mathfrak{g}(t_2,\cdot , \cdot)- \mathfrak{g}(t_1,\cdot , \cdot)  \big\|_{L^p(\R^3_x \times \R^3_v)}  = \bigg\| \int_{\tau=t_1}^{t_2} \partial_t \mathfrak{g} (\tau,\cdot , \cdot) \dr \tau \bigg\|_{L^p(\R^3_x \times \R^3_v)} \leq \int_{\tau=t_1}^{+\infty} \big\| \partial_t \mathfrak{g} (\tau , \cdot , \cdot) \big\|_{L^p_{x,v}} \dr \tau . \]
As the right hand side goes to $0$ as $t_1 \to +\infty$, the map $t \mapsto g(t,\cdot , \cdot)$ is Cauchy in $L^p_{x,v}$.
\end{proof}

\subsection{Convergence of characteristics and consequences}

Let $T \geq 0$ and assume, throughout this section, that $I=[T,+\infty[$. Recall that $(\phi, \Phi) \in \mathcal{W}^N(I,\mathbf{\Lambda})$, for some constant $\mathbf{\Lambda}$. We will be interested in the asymptotic properties of the characteristics associated with the transport operator 
\[\overline{\T}_\phi= \partial_t+a_x (t,x,v) \cdot \nabla_x-\lambda \nabla_x \phi \big(t,z(t,x,v) \big) \cdot \nabla_v,    \]
 where
\[  a_x(t,x,v) \coloneqq \frac{\lambda t}{\langle t \rangle^2} \Big( t^2\nabla_x \phi  (t,z) - \Phi (t,v) \Big)   + \log \langle t \rangle \nabla_v \Phi \cdot \nabla_x \phi (t,z )  -\lambda \log \langle t \rangle \partial_t \Phi  . \]

\begin{Def}
Let $t, \, t_0 \geq T$ and $(x,v) \in \R^3_x \times \R^3_v$. We denote by $\varphi$ the characteristic flow of $\overline{\T}_\phi$,
\[ \varphi_{t,t_0}(x,v) \coloneqq \big( \mathbf{X}(t,t_0,x,v),\mathbf{V}(t,t_0,x,v) \big), \]
where $(\mathbf{X},\mathbf{V})(t_0,t_0,x,v)=(x,v)$ and, using the dot to denote differentiation with respect to the first variable $t$,
\begin{align*}
\dot{\mathbf{X}}(t,t_0,x,v) &=a_x\big(t,\mathbf{X}(t,t_0,x,v),\mathbf{V}(t,t_0,x,v) \big)  , \\
\dot{\mathbf{V}}(t,t_0,x,v)&=-\lambda \nabla_x \phi \big(t,z(t,\mathbf{X}(t,t_0,x,v),\mathbf{V}(t,t_0,x,v)) \big) .
\end{align*}
\end{Def}

We start by proving that the characteristics converge.

\begin{Lem}\label{Lemcharac}
Let $t_0 \geq T$ and $(x,v) \in \R^3_x \times \R^3_v$. Then, there exists $C[\mathbf{\Lambda}]>0$ such that, for all $t \geq T$,
\[ \big| \mathbf{V}(t,t_0,x,v)-v \big| \leq C, \qquad \qquad  \big| \mathbf{X}(t,t_0,x,v) \big| \leq \langle x \rangle e^C   . \]
Moreover, there exists a continuous map $\varphi_{\infty,t_0}(x,v) =(\mathbf{X},\mathbf{V})(\infty,t_0,x,v)$ such that, for all $t \geq T$,
\[  \big|\mathbf{V}(t,t_0,x,v)- \mathbf{V}(\infty,t_0,x,v) \big| \leq C \langle t \rangle^{-1}, \qquad \qquad \big|\mathbf{X}(t,t_0,x,v)- \mathbf{X}(\infty,t_0,x,v) \big| \leq \langle x \rangle \langle t \rangle^{-\frac{1}{2}} Ce^C. \]
\end{Lem}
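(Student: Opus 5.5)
The plan is to integrate the ODE system directly, using only the pointwise bounds encoded in the class $\mathcal{W}^N(I,\mathbf{\Lambda})$. The velocity equation is handled first, since it does not involve $\mathbf{X}$ in an essential way. By \eqref{eq:phiassump1} with $\kappa=0$ we have $|\nabla_x\phi(t,z)|\leq \mathbf{\Lambda}\langle t+|z|\rangle^{-2}\leq \mathbf{\Lambda}\langle t\rangle^{-2}$, uniformly in $z$. Hence $|\dot{\mathbf{V}}|\leq \mathbf{\Lambda}\langle t\rangle^{-2}$, which is integrable on $[T,+\infty[$. This immediately gives $|\mathbf{V}(t,t_0,x,v)-v|\leq \int_{\R_+}\mathbf{\Lambda}\langle s\rangle^{-2}\dr s\leq C$. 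Integrating from $t$ to $+\infty$ then shows that $\mathbf{V}(\infty,t_0,x,v)$ exists and that $|\mathbf{V}(t)-\mathbf{V}(\infty)|\lesssim \mathbf{\Lambda}\langle t\rangle^{-1}$.

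For $\mathbf{X}$, I bound $|a_x(t,x,v)|$ by $\langle x\rangle$ times an integrable function of $t$, and then apply Grönwall. Three terms need to be controlled.
\begin{itemize}
\item The first term of $a_x$ is the one estimated in \eqref{eq:29}: it combines the mean value theorem, applied through \eqref{eq:phiassump1} to the second derivatives of $\phi$, with \eqref{eq:phiassump3}. This gives $\frac{t}{\langle t\rangle^2}|\langle t\rangle^2\nabla_x\phi(t,z)-\Phi(t,v)|\lesssim \mathbf{\Lambda}\log\langle t+1\rangle\langle t\rangle^{-9/5}\langle x\rangle$. The $t^2$ versus $\langle t\rangle^2$ discrepancy costs only $\frac{t}{\langle t\rangle^2}|\nabla_x\phi|\lesssim\langle t\rangle^{-3}$.
\item The term $\log\langle t\rangle\nabla_v\Phi\cdot\nabla_x\phi(t,z)$ is $O(\mathbf{\Lambda}^2\log\langle t\rangle\langle t\rangle^{-2})$. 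Here $\nabla_v\Phi$ is bounded pointwise via \eqref{EQU:HPhi} and Sobolev embedding, using $N\geq 2p_\infty$.
\item The term $\log\langle t\rangle\,\partial_t\Phi$ is $O(\mathbf{\Lambda}\log\langle t\rangle\langle t\rangle^{-9/5})$ by \eqref{eq:phiassump3} with $\kappa=0$.
\end{itemize}
Altogether $|\dot{\mathbf{X}}|\leq \beta(t)\langle\mathbf{X}\rangle$, where $\beta(t)\lesssim \mathbf{\Lambda}\langle\mathbf{\Lambda}\rangle\log\langle t+1\rangle\langle t\rangle^{-9/5}$ is integrable. Grönwall applied to $\langle\mathbf{X}\rangle$, either forward or backward from $t_0$, yields $\langle\mathbf{X}(t)\rangle\leq\langle x\rangle e^{\|\beta\|_{L^1}}$, which is the bound $\langle x\rangle e^C$.

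Plugging the bound on $\langle\mathbf{X}\rangle$ back into the estimate for $a_x$ gives
\[ |\dot{\mathbf{X}}|\lesssim \langle x\rangle e^C\log\langle t+1\rangle\langle t\rangle^{-9/5}\lesssim \langle x\rangle e^C\langle t\rangle^{-3/2}. \]
Integrating from $t$ to $\infty$ shows that $\mathbf{X}(\infty)$ exists and that $|\mathbf{X}(t)-\mathbf{X}(\infty)|\leq C e^C\langle x\rangle\langle t\rangle^{-1/2}$.

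It remains to prove continuity of $\varphi_{\infty,t_0}$. For each finite $t$, the map $\varphi_{t,t_0}$ is continuous, by continuous dependence on initial data for the ODE whose coefficients are $C^1$. The convergence $\varphi_{t,t_0}\to\varphi_{\infty,t_0}$ is uniform on bounded sets of $(x,v)$, since the rates above depend only on $\langle x\rangle$. A locally uniform limit of continuous maps is continuous, so $\varphi_{\infty,t_0}$ is continuous.

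I expect the main point needing care to be the $\langle x\rangle$-linear bound on the first term of $a_x$. The mean value step requires a pointwise bound on $\nabla_x^2\phi$ along the segment from $tv$ to $z$. This comes from \eqref{eq:phiassump1} with $|\kappa|=1$, which is allowed since $N-p_\infty\geq 1$. That bound gives $|\nabla_x^2\phi|\lesssim\mathbf{\Lambda}\langle t\rangle^{-3}$, and the displacement satisfies $|z-tv|\leq|x|+\mathbf{\Lambda}\log\langle t\rangle$. So the bound holds uniformly in $v$ and the argument closes.
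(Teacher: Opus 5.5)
Your proof is correct and is essentially the paper's argument. The paper invokes \eqref{eq:Txv}, namely $|\overline{\T}_\phi(\langle v\rangle)|\lesssim \langle t\rangle^{-2}$ and $|\overline{\T}_\phi(\langle x\rangle)|\lesssim \log\langle t+1\rangle\langle t\rangle^{-9/5}\langle x\rangle$, which rest on \eqref{eq:phiassump1}, \eqref{eq:phiassump3} and the mean value bound \eqref{eq:29} you use, and then applies Duhamel's principle and Grönwall along characteristics; your direct bound on $|\dot{\mathbf{V}}|$, your handling of the $t^2$ versus $\langle t\rangle^2$ discrepancy in $a_x$, and your locally-uniform-limit argument for continuity just make explicit what the paper summarizes as ``the convergences follow''.
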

\begin{proof}
Recall from \eqref{eq:Txv} that
\[ \overline{\T}_\phi (|v|) \lesssim \langle t\rangle^{-2}, \qquad \qquad \overline{\T}_\phi (|x|) \lesssim \langle t \rangle^{-\frac{3}{2}} \langle x \rangle. \]
It remains to apply Duhamel's principle and, for $\mathbf{X}$, the Grönwall lemma. The convergences follow.
\end{proof}

We now show the convergence of the flow, as $t_0 \to +\infty$.

\begin{Pro}\label{Procharac}
Let $t \geq T$. There exists $\varphi_{t,\infty}(x,v) \in C^0(\R^3_x \times \R^3_v,\R^3_x \times \R^3_v)$ such that
\begin{enumerate}[ label = (\Roman*)]
\item \label{objet1} For any compact subset $K \Subset \R^3_x \times \R^3_v$, we have $\| \varphi_{t,t_0} - \varphi_{t,\infty} \|_{L^{\infty}(K)}  \lesssim C \langle t_0 \rangle^{-1/3}$, where $C[K,\mathbf{\Lambda}] >0$.
\item \label{objet1bis} For any compact subset $K \Subset \R^3_x \times \R^3_v$, there exists a compact $K'$ such that $\varphi_{t,t_0}^{-1}(K) \subset K'$ for all $t_0 \in [T,+\infty]$.
\item \label{objet2} $\varphi_{t,\infty}$ is a homeomorphism, and $\varphi_{t,\infty}^{-1}=\varphi_{\infty,t}$. Besides, $\varphi_{t,\infty} \in W^{1,\infty}_{\mathrm{loc}} (\R^3_x \times \R^3_v,\R^3_x \times \R^3_v)$, and $\det ( \dr \varphi_{t,\infty}) \equiv 1$.
\end{enumerate}
\end{Pro}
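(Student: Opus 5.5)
The approach is to obtain $\varphi_{t,\infty}$ as the limit of $\varphi_{t,t_0}$ as $t_0\to+\infty$, using the flow property $\varphi_{t,t_0}=\varphi_{t,s}\circ\varphi_{s,t_0}$ and the estimates of Lemma \ref{Lemcharac}. We first need a Lipschitz bound on $\varphi_{t,s}$ that is uniform in $t,s\ge T$. Differentiating the characteristic system gives a linear ODE for $\dr\varphi_{t,s}$, whose coefficients are $\nabla_{x,v}a_x$ and $\nabla_{x,v}[\nabla_x\phi(t,z)]$. The $v$-derivative of $\nabla_x\phi(t,z)$ carries a factor $t$, and this is the delicate part. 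From \eqref{eq:phiassump1} and Remark \ref{Rq:phiassump2} we get $|\nabla^2_x\phi(t,z)|\lesssim \mathbf{\Lambda}\langle t+|z|\rangle^{-3}$, so $t|\nabla_x^2\phi(t,z)|\lesssim \langle t\rangle^{-2}$. The term $\nabla_x a_x$ contains $t\nabla_x^2\phi(t,z)$, which is $O(\langle t\rangle^{-2})$. The term $\nabla_v a_x$ contains $t^2\cdot t\nabla_x^2\phi(t,z)-t\nabla_v\Phi$ divided by $\langle t\rangle^2$; by \eqref{eq:phiassump3} with $|\kappa|=1$ and the mean value theorem as in \eqref{eq:29}, this is bounded by $\langle t\rangle^{-9/5}+\langle t\rangle^{-3/2}\langle x\rangle$ on the relevant region. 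Along characteristics $\langle \mathbf{X}\rangle\lesssim\langle x\rangle$, so on compact sets all coefficients are integrable in time. Grönwall then gives $|\dr\varphi_{t,s}|\le C[K,\mathbf{\Lambda}]$ on $\varphi_{s,t_0}(K)$, uniformly in $t,s$.

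For \ref{objet1}, write $\varphi_{t,t_0}-\varphi_{t,t_1}=\varphi_{t,t_0}-\varphi_{t,t_0}\circ\varphi_{t_0,t_1}$ for $t_1\ge t_0$. By the Lipschitz bound this is at most $C|\mathrm{id}-\varphi_{t_0,t_1}|$ on the image compact set. Lemma \ref{Lemcharac}, applied with base time $t_1$ and run back to $t_0$, gives $|\varphi_{t_0,t_1}(y)-y|\lesssim \langle y\rangle\langle t_0\rangle^{-1/2}$. This makes $(\varphi_{t,t_0})_{t_0}$ Cauchy uniformly on compacts, at rate $\langle t_0\rangle^{-1/2}$, which is stronger than the $-1/3$ claimed. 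The margin absorbs the $\log\langle t\rangle$ losses coming from $\Phi$ in $z$, and any loss from the growth of the domain if the gradient is only bounded on $\{|x|\lesssim \langle t_0\rangle^{1/6}\}$. I expect the hardest step to be this uniform Lipschitz control of the flow at large $|x|$: the $x$-weight in \eqref{eq:29} only decays like $\langle t\rangle^{-3/2}$, and $x$ itself grows at most boundedly by Lemma \ref{Lemcharac}.

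For \ref{objet1bis}, apply Lemma \ref{Lemcharac} with base time $t_0$ and evaluate at time $t$ to get $|\mathbf{V}(t,t_0,y,w)-w|\le C$ and the analogous backward bound on $\mathbf{X}$. By symmetry of the flow and Grönwall, $|y|\le\langle\mathbf{X}(t)\rangle e^{C}$. Thus if $\varphi_{t,t_0}(y,w)\in K$, then $(y,w)$ lies in a fixed ball. The case $t_0=\infty$ follows by passing to the limit.

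For \ref{objet2}, the uniform Lipschitz bounds pass to the limit, so $\varphi_{t,\infty}\in W^{1,\infty}_{\mathrm{loc}}$. The map $\varphi_{\infty,t}$ from Lemma \ref{Lemcharac} is the limit of $\varphi_{t_0,t}$, and it is also locally Lipschitz by the same argument. Passing to the limit in $\varphi_{t,t_0}\circ\varphi_{t_0,t}=\mathrm{id}$ and $\varphi_{t_0,t}\circ\varphi_{t,t_0}=\mathrm{id}$ requires uniform convergence on compacts together with equicontinuity, and \ref{objet1bis} keeps the arguments in compact sets. This shows $\varphi_{t,\infty}$ is a homeomorphism with inverse $\varphi_{\infty,t}$. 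Finally, $\overline{\T}_\phi$ is divergence free, since it is the conjugate of the Hamiltonian $\T_\phi$ by the volume-preserving change of variables $x\mapsto z$. Hence $\det\dr\varphi_{t,t_0}=1$, or equivalently $\int \psi\circ\varphi_{t,t_0}=\int\psi$ for all $\psi\in C_c$. Passing to the limit via dominated convergence, which is justified by \ref{objet1bis}, the limit map also preserves Lebesgue measure. Since it is locally Lipschitz, the area formula gives $|\det\dr\varphi_{t,\infty}|=1$ a.e. The sign is $+1$ by continuity of the degree under uniform limits.
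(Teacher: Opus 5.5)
Your proof is correct, but it takes a different route from the paper's.

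\textbf{The paper's route.} The paper regards $h(t_0,x,v)\coloneqq\varphi_{t,t_0}(x,v)$ as a solution of $\overline{\T}_\phi(h)=0$ in the variables $(t_0,x,v)$. It then reuses the Eulerian machinery of Section \ref{SecTphi}:
\begin{itemize}
\item uniform $\E_N^n$ bounds with negative weights $n=(-N-3,-N-3)$, from Propositions \ref{Proenergy} and \ref{ProboundednessGeneral};
\item convergence in $\E_{N-1}^n$, from Corollary \ref{CorscattGeneral} and Lemma \ref{Lemforscat};
\item a pointwise version of the first-order commutator bound of Corollary \ref{Corenergy}, which gives global weighted $W^{1,\infty}$ bounds and $|\partial_{t_0}(\langle x\rangle^{-N-3}\langle v\rangle^{-N-3}\varphi_{t,t_0})|\lesssim\langle t_0\rangle^{-4/3}$, hence the rate $\langle t_0\rangle^{-1/3}$.
\end{itemize}
In Corollary \ref{Corenergy}, the $\langle x\rangle$-loss produced by the $\partial_v$ commutator is absorbed globally by the weight hierarchy $\langle x\rangle^{|\alpha_x|}\langle v\rangle^{-|\alpha_x|}$.

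\textbf{Your route.} You work in Lagrangian form: you linearize the characteristic system and absorb the same loss locally through $\langle\mathbf{X}\rangle\lesssim\langle x\rangle$ from Lemma \ref{Lemcharac}. The cost is a Lipschitz constant of size roughly $e^{C\langle x\rangle}$, which is harmless for a statement on compacts. The essential cancellation in $\nabla_v a_x$ is the same in both proofs: \eqref{eq:phiassump3} with $|\kappa|=1$ plus the mean value theorem. That step needs $\nabla_x^3\phi$ pointwise, which \eqref{eq:phiassump1} or Remark \ref{Rq:phiassump2} supply since $N\ge 2p_\infty$.

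Your composition identity $\varphi_{t,t_1}=\varphi_{t,t_0}\circ\varphi_{t_0,t_1}$, combined with Lemma \ref{Lemcharac}, is the integrated form of the paper's bound on $\partial_{t_0}\varphi_{t,t_0}$. It gives the better rate $\langle t_0\rangle^{-1/2}$. Since your Lipschitz bound lives on fixed compacts, your hedging about absorbing the growth of the domain is unnecessary.

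\textbf{The determinant.} The paper passes $\det(\dr\varphi_{t,t_0})\equiv1$ to the limit directly. It combines the uniform local Lipschitz bounds with the $\E^n_{N-1}$ convergence, which controls first derivatives because $N-1\ge1$. You instead use measure preservation (dominated convergence justified by \ref{objet1bis}), the area formula to get $|\det|=1$, and a degree argument for the sign. Both are valid.

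Overall, your argument is more elementary and self-contained. The paper's stays within its existing commutator estimates and additionally yields convergence of $\varphi_{t,t_0}$ in the weighted Sobolev norm $\E^n_{N-1}$.
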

\begin{proof}
Let us fix $t$ and introduce $h(t_0,x,v) \coloneqq \varphi_{t,t_0}(x,v)$, so that $\overline{\T}_\phi(h)=0$. Note now that for $n \coloneqq (-N-3,-N-3)$, we have $\E_N^n [h](t) <+ \infty$. Recall from Definition \ref{Defgalpha} the notation $h_\alpha$. As a consequence, by applying the energy inequality of Proposition \ref{Proenergy}, to $h_\alpha$ for any $|\alpha| \leq N$, we obtain
\[ \E_N^n [h](t_0)-\E_N^n[h](t) \lesssim \bigg| \int_{\tau=t}^{t_0} \big\| \overline{\T} (h_\alpha) (\tau,x,v) \big\|_{L^p(\R^3_x \times \R^3_v)} \dr \tau \bigg| \lesssim \bigg| \int_{\tau=t}^{t_0} \frac{1}{\langle \tau \rangle^{\frac{4}{3}}} \E_N^n[h](\tau) \dr \tau \bigg|, \]
where, in the last step, we used Proposition \ref{ProboundednessGeneral}. Boundedness follows from the Grönwall inequality. Then, Corollary \ref{CorscattGeneral}, together with Lemma \ref{Lemforscat}, yields the existence of $\varphi_{t,\infty}$ such that
\[ \sup_{t_0 \geq T} \E_N^n [h](t_0) < +\infty, \qquad \qquad \lim_{t_0 \to +\infty} \E_{N-1}^n \big[ \varphi_{t,t_0}-\varphi_{t,\infty} \big] =0 .\]
Unfortunately, if $N$ or $p$ are not sufficiently large, we cannot conclude the proof without additional estimates. We claim now that, for any $|\alpha| \leq 1$,
\[\forall \, (\tau ,x,v) \in [T,+\infty[ \times \R^3_x \times \R^3_v, \quad \big| \T_\phi (h_\alpha ) \big|(\tau,x,v) \leq \mathbf{C} \langle t \rangle^{-\frac{4}{3}} \sum_{|\beta| \leq 1} \big| h_\beta \big|(\tau,x,v), \qquad \qquad \mathbf{C}[\mathbf{\Lambda}]>0 . \]
For this, we proceed as in the proof of Proposition \ref{ProboundednessGeneral}. 
\begin{itemize}
\item We bound $\T_\phi (h_\alpha)$ using first Corollary \ref{Corenergy}, applied with $|\alpha|=1$, and \eqref{eq:Txv}.
\item Then, in any term given by Corollary \ref{Corenergy}, we can estimate the force field and the coefficients $\Phi$ pointwise through \eqref{EQU:HPhi}, \eqref{eq:phiassump3} and \eqref{eq:phiassump1}, exactly as in the proof of Proposition \ref{ProboundednessGeneral}.
\end{itemize}
Hence, we obtain from Duhamel's principle and the Grönwall inequality, that
\[ \forall \, t , \, \tau \geq T, \quad \big\| h_\alpha  \big\|_{W^{1,\infty}(\R^3_x \times \R^3_v)} \leq \mathbf{C}, \qquad \qquad \mathbf{C}[\mathbf{\Lambda}] >0  . \]
Proceeding as in Corollary \ref{CorscattGeneral}, we get
\[ \forall \, t , \, \tau \geq T, \quad \big\| \partial_\tau \langle x \rangle^{-N-3} \langle v \rangle^{-N-3} \varphi_{t,\tau} (x,v)  \big\|_{L^{\infty}(\R^3_x \times \R^3_v)} \leq \mathbf{C} \langle \tau \rangle^{-\frac{4}{3}}, \qquad \qquad \mathbf{C}[\mathbf{\Lambda}] >0  ,\]
which allows us to obtain \ref{objet1} and that $(x,v) \mapsto \varphi_{t,\infty}(x,v)$ is continuous. As $\varphi_{t,t_0} \circ\varphi_{t_0,t}=\varphi_{t_0,t} \circ \varphi_{t,t_0}=\mathrm{id}$ for all $t_0 \geq T$, and that both $\varphi_{t,t_0}$ and $\varphi_{t_0,t}$ are continuous for all $T \leq t_0 \leq +\infty$, we get the first part of \ref{objet2}. The Banach-Alaoglu theorem yields $\varphi_{t,\infty} \in W^{1,\infty}_{(x,v),\mathrm{loc}}$. To conclude the proof of \ref{objet2}, we combine 
\[ \forall \, t_0 \in [T,+\infty[, \quad \det \big(\dr  \varphi_{t,t_0} \big) \equiv 1, \qquad \qquad \forall \, T \leq t_0 \leq + \infty, \quad \big\| \nabla_{x,v} \, \varphi_{t,t_0} \big\|_{L^\infty (K)} \leq \mathbf{C}[\mathbf{\Lambda},K], \]
for any compact $K \Subset \R^3_x \times \R^3_v$, with the strong convergence in $L^p_{(x,v),\mathrm{loc}}$, $\varphi_{t,t_0} \to \varphi_{t,\infty}$, as $t_0 \to +\infty$. Finally, \ref{objet1bis} follows from Lemma \ref{Lemcharac}.
\end{proof}

We are now able to show that sufficiently regular solutions to $\T_\phi (f)=0$ converge strongly in $\E_N^n[\cdot]$, along suitable corrections to the characteristics of the free transport operator.

\begin{Cor}\label{Corscatstrong}
Assume that $g$ satisfies $\overline{\T}_\phi(g)=0$, and that $\sup_{t \geq T} \E_N^n [g] (t) < + \infty$ for some $n \in \mathbb{Z}^2$. Then, there exists a function $f_\infty : \R^3_x \times \R^3_v \to \R$ such that
\[ \lim_{t \to + \infty} \E_N^n \big[g (t , \cdot , \cdot) - f_\infty \big] (t) =0. \]
Moreover, we have for all $t \geq T$,
\[ \E_{N-1}^n \big[ g (t , \cdot , \cdot) - f_\infty \big] \lesssim  \frac{\mathbf{\Lambda}}{\langle t \rangle^{\frac{1}{3}}} \sup_{ \tau \geq T} \E_N^n [g] ( \tau ). \]
\end{Cor}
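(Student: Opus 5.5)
The plan has three parts: a weak-norm convergence with a rate, obtained by integrating in time; an identification of the limit at top order via Banach–Alaoglu; and a Duhamel representation along the flow $\varphi$ that upgrades the top-order convergence from weak to strong.

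\emph{Step 1: convergence in $\E_{N-1}^n$.} Set $M \coloneqq \sup_{\tau \geq T} \E_N^n[g](\tau)$. For $|\alpha| \leq N-1$, Corollary \ref{CorscattGeneral} gives $\|\partial_t g_\alpha\|_{L^p_{x,v}} \lesssim \mathbf{\Lambda} \langle t\rangle^{-4/3} M$. Lemma \ref{Lemforscat} then yields limits $G_\alpha \in L^p_{x,v}$ with
\[ \|g_\alpha(t)-G_\alpha\|_{L^p} \lesssim \mathbf{\Lambda}\langle t\rangle^{-1/3} M. \]
Take $f_\infty$ to be the $L^p_{\mathrm{loc}}$ limit of $g(t)$, so that $G_0 = \langle x\rangle^{n_x}\langle v\rangle^{n_v+N} f_\infty$. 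Convergence in the sense of distributions identifies $G_\alpha$ with the weighted derivatives of $f_\infty$. This proves the second estimate.

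\emph{Step 2: top order, weak.} For $|\alpha| = N$, the family $g_\alpha(t)$ is bounded in $L^p$. Since $1<p<\infty$, Banach–Alaoglu together with the distributional identification shows that $g_\alpha(t) \rightharpoonup (f_\infty)_\alpha$ weakly. In particular $\E_N^n[f_\infty] \leq M$.

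\emph{Step 3: top order, strong (the main obstacle).} Here no time derivative is available without losing a derivative. I would use Duhamel along the characteristic flow $\varphi$ of $\overline{\T}_\phi$, as sketched in the strategy section. Since $\overline{\T}_\phi(g)=0$,
\[ \overline{\T}_\phi(g_\alpha) = [\overline{\T}_\phi, g_\alpha \text{-operator}](g) \eqqcolon F_\alpha, \qquad g_\alpha(t,x,v) = g_\alpha\big(T,\varphi_{T,t}(x,v)\big) + \int_T^t F_\alpha\big(s,\varphi_{s,t}(x,v)\big)\,\dr s. \]
Proposition \ref{ProboundednessGeneral} gives $\|F_\alpha(s)\|_{L^p} \lesssim \mathbf{\Lambda}\langle s\rangle^{-4/3} M$, and $\varphi_{s,t}$ is volume preserving.

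For the first term, approximate $g_\alpha(T)$ in $L^p$ by $\psi \in C_c$, with an error that is uniform in $t$ by volume preservation. For continuous compactly supported $\psi$, Proposition \ref{Procharac} \ref{objet1}–\ref{objet1bis} gives $\psi\circ\varphi_{T,t} \to \psi\circ\varphi_{T,\infty}$ uniformly, with supports in a fixed compact set, hence in $L^p$. Since $\det \dr\varphi_{T,\infty}=1$, the map $\psi\mapsto\psi\circ\varphi_{T,\infty}$ is an $L^p$ isometry. Together these show that $g_\alpha(T,\varphi_{T,t}) \to g_\alpha(T,\varphi_{T,\infty})$ strongly.

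For the integral term, split at a time $S$. The piece $\int_S^t$ is bounded by $\mathbf{\Lambda} M \langle S\rangle^{-1/3}$ uniformly in $t$. For fixed $s \leq S$, $F_\alpha(s)\circ\varphi_{s,t} \to F_\alpha(s)\circ\varphi_{s,\infty}$ in $L^p$, by the same density argument and the flow property $\varphi_{s,t}=\varphi_{s,T}\circ\varphi_{T,t}$. Dominated convergence in $s$ then handles $\int_T^S$.

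This shows that $g_\alpha(t)$ is Cauchy in $L^p$, so it converges strongly. By Step 2 the strong limit is $(f_\infty)_\alpha$, which gives $\E_N^n[g(t)-f_\infty]\to0$.

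The delicate points are two. First, one must check that the flow estimates of Proposition \ref{Procharac}, which are stated for a fixed $t$, pass to the composition in $s$ used above. Second, $F_\alpha$ need only be defined almost everywhere; I would handle this by density in $L^1_s L^p_{x,v}$, approximating $F_\alpha$ by continuous compactly supported functions.
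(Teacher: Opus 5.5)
Your proposal is correct and follows essentially the paper's argument. The ingredients match: Duhamel along the flow $\varphi$, a $C_c$ density argument combined with Proposition \ref{Procharac} and volume preservation for the data term, the tail bound from Proposition \ref{ProboundednessGeneral} for the source term, and Corollary \ref{CorscattGeneral} with Lemma \ref{Lemforscat} for the $\E_{N-1}^n$ rate. Your split at $S$ with dominated convergence fills in a step the paper only asserts, namely that $H_\alpha(t)$ is Cauchy despite the $t$-dependence of $\varphi_{\tau,t}$, and the flow estimates apply for each fixed $s$, so that worry is moot. Your Step 2 is redundant, since strong $L^p$ convergence plus distributional limits already identifies the top-order limit with $(f_\infty)_\alpha$.
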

\begin{proof}
Let $\mathbf{B} \coloneqq \sup_{t \geq T} \E_N^n[g](t)$. According to Duhamel's principle, we have, for $|\alpha| \leq N$,
\[ g_\alpha(t,x,v) = g_\alpha(T, \cdot , \cdot) \circ \varphi_{T,t}(x,v) +H_\alpha(t,x,v), \qquad \qquad H_\alpha(t,x,v) \coloneqq \int_{\tau = T}^t \overline{\T}_\phi (g_\alpha)(\tau, \cdot , \cdot) \circ \varphi_{\tau,t}(x,v) \dr \tau   . \]
Note now that $H_\alpha (t, \cdot , \cdot)$ converges in $L^p(\R^3_x \times \R^3_v)$ as $t \to +\infty$. Indeed, one can show that it is a Cauchy sequence from the Minkowski integral inequality, and
\[   \int_{\tau=t}^{+ \infty} \Big\| \overline{\T}_\phi (g_\alpha)(\tau, \cdot , \cdot) \circ \varphi_{\tau,t}(x,v) \Big\|_{L^p (\R^3_x \times \R^3_v)} \dr \tau =  \int_{\tau=t}^{+ \infty} \Big\| \overline{\T}_\phi (g_\alpha)(\tau, x , v) \Big\|_{L^p (\R^3_x \times \R^3_v)} \dr \tau \leq \frac{C[N,n,p,\mathbf{\Lambda}]}{\langle t \rangle^{\frac{1}{3}}} \mathbf{B} , \]
where we used first that $(x,v) \mapsto \varphi_{\tau,t}(x,v)$ is a $C^1$-diffeomorphism of Jacobian determinant equal to $1$, and then Proposition \ref{ProboundednessGeneral}. For the first term, let $\epsilon >0$ and consider $h \in C^1_c(\R^3_x \times \R^3_v)$ such that $\|h-g_\alpha (T, \cdot , \cdot ) \|_{L^p_{x,v}} \leq \epsilon$. By \ref{objet2}, $\varphi_{\tau,\infty}$ is a Lipschitz volume-preserving homeomorphism, so
\[ \big\| g_\alpha(T, \cdot , \cdot) \circ \varphi_{T,t} - g_\alpha(T, \cdot , \cdot) \circ \varphi_{T,\infty} \big\|_{L^p_{x,v}} \leq  2\big\| g_\alpha(T, \cdot , \cdot)- h \big\|_{L^p_{x,v}}+ \big\| h(T, \cdot , \cdot) \circ \varphi_{T,t} - h(T, \cdot , \cdot) \circ \varphi_{T,\infty} \big\|_{L^p_{x,v}}   . \] 
We bound the second term using \ref{objet1}--\ref{objet1bis} and the mean value theorem. As a consequence, denoting the limit of $H_\alpha (t,\cdot,\cdot)$ by $H_\alpha^\infty$, we finally get
\begin{equation}\label{eq:forconti}
 \big\| g_\alpha (t,\cdot , \cdot)- g_\alpha(T, \cdot , \cdot) \circ \varphi_{T,\infty} - H^\infty_\alpha \big\|_{L^p_{x,v}} \leq 2 \epsilon + C \langle t \rangle^{-1/3}, \qquad  C\big[N,n,p,\mathbf{\Lambda},\mathbf{B},\| \nabla h \|_{L^\infty}, \mathrm{supp} \, h \big] >0 . 
 \end{equation}
The convergence with a rate for the weaker norm follows from Corollary \ref{CorscattGeneral} and Lemma \ref{Lemforscat}.
\end{proof}

We can finally prove well-posedness for the asymptotic Cauchy problem associated with $\overline{\T}_\phi$.

\begin{Pro}\label{ProAsympCau}
Let $ f_\infty \in L^p( \R^3_x \times \R^3_v )$. Then, there exists a unique $g \in L^\infty \big( [T,+\infty[, L^p(  \R^3_x \times \R^3_v ) \big)$ such that
\begin{equation}\label{AsympCauch}
 \overline{\T}_\phi (g)=0, \qquad \qquad g(t,\cdot , \cdot) \to f_\infty \quad \text{as $t \to + \infty$}. \tag{ACpb}
 \end{equation}
Moreover, if $\E_N^n[f_\infty] <+\infty$ for some $n \in \mathbb{Z}^2$, there exists $\mathbf{C}_0[N,n,p, \mathbf{\Lambda}] >0$ such that,
\[ \forall \, t \geq T , \qquad \quad \E_N^n[g](t) \leq  \E_N^n[f_\infty] e^{\mathbf{C}_0 \mathbf{\Lambda} \langle T \rangle^{-\frac{1}{3}}}. \]
\end{Pro}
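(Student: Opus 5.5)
The plan is to build $g$ through the asymptotic flow $\varphi_{t,\infty}$ of Proposition \ref{Procharac}, and to get the quantitative bound by running the energy inequality backwards from $t=+\infty$ on truncated problems.

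\textbf{Existence.} Since $\overline{\T}_\phi(g)=0$ means that $g$ is constant along the flow $\varphi$, the natural candidate is
\[ g(t,x,v) \coloneqq f_\infty\big(\varphi_{\infty,t}(x,v)\big), \qquad \text{equivalently} \qquad g(t,\cdot,\cdot)=f_\infty\circ\varphi_{t,\infty}^{-1}. \]
By \ref{objet2}, $\varphi_{t,\infty}$ is a volume-preserving homeomorphism, so $\|g(t)\|_{L^p_{x,v}}=\|f_\infty\|_{L^p_{x,v}}$ for every $t$. The group property $\varphi_{\infty,s}\circ\varphi_{s,t}=\varphi_{\infty,t}$ comes from passing to the limit $t_0\to\infty$ in $\varphi_{t_0,s}\circ\varphi_{s,t}=\varphi_{t_0,t}$, using \ref{objet1}--\ref{objet1bis}. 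It shows that $g$ is transported by the flow, hence a weak (distributional) solution of $\overline{\T}_\phi(g)=0$.

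\textbf{Convergence $g(t)\to f_\infty$.} I will argue by density. Take $h\in C^1_c$ with $\|h-f_\infty\|_{L^p}\le\epsilon$. Since composition with the flows preserves $L^p$ norms, it suffices to prove $\|h\circ\varphi_{\infty,t}-h\|_{L^p}\to0$. For this I need $\varphi_{\infty,t}\to \mathrm{id}$ as $t\to\infty$ uniformly on compacts, together with uniform compactness of supports. Both come from Lemma \ref{Lemcharac}: one has $|\mathbf{V}(\infty,t,x,v)-v|\lesssim\langle t\rangle^{-1}$ from $\overline{\T}_\phi(v)=O(\langle t\rangle^{-2})$, and $|\mathbf{X}(\infty,t,x,v)-x|\lesssim\langle x\rangle\langle t\rangle^{-1/2}$ by integrating $\overline{\T}_\phi(x)$ from $t$ to $\infty$.

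\textbf{Uniqueness.} Let $g_1,g_2$ be two solutions and set $w=g_1-g_2$. Then $w$ solves $\overline{\T}_\phi(w)=0$ and $w(t)\to0$ in $L^p$. Because the flow is measure-preserving, $t\mapsto\|w(t)\|_{L^p}$ is constant, which is also the case $\overline{\T}_\phi(w)=0$ of Proposition \ref{Proenergy} with $t_2=+\infty$. Since this constant tends to $0$, we get $w\equiv0$. For merely $L^\infty_tL^p$ solutions I will justify the energy identity through the renormalized representation $w(t)=w(s)\circ\varphi_{s,t}$.

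\textbf{Quantitative bound.} This is the main step. I will apply Proposition \ref{Proenergy} to each $g_\alpha$ on $[t,+\infty]$, which requires knowing a priori that $g_\alpha(\tau)\to (f_\infty)_\alpha$ as $\tau\to\infty$. To avoid circularity I will first work with truncated problems: let $g^{(T_1)}$ solve $\overline{\T}_\phi(g^{(T_1)})=0$ with $g^{(T_1)}(T_1)=f_\infty$, i.e. $g^{(T_1)}(t)=f_\infty\circ\varphi_{T_1,t}$.

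For these, Proposition \ref{ProboundednessGeneral} gives, for $t\le T_1$,
\[ \E_N^n[g^{(T_1)}](t)\le \E_N^n[f_\infty]+C\mathbf{\Lambda}\int_t^{T_1}\langle\tau\rangle^{-4/3}\E_N^n[g^{(T_1)}](\tau)\,\dr\tau. \]
A backward Grönwall argument then yields
\[ \E_N^n[g^{(T_1)}](t)\le \E_N^n[f_\infty]\exp\big(3C\mathbf{\Lambda}\langle T\rangle^{-1/3}\big), \]
uniformly in $T_1$. It remains to pass $T_1\to\infty$. Since $g^{(T_1)}(t)=f_\infty\circ\varphi_{T_1,t}\to f_\infty\circ\varphi_{\infty,t}=g(t)$ in $L^p$ by the density argument above with \ref{objet1}, lower semicontinuity of $\E_N^n$ under weak convergence (Banach–Alaoglu, $1<p<\infty$) transfers the bound to $g$.

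I expect the delicate points to be justifying Proposition \ref{ProboundednessGeneral} for these weighted derivatives, which requires $g^{(T_1)}$ regular enough (smoothing $f_\infty$ and approximating again if needed), and checking that the weak limit of $g^{(T_1)}$ identifies with $g$.
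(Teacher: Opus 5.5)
Your proposal is correct and follows essentially the paper's proof. It uses the same representation $g(t)=f_\infty\circ\varphi_{\infty,t}$, the same truncated solutions $f_\infty\circ\varphi_{k,t}$ controlled by Proposition \ref{Proenergy}, Proposition \ref{ProboundednessGeneral} and a Grönwall bound uniform in $k$, and then Banach--Alaoglu; your uniqueness argument (linearity plus conservation of the $L^p$ norm along the volume-preserving flow) is an equivalent shortcut for the paper's direct identification of any solution with $f_\infty\circ\varphi_{\infty,t}$.

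Two minor corrections. First, the limits $\varphi_{T_1,t}\to\varphi_{\infty,t}$ (and the limit in your group property) come from Lemma \ref{Lemcharac}, not from \ref{objet1}, which concerns $\varphi_{t,t_0}$ as $t_0\to\infty$. Second, the paper resolves the regularity issue you flag not by smoothing but by first taking $f_\infty$ compactly supported, so the truncated solutions stay compactly supported and have finite $\E_N^n$ norm, and then approximating a general $f_\infty$ by $f_\infty\chi_k$.
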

\begin{proof}
Let $g \in L^\infty \big( [T,+\infty[, L^p(  \R^3_x \times \R^3_v ) \big)$ be a weak solution to \eqref{AsympCauch}. Then, we write
\[ g(t,x,v) = g\big( t_0, \varphi_{t_0,t}(x,v) \big)-g\big( t_0, \varphi_{\infty,t}(x,v) \big)+g\big( t_0, \varphi_{\infty,t}(x,v) \big).\]
By assumption, the last term on the right hand side converges to $f_\infty \big(\varphi_{\infty,t}(x,v) \big)$ in $L^p_{x,v}$ as $t_0 \to +\infty$. In view of Lemma \ref{Lemcharac} and Proposition \ref{Procharac}, we can argue as in the proof of Corollary \ref{Corscatstrong}, when we proved that $g_\alpha(T,  \varphi_{T,t} ) \to g_\alpha(T,  \varphi_{T,\infty})$ as $t \to +\infty$ in $L^p_{x,v}$, to get
\[  \big\|g\big( t_0, \varphi_{t_0,t}(x,v) \big)-g\big( t_0, \varphi_{\infty,t}(x,v) \big) \big\|_{L^p(\R^3_x \times \R^3_v)} \to 0 \qquad \text{as $t_0 \to + \infty$.} \]
We then deduce, by passing to the limit $t_0 \to +\infty$, that $g(t, \cdot , \cdot)=f_\infty \circ \varphi_{\infty, t}$. Existence holds since Proposition \ref{Procharac} ensures that $f_\infty \circ \varphi_{\infty, t}$ is a weak solution to \eqref{AsympCauch} with the stated regularity.

Finally, if $\E_N^n[f_\infty] <+\infty$, we conclude by standard arguments as follows.
\begin{itemize}
\item Assume first that $f_\infty$ is compactly supported, so that, by Lemma \ref{Lemcharac}, the same holds for $g_k(t,\cdot,\cdot) \coloneqq f_\infty \circ \varphi_{k,t}$. In particular, $\E_N^n[g_k](t) <+ \infty$ for all $t \geq T$. We can then apply the energy estimate of Proposition \ref{Proenergy} to $[g_k]_\alpha$, for any $|\alpha| \leq N$, and Proposition \ref{ProboundednessGeneral}, to get
\[ \E_N^n[g_k](\tau) -\E_N^n[f_\infty] \leq p   \sum_{|\alpha| \leq N} \bigg|\int_{t=k}^{\tau} \Big\|  \overline{\T}_\phi \big( [g_k]_\alpha \big) \Big\|_{L^p(\R^3_x \times \R^3_v)} \dr t \bigg| \leq \bigg|\int_{t = k}^{\tau} pC_0  \frac{\mathbf{\Lambda}}{\langle \tau \rangle^{\frac{4}{3}}} \E_N^n[g_k](\tau) \dr \tau \bigg| . \] 
Applying Grönwall's inequality yields a bound that is uniform in $k$. The Banach-Alaoglu theorem finally provides the desired energy bound for $g$.
\item Otherwise, let $(\chi_k)_{k \geq 1}$ be a family of cutoff functions such that $\chi_k (x,v) =1$ for $|x|+|v| \leq k$ and $\chi (x,v) =0$ for $|x|+|v| \geq 2k$. Then, we apply the previous result to $g_k$, the unique solution to \eqref{AsympCauch} arising from the asymptotic data $f_\infty \chi_k$. It remains to pass to the limit $k \to +\infty$.
\end{itemize}
\end{proof}

\subsection{Existence of a solution to \eqref{VP} with asymptotic data}\label{Secmodwave}

We now apply the results of this section to construct a solution $f$ to the Vlasov--Poisson system \eqref{VP} corresponding to the scattering data $f_\infty \in \mathcal{B}$, thereby proving the second part of Theorem \ref{Th1}. More precisely, we work in the coordinate system $(t,z,v)$, where
\[ z(t,x,v) \coloneqq x+tv+\lambda \nabla_v \phi_\infty [f_\infty] (v) \log \langle t \rangle, \qquad \qquad \Delta_v \phi_\infty [f_\infty]= \int_{\R^3_x} f_\infty (x, \cdot ) \dr x. \]
We then show that the asymptotic Cauchy problem associated with \eqref{VP},
\begin{equation}\label{eq:VPasymp}
\overline{\T}_{\phi [f]}(g)=0, \qquad \qquad  \lim_{t \to +\infty} g(t,x , x ) \to f_\infty (x,v),
\end{equation}
where the expression of $\overline{\T}_{\phi [f]}$ is given in Lemma \ref{LemTbar}, admits a unique solution belonging to $L^\infty ([T,+\infty[,\mathcal{B})$, for some $T \geq 0$. The function $f$ given by $f(t,x,v)=g(t, x-tv-\lambda \nabla_v \phi_\infty [f_\infty](v) \log(t),v)$ will then be a solution to \eqref{VP}, that is $\T_{\phi [f]}(f)=0$.

We start by defining a sequence of approximate solutions as follows.
\begin{itemize}
\item $g_0$ is the constant in time solution $g_0 (t,x,v) \coloneqq f_\infty (x,v) $.
\item For $k \in \mathbb{N}$, we define $g_{k+1}$ as the the unique solution to the asymptotic Cauchy problem
\[ \overline{\T}_{\phi [f_k]} (g_{k+1})  =0, \qquad \qquad \lim_{t \to + \infty} g_{k+1}(t,x,v)=f_\infty (x,v), \]
where $f_k \big( t,z(t,x,v),v \big)=g_k(t,x,v)$, so that $\T_{\phi [f_k]}(f_{k+1})=0$. We recall that $\phi [f_k]=\nabla \Delta^{-1}\rho [f_k]$.
\end{itemize}
We start by proving boundedness for the sequence $(g_k)_{k \in \mathbb{N}}$.
\begin{Pro}\label{Proinduc}
Let $T \geq 0$, $\varepsilon \geq 0$ such that $\mathbb{E}_N[f_\infty] = \varepsilon$, and let $C[N,p,\varepsilon]>0$ be a constant to be chosen sufficiently large. Then, there exist $T_0[N,p,\varepsilon] \geq 0$ and $\varepsilon_0[N,p] >0$, such that the following holds. If $T \geq T_0$, or if $T=0$ and $\varepsilon \leq \varepsilon_0$, then, for any $k \geq 0$,
\begin{enumerate}[ label = (\Alph*)]
\item \label{OB:1} The function $g_k$ is well-defined and satisfies $\mathbb{E}_N [g_k](t) \leq 2 \varepsilon$ for all $t \geq T$,
\item \label{OB:2} The potential $\phi [f_k]$ and $\nabla_v \phi_\infty[f_\infty]$ satisfy $(\phi [f_k],\nabla_v \phi_\infty[f_\infty]) \in \mathcal{W}^N \big([T,+\infty[,C\varepsilon \big)$.
\end{enumerate}
\end{Pro}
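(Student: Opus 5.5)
We argue by induction on $k$. For $k=0$, $g_0=f_\infty$ is constant in time, so \ref{OB:1} holds trivially: $\E_N[g_0](t)=\varepsilon\leq 2\varepsilon$. Then \ref{OB:2} for $k=0$ follows from Corollary \ref{Rqclassteleo}, applied with $f=f_0$ and $g=g_0$. Indeed, $\E_N[f_\infty]=\varepsilon$ and $\E_N[g_0]=\varepsilon$, so the corollary gives $(\phi[f_0],\nabla_v\phi_\infty[f_\infty])\in\mathcal{W}^N([T,+\infty[,C'(\varepsilon+\varepsilon))$. Here $C'[N,p,\varepsilon]$ is the constant of that corollary. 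We therefore fix $C\coloneqq 3C'$, which is then uniform in $k$ because the constant only depends on $\E_N[f_\infty]$ and on the bound $2\varepsilon$. In fact, to keep the dependence on $\varepsilon$ clean, we will use $C'[N,p,\mathbf{\Lambda}]$ with $\mathbf{\Lambda}=\varepsilon$ and note that it is non-decreasing in $\mathbf{\Lambda}$ (polynomial dependence, as in Remark \ref{Rqpoly}).

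For the inductive step, assume \ref{OB:1}--\ref{OB:2} hold at rank $k$. Since $(\phi[f_k],\nabla_v\phi_\infty[f_\infty])\in\mathcal{W}^N([T,+\infty[,C\varepsilon)$ with a time-independent bound $\mathbf{\Lambda}=C\varepsilon$, the setting of Section \ref{SecTphi} applies with $a=0$. Proposition \ref{ProAsympCau} then provides a unique $g_{k+1}$ solving $\overline{\T}_{\phi[f_k]}(g_{k+1})=0$ with $g_{k+1}(t)\to f_\infty$, so $g_{k+1}$ is well defined. It also gives
\[ \E_N[g_{k+1}](t)\leq \varepsilon\, e^{\mathbf{C}_0[N,p,C\varepsilon]\, C\varepsilon\,\langle T\rangle^{-1/3}} . \]
It remains to make the exponential factor at most $2$. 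If $T=0$, the exponent is $\mathbf{C}_0 C\varepsilon$. Since $C$ and $\mathbf{C}_0$ stay bounded as $\varepsilon\to0$ (polynomial dependence on $\mathbf{\Lambda}$), this is $\leq\log 2$ for $\varepsilon\leq\varepsilon_0[N,p]$. If $\varepsilon$ is arbitrary, we choose $T_0[N,p,\varepsilon]$ so that $\mathbf{C}_0 C\varepsilon\langle T_0\rangle^{-1/3}\leq \log 2$. This gives \ref{OB:1} at rank $k+1$.

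For \ref{OB:2} at rank $k+1$, we need $\E_N[g_{k+1}]<\infty$ on $[T,+\infty[$, which we just proved, and the link between $f_{k+1}$ and $g_{k+1}$ through the teleological coordinates $z(t,x,v)=x+tv+\lambda\nabla_v\phi_\infty[f_\infty](v)\log\langle t\rangle$, which holds by definition. Corollary \ref{Rqclassteleo} then yields $(\phi[f_{k+1}],\nabla_v\phi_\infty[f_\infty])\in\mathcal{W}^N([T,+\infty[,C'(\varepsilon+2\varepsilon))$, and $3C'\varepsilon= C\varepsilon$. This closes the induction.

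The main subtlety is the consistency of constants. $C$ must be chosen before $T_0$ and $\varepsilon_0$, and must not depend on $k$. This is why $C$ is taken to depend only on $N$, $p$ and an upper bound $\varepsilon$ for $\E_N[f_\infty]$, via Corollary \ref{Rqclassteleo}. One also has to check that the implicit constant $\mathbf{C}_0$ of Proposition \ref{ProAsympCau} depends on $\mathbf{\Lambda}=C\varepsilon$ only polynomially, so that the smallness condition in the case $T=0$ is genuinely achievable. A minor point is that Corollary \ref{Rqclassteleo} requires $\E_N[g_{k+1}](t)<\infty$ for all $t\in I$, which is exactly what Proposition \ref{ProAsympCau} delivers.
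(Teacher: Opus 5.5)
Your proposal is correct and follows the paper's proof essentially verbatim: an induction in which Corollary \ref{Rqclassteleo} converts the bound $\E_N[g_k]\leq 2\varepsilon$ into $(\phi[f_k],\nabla_v\phi_\infty[f_\infty])\in\mathcal{W}^N([T,+\infty[,C\varepsilon)$. Proposition \ref{ProAsympCau} then produces $g_{k+1}$ with the bound $\varepsilon\, e^{\mathbf{C}_0 C\varepsilon\langle T\rangle^{-1/3}}$, and the induction closes by taking $T_0$ large or $\varepsilon$ small. Your explicit choice $C=3C'$, uniform in $k$, together with the observation that the constants stay bounded as $\varepsilon\to 0$, is exactly what the paper encodes when it compares $\varepsilon$ with $\mathbf{C}_0[N,p,1]$.
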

\begin{proof}
We perform an induction. As $g_0(t , \cdot , \cdot) = f_\infty$ and $\E_N[f_\infty]=\varepsilon$, \ref{OB:1} holds for $k=0$. Next, if \ref{OB:1} holds for some $k \in \mathbb{N}$, then \ref{OB:2} is satisfied at order $k$ by Corollary \ref{Rqclassteleo}. Finally, consider $k \in \mathbb{N}$ such that \ref{OB:2} holds. Then, by applying Proposition \ref{ProAsympCau}, we obtain that $g_k$ is well-defined and, for all $t \geq T$,
 \[ \mathbb{E}_N [g_{k+1}](t) \leq  \E_N[f_\infty] e^{\mathbf{C}_0 \varepsilon \langle T \rangle^{-1/3}} , \qquad \qquad \mathbf{C}_0[N,p,\varepsilon] >0 . \]
 We then bound the right hand side by $2 \varepsilon$ by either choosing $T_0$ sufficiently large compared to $\mathbf{C}_0 \varepsilon$, or, if $T_0=0$, by choosing $\varepsilon$ sufficiently small compared to $\mathbf{C}_0[N,p,1]$.
\end{proof}

We are now able to show that the sequence of approximate solutions $(g_k)_{k \in \mathbb{N}}$ converges, as $k \to + \infty$. We recall from Definition \ref{Defnorm} the expression of the energy norm $\E_N^{(n_x,n_v)}[\cdot]$, and that $\E_N[\cdot ]= \E_N^{(7,7)} [ \cdot ]$.

\begin{Pro}\label{Prolocalwp}
Let $f_\infty \colon \R^3_x \times \R^3_v \to \R$ and $\varepsilon \geq 0$ be such that $\E_N[f_\infty]= \varepsilon$. Then, if $T \geq T_0$, or if $T=0$ and $\varepsilon \leq \varepsilon_0$, there exists a solution $g$ to \eqref{eq:VPasymp}, which satisfies
\begin{itemize}
\item  $\mathbb{E}_N [g](t) \leq 2 \varepsilon$ for all $t \geq T$, and $(\phi [f],\nabla_v \phi_\infty [f_\infty]) \in \mathcal{W}^N \big( [T,+\infty[,C\varepsilon \big)$ with $C[N,p]>0$,
\item  $\E_{N-1}\big[ g(t,\cdot , \cdot) - f_\infty \big] \lesssim \varepsilon \langle t \rangle^{-\frac{1}{3}} $ for all $t \geq T$, and $\E_{N}\big[ g(t,\cdot , \cdot) - f_\infty \big] \to 0$, as $t \to +\infty$.
\end{itemize}
\end{Pro}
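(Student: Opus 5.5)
The plan is to show that the iterates $(g_k)$ of Proposition \ref{Proinduc} form a Cauchy sequence in a weaker norm on $[T,+\infty[$. The solution $g$ is then recovered as the limit, and the uniform bounds pass to it by weak compactness. By Proposition \ref{Proinduc}, for all $k$ we have $\E_N[g_k](t) \leq 2\varepsilon$ and $(\phi[f_k],\nabla_v\phi_\infty[f_\infty]) \in \mathcal{W}^N([T,+\infty[,C\varepsilon)$. The coordinate system $z$ is the same for every $k$, since it is teleological and depends only on $f_\infty$. Hence the differences $d_k \coloneqq g_{k+1}-g_k$ satisfy
\[ \overline{\T}_{\phi[f_k]}(d_k) = \lambda\Big( \nabla_x \phi[f_k-f_{k-1}](t,z) \cdot \nabla_v g_k - \frac{t\langle t\rangle^2}{\langle t\rangle^2}\nabla_x\phi[f_k-f_{k-1}](t,z)\cdot \nabla_x g_k - \log\langle t\rangle \nabla_v\Phi\cdot\nabla_x\phi[f_k-f_{k-1}](t,z)\cdot\nabla_x g_k\Big), \]
with $d_k(t)\to 0$ as $t\to+\infty$. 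Here $\Phi=\nabla_v\phi_\infty[f_\infty]$, and the $\partial_t\Phi$ and $-\Phi$ terms cancel in the difference of the two operators.

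I will estimate $\E_{N-1}^{n'}[d_k]$ for a slightly lower weight $n'$, for instance $n'=(6,6)$, so that the loss of one derivative and one $\langle x\rangle$ weight on $g_k$ is absorbed by the $\E_N$ bound. First, I apply Proposition \ref{Prodecayintvbis} and the elliptic estimates of Corollary \ref{Corellip} to $f_k-f_{k-1}$, whose $g$-transform is $d_{k-1}$. This gives \eqref{eq:phiassump1}--\eqref{eq:phiassump2} at order $N-1$ for $\phi[f_k-f_{k-1}]$, with $\Lambda_t \lesssim \E_{N-1}^{(6,6)}[d_{k-1}](t)$.

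The key point is the term $t\nabla_x\phi[f_k-f_{k-1}](t,z)\cdot\nabla_x g_k$. It only has $t^{-1}$ decay unless I subtract the limit profile. I will rewrite it as $t\langle t\rangle^{-2}\big(\langle t\rangle^2\nabla_x\phi[f_k-f_{k-1}](t,z) - \nabla_v\phi_\infty[0]\big)$, where $\nabla_v\phi_\infty[0]=0$ because both iterates scatter to the same $f_\infty$. Concretely, I argue as in the proof of Corollary \ref{Corphiphiinfty}, using Lemma \ref{LemdtPhi} integrated from $+\infty$. This gives $|t^2\nabla_x\phi[f_k-f_{k-1}](t,tv)| \lesssim \langle t\rangle^{-1}\log^b\langle t+1\rangle \sup_{\tau\geq t}\E_{N-1}^{(6,6)}[d_{k-1}](\tau)$. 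I then treat the shift from $tv$ to $z$ exactly as for $\mathfrak{I}_1^1$ in Proposition \ref{ProboundednessGeneral}, paying a factor $\langle x\rangle$ that the weight gap between $n'$ and $(7,7)$ absorbs.

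Commuting $N-1$ derivatives with Corollary \ref{CorComm}, the homogeneous part is handled by Proposition \ref{ProboundednessGeneral}, and the source terms are bounded as above. Integrating backward from $+\infty$ with Proposition \ref{Proenergy} and using Grönwall, I expect
\[ \sup_{t\ge T}\E_{N-1}^{n'}[d_k](t) \leq C\varepsilon \langle T\rangle^{-1/3}\,\sup_{t\geq T}\E_{N-1}^{n'}[d_{k-1}](t). \]
The prefactor is $<1/2$ if $T\ge T_0$ or $\varepsilon\le\varepsilon_0$, so the sequence contracts.

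The main obstacle is this source-term estimate. I must check that the $\log$-losses and the top-order (here order $N-1$) force-field derivatives, which are only controlled in $L^p$, still yield an integrable decay rate. The splitting between pointwise and $L^p$ cases will follow Proposition \ref{ProboundednessGeneral}. Given the contraction, $g_k\to g$ in $L^\infty_t\E_{N-1}^{n'}$, and by Banach–Alaoglu $\E_N[g](t)\le 2\varepsilon$. Passing to the limit in the equations gives $\overline{\T}_{\phi[f]}(g)=0$ and $g(t)\to f_\infty$, uniformly in $k$ thanks to the rate from Corollary \ref{CorscattGeneral} and Lemma \ref{Lemforscat}. Corollary \ref{Rqclassteleo} then provides the class $\mathcal{W}^N$ for $\phi[f]$. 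Finally, Corollary \ref{Corscatstrong} applied with $n=(7,7)$ yields both $\E_N[g-f_\infty]\to0$ and the rate $\E_{N-1}[g-f_\infty]\lesssim\varepsilon\langle t\rangle^{-1/3}$, since the limit there must coincide with $f_\infty$.
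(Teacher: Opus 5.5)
\textbf{Verdict.} Your proposal is correct in its overall logic, but it takes a different route from the paper.

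\textbf{How the paper proceeds.} The paper never estimates the differences $g_{k+1}-g_k$. It combines three ingredients:
\begin{itemize}
\item the uniform bounds $\E_N[g_k]\le 2\varepsilon$ of Proposition \ref{Proinduc};
\item a uniform bound on $\|\partial_t g_k\|_{L^p_{x,v}}$, read off from the equation using \eqref{eq:phiassump1} and \eqref{eq:phiassump3};
\item the Aubin--Lions--Simon lemma with a diagonal extraction, which gives a subsequence converging in $\E_{N-1}^{(6,6)}$, locally uniformly in time.
\end{itemize}
Banach--Alaoglu, Corollary \ref{Rqclassteleo} (or Proposition \ref{Proforcefield} with Corollary \ref{Corphiphiinfty}) and Corollary \ref{Corscatstrong} then finish exactly as in your last step.

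\textbf{What each route buys.} The compactness route is shorter, because it needs no sharp decay estimate for $\phi[f_k-f_{k-1}]$. Your contraction requires the difference estimates that the paper develops only later, for Corollary \ref{Corunique}. They are simpler here, since the coordinates are common and the $\Phi_{f-\mathbf{f}}$ and $z_f-z_{\mathbf{f}}$ terms of Lemma \ref{LemTphi} are absent. In exchange you get convergence of the whole sequence. That matters when passing to the limit in $\overline{\T}_{\phi[f_{k-1}]}(g_k)=0$. With only a subsequence $g_{k_i}$ one must also know that $g_{k_i-1}$ has the same limit, which compactness alone does not provide. In your scheme this is automatic, and you also get uniqueness and Lipschitz dependence essentially for free.

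\textbf{Points to tighten.}
\begin{enumerate}
\item Lemma \ref{LemdtPhi} is stated for a solution of $\T_\phi(f)=0$, whereas $f_k-f_{k-1}$ is not one. You should note that the identity behind it (Lemma \ref{LemComfcirc}) holds separately for $f_k$ and for $f_{k-1}$. These solve Vlasov equations with potentials $\phi[f_{k-1}]$ and $\phi[f_{k-2}]$, respectively. The identity is linear in the distribution function, so it transfers to the difference, and then your integration from $+\infty$ works.
\item To start the backward energy estimate you need $\E_{N-1}[d_k](t)\to 0$ as $t\to+\infty$. This follows from Corollary \ref{Corscatstrong} applied to each $g_k$, whose limit must be $f_\infty$.
\item Lowering the weights to $n'=(6,6)$ is unnecessary. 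The hierarchical weight $\langle x\rangle^{n_x+|\alpha_x|}$ in $\E_N$ already supplies the extra $\langle x\rangle$ on $\nabla_x g_k$ that compensates the mean-value loss. This is why Corollary \ref{CorforenergypertGeneral} works with the same $n$ on both sides.
\item Lowering the weights is also counterproductive. The pointwise sharp bound in Lemma \ref{LemdtPhi}, via \eqref{EQUATI:1}, uses $\langle y\rangle^7$ weights on velocity derivatives. $\E_{N-1}^{(6,6)}[d_{k-1}]$ does not provide these, so you would have to redo that estimate with weaker weights. Simply keep $n'=(7,7)$.
\item A cosmetic slip in your source formula: the term $\log\langle t\rangle\,\nabla_v\Phi\cdot\nabla_x\phi[f_k-f_{k-1}](t,z)\cdot\nabla_x g_k$ carries no factor $\lambda$.
\end{enumerate}
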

\begin{proof}
Recall that $(g_k)_{k \in \mathbb{N}}$ is bounded in the norm $\sup_{t \geq T} \E_N[ \cdot ]$ by \ref{OB:1}. Moreover, using $\overline{\T}_{\phi [f_k]}(g_{k+1})=0$, together with \eqref{eq:phiassump1} and \eqref{eq:phiassump3}, we obtain that $\sup_{t \geq T} \| \partial_t g_k\|_{L^p_{x,v}}$ is bounded uniformly in $k \in \mathbb{N}$. By the Aubin–Lions–Simon compactness lemma and a diagonal extraction argument, there exists a subsequence $(g_{k_i})$ such that,
\[ \forall \, T_1 \in [T,+\infty[, \qquad \lim_{i \to + \infty} \sup_{T \leq t \leq T_1} \, \E_{N-1}^{(6,6)} \big[ g-g_{k_i} \big](t) =0. \]
Then, we apply Banach-Alaoglu theorem to get, from \ref{OB:1}, that $\E_N[g](t) \leq 2 \varepsilon$ for all $t \geq T$. Moreover, one can either use again the Banach-Alaoglu theorem and \ref{OB:2}, or use Proposition \ref{Proforcefield} and Corollary \ref{Corphiphiinfty}, to obtain $(\phi [f], \nabla_v \phi_\infty [f_\infty]) \in \mathcal{W}^N \big( [T,+\infty[,C\epsilon \big)$. Convergence in this norm allows us to pass to the limit in the equation $\overline{\T}_{\phi [f_{n-1}]} ( g_{n})=0$, so that $\overline{\T}_{\phi[f]}(g)=0$. Finally, the convergence statements follow from Corollary \ref{Corscatstrong}.
\end{proof}

We will conclude the proof of the converse statement in Theorem \ref{Th1}, that is show uniqueness for the asymptotic Cauchy problem \eqref{eq:VPasymp}, in Corollary \ref{Corunique} below. We finish by a propagation result for higher weighted Sobolev norms.

\begin{Cor}\label{Corhighnorm}
Assume that $\E_N^n[f_\infty] <+\infty$ for $n \in \mathbb{Z}^2$. Then, there exists $B[N,p,n,\varepsilon]>0$ such that,
\[\forall \, t \geq T, \qquad \qquad \E_N^n[g](t) \leq B \E_N^n[f_\infty], \qquad  \E_{N-1}^n\big[ g(t,\cdot , \cdot) - f_\infty \big] \leq B \E_N^n[f_\infty] \, \langle t \rangle^{-\frac{1}{3}} .\]
Moreover, we have $\E_{N}^n [ g(t,\cdot , \cdot) - f_\infty  ] \to 0$ as $t \to +\infty$.
\end{Cor}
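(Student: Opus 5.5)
The point is that the solution $g$ built in Proposition \ref{Prolocalwp} already places the pair $(\phi[f],\nabla_v \phi_\infty[f_\infty])$ in $\mathcal{W}^N([T,+\infty[,C\varepsilon)$, with $C\varepsilon$ independent of $n$. The force field is therefore fixed and satisfies all the hypotheses of Section \ref{SecTphi} with $\mathbf{\Lambda}=C\varepsilon$ and $a=0$. So $g$ is simply a solution of the linear transport equation $\overline{\T}_{\phi[f]}(g)=0$ with asymptotic data $f_\infty$. By the uniqueness part of Proposition \ref{ProAsympCau}, it is the unique $L^\infty_tL^p_{x,v}$ solution, and $g(t,\cdot,\cdot)=f_\infty\circ\varphi_{\infty,t}$.

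\textbf{Step 1: boundedness in $\E_N^n$.} I would apply the second part of Proposition \ref{ProAsympCau} with this $n$. Its constant $\mathbf{C}_0[N,n,p,\mathbf{\Lambda}]$ depends on $n$, but Proposition \ref{ProboundednessGeneral} and Corollary \ref{Corenergy} hold for arbitrary $n\in\mathbb{Z}^2$. This gives
\[\E_N^n[g](t)\le \E_N^n[f_\infty]\,e^{\mathbf{C}_0 C\varepsilon\langle T\rangle^{-1/3}}\le B\,\E_N^n[f_\infty].\]
The approximation argument used there (compactly supported data, Grönwall, Banach--Alaoglu, then cutoffs $\chi_k$) does not depend on $n$, so nothing new is needed. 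The only point to check is that the commutator bound of Proposition \ref{ProboundednessGeneral} uses only the $\mathcal{W}^N$ control of $(\phi,\Phi)$ and the weight bounds \eqref{eq:Txv}, and never the specific exponent $n=(7,7)$. The value $(7,7)$ (actually $(6,6)$) enters only when $\E_N[g]$ is used to produce the force field bounds, and those are already fixed by Proposition \ref{Prolocalwp}.

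\textbf{Step 2: convergence.} With $\sup_{t\ge T}\E_N^n[g](t)\le B\E_N^n[f_\infty]$ in hand, Corollary \ref{Corscatstrong} yields a limit $\tilde f_\infty$ with $\E_N^n[g(t)-\tilde f_\infty]\to 0$ and
\[\E_{N-1}^n[g(t)-\tilde f_\infty]\lesssim \varepsilon\langle t\rangle^{-1/3}\,B\,\E_N^n[f_\infty].\]
Since $g(t)\to f_\infty$ in $L^p$ by construction, $\tilde f_\infty=f_\infty$. Enlarging $B$ (it may depend on $\varepsilon$) gives the stated rate.

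\textbf{Main obstacle.} The delicate point is making sure no hidden $n$-dependence enters the class $\mathcal{W}^N$. The rest of the argument only requires $n$ to be any element of $\mathbb{Z}^2$. If $n_x<7$ or $n_v<7$, one might worry that $\E_N^n[f_\infty]<\infty$ does not imply $\E_N[f_\infty]<\infty$. The standing assumption $\E_N[f_\infty]=\varepsilon$ from Proposition \ref{Prolocalwp} is retained, however, so the force field control is independent of $n$ and the argument goes through.
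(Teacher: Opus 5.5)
Your proposal is correct and is essentially the paper's proof: the paper simply states that the result follows from Proposition~\ref{ProAsympCau} (with the force field fixed by Proposition~\ref{Prolocalwp}, and uniqueness for the linear asymptotic Cauchy problem identifying $g$) together with Corollary~\ref{Corscatstrong}. The paper leaves two points implicit, and your proposal checks both: that the commutator and energy bounds hold for arbitrary $n\in\mathbb{Z}^2$, and that the limit produced by Corollary~\ref{Corscatstrong} coincides with $f_\infty$.
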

\begin{proof}
This follows from Proposition \ref{ProAsympCau} and Corollary \ref{Corscatstrong}.
\end{proof}

\subsection{Small data solutions}\label{Subsecsmall}

We now establish the first part of Theorem \ref{Th1}, that is global existence and modified scattering for the small data solutions to the Vlasov-Poisson system. Let $\varepsilon \geq 0$ and $f$ be a solution to \eqref{VP} such that $\E_N[f](0)=\varepsilon$. We define further 
\[ \Phi_f (t,v) \coloneqq t^2 \nabla_x \phi [f] (t,tv), \qquad \qquad  g(t,x,v) \coloneqq f \big( t,x+tv+\lambda \Phi_f(t,v) \log \langle t \rangle , v \big), \]
so that $\overline{\T}_{\phi [f]}(g)=0$. We consider $T \in \R_+ \cup \{ + \infty \}$, the maximal time such that
\begin{equation}\label{BAsmall}
  \big( \phi [f] , \Phi_f \big) \in \mathcal{W}^N \big( [0,T[,C_{\mathrm{boot}}\varepsilon \big) , \tag{BA1}
  \end{equation}
  for some constant $C_{\mathrm{boot}} [N,p]>0$ which will be fixed sufficiently large later. By local well-posedness, $T>0$. Let us now improve, if $\varepsilon$ is small enough, the bootstrap assumption \eqref{BAsmall}. Proposition \ref{ProboundednessGeneral}, together with the energy estimate of Proposition \ref{Proenergy}, applied to $g_\alpha$ for any $|\alpha| \leq N$, yields
\[ \E_N^n[g](\tau) -\E_N^n[g](0) \leq p\int_{t=0}^{\tau} C_0[N,p,n,C_{\mathrm{boot}}\varepsilon]   \frac{ C_{\mathrm{boot}}\varepsilon }{\langle t\rangle ^{\frac{4}{3}}} \E_N^n[g](t) \dr t, \]
for any $n \in \mathbb{Z}^2$ such that $\E_N^n[g](0)<+\infty$. By the Grönwall inequality, we obtain, if $\varepsilon$ is small enough, that
\begin{equation}\label{eq:boundednessforn}
 \forall \, t \in  [0,T[, \qquad \E_N^n[g](\tau) \leq \E_N^n[g](0) e^{4pC_0  C_{\mathrm{boot}}\varepsilon}. 
 \end{equation}
Assume now that $\varepsilon$ is small enough so that $C_{\mathrm{boot}} \varepsilon \leq 1$. In particular, $C_0$ does not depend on $(C_{\mathrm{boot}},\varepsilon)$. Applying \eqref{eq:boundednessforn} to $n=(7,7)$, we obtain, if $\varepsilon$ is small enough, that $\E_N[g](t) \leq 2 \varepsilon$ for all $t \in [0,T[$. Then, by applying Corollary \ref{Rqclass} for $\Phi=\Phi_f$, and using again $C_{\mathrm{boot}} \varepsilon \leq 1$, we get 
  \[ \big( \phi [f] , \Phi_f \big) \in \mathcal{W}^N \big( [0,T[,\mathbf{C}\varepsilon \big) , \qquad \mathbf{C}[N,p]>0 . \]
  We then choose $C_{\mathrm{boot}} =2 \mathbf{C} $. This improves \eqref{BAsmall} and implies that $T=+\infty$. Moreover, we obtain that $\E_N[g](t) \leq 2 \varepsilon$ and \eqref{eq:boundednessforn} hold for all $t \geq 0$. We finally show \ref{itunan}--\ref{itdaou} in Theorem \ref{Th1}. Let $n \in \mathbb{Z}^2$ such that $\E_N^n [g](0)<+\infty$. 
\begin{itemize}
\item According to Corollary \ref{Corscatstrong}, there exists $f_\infty \colon \R^3_x \times \R^3_v \to \R$ such that $\E_N^n [g(t,\cdot , \cdot) -f_\infty] \to 0$, as $t \to +\infty$. Note in particular that $\E_N[f_\infty] \leq 2 \varepsilon$. However, this is not enough for our purposes since we would like to prove that $f$ converges along slightly different corrections to the linear characteristics.
\item In this step, we prove the existence of an effective force field. We recall that $\Phi_f (t,v) \coloneqq t^2 \nabla_x \phi [f] (t,tv)$, and that $\phi_\infty [f_\infty]$ satisfies $\Delta _v \phi_\infty[f_\infty]=\int_x f_\infty (x,\cdot ), \dr x$. Then, combining Corollary \ref{Corphiphiinfty}, the bound on $\E_N[g]$, and the previous step, we get the following estimates. For all $t \geq 0$,
\begin{align}\label{kevatalenn:unan1}
 \Big\| \langle v \rangle \Big(  \partial_v^\kappa \Phi_f (t,v)-\nabla_v  \phi_\infty \big[ \partial_v^\kappa f_\infty \big](v)\Big) \Big\|_{L^\infty (\R^3_v)} & \lesssim  \varepsilon \frac{\log^{2N+7} \langle t +1 \rangle}{\langle t \rangle} , \qquad \qquad |\kappa| \leq N-p_\infty .
 \end{align}
\item Finally, consider the function $G$ given by
\[ G(t,x,v) \coloneqq f \big( t,x+tv+\lambda \nabla_v \phi_\infty [f_\infty] (v) \log (t) , v \big) = g \big(t,x+\lambda \big[ \nabla_v \phi_\infty[f_\infty](v)-\Phi_f(t,v) \big] \log \langle t \rangle,v \big).\]
Then, by the mean value theorem in $x$ and \eqref{kevatalenn:unan1}, we have $G(t,\cdot , \cdot) \to f_\infty$ in $L^p_{x,v}$ as $ t \to + \infty$. Using $\mathcal{E}_N[\Phi_f](t) \lesssim \varepsilon$ and $\E_N[g](t) \leq 2 \varepsilon$ for all $t \geq 0$, and arguing exactly as in the proof of Corollary \ref{Rqclassteleo}, we obtain
\[ \big( \phi [f] , \nabla_v \phi_\infty [f_\infty ] \big) \in \mathcal{W}^N \big( \R_+,C \varepsilon \big), \qquad \qquad C[N,p]>0. \] 
Since $G \in  L^\infty (\R_+,L^p_{x,v})$, we apply Proposition \ref{ProAsympCau} to bound $ \E_N^n[G] $ in terms of $\E_N^n [f_\infty]$, which yields \ref{itunan} in Theorem \ref{Th1}. Corollary \ref{Corscatstrong} then gives the scattering result \ref{itdaou}. Finally, this also proves Remark \ref{RqTh1}.
\end{itemize}

\section{Global existence and modified scattering for perturbations of dispersive solutions}\label{Sec3}

We aim to construct a solution $f$ to the Vlasov-Poisson system, with initial data close to the ones of a global and dispersive solution $\mathbf{f}$ to \eqref{VP}. We will then study the function $g-\mathbf{g}$, with
\[ g(t,x,v) \coloneqq  f \big( t,z_f(t,x,v),v \big) , \qquad \qquad \mathbf{g}(t,x,v) \coloneqq \mathbf{f} \big( t,z_\mathbf{f}(t,x,v) , v \big) , \]
where the coordinates $z_f$ and $z_{\mathbf{f}}$ are given by
\[ z_f(t,x,v) \coloneqq x+tv+\lambda \Phi_f(t,v) \log\langle t \rangle, \qquad \qquad z_{\mathbf{f}} (t,x,v) \coloneqq x+tv+\lambda \Phi_{\mathbf{f}}(t,v) \log \langle t \rangle , \]
and the coefficients $\Phi_f$ and $\Phi_{\mathbf{f}}$ will be of one of the following two forms.
\begin{enumerate}[label= (\textbf{\roman*})]
\item \label{traouunan} For the forward problem, we will use $\Phi_{\mathfrak{f}} (t,v) \coloneqq t^2 \nabla_x \phi [\mathfrak{f} ] (t,tv)$.
\item \label{traoudaou} For the backward problem, we will set $\Phi_{\mathfrak{f}} (v) \coloneqq \nabla_v \phi_{\infty}[\mathfrak{f}_\infty](v)$, for $\mathfrak{f} (t,z_{\mathfrak{f}}(t,x,v),v) \to \mathfrak{f}_\infty (x,v)$ as $t \to + \infty$.
\end{enumerate}

For all this section, we fix an order of regularity $N \geq 2 p_\infty$, $n=(n_x,n_v) \in \mathbb{N}^2$ with $n_x, \, n_v \geq 7$, a constant $\mathbf{\Lambda} >0$, and a global solution $\mathbf{f} \colon \R_+ \times \R^3_x \times \R^3_v \to \R$ to \eqref{VP}. Assume that, for all $t \in \R_+$,
\begin{equation}\label{EQU:assumpPhibff}
 \E_N^n [\mathbf{g}](t) \leq \mathbf{\Lambda}, \qquad \qquad \qquad \big( \phi [\mathbf{f} ], \Phi_{\mathbf{f}} \big) \in \mathcal{W}^N \big( \R_+, \mathbf{\Lambda} \big)  , \tag{H$\mathbf{f}$}
\end{equation}
where $\Phi_{\mathbf{f}}$ is either given by \ref{traouunan} or by \ref{traoudaou}.

\subsection{Preparatories}\label{Secprepsubsec}

We complement here the results of Section \ref{Secprep}. We fix an interval of times $I \subset \R_+$, and assume that, in addition to \eqref{EQU:assumpPhibff}, the following estimates hold,
\begin{equation}\label{EQUA:Hfbff}
 \big(\phi [f], \Phi_f \big) \in \mathcal{W}^{N-1} \big( I, 2 \mathbf{\Lambda}  \big), \qquad \qquad \big( \phi [f-\mathbf{f}],\Phi_{f-\mathbf{f}} \big) \in \mathcal{W}^{N-1} \big( I,\omega \big), \tag{H$f$}
 \end{equation}
for a nonnegative function $\omega \colon t \mapsto \omega_t$. We start by computing $\overline{\T}_{\phi[f]}(g - \mathbf{g})$.

\begin{Lem}\label{LemTphi}
The function $g-\mathbf{g}$ satisfies
\begin{align*}
 \overline{\T}_{\phi [f]} \big(g-\mathbf{g} \big) = & -\frac{\lambda t}{ \langle t \rangle^2}\Big( \langle t \rangle^2\nabla_x \phi [f-\mathbf{f}] (t,z_f) - \Phi_{f-\mathbf{f}} \Big) \cdot \nabla_x \mathbf{g} - \lambda t \big( \nabla_x \phi [\mathbf{f}] (t,z_f) - \nabla_x \phi [\mathbf{f}] (t,z_{\mathbf{f}}) \big) \cdot \nabla_x \mathbf{g} \\
 &  +\lambda \Big( \nabla_x \phi [f-\mathbf{f}] (t,z_f)+\nabla_x \phi [\mathbf{f}] (t,z_f) -\nabla_x \phi [\mathbf{f}] (t,z_{\mathbf{f}}) \Big)\cdot \nabla_v \mathbf{g}  + \lambda \log \langle t \rangle \partial_t \Phi_{f-\mathbf{f}} \cdot \nabla_x \mathbf{g} \\
 & - \lambda \log \langle t \rangle \nabla_v \Phi_{f-\mathbf{f}} \cdot \nabla_x \phi[f](t,z_f) \cdot \nabla_x \mathbf{g}-\lambda \log \langle t \rangle \nabla_v \Phi_{\mathbf{f}}  \cdot  \nabla_x \phi[f - \mathbf{f}](t,z_f) \cdot \nabla_x \mathbf{g} \\
 & -\lambda \log \langle t \rangle \nabla_v \Phi_{\mathbf{f}} \cdot \Big( \nabla_x \phi [\mathbf{f}](t,z_f) -\nabla_x \phi[\mathbf{f}](t,z_{\mathbf{f}}) \Big) \cdot \nabla_x \mathbf{g} .
 \end{align*}
 \end{Lem}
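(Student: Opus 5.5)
Since $g$ solves $\overline{\T}_{\phi[f]}(g)=0$ (Lemma \ref{LemTbar} applied with $\Phi=\Phi_f$ and $\T_{\phi[f]}f=0$), we have $\overline{\T}_{\phi[f]}(g-\mathbf{g}) = -\overline{\T}_{\phi[f]}(\mathbf{g})$. Likewise, $\mathbf{g}$ solves $\overline{\T}^{\mathbf{f}}_{\phi[\mathbf{f}]}(\mathbf{g})=0$, where $\overline{\T}^{\mathbf{f}}$ is the operator of Lemma \ref{LemTbar} built with $\Phi=\Phi_{\mathbf{f}}$ and evaluated at $z_{\mathbf{f}}$. The plan is therefore to write
\[ \overline{\T}_{\phi[f]}(\mathbf{g}) = \big(\overline{\T}_{\phi[f]}-\overline{\T}^{\mathbf{f}}_{\phi[\mathbf{f}]}\big)(\mathbf{g}) \]
and to expand the difference of the two first-order operators coefficient by coefficient. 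Both operators share $\partial_t$, so only the $\nabla_x$ and $\nabla_v$ coefficients need to be compared.

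For the $\nabla_v$ coefficient we have $-\lambda\nabla_x\phi[f](t,z_f)+\lambda\nabla_x\phi[\mathbf{f}](t,z_{\mathbf{f}})$. Using linearity $\phi[f]=\phi[f-\mathbf{f}]+\phi[\mathbf{f}]$, this equals $-\lambda\big(\nabla_x\phi[f-\mathbf{f}](t,z_f)+\nabla_x\phi[\mathbf{f}](t,z_f)-\nabla_x\phi[\mathbf{f}](t,z_{\mathbf{f}})\big)$. After the overall sign flip, this gives the $\nabla_v\mathbf{g}$ term.

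The $\nabla_x$ coefficient has three pieces, which we treat in turn.

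First, consider $\frac{\lambda t}{\langle t\rangle^2}\big(\langle t\rangle^2\nabla_x\phi[f](t,z_f)-\Phi_f\big)$ minus the analogous expression for $\mathbf{f}$. Linearity of $\Phi$ in the forward case (and of $\nabla_v\phi_\infty$ in the backward case) gives $\Phi_f-\Phi_{\mathbf{f}}=\Phi_{f-\mathbf{f}}$. We then insert $\pm\nabla_x\phi[\mathbf{f}](t,z_f)$. This produces the $\frac{\lambda t}{\langle t\rangle^2}(\langle t\rangle^2\nabla_x\phi[f-\mathbf{f}](t,z_f)-\Phi_{f-\mathbf{f}})$ term plus $\lambda t\big(\nabla_x\phi[\mathbf{f}](t,z_f)-\nabla_x\phi[\mathbf{f}](t,z_{\mathbf{f}})\big)$, where the $\langle t\rangle^2$ factors cancel.

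Second, the $\partial_t\Phi$ piece gives $-\lambda\log\langle t\rangle\,\partial_t\Phi_{f-\mathbf{f}}$.

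Third, the piece $\log\langle t\rangle\big(\nabla_v\Phi_f\cdot\nabla_x\phi[f](t,z_f)-\nabla_v\Phi_{\mathbf{f}}\cdot\nabla_x\phi[\mathbf{f}](t,z_{\mathbf{f}})\big)$ is handled by telescoping: write $\nabla_v\Phi_f=\nabla_v\Phi_{f-\mathbf{f}}+\nabla_v\Phi_{\mathbf{f}}$, then $\phi[f]=\phi[f-\mathbf{f}]+\phi[\mathbf{f}]$, then insert $\pm\nabla_x\phi[\mathbf{f}](t,z_f)$. This yields exactly the three $\log\langle t\rangle\,\nabla_v\Phi$ terms of the statement.

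Collecting everything with the overall minus sign gives the claimed identity. The only point requiring care is sign and $\lambda$ bookkeeping, together with the convention that $\lambda^2=1$ may appear. The $\nabla_v\Phi\cdot\nabla_x\phi\cdot\nabla_x$ term of Lemma \ref{LemTbar} carries no $\lambda$, while the statement carries $-\lambda$; I would recheck this against a direct recomputation of Lemma \ref{LemTbar}, since $\partial_{v}z$ contains $\lambda\log\langle t\rangle\nabla_v\Phi$ and the force term carries $-\lambda$, so the product has coefficient $-\lambda^2\cdot\lambda$-type factors. I would fix the sign from that recomputation, which is the main (purely computational) obstacle.
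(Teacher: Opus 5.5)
Your argument is the paper's own. The paper's proof consists only of noting that $\overline{\T}_{\phi[f]}(g)=\overline{\T}_{\phi[\mathbf{f}]}(\mathbf{g})=0$, where the second operator is the one of Lemma \ref{LemTbar} built on $\Phi_{\mathbf{f}}$ and $z_{\mathbf{f}}$, and then using the linearity of $\phi[\cdot]$ and of $\psi\mapsto\Phi_\psi$. That is exactly your coefficient-by-coefficient subtraction with the insertion of $\pm\nabla_x\phi[\mathbf{f}](t,z_f)$.

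On the sign you left open, your heuristic ``$-\lambda^2\cdot\lambda$'' is off. In Lemma \ref{LemTbar}, the term $\log\langle t\rangle\,\nabla_v\Phi\cdot\nabla_x\phi(t,z)\cdot\nabla_x$ comes from $(-\lambda)\nabla_x\phi(t,z)\cdot\nabla_v f$, with $\nabla_v f(t,z,v)=\nabla_v g-t\nabla_x g-\lambda\log\langle t\rangle\,\nabla_v\Phi\cdot\nabla_x g$. Its coefficient is therefore $\lambda^2\log\langle t\rangle=\log\langle t\rangle$, so that lemma is correct as printed. Your computation then produces $-\log\langle t\rangle$, not $-\lambda\log\langle t\rangle$, in front of the last three terms. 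The factor $-\lambda$ in the statement should read $-1$; the two agree only when $\lambda=1$. This is a harmless misprint, since only absolute values of these terms enter Corollary \ref{CorboundTh} and everything after it.
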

\begin{proof}
We use that $\overline{\T}_{\phi [f]}(g)=\overline{\T}_{\phi [\mathbf{f}]}(\mathbf{g})=0$, and that $\phi [\cdot]$ as well as $\psi \mapsto \Phi_\psi$ are linear.
\end{proof}

We now show a result similar to Corollary \ref{Corenergy}.

\begin{Cor}\label{CorboundTh}
For any $|\alpha| \leq N-1$, we can bound $\langle x \rangle^{|\alpha_x|} \langle v \rangle^{-|\alpha_x|} \big| \partial_x^{\alpha_x} \partial_v^{\alpha_v} \overline{\T}_{\phi [f]} (g-\mathbf{g}) \big|$ by a linear combination of the terms
\begin{alignat*}{2}
& \frac{\langle v \rangle t}{\langle x \rangle \langle t \rangle^2} \Big| \langle t \rangle^2 t^{|\gamma|} \nabla_x \partial_x^\gamma \phi [f-\mathbf{f}] (t,z_f) - \partial_v^\gamma \Phi_{f-\mathbf{f}} \Big| \cdot \frac{\langle x \rangle^{|\beta_x|}}{\langle v \rangle^{|\beta_x|}}\big|  \partial_x^{\beta_x} \partial_v^{\beta_v} \mathbf{g} \big|, \qquad \qquad \qquad && |\gamma| + |\beta| = |\alpha|+1, \quad |\gamma| \leq |\alpha|, \\
& \log \langle t \rangle \Big| \langle v \rangle \partial_t \partial_v^\gamma \Phi_{f-\mathbf{f}} \Big| \cdot  \frac{\langle x \rangle^{|\beta_x|}}{\langle v \rangle^{|\beta_x|}}\big|  \partial_x^{\beta_x} \partial_v^{\beta_v} \mathbf{g} \big| ,         && |\gamma| + |\beta| = |\alpha|+1, \quad |\gamma| \leq |\alpha|,
\end{alignat*}
and
\begin{align*}
& \log^{N} \langle t +1\rangle \Big( 1+ |\langle v \rangle \nabla_v \partial_v^\xi \Phi_{\mathfrak{f}} | \Big)   \cdot \Big| \langle t+|z_f| \rangle^{|\gamma|} \partial_{x^k} \partial_x^\gamma \phi[f - \mathbf{f}](t,z_f) \Big| \cdot  \frac{\langle x \rangle^{|\beta_x|}}{\langle v \rangle^{|\beta_x|}}\big| \partial_x^{\beta_x} \partial_v^{\beta_v}  \mathbf{g} \big| ,  \\
& \log^{N} \langle t +1\rangle \Big( 1+ | \langle v \rangle \nabla_v \partial_v^\xi \Phi_{\mathbf{f}} | \Big)  \cdot \Big| \langle t+|z_f| \rangle^{|\gamma|+1}\Big( \partial_{x^k} \phi \big[ \partial_x^\gamma \mathbf{f} \big] (t,z_f) - \partial_{x^k} \phi \big[ \partial_x^\gamma \mathbf{f} \big] (t,z_{\mathbf{f}}) \Big) \Big| \cdot   \frac{\langle x \rangle^{|\beta_x|}}{\langle v \rangle^{|\beta_x|}} \big| \partial_x^{\beta_x} \partial_v^{\beta_v} \mathbf{g} \big|,    \\
& \log^{N} \langle t +1\rangle\Big( 1+ |\langle v \rangle \nabla_v \partial_v^\xi \Phi_{\mathfrak{f}} | \Big) \big|\langle v \rangle \nabla_v \partial_v^\kappa \Phi_{f-\mathbf{f}} \big| \cdot \Big|(t+|z_f|)^{|\gamma|} \nabla_x \phi \big[ \partial_x^\gamma f \big](t,z_f) \Big| \cdot  \frac{\langle x \rangle^{|\beta_x|}}{\langle v \rangle^{|\beta_x|}} \big|  \partial_x^{\beta_x} \partial_v^{\beta_v} \mathbf{g} \big| ,  
\end{align*}
where $|\beta| \geq 1$, $|\xi|+|\kappa|+|\gamma|+\big( |\beta| -1 \big) \leq |\alpha|$, and $\mathfrak{f} \in \{ f, \mathbf{f} \}$.
\end{Cor}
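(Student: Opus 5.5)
Our plan is to differentiate the identity of Lemma \ref{LemTphi} by $\partial_x^{\alpha_x}\partial_v^{\alpha_v}$ via the Leibniz rule and the chain rule, and to bookkeep exactly as in Corollaries \ref{CorComm} and \ref{Corenergy}. Each term on the right-hand side of Lemma \ref{LemTphi} is a product of a coefficient, depending on $(t,z_f)$ or $(t,z_{\mathbf{f}})$ and on $\Phi_f,\Phi_{\mathbf{f}},\Phi_{f-\mathbf{f}}$, with a first order derivative of $\mathbf{g}$. When $\partial_x^{\alpha_x}\partial_v^{\alpha_v}$ is distributed, $\partial_{x^i}$ acting on any composition with $z_f$ or $z_{\mathbf{f}}$ only adds one $x$-derivative to the potential, since $\partial_{x^i}z^j=\delta_{ij}$. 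A $\partial_{v^i}$ instead produces either $t\,\partial_{x^i}$ acting on the potential, or $\lambda\log\langle t\rangle\,\partial_{v^i}\Phi_{\mathfrak f}^j\,\partial_{x^j}$ with $\mathfrak f\in\{f,\mathbf f\}$. As in Definition \ref{DefpolynomPhi}, this generates products $\log^r\langle t\rangle\, t^{s}P_{r,q}(\Phi_{\mathfrak f})$. Using the Sobolev bound $\|\nabla_v\Phi_{\mathfrak f}\|_{W^{N-1-p_\infty,\infty}}\lesssim\mathbf\Lambda$ from \eqref{EQU:assumpPhibff}--\eqref{EQUA:Hfbff}, all but one factor are estimated pointwise as in \eqref{eq:Ppq}. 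Since $r\le N$, this accounts for the $\log^N\langle t+1\rangle(1+|\langle v\rangle\nabla_v\partial_v^\xi\Phi_{\mathfrak f}|)$ prefactor.

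We then treat each line of Lemma \ref{LemTphi} separately.

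\textbf{First term.} When a $v$-derivative falls on it, we need $\partial_{v^i}\big(\langle t\rangle^2\nabla_x\phi[f-\mathbf f](t,z_f)-\Phi_{f-\mathbf f}\big)$. Its $t\partial_x$ part recombines with $\partial_{v^i}\Phi_{f-\mathbf f}$ to give the type--1 structure $\langle t\rangle^2t^{|\gamma|}\nabla_x\partial_x^\gamma\phi[f-\mathbf f](t,z_f)-\partial_v^\gamma\Phi_{f-\mathbf f}$, exactly as in the proof of Corollary \ref{CorComm}. The $\log\langle t\rangle\,\partial_v\Phi_f$ part produces terms of the third family, with $\phi[f-\mathbf f]$.

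\textbf{Second and third terms.} The $\partial_t\Phi_{f-\mathbf f}$ term directly yields the second family.
\begin{itemize}
\item The two differences $\nabla_x\phi[\mathbf f](t,z_f)-\nabla_x\phi[\mathbf f](t,z_{\mathbf f})$ are kept as differences. Differentiation produces the $t^{|\gamma|}$ factors and the difference of $\partial_x^\gamma$-derivatives. It also produces cross terms where $\partial_v$ hits $z_f$ versus $z_{\mathbf f}$ asymmetrically; these yield factors $\log\langle t\rangle\,\partial_v\Phi_f\,\nabla^2\phi[\mathbf f](t,z_f)-\log\langle t\rangle\,\partial_v\Phi_{\mathbf f}\,\nabla^2\phi[\mathbf f](t,z_{\mathbf f})$.
\item We rewrite these by writing $\Phi_f=\Phi_{\mathbf f}+\Phi_{f-\mathbf f}$. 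The part involving $\Phi_{f-\mathbf f}$ goes into the last family, using $\phi[f]$ or $\phi[\mathbf f]$; here we note that $\phi[\mathbf f]=\phi[f]-\phi[f-\mathbf f]$, so the $f-\mathbf f$ piece falls in the third family. The remainder keeps the difference structure of the fourth family.
\item The remaining $\log\langle t\rangle\nabla_v\Phi_{\bullet}\cdot\nabla_x\phi[\bullet]$ terms are handled likewise.
\end{itemize}

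\textbf{Converting to the weighted form.} Finally, the powers of $t$ must be converted into $\langle t+|z_f|\rangle^{|\gamma|}$ (or $|\gamma|+1$ for the difference term, which comes with one extra $t$). The weights $\langle x\rangle^{|\alpha_x|}\langle v\rangle^{-|\alpha_x|}$ must be redistributed onto $\partial^{\beta}\mathbf g$. We do this exactly as in Corollary \ref{Corenergy}: we use \eqref{eq:x}, i.e.\ $\langle x\rangle\lesssim\langle t+|z_f|\rangle\langle v\rangle$, whenever fewer $x$-derivatives fall on $\mathbf g$ than in $\alpha_x$. The gain $\langle x\rangle^{-1}$ appears in the type--1 term because $\nabla_x\mathbf g$ carries one more $x$-derivative. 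The constraint $|\xi|+|\kappa|+|\gamma|+|\beta|-1\le|\alpha|$ comes from counting derivatives, and $|\beta|\ge1$ because $\mathbf g$ always carries the original first order derivative.

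The main obstacle is this last step, namely checking that no negative $\langle x\rangle$ or positive unbalanced $t$ power survives. For the difference terms in particular, we must ensure that the number of $t$ factors never exceeds $|\gamma|+1$, which should follow from the same no-negative-power condition as in \eqref{type2}.
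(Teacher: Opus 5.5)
Your plan follows the paper's proof. You differentiate the identity of Lemma \ref{LemTphi} and use the chain rule for $\partial_{v^i}[\psi(z_f)-\psi(z_{\mathbf{f}})]$; its middle term $\log\langle t\rangle\,\partial_{v^i}\Phi_{f-\mathbf{f}}\cdot\nabla_x\psi(z_f)$ rightly goes to the last family once you write $\phi[\mathbf{f}]=\phi[f]-\phi[f-\mathbf{f}]$. You then redistribute weights as in Corollary \ref{Corenergy}.

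There is, however, one configuration where your final step does not close as described: the fourth (difference) family when $|\alpha_x|+1$ $x$-derivatives fall on $\mathbf{g}$. Already for $\alpha=0$, the term $-\lambda t\big(\nabla_x\phi[\mathbf{f}](t,z_f)-\nabla_x\phi[\mathbf{f}](t,z_{\mathbf{f}})\big)\cdot\nabla_x\mathbf{g}$ must be written with the target weight $\langle x\rangle\langle v\rangle^{-1}$ on $\nabla_x\mathbf{g}$. This leaves a prefactor $t\,\langle v\rangle\langle x\rangle^{-1}$, and in general $t^{|\gamma|+1-r}\langle v\rangle\langle x\rangle^{-1}$ with $r=0$ allowed. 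Neither tool you invoke handles this:
\begin{itemize}
\item \eqref{eq:x} only helps when fewer $x$-derivatives fall on $\mathbf{g}$.
\item The device of Corollary \ref{Corenergy} for the surplus case absorbs $\langle v\rangle$ into a factor $\langle v\rangle\nabla_v\partial_v^\xi\Phi$, which needs $r\geq 1$. This is unavailable here, because $P_{r,q}(\Phi_{\mathbf{f}})$ can be trivial in the difference terms.
\end{itemize}
If you simply convert the extra $t$ into the extra power of $\langle t+|z_f|\rangle$, an unbounded factor $\langle v\rangle/\langle x\rangle$ survives.

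The missing ingredient is the inequality $t\langle v\rangle\leq t+|tv|\leq t+|z_f|+|x|+|\Phi_f|\log\langle t\rangle\lesssim\langle x\rangle\langle t+|z_f|\rangle$. It follows from $tv=z_f-x-\lambda\Phi_f\log\langle t\rangle$ and $\|\Phi_f\|_{L^\infty}\lesssim\mathbf{\Lambda}$. The extra $t$ carried by the difference terms has to be spent together with $\langle v\rangle\langle x\rangle^{-1}$ through this inequality; that, not merely one extra $t$, is what the exponent $|\gamma|+1$ in the fourth family encodes. So the point to check in your last paragraph is not the number of $t$ factors, which is automatically at most $|\gamma|+1$, but this surplus $\langle v\rangle$.

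Relatedly, for the third and fifth families you should verify explicitly that the surplus case always comes with a $\Phi$-factor. For the $\phi[f-\mathbf{f}]$ terms this is $r\geq\delta$; for the $\phi[f]$ terms it is the factor $\nabla_v\partial_v^\kappa\Phi_{f-\mathbf{f}}$. With these additions your argument is the paper's.
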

\begin{proof}
We begin by stating a result analogous to Corollary \ref{CorComm}. Note first that for a function $\psi : \R^3 \to \R$,
\[ \partial_{x^i} \big[ \psi (z_f) \big]= \partial_{x^i} \psi (z_f), \qquad \qquad \partial_{v^i} \big[ \psi (z_f) \big]=t \partial_{x^i} \psi (z_f)+\lambda \log \langle t \rangle \partial_{v^i} \Phi_f \cdot \nabla_x \psi (z_f) , \]
and
\[ \partial_{v^i} \big[ \psi (z_f)-\psi(z_\mathbf{f} ) \big] = t \big( \partial_{x^i} \psi (z_f)- \partial_{x^i} \psi (z_\mathbf{f}) \big) +\lambda \log \langle t \rangle \partial_{v^i} \Phi_{f-\mathbf{f}} \cdot \nabla_x \psi (z_f)+ \lambda \log \langle t \rangle \partial_{v^i} \Phi_{\mathbf{f}} \cdot \big(  \nabla_x \psi (z_f)-\nabla_x \psi (z_\mathbf{f}) \big) . \]
We recall from Definition \ref{DefpolynomPhi} the notation $P_{r,q}(\Phi_f)$. We use the obvious extension $P_{r,q}(\Phi_f,\Phi_{\mathbf{f}})$ for products involving both $\Phi_f$ and $\Phi_{\mathbf{f}}$. Hence, by Lemma \ref{LemTphi} and an induction, we can write $\partial_x^{\alpha_x} \partial_v^{\alpha_v} \overline{\T}_{\phi [f]} (g - \mathbf{g})$ as a linear combination of the terms
\begin{alignat*}{2}
& \frac{t}{\langle t \rangle^2} \Big( \langle t \rangle^2 t^{|\gamma|} \nabla_x \partial_x^\gamma \phi [f-\mathbf{f}] (t,z_f) - \partial_v^\gamma \Phi_{f-\mathbf{f}} \Big) \cdot \nabla_x \partial_x^{\alpha_x} \partial_v^{\beta_v} \mathbf{g}, \qquad \qquad \qquad && \gamma + \beta_v = \alpha_v, \\
& \log \langle t \rangle \partial_t \partial_v^\gamma \Phi_{f-\mathbf{f}} \cdot \nabla_x \partial_x^{\alpha_x} \partial_v^{\beta_v} \mathbf{g} ,         && \gamma + \beta_v = \alpha_v ,
\end{alignat*}
and, with the convention that $\partial_{x^i}^{\delta} \partial_{v^i}^{1-\delta}= \partial_{x^i}$ if $\delta=1$ and $\partial_{x^i}^{\delta} \partial_{v^i}^{1-\delta} = \partial_{v^i}$ for $\delta=0$, 
\begin{align}
& \log^{r} \langle t \rangle P_{r,q} (\Phi_f,\Phi_{\mathbf{f}}) \cdot t^{\delta +|\alpha_v|-|\beta_v|-q-r} \partial_{x^k} \partial_x^\gamma \phi[f - \mathbf{f}](t,z_f) \cdot \partial_{x^i}^{\delta} \partial_{v^i}^{1-\delta}  \partial_x^{\beta_x} \partial_v^{\beta_v}  \mathbf{g}, \qquad \delta \leq r \leq |\gamma|+1, \label{term:4} \\
& \log^r \langle t \rangle P_{r,q} (\Phi_{\mathbf{f}}) \cdot t^{\delta+|\alpha_v|-|\beta_v|-q-r} \big( \partial_{x^k} \phi \big[ \partial_x^\gamma \mathbf{f} \big] (t,z_f) - \partial_{x^k} \phi \big[ \partial_x^\gamma \mathbf{f} \big] (t,z_{\mathbf{f}}) \big) \cdot \partial_{x^i}^\delta \partial_{v^i}^{1-\delta} \partial_x^{\beta_x} \partial_v^{\beta_v} \mathbf{g}, \qquad r \leq |\gamma|, \label{term:6}   \\
& \log^{r+1} \langle t \rangle P_{r,q} (\Phi_f,\Phi_{\mathbf{f}}) \partial_{v^k} \partial_v^\kappa \Phi_{f-\mathbf{f}} \cdot t^{\delta+|\alpha_v|-|\beta_v|-|\kappa|-q-r-1} \nabla_x \phi \big[ \partial_x^\gamma f \big](t,z_f) \cdot \partial_{x^i}^\delta \partial_{v^i}^{1-\delta} \partial_x^{\beta_x} \partial_v^{\beta_v} \mathbf{g} ,  \qquad r \leq |\gamma|, \label{term:5}
\end{align}
where $q+|\kappa|+|\gamma|+|\beta| = |\alpha|$, $|\beta_x| \leq |\alpha_x|$, $|\beta_v| \leq |\alpha_v|$. Moreover, the exponent of $t$ is nonnegative. In other words, in \eqref{term:4}--\eqref{term:6} and \eqref{term:5}, we respectively have
\begin{align}
 \delta+|\alpha_v|-|\beta_v|-q-r & =  |\gamma|+|\beta_x|-|\alpha_x|+\delta -r \geq 0 , \label{eq:exponent0} \\
 \delta+|\alpha_v|-|\beta_v|-|\kappa|-q-r-1 &= |\gamma|+|\beta_x|-|\alpha_x|+\delta-r-1 \geq 0 . \label{eq:exponent1}
\end{align}
As for Corollary \ref{Corenergy}, the first two family of terms can be easily handled. For the last three ones, we proceed as for the terms of \eqref{type2} in the proof of Corollary \ref{Corenergy}. Recall first that $\Phi_f$ and $\Phi_{\mathbf{f}}$ satisfy \eqref{EQU:HPhi} for $\Lambda_t=2\mathbf{\Lambda}$. Since $N \geq 2p_\infty$, the Sobolev embedding $W^{p_\infty,p}(\R^3_v) \hookrightarrow L^\infty (\R^3_v)$, together with \eqref{EQU:assumpPhibff}--\eqref{EQUA:Hfbff}, implies that, if $r \geq 1$, there exists $|\xi| \leq q$ and $\mathfrak{f} \in \{ f , \mathbf{f} \}$ such that
\begin{equation}\label{eq:Phbff}
 \big| P_{r,q} (\Phi_f,\Phi_{\mathbf{f}}) \big|(t,v) \lesssim \mathbf{\Lambda}^{r-1} \langle v \rangle^{-1} \big| \langle v \rangle \nabla_v \partial_v^\xi \Phi_{\mathfrak{f}} \big|(t,v) . 
 \end{equation}
Then, we write $\partial_{x^i}^{\delta} \partial_{v^i}^{1-\delta}  \partial_x^{\beta_x} \partial_v^{\beta_v} = \partial_{x}^{\beta_x'} \partial_v^{\beta'_v}$ and we consider two cases.
\begin{itemize}
\item If $|\beta_x'|= |\alpha_x|+1$, which occurs if $|\beta_x|=|\alpha_x|$ and $\delta=1$. Then, in view of \eqref{eq:exponent0}--\eqref{eq:exponent1}, we focus on
\[  t^{|\gamma|+1-r} \langle x \rangle^{|\alpha_x|} \langle v \rangle^{-|\alpha_x|} = \frac{\langle v \rangle}{\langle x \rangle} t^{|\gamma|+1-r} \langle x \rangle^{|\beta'_x|} \langle v \rangle^{-|\beta'_x|}  . \]
Note then that for the terms \eqref{term:4}, we have $t^{|\gamma|+1-r} \leq \langle t \rangle^{|\gamma|}$ by \eqref{eq:exponent0} and $r \geq \delta$. We further absorb the weight $\langle v \rangle$ through \eqref{eq:Phbff}.

For the terms \eqref{term:5}, since we have an extra factor $t^{-1}$, we need to show $t^{|\gamma|-r} \leq \langle t \rangle^{|\gamma|}$, which follows from \eqref{eq:exponent1}. The weight $\langle v \rangle$ is absorbed by the factor $\partial_{v^k} \partial_v^\kappa \Phi_{f-\mathbf{f}}$.

Finally, for the terms \eqref{term:6}, we have $t^{|\gamma|+1-r} \leq t\langle t \rangle^{|\gamma|}$ since $r \leq |\gamma|$. Then, we use
\[ t\langle v \rangle \leq t+ |tv| \leq t+|z_f|+|x|+\| \Phi_{f} \|_{L^\infty_{t,v}} \log \langle t \rangle \lesssim \langle x \rangle \langle t+|z_f| \rangle . \]
\item Otherwise, $\delta +|\beta_x|=|\beta'_x| \leq |\alpha_x|$ and we recall from \eqref{eq:x} that $\langle x \rangle \lesssim \langle t+|z_f| \rangle \, \langle v \rangle$, so that
\[ t^{|\gamma|+|\beta_x|-|\alpha_x|+\delta -r}  \langle x \rangle^{|\alpha_x|} \langle v \rangle^{-|\alpha_x|} \lesssim t^{|\gamma|+|\beta_x'|-|\alpha_x| -r} \langle t+|z_f| \rangle^{|\alpha_x|-|\beta_x'|}  \langle x \rangle^{|\beta'_x|} \langle v \rangle^{-|\beta'_x|} . \]
We then get the desired bounds by using \eqref{eq:exponent0}--\eqref{eq:exponent1}, to deal with small times $0 \leq t \leq 1$, and $t \leq \langle t+|z_f| \rangle$.

\end{itemize} 
\end{proof}

The next corollary will be useful in the perspective of performing energy estimates. We recall from Definition \ref{Defgalpha} the notation $g_\beta$.

\begin{Cor}\label{CorforenergypertGeneral}
 There exists $\mathfrak{C}[N,n,p,\mathbf{\Lambda}]>0$ such that, for any $|\alpha_x|+|\alpha_v| \leq N-1$,
 \[\forall \, t \in I, \qquad \qquad \Big\| \langle x \rangle^{n_x+|\alpha_x|} \langle v \rangle^{n_v+N-|\alpha_x|} \partial_x^{\alpha_x} \partial_v^{\alpha_v} \overline{\T}_{\phi [f]} (g-\mathbf{g}) (t,\cdot , \cdot) \Big\|_{L^p ( \R^3_x \times \R^3_v) } \leq  \mathfrak{C}    \frac{\omega_t }{\langle t\rangle^{\frac{4}{3}}} . \]
\end{Cor}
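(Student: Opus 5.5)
The bound follows the proof of Proposition \ref{ProboundednessGeneral}, applied term by term to the decomposition of Corollary \ref{CorboundTh}. Multiplying that pointwise bound by $\langle x \rangle^{n_x}\langle v \rangle^{n_v+N}$ turns every factor $\frac{\langle x \rangle^{|\beta_x|}}{\langle v \rangle^{|\beta_x|}} |\partial_x^{\beta_x}\partial_v^{\beta_v}\mathbf{g}|$ into $|\mathbf{g}_\beta|$, in the notation of Definition \ref{Defgalpha}. Since $|\beta| \leq |\alpha|+1 \leq N$, each such function is controlled in $L^p_{x,v}$ by $\E_N^n[\mathbf{g}] \leq \mathbf{\Lambda}$, thanks to \eqref{EQU:assumpPhibff}. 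Whenever $|\beta| \leq p_\infty$, it is also controlled in $L^\infty_v L^p_x$ by the Banach-valued Sobolev embedding, as in that proof. Every term then contains exactly one factor built from $f-\mathbf{f}$, which must be bounded by $\omega_t \langle t \rangle^{-4/3}$ using \eqref{EQUA:Hfbff} at regularity $N-1$. The remaining $\Phi_f$, $\Phi_{\mathbf{f}}$ and $\phi[f]$ factors are bounded by constants depending on $\mathbf{\Lambda}$, using \eqref{EQU:assumpPhibff} and \eqref{EQUA:Hfbff}.

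\textbf{Terms of the first two families.} The second family is handled exactly like $\mathfrak{I}_3$. When $|\gamma| \leq N-1-p_\infty$, I use \eqref{eq:phiassump3} for $\Phi_{f-\mathbf{f}}$ pointwise. Otherwise $|\gamma| > N-1-p_\infty$, so $|\beta| \leq p_\infty+1$, and I use \eqref{eq:phiassump4} in $L^p_v$ combined with $L^\infty_vL^p_x$ for $\mathbf{g}_\beta$. For the first family I split, as for $\mathfrak{I}_1 \leq \mathfrak{I}_1^1+\mathfrak{I}_1^2$, into the sharp term evaluated at $(t,tv)$ and the difference $\phi[f-\mathbf{f}](t,z_f)-\phi[f-\mathbf{f}](t,tv)$. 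The sharp term is handled like $\mathfrak{I}_1^2$. The difference is handled like $\mathfrak{Q}_1,\mathfrak{Q}_2,\mathfrak{Q}_3$, via the decomposition $\mathbf{D}_t \sqcup \mathbf{D}_t^{\mathrm{c}}$, the mean value theorem, and \eqref{eq:techni}. The prefactor $\langle x \rangle^{-1}$ absorbs the displacement $|x+\lambda\Phi_f\log\langle t\rangle|$. These estimates give $\omega_t \log^{c}\langle t+1\rangle\langle t\rangle^{-3/2}$, hence $\omega_t\langle t\rangle^{-4/3}$.

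\textbf{Terms of the last three families.} The third family, with $\nabla_x\partial_x^\gamma\phi[f-\mathbf{f}](t,z_f)$, is exactly $\mathfrak{I}_2$ with $\Lambda_t$ replaced by $\omega_t$. This gives $\omega_t\log^{N+c}\langle t+1\rangle\langle t\rangle^{-2}$. In the top-order subcase $|\gamma| \geq N-p_\infty$, I use \eqref{eq:phiassump2} together with the change of variables of Lemma \ref{Lemcdv} for $t \geq t_0$, and a plain Sobolev bound for $t \leq t_0$. The fifth family, with $\nabla_v\partial_v^\kappa\Phi_{f-\mathbf{f}}$ times $\nabla_x\phi[\partial_x^\gamma f](t,z_f)$, is handled the same way. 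Here \eqref{EQU:HPhi} for $\Phi_{f-\mathbf{f}}$ supplies $\omega_t$, the $f$-force supplies $\langle t\rangle^{-2}$ via \eqref{EQUA:Hfbff}, and I choose which factor to bound pointwise according to which index is large; since $N \geq 2p_\infty$, at most one index exceeds $N-1-p_\infty$.

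\textbf{Main obstacle.} I expect the fourth family to be the hardest. It involves the difference $\langle t+|z_f|\rangle^{|\gamma|+1}\big(\partial_{x^k}\phi[\partial_x^\gamma\mathbf{f}](t,z_f)-\partial_{x^k}\phi[\partial_x^\gamma\mathbf{f}](t,z_{\mathbf{f}})\big)$, which carries an extra power of $\langle t+|z_f|\rangle$. By the mean value theorem, $z_f-z_{\mathbf{f}}=\lambda\Phi_{f-\mathbf{f}}\log\langle t\rangle$, and $|\Phi_{f-\mathbf{f}}| \leq \omega_t\langle v\rangle^{-2}$ by \eqref{EQU:HPhi}. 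The difference therefore equals $\omega_t\log\langle t\rangle$ times $\nabla_x^2\partial_x^\gamma\phi[\mathbf{f}]$ evaluated at an intermediate point $z_\theta$. One gains a full power of $\langle t+|z_\theta|\rangle$ from the second derivative, which compensates the extra weight, provided $\langle t+|z_f|\rangle \lesssim \langle t+|z_\theta|\rangle$. That comparison holds because the displacement is only logarithmic. This requires $|\gamma|+1 \leq N-p_\infty$ for the pointwise bound \eqref{eq:phiassump1} on $\mathbf{f}$, or Remark \ref{Rq:phiassump2} at one more order. In the top-order case I would instead use \eqref{eq:phiassump2} for $\mathbf{f}$ at order $|\gamma|+1 \leq N$. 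I then perform the change of variables $y=z_\theta(t,x,v)$ in $v$ through Lemma \ref{Lemcdv}, applied to $\Phi_\theta=\theta\Phi_f+(1-\theta)\Phi_{\mathbf{f}}$, which satisfies \eqref{EQ:AssumpPhi} with $2\mathbf{\Lambda}$, and bound $\mathbf{g}_\beta$ in $L^\infty_vL^p_x$ since $|\beta| \leq p_\infty$. The result is $\omega_t\log^{c}\langle t+1\rangle\langle t\rangle^{-2}$. Summing all contributions, with $\log$ powers absorbed into $\langle t\rangle^{-3/2+4/3}$, yields the claim, with $\mathfrak{C}$ depending only on $N,n,p,\mathbf{\Lambda}$.
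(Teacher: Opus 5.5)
Your proposal is correct and follows essentially the same route as the paper. Like the paper, you reduce via Corollary \ref{CorboundTh} to five families and treat the first three as $\mathfrak{I}_1,\mathfrak{I}_2,\mathfrak{I}_3$ in Proposition \ref{ProboundednessGeneral} with $\Lambda_t$ replaced by $\omega_t$. You split the fifth family according to which of $\xi,\kappa,\gamma$ is large, and handle the fourth by the fundamental theorem of calculus along $z_f-z_{\mathbf f}=\lambda\Phi_{f-\mathbf f}\log\langle t\rangle$, the comparison $t+|z_f|\lesssim\langle\mathbf{\Lambda}\rangle\langle t+|y_\theta|\rangle$, and the change of variables used for $\mathfrak{I}_2$.

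Two small corrections. In the second family, $|\gamma|\geq N-p_\infty$ gives $|\beta|\leq p_\infty$, not $p_\infty+1$; this is exactly what the Banach-valued Sobolev embedding needs when $N=2p_\infty$. In the fourth family, you should also treat the subcase $|\xi|\geq N-p_\infty$ explicitly: bound the force pointwise and put $\mathbf{g}_\beta$ in $L^\infty_vL^p_x$, as you did for the fifth family.
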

\begin{proof}
In view of Corollary \ref{CorboundTh}, we are lead to bound the following five quantities,
\begin{alignat*}{2}
\mathfrak{K}_1 & \coloneqq \bigg\| \frac{ \langle v \rangle t}{\langle x \rangle \langle t \rangle^2} \Big| \langle t \rangle^2 t^{|\gamma|} \nabla_x \partial_x^\gamma \phi [f-\mathbf{f}] (t,z_f) - \partial_v^\gamma \Phi_{f-\mathbf{f}} \Big| \cdot    \mathbf{g}_\beta   \bigg\|_{L^p(\R^3_x \times \R^3_v)}   , \qquad \qquad &&|\gamma| + |\beta| = N, \quad |\gamma| \leq N-1, \\
\mathfrak{K}_3 & \coloneqq \Big\| \langle v \rangle \partial_t \partial_v^\gamma \Phi_{f-\mathbf{f}}(t,v) \cdot \mathbf{g}_\beta  \Big\|_{L^p(\R^3_x \times \R^3_v)}  , \qquad \qquad &&|\gamma| + |\beta| = N, \quad |\gamma| \leq N-1,
\end{alignat*}
and
\begin{align*}
\mathfrak{K}_2 & \coloneqq \Big\| \big( 1+ \big|\langle v \rangle\nabla_v \partial_v^\xi \Phi_{\mathfrak{f}} \big| \big)   \cdot \Big| \langle t+|z_f| \rangle^{|\gamma|} \partial_{x^k} \partial_x^\gamma \phi[f - \mathbf{f}](t,z_f) \Big| \cdot \mathbf{g}_\beta  \Big\|_{L^p(\R^3_x \times \R^3_v)}  , \\
\mathfrak{K}_4 & \coloneqq \Big\| \big( 1+ | \langle v \rangle \nabla_v \partial_v^\xi \Phi_{\mathbf{f}} | \big)  \cdot \Big| \langle t+|z_f| \rangle^{|\gamma|+1}\big( \partial_{x^k} \phi \big[ \partial_x^\gamma \mathbf{f} \big] (t,z_f) - \partial_{x^k} \phi \big[ \partial_x^\gamma \mathbf{f} \big] (t,z_{\mathbf{f}}) \big) \Big| \cdot  \mathbf{g}_\beta  \Big\|_{L^p(\R^3_x \times \R^3_v)}  , \\
\mathfrak{K}_5 & \coloneqq \Big\| \big( 1+ |\langle v \rangle \nabla_v \partial_v^\xi \Phi_{\mathfrak{f}} | \big) \big|\langle v \rangle \nabla_v \partial_v^\kappa \Phi_{f-\mathbf{f}} \big| \cdot \Big|\langle t+|z_f| \rangle^{|\gamma|} \nabla_x \phi \big[ \partial_x^\gamma f \big](t,z_f) \Big| \cdot \mathbf{g}_\beta  \Big\|_{L^p(\R^3_x \times \R^3_v)}  .
\end{align*}
where $|\beta| \geq 1$, $|\xi|+|\kappa|+|\gamma|+\big( |\beta| -1 \big) \leq N-1$ and $\mathfrak{f} \in \{ f, \mathbf{f} \}$.

By proceeding as in the proof of Proposition \ref{ProboundednessGeneral} for the analysis of $\mathfrak{I}_1$, $\mathfrak{I}_2$, and $\mathfrak{I}_3$, one can show
\[ \mathfrak{K}_1 \lesssim \omega_t \frac{\log \langle t+1 \rangle}{\langle t \rangle^{\frac{3}{2}}} \E_N^n[\mathbf{g}](t), \qquad \mathfrak{K}_2 \lesssim \langle \mathbf{\Lambda} \rangle\frac{\omega_t}{\langle t \rangle^2} \E_N^n[\mathbf{g}](t)  , \qquad \mathfrak{K}_3 \lesssim \frac{\omega_t}{\langle t \rangle^{\frac{9}{5}}} \E_N^n[\mathbf{g}](t) . \] 
In particular, to estimate $\phi [f-\mathbf{f}]$ and $\partial_t \Phi_{f-\mathbf{f}}$ through \eqref{EQUA:Hfbff} for $\mathfrak{K}_1$ and $\mathfrak{K}_3$, we note that either $|\gamma| \leq N-1-p_\infty$, or $|\gamma| \geq N-p_\infty$ and $|\beta| \leq p_\infty$. Then, we conclude by using $\mathbf{E}_N^n[\mathbf{g}](t) \leq \mathbf{\Lambda}$. 

For the analysis of $\mathfrak{K}_4$ and $\mathfrak{K}_5$, we will use the following ingredients.
\begin{enumerate}
\item  If $|\xi|, \, |\kappa|, \, |\gamma| \leq N-1-p_\infty$, then
\begin{equation*}
\big| \langle v \rangle \nabla_v \partial_v^\kappa \Phi_{f-\mathbf{f}} \big| \lesssim \omega_t, \qquad \big| \langle v \rangle \nabla_v \partial_v^\xi \Phi_{f} \big|+\big| \langle v \rangle \nabla_v \partial_v^\xi \Phi_{\mathbf{f}} \big| \lesssim \mathbf{\Lambda}, \qquad  \langle t+|z_f| \rangle^{2+|\gamma|} \big| \nabla_x \phi \big[ \partial_x^\gamma f \big](t,z_f) \big| \leq 2 \mathbf{\Lambda} . \color{white} \quad \square \color{black}
\end{equation*}
Recall the assumptions \eqref{EQU:assumpPhibff}--\eqref{EQUA:Hfbff}. The derivatives of $\Phi_{\mathbf{f}}$, $\Phi_f$ and $\Phi_{f-\mathbf{f}}$ are controlled pointwise by combining \eqref{EQU:HPhi} with the Sobolev embedding $W^{p_\infty,p}(\R^3_v) \hookrightarrow L^\infty (\R^3_v)$. For $\nabla_x \phi \big[ \partial_x^\gamma f \big]$, we use \eqref{eq:phiassump1}.
\item If $|\beta| \leq p_\infty$, we have $\int_x |\mathbf{g}_\beta|^p \dr x \lesssim |\E_N^n[\mathbf{g}]|^p \leq \mathbf{\Lambda}^p$ according to the Sobolev embedding for Banach-valued functions $W^{p_\infty , p}(\R^3_v,L^p(\R^3_x)) \hookrightarrow L^\infty (\R^3_v, L^p(\R^3_x))$ and $N \geq 2 p_\infty$.
\end{enumerate}
We start by studying $\mathfrak{K}_5$.
\begin{itemize}
\item Assume first that $|\xi|, \, |\kappa|, \, |\gamma| \leq N-1-p_\infty$. Then, we obtain $\mathfrak{K}_5 \lesssim \langle \mathbf{\Lambda} \rangle \omega_t \mathbf{\Lambda} \langle t\rangle^{-2} \E_N^n[\mathbf{g}](t) \leq \langle \mathbf{\Lambda} \rangle  \mathbf{\Lambda}^2 \omega_t \langle t\rangle^{-2}$.
\item Suppose now that $|\xi| \geq N-p_\infty$, so that $1 \leq |\beta| \leq p_\infty$ and $|\kappa|, \,  |\gamma| \leq p_\infty-1$. Then, $\Phi_{f-\mathbf{f}}$ and $\phi [f]$ can still be estimated pointwise. It yields
\[ \mathfrak{K}_5 \lesssim  \frac{  \mathbf{\Lambda}^2 \omega_t}{\langle t \rangle^2} + \frac{  \mathbf{\Lambda}  \omega_t}{\langle t \rangle^2}   \big\|\langle v \rangle \nabla_v \partial_v^\xi \Phi_{\mathfrak{f}} \big\|_{L^p(\R^3_v)} \bigg\| \int_{\R^3_x} \big| \mathbf{g}_\beta \big|^p \dr x \bigg\|_{L^\infty (\R^3_v)}^{\frac{1}{p}} \lesssim \langle \mathbf{\Lambda} \rangle \mathbf{\Lambda}^2 \frac{\omega_t }{ \langle t \rangle^2} . \]
\item We consider now the case $|\kappa| \geq N-p_\infty$, so that $1 \leq |\beta| \leq p_\infty$, $|\xi|, \,  |\gamma| \leq p_\infty-1$, and
\[ \mathfrak{K}_5 \lesssim \frac{\langle \mathbf{\Lambda} \rangle \mathbf{\Lambda}}{\langle t \rangle^2}   \big\|\langle v \rangle \nabla_v \partial_v^\kappa \Phi_{f-\mathbf{f}} \big\|_{L^p(\R^3_v)} \bigg\| \int_{\R^3_x} \big| \mathbf{g}_\beta \big|^p \dr x \bigg\|_{L^\infty (\R^3_v)}^{\frac{1}{p}} \lesssim \langle \mathbf{\Lambda} \rangle \mathbf{\Lambda}^2 \frac{\omega_t }{ \langle t \rangle^2}  . \]
\item Finally, if $|\gamma| \geq N-p_\infty$, we have $|\beta| \leq p_\infty$ and $|\xi|, \,  |\kappa| \leq p_\infty -1 $. Then, in that case, we have
\[ \mathfrak{K}_5 \lesssim \langle \mathbf{\Lambda} \rangle \omega_t  \Big\|  \langle t+|z_f| \rangle^{|\gamma|} \nabla_x \phi \big[ \partial_x^\gamma f \big](t,z_f) \big| \cdot \mathbf{g}_\beta  \Big\|_{L^p(\R^3_x \times \R^3_v)}  . \]
Recall from \eqref{EQUA:Hfbff} that we can estimate $\phi [f]$ in $L^p(\R^3_x)$ using \eqref{eq:phiassump2}. Then, proceeding exactly as in \ref{it3}, where we controlled $\mathfrak{I}_2$, we get
\[ \mathfrak{K}_5 \lesssim \langle \mathbf{\Lambda} \rangle \omega_t \frac{ \mathbf{\Lambda} }{\langle t \rangle^2} \E_N^n[\mathbf{g}](t) \leq \langle \mathbf{\Lambda} \rangle \mathbf{\Lambda}^2  \frac{\omega_t }{\langle t \rangle^2} . \]
\end{itemize}
Let us finally deal with $\mathfrak{K}_4$. Note that the fundamental theorem of calculus gives, since $z_f - z_\mathbf{f}=\lambda \Phi_{f-\mathbf{f}}(t,v)\log \langle t \rangle$,
\[ \nabla_x \phi \big[ \partial_x^\gamma \mathbf{f} \big] (t,z_f) - \nabla_x \phi \big[ \partial_x^\gamma \mathbf{f} \big] (t,z_{\mathbf{f}}) = \lambda \log \langle t \rangle \int_{\theta=0}^1 \Phi_{f-\mathbf{f}}(t,v) \cdot \nabla^2_x \phi \big[ \partial_x^\gamma \mathbf{f} \big] \Big(t,z_f-\theta \lambda  \Phi_{f-\mathbf{f}}(t,v)  \log \langle t \rangle \Big)  \dr \theta . \]
Let us use the notation $y_\theta (t,x,v) \coloneqq z_f-\theta \lambda  \Phi_{f-\mathbf{f}}(t,v) \log \langle t \rangle$. As $\| \Phi_{f-\mathbf{f}} (t,\cdot ) \|_{L^\infty_{v}} \leq \omega_t \leq 3 \mathbf{\Lambda} $, we have $t+|z_f| \leq 6\langle \mathbf{\Lambda} \rangle \langle t+|y_\theta| \rangle$ and
\begin{equation}\label{eq:meanval0}
\langle t+ |z_f| \rangle^{1+|\gamma|} \Big| \nabla_x \phi \big[ \partial_x^\gamma \mathbf{f} \big] (t,z_f) - \nabla_x \phi \big[ \partial_x^\gamma \mathbf{f} \big] (t,z_{\mathbf{f}}) \Big| \lesssim  \omega_t \log \langle t \rangle \int_{\theta=0}^1 \langle t+ |y_\theta| \rangle^{1+|\gamma|}\Big|  \nabla^2_x \phi \big[ \partial_x^\gamma \mathbf{f} \big] (t,y_\theta) \Big|   \dr \theta .
 \end{equation}
\begin{itemize}
\item Consider first the case $|\xi|, \, |\gamma| \leq N-1-p_\infty$, so that $|\langle v \rangle \nabla_v \partial_v^\xi \Phi_{\mathbf{f}}| \lesssim \mathbf{\Lambda}$. We estimate pointwise the force field using \eqref{eq:meanval0} and \eqref{eq:phiassump1}, since $(\phi [\mathbf{f}],\Phi_{\mathbf{f}}) \in \mathcal{W}^N (\R_+,\mathbf{\Lambda})$. It yields
\[ \mathfrak{K}_4 \lesssim \langle \mathbf{\Lambda} \rangle \frac{\omega_t\mathbf{\Lambda} \log \langle t \rangle}{\langle t \rangle^2} \big\|   \mathbf{g}_\beta  \big\|_{L^p(\R^3_x \times \R^3_v)} \lesssim \langle \mathbf{\Lambda}  \rangle \mathbf{\Lambda}^2 \frac{\omega_t \log \langle t \rangle}{\langle t \rangle^2}. \]
\item If $|\xi| \geq N-p_\infty$, we have $1 \leq |\beta| \leq p_\infty$ and $|\gamma| \leq p_\infty -1$. We can then use again \eqref{eq:meanval0}, together with \eqref{eq:phiassump1}, and $\int_x |\mathbf{g}_\beta|^p \dr x \lesssim \mathbf{\Lambda}^p$. We obtain
\[ \mathfrak{K}_4 \lesssim \mathbf{\Lambda}^2 \frac{\omega_t \log \langle t \rangle}{\langle t \rangle^2}+  \frac{\omega_t \mathbf{\Lambda} \log \langle t \rangle}{\langle t \rangle^2}\Big\|\langle v \rangle \nabla_v \partial_v^\xi \Phi_{\mathbf{f}}   \Big\|_{L^p(  \R^3_v)} \mathbf{\Lambda} \lesssim \langle \mathbf{\Lambda}  \rangle \mathbf{\Lambda}^2 \frac{\omega_t \log \langle t \rangle}{\langle t \rangle^2}. \]
\item We finally study the case $|\gamma| \geq N-p_\infty$, so that $|\beta| \leq p_\infty$ and $|\xi| \leq p_\infty -1$. Then, we use \eqref{eq:meanval0} and the Minkowski integral inequality to get
\[ \mathfrak{K}_4 \lesssim  \langle \mathbf{\Lambda} \rangle \omega_t \log \langle t \rangle \int_{\theta=0}^1 \Big\| \langle t+ |y_\theta| \rangle^{1+|\gamma|}  \nabla^2_x \big[ \partial_x^\gamma \mathbf{f} \big] (t,y_\theta)  \cdot \mathbf{g}_\beta \Big\|_{L^p(\R^3_x \times \R^3_v)}  \dr \theta , \]
where we recall $y_\theta (t,x,v) = x+tv+  \lambda \log \langle t \rangle \big[ (1-\theta)\Phi_f+\theta \Phi_{\mathbf{f}} \big](t,v)$. Next, we perform the affine change of variables $X(x)=y_\theta (t,x,v)$ to get
\[ \mathfrak{K}_4 \lesssim  \langle \mathbf{\Lambda} \rangle \omega_t \log \langle t \rangle \int_{\theta=0}^1\Big\| \langle t+ |X| \rangle^{1+|\gamma|}  \nabla^2_x \big[ \partial_x^\gamma \mathbf{f} \big] (t,X)  \Big\|_{L^p(\R^3_X)} \bigg\| \int_{\R^3_v} \big| \mathbf{g}_\beta \big|^p (t,Y_\theta (t,X,v) ,v) \dr v \bigg\|_{L^\infty (\R^3_X)}^{\frac{1}{p}}  \dr \theta  ,  \]
where $Y_\theta (t,x,v) = x-tv-\lambda \log \langle t \rangle \big[ (1-\theta)\Phi_f+\theta \Phi_{\mathbf{f}} \big](t,v)$. We may now proceed as in \ref{it3}, where $\mathfrak{I}_2$ was estimated, to obtain
\[ \mathfrak{K}_4 \lesssim \langle \mathbf{\Lambda}  \rangle \mathbf{\Lambda}^2 \omega_t  \log \langle t \rangle \langle t \rangle^{-2} . \]
\end{itemize}
\end{proof}

Next, we will require a result similar to Lemma \ref{Corderivgtof}.

\begin{Lem}\label{CorderivgtofDIFF}
We have, for all $t \in I$,
\begin{align*}
\E_{N-1} \big[ \mathbf{g}(t,x,v) -\mathbf{g}(t,x+\lambda \Phi_{f-\mathbf{f}} (t,v) \log \langle t \rangle ,v ) \big](t) \lesssim \log^{2N+6}  \langle t+1 \rangle \, \mathcal{E}_{N-1} [\Phi_{f-\mathbf{f}}](t) .
 \end{align*} 
\end{Lem}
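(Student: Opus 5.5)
The plan is to write the difference as an integral along the segment joining the two points, and then differentiate under the integral, in the spirit of the proof of Lemma \ref{Corderivgtof}. Set $\Psi(t,v)\coloneqq\lambda\Phi_{f-\mathbf{f}}(t,v)\log\langle t\rangle$. By the fundamental theorem of calculus,
\[
\mathbf{g}(t,x,v)-\mathbf{g}(t,x+\Psi,v)=-\int_{\theta=0}^1 \Psi(t,v)\cdot\nabla_x\mathbf{g}\big(t,x+\theta\Psi(t,v),v\big)\,\dr\theta .
\]
Applying $\partial_x^{\alpha_x}\partial_v^{\alpha_v}$ with $|\alpha|\leq N-1$ and using the Leibniz rule, every derivative in $v$ falls either on a factor $\Psi$ (the prefactor, or the shift inside $\mathbf{g}$) or on $\mathbf{g}$ itself. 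This produces a linear combination of terms
\[
\log^{r}\langle t\rangle\,\theta^{r-1}\prod_{i=1}^{r}\partial_v^{\gamma_i}\Phi^{j_i}_{f-\mathbf{f}}\;\big[\partial_x^{\alpha_x+\beta_x}\partial_v^{\beta_v}\mathbf{g}\big](t,x+\theta\Psi,v),\qquad r\geq1,\quad |\beta_x|=r,\quad \sum_i|\gamma_i|+|\beta_v|=|\alpha_v|.
\]
Each such term contains at least one factor of $\Phi_{f-\mathbf{f}}$ and exactly $r$ extra $x$-derivatives on $\mathbf{g}$.

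Next I estimate these terms. In each product I bound all but one factor of $\Phi_{f-\mathbf{f}}$ pointwise. By \eqref{EQUA:Hfbff}, $\omega_t\lesssim\mathbf{\Lambda}$, so $\|\langle v\rangle^2\Phi_{f-\mathbf{f}}\|_{L^\infty}$ and, via the Sobolev embedding $W^{p_\infty,p}_v\hookrightarrow L^\infty_v$, $\|\langle v\rangle^{1+|\xi|}\nabla_v\partial_v^\xi\Phi_{f-\mathbf{f}}\|_{L^\infty}$ for $|\xi|\leq N-1-p_\infty$ are all $\lesssim\mathcal{E}_{N-1}[\Phi_{f-\mathbf{f}}]$ or $\lesssim \mathbf{\Lambda}$. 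The one remaining factor carries the full $\mathcal{E}_{N-1}[\Phi_{f-\mathbf{f}}](t)$.

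Two cases remain. If the remaining factor has low order, it is bounded in $L^\infty$. If it has high order ($|\gamma|\geq N-p_\infty$), then $\mathbf{g}$ carries at most $p_\infty$ derivatives. In that case I use the Banach-valued embedding $W^{p_\infty,p}(\R^3_v,L^p_x)\hookrightarrow L^\infty(\R^3_v,L^p_x)$ exactly as in item (c) of the proof of Lemma \ref{Corderivgtof}. Here it is essential that $\mathbf{g}$ is controlled at order $N$ while we only need order $N-1$: the one derivative lost is absorbed by $\E_N^n[\mathbf{g}]\leq\mathbf{\Lambda}$.

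The main obstacle is the weight bookkeeping. The term carries $r$ extra $x$-derivatives, which require weights $\langle x\rangle^{r}\langle v\rangle^{-r}$ beyond those of $\E_{N-1}$. The $\langle x\rangle$-weights are paid for because $\E_N[\mathbf{g}]$ has weights $\langle x\rangle^{7+|\alpha_x|+r}$. The $\langle v\rangle^{-r}$ deficit, together with the shift of the $\langle v\rangle^{7+N-|\alpha_x|}$ weight by one (since $\E_{N-1}$ has $N-1$ in place of $N$), is compensated by the decay $\langle v\rangle^{-1-|\gamma_i|}$ built into each $\Phi$ factor through $\mathcal{E}_{N-1}$. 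This is where the cost of each $v$-derivative landing on $\Phi$ rather than on $\mathbf{g}$ has to be checked carefully.

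Finally, I change variables $x\mapsto x+\theta\Psi(t,v)$ for fixed $v$ and $\theta$; this is a translation, hence measure-preserving. The resulting weight mismatch satisfies $\langle x\rangle\lesssim\langle x+\theta\Psi\rangle\log\langle t+1\rangle$, as in item (a) of the proof of Lemma \ref{Corderivgtof}. At most $n_x+2N$ such factors appear, together with the $\log^r$ prefactors, which gives the $\log^{2N+6}\langle t+1\rangle$ loss. Minkowski's inequality in $\theta$ then concludes.
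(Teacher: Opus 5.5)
Your proof is correct, but it is organized differently from the paper's. The paper does not integrate $\mathbf{g}-\widetilde{\mathbf{g}}$ directly. For $|\alpha|\leq N-1$ it inserts the intermediate quantity $[\partial_x^{\alpha_x}\partial_v^{\alpha_v}\mathbf{g}](t,x+\lambda\Phi_{f-\mathbf{f}}(t,v)\log\langle t\rangle,v)$ and splits the difference into two pieces.

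The first piece is the difference with $\partial_x^{\alpha_x}\partial_v^{\alpha_v}\widetilde{\mathbf{g}}$. This is exactly the chain-rule remainder already controlled in the proof of Lemma \ref{Corderivgtof} up to \eqref{eq:auxiliaire}, with $\Phi=-\Phi_{f-\mathbf{f}}$. There, each extra $x$-derivative on $\mathbf{g}$ is paid for by a $v$-derivative of $\Phi_{f-\mathbf{f}}$, so no regularity is lost and only $\E_{N-1}[\mathbf{g}]$ enters.

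The second piece is the difference with $\partial_x^{\alpha_x}\partial_v^{\alpha_v}\mathbf{g}$. The paper treats it by the fundamental theorem of calculus, Minkowski, the translation in $x$, and $\langle v\rangle|\Phi_{f-\mathbf{f}}|\leq\mathcal{E}_{N-1}[\Phi_{f-\mathbf{f}}]$. This single term contains no derivative of $\Phi_{f-\mathbf{f}}$ and is the only place where $\E_N[\mathbf{g}]$ is used.

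You integrate first and differentiate afterwards, so the chain-rule terms and the derivative loss appear together in one Leibniz expansion. Many of your terms therefore carry $|\alpha|+1$ derivatives of $\mathbf{g}$ and several factors of $\Phi_{f-\mathbf{f}}$. This forces you to redo the case analysis: at most one factor lies outside the pointwise range, and in that case $\mathbf{g}$ carries at most $p_\infty$ derivatives and the Banach-valued embedding applies. Your counting handles this correctly. The paper's split is shorter and isolates the loss of regularity in one explicit term; yours is self-contained.

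Two small points of bookkeeping. First, the constant bound on the non-distinguished factors is really $\mathcal{E}_{N-1}[\Phi_{f-\mathbf{f}}]\leq\mathcal{E}_{N-1}[\Phi_f]+\mathcal{E}_{N-1}[\Phi_{\mathbf{f}}]\leq 3\mathbf{\Lambda}$. This follows from linearity of $\mathfrak{f}\mapsto\Phi_{\mathfrak{f}}$ and \eqref{EQU:assumpPhibff}--\eqref{EQUA:Hfbff}, not from any bound on $\omega_t$.

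Second, passing from $N$ to $N-1$ lowers the target $\langle v\rangle$-weight, which helps rather than hurts. You only need $\langle v\rangle^{r-1}$ from the $r$ factors of $\Phi_{f-\mathbf{f}}$, and they supply at least $\langle v\rangle^{-r}$. The logarithms total $\log^{7+|\alpha_x|+r}\langle t+1\rangle\lesssim\log^{N+7}\langle t+1\rangle$, since $|\alpha_x|+r\leq|\alpha|+1\leq N$. This is within the claimed $\log^{2N+6}\langle t+1\rangle$; your count of $n_x+2N$ weight factors is not the right tally.
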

\begin{proof}
Let $\widetilde{\mathbf{g}}(t,x,v) \coloneqq \mathbf{g}(t,x+\lambda \Phi_{f-\mathbf{f}} (t,v) \log \langle t \rangle ,v )$. We follow the proof of Lemma \ref{Corderivgtof} up to \eqref{eq:auxiliaire}, with $\Phi=-\Phi_{f-\mathbf{f}}$, $g=\mathbf{g}$, and $f_\circ = \widetilde{\mathbf{g}}$. It yields, for any $|\alpha_x|+|\alpha_v| \leq N-1$,
\[ \bigg\| \hspace{-0.04em} \frac{\langle x \rangle^{7+|\alpha_x|} \langle v \rangle^{7+N}}{\langle v \rangle^{|\alpha_x|}} \hspace{-0.04em} \Big( \hspace{-0.05em} \partial_x^{\alpha_x} \partial_v^{\alpha_v}\widetilde{\mathbf{g}} (t,x,v)- \big[\partial_x^{\alpha_x}\partial_v^{\alpha_v} \mathbf{g} \big] \hspace{-0.05em} \big( t,x+\lambda \Phi_{f-\mathbf{f}} (t,v) \log \langle t \rangle,v \big)  \hspace{-0.05em} \Big) \hspace{-0.08em} \bigg\|_{L^p_{x,v}} \! \lesssim \log^{b}  \langle t+1 \rangle \hspace{0.02em} \mathcal{E}_{N-1} [\Phi](t)  \E_{N-1}[\mathbf{g}](t) ,\]
where $b = 2N+5$. To conclude the proof, let us prove that
\[ \bigg\|  \frac{\langle x \rangle^{7+|\alpha_x|} \langle v \rangle^{7+N}}{\langle v \rangle^{|\alpha_x|}}  \Big(  \partial_x^{\alpha_x} \partial_v^{\alpha_v}\mathbf{g} (t,x,v)- \big[\partial_x^{\alpha_x}\partial_v^{\alpha_v} \mathbf{g} \big]  \big( t,x+\lambda \Phi_{f-\mathbf{f}} (t,v) \log \langle t \rangle,v \big)   \Big)  \bigg\|_{L^p_{x,v}} \lesssim \log^{b'} \! \langle t+1 \rangle \hspace{0.02em} \mathcal{E}_{N-1} [\Phi](t) \E_{N}[\mathbf{g}](t) ,\]
where $b'=N+8$. Indeed, we first apply the fundamental theorem of calculus. It remains to use the Minkowski integral inequality, $\langle v \rangle |\Phi_{f-\mathbf{f}}(t,v)| \leq \mathcal{E}_{N-1}[\Phi_{f-\mathbf{f}}](t)$, $\langle x \rangle \lesssim \big\langle x+\theta \lambda \Phi_{f-\mathbf{f}}(t,v) \log \langle t \rangle \big\rangle  \log \langle t+1 \rangle$ for all $0 \leq \theta \leq 1$ and to perform the change of variables $y(x)=x+\theta \lambda \Phi_{f-\mathbf{f}}(t,v) \log \langle t \rangle$.
\end{proof}

We conclude this subsection by proving the following result, which is a refinement of Proposition \ref{Prodecayintvbis}. 

\begin{Pro}\label{ProAppC}
Recall that the assumptions \eqref{EQU:assumpPhibff}--\eqref{EQUA:Hfbff} hold. Then, for any $|\alpha| \leq N-1$ and all $t \geq 0$,
\[ \Big\| \langle t+|x| \rangle^{3\frac{p-1}{p}+|\alpha|} \partial_x^\alpha \rho \big[ f-\mathbf{f} \big] (t,x) \Big\|_{L^p(\R^3_x)} \lesssim \E_{N-1} \big[ g- \mathbf{g} \big](t)+\frac{1}{\langle t \rangle^{\frac{1}{2}}} \mathcal{E}_{N-1} \big[ \Phi_{f-\mathbf{f}} \big](t)  . \]
\end{Pro}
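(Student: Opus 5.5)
The plan is to split $f-\mathbf{f}$ according to the two coordinate systems. Writing $f(t,y,v)=g(t,\mathcal{Z}_f(t,y,v),v)$ and $\mathbf{f}(t,y,v)=\mathbf{g}(t,\mathcal{Z}_{\mathbf{f}}(t,y,v),v)$, with $\mathcal{Z}_{\mathfrak{f}}(t,y,v)=y-tv-\lambda\Phi_{\mathfrak{f}}(t,v)\log\langle t\rangle$, we decompose
\[ f(t,y,v)-\mathbf{f}(t,y,v)=(g-\mathbf{g})\big(t,\mathcal{Z}_f,v\big)+\Big(\mathbf{g}\big(t,\mathcal{Z}_f,v\big)-\mathbf{g}\big(t,\mathcal{Z}_f+\lambda\Phi_{f-\mathbf{f}}(t,v)\log\langle t\rangle,v\big)\Big), \]
using $\mathcal{Z}_{\mathbf{f}}=\mathcal{Z}_f+\lambda\Phi_{f-\mathbf{f}}\log\langle t\rangle$. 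The first piece is the distribution function associated with $g-\mathbf{g}$ in the coordinates defined by $\Phi=\Phi_f$. Since \eqref{EQUA:Hfbff} gives \eqref{EQU:HPhi} for $\Phi_f$ at order $N-1$ with $\Lambda_t=2\mathbf{\Lambda}$, Proposition \ref{Prodecayintvbis} applies with $N$ replaced by $N-1$ and $a=0$ (note $N-1\geq 2p_\infty-1$). Its contribution is therefore bounded by $\E_{N-1}^{(6,6)}[g-\mathbf{g}]\leq\E_{N-1}[g-\mathbf{g}]$.

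The second piece is where the gain $\langle t\rangle^{-1/2}$ must appear. Setting $\widetilde{\mathbf{g}}(t,x,v)\coloneqq\mathbf{g}(t,x,v)-\mathbf{g}(t,x+\lambda\Phi_{f-\mathbf{f}}(t,v)\log\langle t\rangle,v)$, this piece is again of the form $\widetilde{\mathbf{g}}(t,\mathcal{Z}_f,v)$. A first attempt is to apply Proposition \ref{Prodecayintvbis} together with Lemma \ref{CorderivgtofDIFF}. That only yields $\log^{2N+6}\langle t+1\rangle\,\mathcal{E}_{N-1}[\Phi_{f-\mathbf{f}}]$, which has no decay. The missing gain will come from the structure of $\widetilde{\mathbf{g}}$, which is a difference in the $x$-variable of size $\Phi_{f-\mathbf{f}}\log\langle t\rangle$. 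By the fundamental theorem of calculus,
\[ \widetilde{\mathbf{g}}=-\lambda\log\langle t\rangle\int_0^1\Phi_{f-\mathbf{f}}(t,v)\cdot\nabla_x\mathbf{g}\big(t,x+\theta\lambda\Phi_{f-\mathbf{f}}\log\langle t\rangle,v\big)\,\dr\theta, \]
and an $x$-derivative of a function evaluated at $\mathcal{Z}_f$ gains a factor $t^{-1}$ after integration in $v$. This is the mechanism already used in the proof of Lemma \ref{Lemrelintvgtof}, via the identity expressing $\partial_{x^i}h(t,\mathcal{Z},v)$ as $-t^{-1}$ times a $v$-derivative plus corrections. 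Concretely, I would reproduce the vector field argument \eqref{eq:decayrhox0}. After commuting $\widehat{Z}^\kappa$, integrating by parts in $v$ transfers the $t^{-1}$ gain onto $v$-derivatives of $\Phi_{f-\mathbf{f}}$ and of $\mathbf{g}$. The resulting terms are products of $\langle v\rangle^{1+|\xi|}|\nabla_v\partial_v^\xi\Phi_{f-\mathbf{f}}|$ (or $\langle v\rangle^2|\Phi_{f-\mathbf{f}}|$) with weighted derivatives of $\mathbf{g}$ of total order at most $N$, multiplied by $\log^{C}\langle t+1\rangle/t\lesssim\langle t\rangle^{-1/2}$. For $t\leq1$ no gain is needed, and the crude bound from Lemma \ref{CorderivgtofDIFF} suffices.

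These products are then estimated in $L^p_x$ as in Lemma \ref{Lem236458}, using the weighted Hölder bound \eqref{eq:111111} and the change of variables of Lemma \ref{Lemcdv}. The derivatives are distributed as follows. When $|\xi|\leq N-1-p_\infty$, the $\Phi_{f-\mathbf{f}}$ factor is controlled pointwise by Sobolev embedding, giving $\mathcal{E}_{N-1}[\Phi_{f-\mathbf{f}}]$, and $\mathbf{g}$ is estimated in $L^p$ by $\E_N^n[\mathbf{g}]\leq\mathbf{\Lambda}$. Otherwise $\mathbf{g}$ carries at most $p_\infty$ derivatives, so the Banach-valued embedding $W^{p_\infty,p}_vL^p_x\hookrightarrow L^\infty_vL^p_x$ applies and $\Phi_{f-\mathbf{f}}$ is taken in $L^p_v$. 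Since one extra derivative falls on $\mathbf{g}$, it is essential to use $\E_N$ rather than $\E_{N-1}$ for $\mathbf{g}$, which \eqref{EQU:assumpPhibff} provides.

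The main obstacle is the bookkeeping in this integration-by-parts step. The $t^{-1}$ gain must be extracted without the logarithmic factors, or the additional $x$-derivative on $\mathbf{g}$, exceeding what $\E_N[\mathbf{g}]$ and the spatial weights $\langle x\rangle^{7}$ can absorb. In particular, the shifted argument $x+\theta\lambda\Phi_{f-\mathbf{f}}\log\langle t\rangle$ costs a factor $\log\langle t+1\rangle$ in the weights, as in Lemma \ref{CorderivgtofDIFF}. A secondary difficulty is that $\mathcal{E}_{N-1}[\Phi_{f-\mathbf{f}}]$ controls only $N-1$ derivatives, whereas $\partial_x^\alpha$ with $|\alpha|=N-1$ combined with the extra $v$-derivative would naively require $N$. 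This loss has to be absorbed by letting the integrated-by-parts derivative fall on $\mathbf{g}$ rather than on $\Phi_{f-\mathbf{f}}$ at top order.
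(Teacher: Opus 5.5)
Your proposal is correct and is essentially the paper's argument. You decompose $f-\mathbf{f}$ in the $z_f$-coordinates into a $g-\mathbf{g}$ part, handled by Proposition~\ref{Prodecayintvbis} and Lemma~\ref{Lem236458}, and a shift part, handled by the fundamental theorem of calculus, the identity \eqref{foripp}, an integration by parts in $v$, and $\E_N[\mathbf{g}]\leq \mathbf{\Lambda}$. The only difference is that you split before, rather than after, the vector-field reduction \eqref{eq:decayrhox0}--\eqref{eq:decayrhox}.

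Your ``secondary difficulty'' does not exist, however. $\mathcal{E}_{N-1}[\Phi_{f-\mathbf{f}}]$ controls $\langle v \rangle^{1+|\gamma|}\nabla_v\partial_v^\gamma \Phi_{f-\mathbf{f}}$ in $L^p_v$ for all $|\gamma|\leq N-1$, that is, up to $N$ derivatives. So the derivative created by the integration by parts can land on $\Phi_{f-\mathbf{f}}$. In fact it necessarily does, since integration by parts moves it off the $\mathbf{g}$-factor, so letting it fall on $\mathbf{g}$ is neither possible nor needed.
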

\begin{proof}
Let $\mathcal{Z}_f(t,x,v) \coloneqq x-tv-\lambda \Phi_f (t,v) \log \langle t \rangle$, that we will often denote by $\mathcal{Z}_f$, and $\widetilde{\mathbf{g}}$ be defined as 
\[ \widetilde{\mathbf{g}} (t,x,v)= \mathbf{g} \big( t,x+\lambda \Phi_{f-\mathbf{f}}(t,v) \log \langle t \rangle , v \big)=\mathbf{f} \big(t,x+tv+\lambda \Phi_f (t,v) \log \langle t \rangle,v \big) . \] 
We start by applying the identities \eqref{eq:decayrhox0} and \eqref{eq:decayrhox}. Separating the cases $t+|x| \leq 1$ and $t+|x| \geq 1$, it yields, for $h=g-\widetilde{\mathbf{g}}$,
\begin{align*}
\langle t+|x| \rangle^{|\alpha|} \big| \partial_{x}^{\alpha}  \rho \big[f-\mathbf{f} \big] \big| (t,x)&  \lesssim \sum_{|\kappa_x|+ |\kappa_v|+|\kappa_\Omega|+m \leq |\alpha| }  \bigg| \int_{\R^3_v} \big[\partial_x^{\kappa_x} \partial_v^{\kappa_v} \Omega^{\kappa_\Omega} S^m h \big] \big(t, \mathcal{Z}_f,v \big) \dr v \bigg| \\ 
& \quad \, +  \sum_{|\xi|+|\beta| \leq |\alpha|}  \int_{\R^3_v} \frac{\langle v \rangle^{1+|\xi|}| \nabla_v \partial_v^{\xi} \Phi_f (t,v) |}{\langle t \rangle^{\frac{3}{4}} }  \langle \mathcal{Z}_f \rangle^{|\beta_x|} \langle v \rangle^{|\beta_v|} \big| \partial_{x,v}^\beta h \big(t, \mathcal{Z}_f,v \big)  \big|  \dr v .
 \end{align*}
 Then, as $N-1 \geq 2p_\infty -1$, we can apply Lemma \ref{Lem236458} to get, for any $|\xi|+|\beta| \leq N-1$,
\begin{align*} 
& \bigg\| \langle t+|x| \rangle^{3\frac{p-1}{p}} \int_{\R^3_v} \frac{\langle v \rangle^{1+|\xi|}| \nabla_v \partial_v^{\xi} \Phi_f (t,v) |}{\langle t \rangle^{\frac{3}{4}} } \langle \mathcal{Z}_f \rangle^{|\beta_x|} \langle v \rangle^{|\beta_v|} \big| \partial_{x,v}^\beta h \big(t, \mathcal{Z}_f,v \big)  \big|  \dr v \bigg\|_{L^p(\R^3_x)} \lesssim \frac{\mathcal{E}_{N-1}[\Phi_f ](t)}{\langle t \rangle^{\frac{3}{4}}}  \E_{N-1}[h](t).
\end{align*}
Note now that
\begin{align}
 \E_{N-1}[h](t) & \leq \E_{N-1}[g-\mathbf{g}](t)+\E_{N-1} \big[ \mathbf{g}(t,x,v)-\mathbf{g}\big( t,x+\lambda \Phi_{f-\mathbf{f}}(t,v) \log \langle t \rangle , v \big) \big] \nonumber \\
& \lesssim \E_{N-1}[g-\mathbf{g}](t)+\log^{2N+6} (t) \mathcal{E}_{N-1} \big[ \Phi_{f-\mathbf{f}}\big](t) ,  \label{14589}
\end{align}
where, in the last step, we used Lemma \ref{CorderivgtofDIFF}. Next, we fix multi-indices $|\kappa_x|+|\kappa_v|+|\kappa_\Omega|+m \leq |\alpha|$ and we use the shorthand $\mathcal{D} \coloneqq \partial_x^{\kappa_x} \partial_v^{\kappa_v} \Omega^{\kappa_\Omega} S^m$. Then, we remark
\[ \big[ \mathcal{D} h \big] \big( t, \mathcal{Z}_f , v \big) = \big[ \mathcal{D} g- \mathcal{D} \mathbf{g} \big] \big(t, \mathcal{Z}_f , v \big) +\big[ \mathcal{D} \mathbf{g} \big] \big( t,  \mathcal{Z}_f , v \big)- \big[ \mathcal{D} \mathbf{g} \big] \big( t, \mathcal{Z}_{\mathbf{f}},v \big)+ \big[ \mathcal{D} \mathbf{g} \big] \big( t, \mathcal{Z}_{\mathbf{f}}, v \big)-\big[ \mathcal{D} \widetilde{\mathbf{g}} \big] \big( t, \mathcal{Z}_f, v \big). \]
 Applying Lemma \ref{Lem236458}, together with \eqref{eq:23456789}, one gets
\begin{align*} 
 \bigg\| \langle t+|x| \rangle^{3\frac{p-1}{p}}\int_{\R^3_v} \big[ \mathcal{D} g-\mathcal{D}\mathbf{g} \big] \big( t,\mathcal{Z}_f,v \big)\dr v \bigg\|_{L^p(\R^3_x)} & \lesssim  \E_{N-1}[g-\mathbf{g}](t).
\end{align*}
For the second quantity to estimate, we introduce $\mathbf{h} \coloneqq \mathcal{D} \mathbf{g}$ to lighten the notations. Note first that
 \begin{align*}
 \int_{\R^3_v} \mathbf{h} \big(t , \mathcal{Z}_f ,v \big)- \mathbf{h} \big( t , \mathcal{Z}_{\mathbf{f}} , v\big) \dr v  = \log \langle t \rangle  \int_{\theta =0}^1 \int_{\R^3_v} \Phi_{f-\mathbf{f}} (t,v) \cdot \nabla_x \mathbf{h} \big( t,x-tv-\lambda \big[ \Phi_{f}-\theta \Phi_{f-\mathbf{f}} \big](t,v) \log \langle t \rangle , v \big) \dr v \dr \theta .
\end{align*}
If $t \geq 1$, we recall, for $\mathcal{Z}(t,x,v)=x-tv-\lambda \Phi (t,v) \log \langle t \rangle$, the identity
\begin{align}
 \partial_{x^i}\mathbf{h} (t,x,v) &  = - \frac{1}{t} \Big( \partial_{v^i} \big[ \mathbf{h} \big( t,\mathcal{Z},v \big) \big]+\lambda \log \langle t \rangle \partial_{v^i} \Phi \cdot \nabla_x \mathbf{h} \big(t, \mathcal{Z},v \big)-\partial_{v^i} \mathbf{h} \big( t, \mathcal{Z} , v \big) \Big). \label{foripp}
 \end{align}
 Applying it for $\Phi = \Phi_{f}-\theta \Phi_{f-\mathbf{f}}$, which satisfies $|\nabla_v \Phi (t,v) | \lesssim \mathbf{\Lambda}$, and performing integration by parts, yields
 \begin{align*}
\bigg| \int_{\R^3_v} \mathbf{h} \big(t , \mathcal{Z}_f ,v \big)- \mathbf{h} \big( t , \mathcal{Z}_{\mathbf{f}} , v\big) \dr v \bigg| &  \lesssim \frac{\mathcal{E}_{p_\infty} \! \big[ \Phi_{f-\mathbf{f}} \big](t)}{\langle t \rangle^{\frac{3}{4}}} \sum_{|\kappa| \leq 1}\int_{0}^1 \int_{\R^3_v} \! \big| \partial_{x,v}^{\kappa} \mathbf{h} \big( t,x-tv-\lambda \big[ \Phi_{f}-\theta \Phi_{f-\mathbf{f}} \big](t,v) \log \langle t \rangle , v \big) \big| \dr v \dr \theta ,
 \end{align*}
 since $|\Phi_{f-\mathbf{f}}|(t,v) +|\nabla_v \Phi_{f-\mathbf{f}}|(t,v) \lesssim \mathcal{E}_{p_\infty} [\Phi_{f-\mathbf{f}}](t)$ by the Sobolev embedding $W^{p_\infty,p}(\R^3_v) \hookrightarrow L^\infty (\R^3_v)$. Then, we bound $\partial_{x,v}^\kappa \mathcal{D} \mathbf{g}$ using \eqref{eq:23456789}. We finally obtain from Lemma \ref{Lem236458}, together with the Minkowski inequality for integrals, that
 \begin{align*}
  \bigg\| \langle t+|x| \rangle^{3\frac{p-1}{p}} \int_{\R^3_v} \big[\mathcal{D} \mathbf{g} \big] \big(t , \mathcal{Z}_f ,v \big)- \big[\mathcal{D} \mathbf{g} \big] \big( t , \mathcal{Z}_{\mathbf{f}} , v\big) \dr v \bigg\|_{L^p(\R^3_x)} \lesssim \frac{\mathcal{E}_{N-1} \big[ \Phi_{f-\mathbf{f}} \big](t)}{\langle t \rangle^{\frac{3}{4}}}  \E_N[\mathbf{g} ](t),
 \end{align*}
where we also used $p_\infty \leq N-1$. We conclude by using $\E_N[\mathbf{g} ](t) \leq \mathbf{\Lambda}$. We finally give the main steps to estimate the last quantity.
\begin{itemize}
\item Estimate $ \big[ \mathcal{D} \mathbf{g} \big] \big( t, \mathcal{Z}_{\mathbf{f}}, v \big)-\big[ \mathcal{D} \widetilde{\mathbf{g}} \big] \big( t, \mathcal{Z}_f, v \big)$ by carrying out computations similar to those used in the proof of Lemma \ref{Corderivgtof}.
\item Control the resulting terms using \eqref{foripp}, integration by parts, and \eqref{eq:23456789}. Finally, apply Lemma \ref{Lem236458}.
\end{itemize}

\end{proof}

\subsection{The backward problem}\label{Subsecbackward}

We finish here the construction of the wave operator $\mathscr{W}_+$, introduced in Definition \ref{DefWplus}. For this, to complement Proposition \ref{Prolocalwp}, we need to show uniqueness for the asymptotic Cauchy problem \eqref{eq:VPasymp} associated to the Vlasov-Poisson system. Then, we study regularity properties of $\mathscr{W}_+$. 

We fix, for all this section, $(f_\infty,\mathbf{f}_\infty) \in \mathcal{B}^2$. We assume that $g$ and $\mathbf{g}$ are two solutions to the asymptotic Cauchy problem \eqref{eq:VPasymp}, with respective scattering data $f_\infty$ and $\mathbf{f}_\infty$, respectively defined on the time intervals $[T_g,+\infty[$ and $[T_{\mathbf{g}},+\infty[$, where $T_g, \, T_{\mathbf{g}} \geq 0$, and such that
\[ \sup_{t \geq T_{\mathbf{g}}} \E_N^n[ \mathbf{g}](t) = \mathbf{\Lambda}, \qquad \qquad \qquad \sup_{t \geq T_g} \E_N^n[g](t) =  \mathbf{\Lambda}_0 ,  \qquad \qquad \qquad \mathbf{\Lambda}, \, \mathbf{\Lambda}_0 \geq 0 . \]
In this section, we then have
\begin{itemize}
\item $\Phi_f(t,v)=\nabla_v \phi_\infty[f_\infty](v)$ and $\Phi_{\mathbf{f}} (t,v) = \nabla_v \phi_\infty[\mathbf{f}_\infty](v)$.
\item $z_f(t,x,v)=x+tv+\lambda \Phi_f(v) \log \langle t \rangle$ and $z_{\mathbf{f}}(t,x,v)=x+tv+\lambda \Phi_{\mathbf{f}}(v) \log \langle t \rangle$.
\item $f$ and $\mathbf{f}$, satisfying $g(t,x,v)=f(t,z_f(t,x,v),v)$ and $\mathbf{g}(t,x,v) = \mathbf{f}(t,z_{\mathbf{f}}(t,x,v),v)$, are two solutions to \eqref{VP}. 
\end{itemize}
Note that in view of Proposition \ref{Prophiinfty},
\begin{equation}\label{eq:Phibackward}
 \mathcal{E}_N[\Phi_f]\lesssim \mathbf{\Lambda}, \qquad \qquad \mathcal{E}_N[\Phi_{\mathbf{f}}] \lesssim \mathbf{\Lambda}_0, \qquad \qquad  \mathcal{E}_{N-1} [\Phi_{f-\mathbf{f}}] \lesssim \E_{N-1} [f_\infty - \mathbf{f}_\infty]. 
 \end{equation}
We can then apply the results of Section \ref{Secprep} for $\Phi=\Phi_f$ and $\Phi=\Phi_{\mathbf{f}}$. We start by controlling $g-\mathbf{g}$.

\begin{Cor}\label{Corunique}
We have, for all $t \in [\max (T_g,T_{\mathbf{g}}),+\infty[$,
\[ \E_{N-1}^n [g-\mathbf{g}] (t)\leq \mathbf{C}_1   \E_{N-1}^n [f_\infty - \mathbf{f}_\infty],  \qquad \qquad \mathbf{C}_1 \big[N,n,p, \max (\mathbf{\Lambda}, \mathbf{\Lambda}_0) \big] >0 . \]
\end{Cor}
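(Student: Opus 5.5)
We run a backward-in-time energy estimate for $g-\mathbf{g}$ in the weighted norm of order $N-1$, starting from $t=+\infty$. At $t=+\infty$ the difference $g-\mathbf{g}$ equals $f_\infty-\mathbf{f}_\infty$. The source terms are controlled by Corollary \ref{CorforenergypertGeneral}, and the loop is closed with Grönwall.

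\textbf{Setup.} Since $\Phi_f,\Phi_{\mathbf{f}}$ are time independent, \eqref{eq:Phibackward} together with Corollary \ref{Rqclassteleo} shows that $(\phi[f],\Phi_f)$ and $(\phi[\mathbf{f}],\Phi_{\mathbf{f}})$ belong to $\mathcal{W}^N$. The constants depend on $\max(\mathbf{\Lambda},\mathbf{\Lambda}_0)$, so \eqref{EQU:assumpPhibff}--\eqref{EQUA:Hfbff} hold up to renaming $\mathbf{\Lambda}$.

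For the difference pair $(\phi[f-\mathbf{f}],\Phi_{f-\mathbf{f}})$ we need a weight $\omega_t$. We argue as in the proof of Corollary \ref{Rqclassteleo}, with Proposition \ref{ProAppC} replacing Proposition \ref{Prodecayintvbis}, and apply the elliptic estimates of Corollary \ref{Corellip}. For the sharp bounds \eqref{eq:phiassump3}--\eqref{eq:phiassump4}, $\partial_t\Phi_{f-\mathbf{f}}=0$. The remaining term $t^2\nabla\phi[f-\mathbf{f}](t,tv)-\nabla_v\phi_\infty[f_\infty-\mathbf{f}_\infty]$ is handled by a linear version of Lemma \ref{LemdtPhi} and Corollary \ref{Corphiphiinfty}, applied to $f-\mathbf{f}$ at order $N-1$. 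This yields $(\phi[f-\mathbf{f}],\Phi_{f-\mathbf{f}})\in\mathcal{W}^{N-1}(I,\omega)$ with
\[ \omega_t \lesssim \E_{N-1}^n[g-\mathbf{g}](t)+\E_{N-1}[f_\infty-\mathbf{f}_\infty]. \]
Here $\mathcal{E}_{N-1}[\Phi_{f-\mathbf{f}}]$ is already bounded by the scattering data through \eqref{eq:Phibackward}. This avoids the circularity described in the introduction, since $\Phi$ is teleological in the backward problem.

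\textbf{Energy estimate.} Write $\overline{\T}_{\phi[f]}\big((g-\mathbf{g})_\alpha\big)$ as the commutator term plus the weighted derivative of $\overline{\T}_{\phi[f]}(g-\mathbf{g})$. The commutator is bounded by Proposition \ref{ProboundednessGeneral} at order $N-1$, giving $\langle t\rangle^{-4/3}\E^n_{N-1}[g-\mathbf{g}]$. The source term is bounded by Corollary \ref{CorforenergypertGeneral}, giving $\omega_t\langle t\rangle^{-4/3}$; the weights there are $(7,7)$, but the same argument works for general $n$ since $\E^n_N[\mathbf{g}]\le\mathbf{\Lambda}$.

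Applying Proposition \ref{Proenergy} on $[t,+\infty]$ and using $(g-\mathbf{g})(\tau)\to f_\infty-\mathbf{f}_\infty$ in $\E^n_{N-1}$ (Corollary \ref{Corhighnorm}) gives
\[ \E^n_{N-1}[g-\mathbf{g}](t)\le \E^n_{N-1}[f_\infty-\mathbf{f}_\infty]+C\int_t^\infty \langle\tau\rangle^{-4/3}\Big(\E^n_{N-1}[g-\mathbf{g}](\tau)+\E_{N-1}[f_\infty-\mathbf{f}_\infty]\Big)\,\dr\tau. \]
Backward Grönwall then gives the claim with $\mathbf{C}_1\sim e^{C}$.

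\textbf{Main obstacle.} The hard step is the derivation of $\omega_t$, specifically the sharp convergence estimate for the difference field at order $N-1$. There, Proposition \ref{ProAppC} must be combined with the $\partial_t$-structure of Lemma \ref{LemComfcirc} applied to $f-\mathbf{f}$. The transport equation for $f-\mathbf{f}$ contains a source $\nabla\phi[f-\mathbf{f}]\cdot\nabla_v\mathbf{f}$, which breaks the vanishing identity of Lemma \ref{LemComfcirc}. This source must be absorbed using $\E_N[\mathbf{g}]\le\mathbf{\Lambda}$ and the decay $\langle t\rangle^{-2}$ of the field.
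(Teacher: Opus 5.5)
Your outline is the paper's proof. The paper also places $(\phi[f],\Phi_f)$ and $(\phi[\mathbf f],\Phi_{\mathbf f})$ in $\mathcal{W}^N$ via Corollary \ref{Rqclassteleo}. It places the difference pair in $\mathcal{W}^{N-1}(I,\omega)$ with $\omega_t\lesssim \E_{N-1}[g-\mathbf g](t)+\E_{N-1}[f_\infty-\mathbf f_\infty]$, using $\partial_t\Phi_{f-\mathbf f}=0$, \eqref{eq:Phibackward}, Proposition \ref{ProAppC} with Corollary \ref{Corellip}, and Corollary \ref{Corphiphiinfty} with \eqref{14589}. It then applies the energy inequality from $+\infty$ with Proposition \ref{ProboundednessGeneral} and Corollary \ref{CorforenergypertGeneral} (already stated for general $n$), and closes with backward Grönwall.

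\textbf{The ``main obstacle'' is misdiagnosed, and your remedy would fail.} The identity of Lemma \ref{LemComfcirc} holds for $f$ (with $\phi[f]$) and for $\mathbf f$ (with $\phi[\mathbf f]$) separately, and it is linear in $f_\circ$. It therefore holds exactly for $f_\circ-\mathbf f_\circ$.

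Equivalently, your extra source $\nabla_x\phi[f-\mathbf f]\cdot\nabla_v\mathbf f=\nabla_v\cdot\big(\mathbf f\,\nabla_x\phi[f-\mathbf f]\big)$ is a $v$-divergence. After the substitution $(t,y,v-\frac{y}{t})$, its field factor is evaluated at the fixed point $tv$, so its contribution vanishes identically. Thus $\partial_t\big(t^3\rho[(t\partial_x+\partial_v)^\gamma(f-\mathbf f)](t,tv)\big)$ reduces to the term $\int_y\frac{y}{t^2}\cdot[\nabla_v\partial_v^\gamma(f_\circ-\mathbf f_\circ)](t,y,v-\frac{y}{t})\,\dr y$ alone. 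This is exactly why the paper can simply invoke Corollary \ref{Corphiphiinfty}.

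Absorbing the source by size, as you propose, loses a full power of $t$. Since $\nabla_v\mathbf f(t,x,v)=[\nabla_v\mathbf f_\circ-t\nabla_x\mathbf f_\circ](t,x-tv,v)$, one has $t^3\int|\nabla_v\mathbf f|(t,tv,w)\,\dr w\sim t$. The source then contributes $O(\omega_t\langle t\rangle^{-1})$ instead of the required $\langle t\rangle^{-2}$ (up to logarithms). That bound is not integrable on $[t,+\infty[$. It gives no convergence of $t^2\nabla_x\phi[f-\mathbf f](t,tv)$, and hence no \eqref{eq:phiassump3}--\eqref{eq:phiassump4}.

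\textbf{Two minor points.}
\begin{enumerate}
\item The rate in Corollary \ref{Corphiphiinfty} comes from integrating $\partial_t$ over $[t,+\infty[$. So $\omega_t$ really involves $\sup_{\tau\geq t}\E_{N-1}[g-\mathbf g](\tau)$, not the value at time $t$. Run Grönwall on this nonincreasing majorant; this works unchanged, since the right-hand side of your integral inequality is nonincreasing in $t$.
\item The convergence $(g-\mathbf g)(\tau)\to f_\infty-\mathbf f_\infty$ in $\E^n_{N-1}$ should be taken from Corollary \ref{Corscatstrong}, not Corollary \ref{Corhighnorm}. The latter concerns the solution built in Proposition \ref{Prolocalwp}. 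Here $g$ and $\mathbf g$ are arbitrary bounded solutions, and that is what makes the statement a uniqueness result.
\end{enumerate}
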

\begin{proof}
Assume, without loss of generality, that $\mathbf{\Lambda}_0 \leq \mathbf{\Lambda}$, and define $I \coloneqq [\min (T_g, T_{\mathbf{g}}),+\infty[$. In view of applying Proposition \ref{ProboundednessGeneral} and Corollary \ref{CorforenergypertGeneral}, let us prove that there exists a constant $C_2[N,p, \mathbf{\Lambda} ]$ such that
\[ \big(\phi[\mathbf{f}], \Phi_{\mathbf{f}} \big)  \in \mathcal{W}^N \big( I, C_2\mathbf{\Lambda} \big), \qquad \quad \big(\phi[f], \Phi_{f} \big)  \in \mathcal{W}^N \big( I, C_2\mathbf{\Lambda} \big), \quad \qquad \big(\phi[f-\mathbf{f}], \Phi_{f-\mathbf{f}} \big)  \in \mathcal{W}^{N-1} \big( I,\omega \big), \]
where $\omega_t = C_2 E_{N-1}[g-\mathbf{g}](t)+C_2\E_{N-1}[f_\infty - \mathbf{f}_\infty]$. For $\big(\phi[\mathbf{f}], \Phi_{\mathbf{f}} \big)$ and $\big(\phi[f], \Phi_{f} \big)$, this follows from Corollary \ref{Rqclassteleo}. For $(\phi[f-\mathbf{f}], \Phi_{f-\mathbf{f}})$, we proceed as follows. 
\begin{itemize}
\item We note that $\partial_t \Phi_{f-\mathbf{f}} =0$, and we obtain \eqref{EQU:HPhi} by \eqref{eq:Phibackward}. 
\item We obtain \eqref{eq:phiassump1}--\eqref{eq:phiassump2} from Proposition \ref{ProAppC} and the elliptic estimates of Corollary \ref{Corellip}.
\item We derive \eqref{eq:phiassump3}--\eqref{eq:phiassump4} by combining Corollary \ref{Corphiphiinfty} with \eqref{14589}--\eqref{eq:Phibackward}.
\end{itemize} 
 
  Recall from Definition \ref{Defgalpha} the notation $(g-\mathbf{g})_\alpha$, and apply the energy inequality of Proposition \ref{Proenergy} to $(g-\mathbf{g})_\alpha$, for any $|\alpha| \leq N-1$. Using Proposition \ref{ProboundednessGeneral} and Corollary \ref{CorforenergypertGeneral}, we obtain
\[ \E_{N-1}^n [g-\mathbf{g}](\tau)-\E_{N-1}^n \big[f_\infty - \mathbf{f}_\infty \big] \lesssim \int_{t=\tau}^{+\infty} \frac{1}{\langle t \rangle^{\frac{4}{3}}} \E_{N-1}^n [g-\mathbf{g}](t)+\frac{1}{\langle t \rangle^{\frac{4}{3}}}\E_{N-1} \big[f_\infty - \mathbf{f}_\infty \big] \dr t. \]
An application of the Grönwall inequality allows to conclude.
\end{proof}

By applying the previous Corollary \ref{Corunique} with $f_\infty=\mathbf{f}_\infty$, we obtain uniqueness for the asymptotic Cauchy problem associated with the Vlasov-Poisson system in $L^\infty ([T,+\infty[ , \mathcal{B})$, for any sufficiently large $T \geq 0$. This concludes the construction of the modified wave operator. Next, we focus on the regularity properties of $\mathscr{W}_+$.

\begin{Pro}\label{ProglobalLarge}
 Assume that $\mathbf{f}$ is a global solution to \eqref{VP} satisfying the hypotheses \eqref{EQU:assumpPhibff}. Let $\mathbf{\Lambda}_0 , \, \varepsilon \geq 0$, and let $f_\infty \in \mathcal{B}$ satisfy $\E_N[f_\infty] = \mathbf{\Lambda}_0$ and $\E_{N-1}[f_\infty - \mathbf{f}_\infty]= \varepsilon$. There exists $\varepsilon_0[N,p,\mathbf{\Lambda},\mathbf{\Lambda}_0]>0$ such that, if $\varepsilon \leq \varepsilon_0$, then the following hold:
 \begin{itemize}
 \item The unique solution $f$ to \eqref{VP} with scattering data $f_\infty$ is global in time,
 \item If, in addition, $\E_N^n[f_\infty]<+\infty$, then there exists $C[N,n,p,\mathbf{\Lambda},\mathbf{\Lambda}_0]>0$ such that $\sup_{t \geq 0}\E_N^n[g](t) \leq C \E_N^n[f_\infty]$.
 \end{itemize}
 \end{Pro}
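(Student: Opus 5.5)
The plan is to first construct $f$ on a neighbourhood of infinity $[T_0,+\infty[$, where the backward machinery of Section \ref{SecTphi} already works, and then to propagate it down to $t=0$ by a continuity argument based on the perturbation estimates of Section \ref{Sec3}.

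\textbf{Step 1: solution near infinity.} By Proposition \ref{Prolocalwp} applied with data $f_\infty$ (size $\mathbf{\Lambda}_0$), there is $T_0[N,p,\mathbf{\Lambda}_0]$ such that the solution $g$ exists on $[T_0,+\infty[$. It satisfies $\E_N[g]\leq 2\mathbf{\Lambda}_0$ and $(\phi[f],\Phi_f)\in\mathcal{W}^N([T_0,\infty[,C\mathbf{\Lambda}_0)$. We then apply Corollary \ref{Corunique} on $[T_0,+\infty[$, comparing with $\mathbf{g}$, which exists globally and satisfies \eqref{EQU:assumpPhibff}. This gives
\[
\E_{N-1}[g-\mathbf{g}](t)\leq \mathbf{C}_1\varepsilon \qquad \text{for all } t\geq T_0.
\]
Both coordinate systems here are teleological, with $\Phi_f=\nabla_v\phi_\infty[f_\infty]$. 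Hence $z_f-z_{\mathbf{f}}=\lambda\Phi_{f-\mathbf{f}}\log\langle t\rangle$ is controlled by $\varepsilon\log\langle T_0\rangle$ through \eqref{eq:Phibackward}. Combined with Lemma \ref{CorderivgtofDIFF}, this shows that
\[
f(T_0)\ \text{is}\ O_{T_0}(\varepsilon)\text{-close to}\ \mathbf{f}(T_0)\ \text{in the norm}\ \E_{N-1}[\,\cdot_\circ\,].
\]

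\textbf{Step 2: the compact interval $[0,T_0]$.} We solve \eqref{VP} backward from $T_0$, and work in the coordinates $f_\circ$, where the force field needs no special coordinate system on a bounded interval. We use Cauchy stability in $\E_{N-1}$ for the difference $f_\circ-\mathbf{f}_\circ$, together with a bootstrap. The top-order bound $\E_N[f_\circ]$ is allowed to grow, but stays bounded by $C[T_0,\mathbf{\Lambda},\mathbf{\Lambda}_0]$. The bootstrap assumption is that $\E_{N-1}[f_\circ-\mathbf{f}_\circ](t)\leq 1$ on $[\tau,T_0]$.
\begin{itemize}
\item Elliptic estimates (Proposition \ref{Proforcefield} with $\Phi=0$) control $\phi[f]$ by $\E_N[f_\circ]\lesssim \mathbf{\Lambda}+\E_N[f_\circ-\mathbf{f}_\circ]$.
\item The commuted equation, handled as in Proposition \ref{ProboundednessGeneral}, gives a linear Grönwall inequality for $\E_{N-1}[f_\circ-\mathbf{f}_\circ]$, with a constant depending on $T_0$. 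This yields $\E_{N-1}[f_\circ-\mathbf{f}_\circ](t)\leq e^{C T_0}C_{T_0}\varepsilon$.
\item The top order needs $\nabla_x\partial_x^\gamma\phi$ only up to $|\gamma|\leq N$, see Remark \ref{Rqindexp}. This is the one-derivative-loss-free structure in $f_\circ$ coordinates, and it gives $\E_N[f_\circ](t)\leq C[T_0]$ by Grönwall together with an energy estimate.
\end{itemize}
Choosing $\varepsilon_0$ small closes the bootstrap, so $f$ exists on $[0,T_0]$ and hence globally.

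\textbf{Step 3: the weighted bound.} Suppose in addition $\E_N^n[f_\infty]<\infty$. On $[T_0,\infty[$, Corollary \ref{Corhighnorm} gives $\E_N^n[g]\leq B\,\E_N^n[f_\infty]$. On $[0,T_0]$, we propagate $\E_N^n[f_\circ]$ linearly by Grönwall. The coefficients there depend only on the already bounded quantities $\E_N[f_\circ]$ and $\phi[f]$, so the estimate is linear in the data and produces a constant $C[T_0]$. Converting between $f_\circ$ and $g$ with Lemma \ref{Corderivgtof} and Remark \ref{RqCorderivgtof} costs only $\log^{C}\langle T_0\rangle$ factors. Since $T_0$ depends only on $(N,p,\mathbf{\Lambda}_0)$, the resulting constant has the claimed dependence $C[N,n,p,\mathbf{\Lambda},\mathbf{\Lambda}_0]$.

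\textbf{Main obstacle.} I expect the difficulty to lie in Step 2, at top order. Derivatives of the difference at order $N-1$ couple to order-$N$ derivatives of $\mathbf{f}$, which are bounded by hypothesis. Order-$N$ derivatives of $f$, however, must be controlled without any smallness. I would handle this by treating $\E_N[f_\circ]$ as a large but $T_0$-dependent quantity: a linear energy estimate with coefficients bounded using only $\E_{N-1}$-level control of $\rho[f]$ and elliptic regularity, applied on the finite interval $[0,T_0]$.
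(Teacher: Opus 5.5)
Your argument is correct, but it is organised differently from the paper's. The paper does not split $[0,+\infty[$ at $T_0$. It runs a single backward bootstrap \eqref{BA:inf} on the teleological quantity $\E_N[g](t)\le C_{\mathrm{boot}}$ for $t\ge T$, and applies Corollary \ref{Corunique} on the whole bootstrap interval $[T,+\infty[$. The Gr\"onwall weight there is $\langle t\rangle^{-4/3}$, which is integrable all the way down to $t=0$. This gives $\E_{N-1}[g-\mathbf{g}]\le \mathbf{C}_1[C_{\mathrm{boot}}]\varepsilon$ with no dependence on the length of the interval, and hence $(\phi[f],\Phi_f)\in\mathcal{W}^{N-1}([T,+\infty[,2\mathbf{\Lambda})$. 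That is exactly the hypothesis of Proposition \ref{Proappendix1}. Run backward from $T_0$, that proposition bounds $\E_N[f_\circ]$ on $[T,T_0]$ with only a polylogarithmic loss in $\langle T_0+1\rangle$, which closes the bootstrap. The weighted bound is then obtained in one stroke on $\R_+$ from $(\phi[f],\Phi_f)\in\mathcal{W}^N(\R_+,C)$, Proposition \ref{ProboundednessGeneral} and Proposition \ref{Proenergy}.

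Your route instead treats $[0,T_0]$ by Cauchy stability in the linear coordinates $f_\circ$, transferring the $O_{T_0}(\varepsilon)$ closeness at $t=T_0$. This is the scheme the paper itself uses for the forward problem in Section \ref{SubsecForward}. It is legitimate here because $T_0$ depends only on $(N,p,\mathbf{\Lambda}_0)$, so the $e^{CT_0}$ losses are absorbed into $\varepsilon_0[N,p,\mathbf{\Lambda},\mathbf{\Lambda}_0]$. The price is a separate stability estimate for $f_\circ-\mathbf{f}_\circ$ plus the change of coordinates at $T_0$, and an $\varepsilon_0$ that is exponentially rather than polylogarithmically small in $T_0$. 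The key structural input is the same in both versions: the top-order estimate for $f_\circ$ only involves $\nabla_x\partial_x^\gamma\phi[f]$ with $|\gamma|\le N$. Elliptic regularity controls these by $\E_{N-1}$-level quantities, which makes the estimate linear in $\E_N[f_\circ]$.

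One point to clean up. Your first bullet in Step 2 says $\phi[f]$ is controlled by $\E_N[f_\circ]$. If that bound were fed into the top-order energy estimate, the inequality would be quadratic in the large quantity $\E_N[f_\circ]$ and could not be closed over all of $[0,T_0]$. The correct version is the one in your last paragraph, where the coefficients are bounded through $\E_{N-1}[f_\circ]\le \E_{N-1}[\mathbf{f}_\circ]+1$ under the bootstrap.
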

 \begin{proof}
 Let $T \in \R_+ \cup \{+\infty \}$ be the minimal time such that, for all $t \geq T$,
 \begin{align}\label{BA:inf}
   \E_N[g](t) & \leq  C_{\mathrm{boot}}, \tag{BA2} 
 \end{align}
 for some constant $C_{\mathrm{boot}}[N,p,\mathbf{\Lambda},\mathbf{\Lambda}_0] \geq \mathbf{\Lambda}$ which will be fixed sufficiently large later. According to Proposition \ref{Prolocalwp}, and since uniqueness holds, we have $T \in \R_+$. More precisely, there exists $T_0[N,p,\mathbf{\Lambda}_0] \geq 0$ such that
 \begin{equation}\label{eq:fotBA2}
 \forall \, t \geq T_0, \qquad \qquad \E_N[g](t) \leq 2 \mathbf{\Lambda}_0.
 \end{equation}
  Let us show that, if $\varepsilon$ is small enough, then we can improve the bootstrap assumption \eqref{BA:inf}. This will imply that $T=0$. Note first that Corollary \ref{Corunique} and \eqref{BA:inf} provide 
 \begin{equation*}
\forall \, t \geq T, \qquad \qquad \E_{N-1}[g-\mathbf{g}](t) \leq \mathbf{C}_1 [N,p,C_{\mathrm{boot}}] \varepsilon . 
 \end{equation*}
Moreover, the proof of Corollary \ref{Corunique} also provides $\big(\phi[f-\mathbf{f}], \Phi_{f-\mathbf{f}} \big)  \in \mathcal{W}^{N-1} \big( [T,+\infty[, \mathbf{C}_1 \varepsilon \big)$, possibly after redefining $\mathbf{C}_1[N,p,C_{\mathrm{boot}}]$ as a larger constant. As a consequence, and since \eqref{EQU:assumpPhibff} holds, if $\varepsilon$ is small enough, we have
 \begin{equation*}
 \big(\phi[f], \Phi_{f} \big)  \in \mathcal{W}^{N-1} \big( [T,+\infty[, 2 \mathbf{\Lambda} \big) .
 \end{equation*}
In particular, using \eqref{eq:phiassump1}--\eqref{eq:phiassump2}, we observe that the assumptions of Proposition \ref{Proappendix1} are satisfied. We then obtain
\[ \forall \, T \leq t \leq T_0 , \qquad \qquad \E_N[f_\circ](t) \leq \mathbf{C}[N,p,\mathbf{\Lambda}] \cdot  \E_N[f_\circ](T_0) \log^{7+(15+2N)N} \langle T_0+1 \rangle, \]
where $f_\circ (t,x,v)= f(t,x+tv,v)$. Using both Lemma \ref{Corderivgtof} and Remark \ref{RqCorderivgtof}, together with $\mathcal{E}[\Phi_f] \lesssim \E_N[f_\infty]$, we get
\[ \forall \, T \leq t \leq T_0 , \qquad \qquad \E_N[g](t) \leq \widetilde{C}[N,p,\mathbf{\Lambda}, \mathbf{\Lambda}_0]\cdot \E_N[g](T_0) \log^{21+(19+2N)N} \langle T_0+1 \rangle. \]
In view of \eqref{eq:fotBA2}, it only remains to choose $C_{\mathrm{boot}}$ larger than both $3 \mathbf{\Lambda}_0$ and twice the right-hand side of the previous inequality. For the second part, the proof of Corollary \ref{Corunique} gives $(\phi[f], \Phi_{f} )  \in \mathcal{W}^{N} \big( R_+, C_2[N,p,C_{\mathrm{boot}}]C_{\mathrm{boot}} \big)$. Proposition \ref{ProboundednessGeneral}, together with the energy estimate of Proposition \ref{Proenergy}, yields boundedness of $\E_N^n[g]$.
\end{proof}
 
As a consequence, $\mathscr{W}_+$ is defined on an open subset $\mathcal{O}_+^{N,n} \subset \mathcal{B}_N^n$. Furthermore, since \eqref{BA:inf} holds globally in time, it follows from Corollary \ref{Corunique} that $\mathscr{W}_+$ is locally Lipschitz with respect to $ \E_{N-1}^n [\cdot]$.

\subsection{The forward problem}\label{SubsecForward}

In this section, we associate to a function $\mathfrak{f} \colon [0,T[ \times \R^3_x \times \R^3_v \to \R$ the coefficient $\Phi_\mathfrak{f}$, the coordinate $z_\mathfrak{f}$, and the function $\mathfrak{g} \colon [0,T[ \times \R^3_x \times \R^3_v \to \R$ given by
\[ \Phi_\mathfrak{f}(t,v) \coloneqq t^2 \nabla_x \phi[ \mathfrak{f}](t,tv), \qquad \quad z_\mathfrak{f} (t,x,v) \coloneqq  x+tv+\lambda \Phi_{\mathfrak{f}}(t,v) \log \langle t \rangle, \qquad \quad \mathfrak{g}(t,x,v)=\mathfrak{f} \big( t,z_{\mathfrak{f}}(t,x,v),v \big) . \]

\begin{Pro}\label{ProforwardLarge}
Let $\mathbf{f}$ be a global solution to \eqref{VP} satisfying the assumptions \eqref{EQU:assumpPhibff}, namely $\sup_{t \geq 0} \E_N^n [\mathbf{g}](t) \leq \mathbf{\Lambda}$ and $\big( \phi [\mathbf{f} ], \Phi_{\mathbf{f}} \big) \in \mathcal{W}^N \big( \R_+, \mathbf{\Lambda} \big)$, for some $\mathbf{\Lambda} \geq 0$. Let $f$ be another solution to \eqref{VP} such that
\[ \E_N [f](0) \leq 5  \mathbf{\Lambda}, \qquad \qquad \E_{N-1} [f-\mathbf{f} ](0)= \varepsilon, \qquad \varepsilon \geq 0  . \]
There exists $\varepsilon_0 [N,p, \mathbf{\Lambda} ]>0$ such that, if $\varepsilon \leq \varepsilon_0$, then $f$ is global in time. Moreover, if $\E_N^n[f](0) <+\infty$, then there exist constants $\mathbf{C}_{\mathrm{top}}^n [N,n,p,\mathbf{\Lambda}]>0$ and $\mathbf{C}_{\mathrm{Lip}}^n[N,n,p,\mathbf{\Lambda}] >0$, such that
\begin{equation*} \forall \, t \geq 0, \qquad \qquad \E_N^n[g](t) \leq \mathbf{C}^n_{\mathrm{top}}\E_N^n [f](0), \qquad \E_{N-1}^n[g-\mathbf{g}](t) \leq \mathbf{C}_{\mathrm{Lip}}^n\E_{N-1}^n [f - \mathbf{f}](0) . 
\end{equation*}
\end{Pro}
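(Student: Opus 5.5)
The argument follows the strategy of Section \ref{Subsecstrat}: Cauchy stability on a compact interval $[0,T_0]$, followed by a bootstrap on $[T_0,T[$ in the dynamical coordinates. The bootstrap runs in the $(N-1)$-norm of $g-\mathbf{g}$, and the top order is recovered afterwards. I first fix $T_0[N,p,\mathbf{\Lambda}]\geq 1$ large, to be chosen below.

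\emph{Step 1 (Cauchy stability on $[0,T_0]$).} Since $\mathbf{f}$ is global with $\E_N[\mathbf{g}]\leq \mathbf{\Lambda}$, Remark \ref{RqCorderivgtof} bounds $\E_N[\mathbf{f}_\circ]$ on $[0,T_0]$ by a constant depending on $T_0$ and $\mathbf{\Lambda}$. A standard local well-posedness and stability argument for \eqref{VP} in the coordinates $(t,x+tv,v)$ then applies: energy estimates for $f_\circ-\mathbf{f}_\circ$ using the commuted equation \eqref{eq:fcircintro}, elliptic estimates, and Grönwall on a bounded interval. If $\varepsilon\leq\varepsilon_0(T_0)$, this shows that $f$ exists on $[0,T_0]$ and that
\[ \E_{N-1}[f_\circ-\mathbf{f}_\circ](t)\leq C_{T_0}\varepsilon, \qquad \E_N[f_\circ](t)\leq C_{T_0}\mathbf{\Lambda}, \qquad t\leq T_0. \]
The top-order estimate does not need smallness on a compact interval, since it is a linear estimate once lower-order norms are bounded (Proposition \ref{Proappendix1}). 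Next I pass to $g-\mathbf{g}$ and to $\Phi_{f-\mathbf{f}}(t,v)=t^2\nabla_x\phi[f-\mathbf{f}](t,tv)$. Lemma \ref{Corderivgtof} together with Lemma \ref{CorderivgtofDIFF}, and Corollary \ref{Corellip} together with Proposition \ref{ProAppC}, give $\E_{N-1}[g-\mathbf{g}](T_0)+\mathcal{E}_{N-1}[\Phi_{f-\mathbf{f}}](T_0)\leq C'_{T_0}\varepsilon$.

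\emph{Step 2 (bootstrap for $t\geq T_0$).} Let $T$ be maximal such that on $[T_0,T[$ we have $\E_{N-1}[g-\mathbf{g}]\leq \sqrt{\varepsilon}$ and $(\phi[f],\Phi_f)\in\mathcal{W}^{N-1}(\cdot,2\mathbf{\Lambda})$. The crux is to show that $(\phi[f-\mathbf{f}],\Phi_{f-\mathbf{f}})\in\mathcal{W}^{N-1}([T_0,T[,\omega)$ with $\omega_t\lesssim \E_{N-1}[g-\mathbf{g}](t)$. This requires breaking the circularity
\[ \mathcal{E}_{N-1}[\Phi_{f-\mathbf{f}}]\lesssim \E_{N-1}[g-\mathbf{g}]+\langle t\rangle^{-1/2}\mathcal{E}_{N-1}[\Phi_{f-\mathbf{f}}], \]
which comes from Proposition \ref{ProAppC} combined with the elliptic bound of Corollary \ref{Corellip} and the change of variables $x=tv$ as in \eqref{toremember}. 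Since $t\geq T_0$, choosing $T_0$ large relative to the implicit constant, which depends only on $\mathbf{\Lambda}$ and $N,p$, lets me absorb the last term. The sharp bounds \eqref{eq:phiassump3}--\eqref{eq:phiassump4} for $\Phi_{f-\mathbf{f}}$ follow as in Lemmas \ref{LemdtPhi}--\ref{Lemdpphismall} applied linearly to $f-\mathbf{f}$. In that argument, $\partial_t\Phi_{f-\mathbf{f}}$ is expressed through $\nabla_v(f_\circ-\mathbf{f}_\circ)$, which is controlled by $g-\mathbf{g}$ and $\Phi_{f-\mathbf{f}}$ via Lemma \ref{CorderivgtofDIFF}.

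With this in hand, I apply Proposition \ref{Proenergy} to $(g-\mathbf{g})_\alpha$ for $|\alpha|\leq N-1$. The commutator is bounded by Proposition \ref{ProboundednessGeneral} with $\Lambda_t=2\mathbf{\Lambda}$, and the source term $\overline{\T}_{\phi[f]}(g-\mathbf{g})$ by Corollary \ref{CorforenergypertGeneral}, both with integrable weight $\langle t\rangle^{-4/3}$. Grönwall then yields $\E_{N-1}[g-\mathbf{g}](t)\leq C\,C'_{T_0}\varepsilon$, which improves the first bootstrap bound for $\varepsilon$ small. Adding this to \eqref{EQU:assumpPhibff} improves the second bound, so $T=+\infty$ and $f$ is global. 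Redoing the estimate with weights $n$ gives the Lipschitz bound by $\E^n_{N-1}[f-\mathbf{f}](0)$, because $\varepsilon$ enters the argument only linearly.

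\emph{Step 3 (top order).} With $\E_{N-1}[g]$ now uniformly bounded, elliptic estimates control $\nabla_x\partial_x^\gamma\phi[f]$ for $|\gamma|\leq N$. The energy estimate of Proposition \ref{Proappendix1} in the $f_\circ$ coordinates then gives $\E_N[f_\circ](t)\lesssim\log^{c}\langle t+1\rangle$. From this I estimate $\mathcal{E}_N[\Phi_f]$ and the sharp bounds through Corollary \ref{Rqclass}, first with logarithmic loss, so that $(\phi[f],\Phi_f)\in\mathcal{W}^N(\R_+,\Lambda_t)$ with $\Lambda_t\lesssim\log^a\langle t+1\rangle$. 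Proposition \ref{ProboundednessGeneral} with Proposition \ref{Proenergy} and Grönwall (since $\log^a\langle t\rangle\langle t\rangle^{-4/3}$ is integrable) then gives $\E_N^n[g]$ uniformly bounded by $\mathbf{C}_{\mathrm{top}}^n\E_N^n[f](0)$, using $\E_N^n[g](0)=\E_N^n[f](0)$. Feeding this back into Corollary \ref{Rqclass} removes the logarithm.

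\emph{Main obstacle.} I expect the main obstacle to be Step 2, specifically closing the $\mathcal{W}^{N-1}$ bounds for $\Phi_{f-\mathbf{f}}$ without circularity. The constants there are large, depending on $\mathbf{\Lambda}$, and the only available small parameter is $T_0^{-1/2}$; it is essential that $T_0$ depends on $\mathbf{\Lambda}$ only, and that $\varepsilon_0$ is chosen after $T_0$.
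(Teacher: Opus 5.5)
Your overall architecture matches the paper's: Cauchy stability on $[0,T_0]$, absorption of the $\langle t\rangle^{-1/2}\mathcal{E}_{N-1}[\Phi_{f-\mathbf{f}}]$ term for $t\geq T_0$, energy estimates for $(g-\mathbf{g})_\alpha$, and top order via Proposition \ref{Proappendix1}. Your way of closing the order-$(N-1)$ estimate, by inserting $\omega_t\lesssim\E_{N-1}[g-\mathbf{g}](t)$ directly into a linear Grönwall, is a valid and somewhat leaner alternative to the paper's bootstrap on $(\phi[f-\mathbf{f}],\Phi_{f-\mathbf{f}})\in\mathcal{W}^{N-1}([0,T[,C_{\mathrm{boot}}\varepsilon)$ with its $T_0^{-1/6}$ gain.

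The gap lies in the order of your Steps 2 and 3, and it matters in the borderline case $N=2p_\infty$, which the statement includes. In Step 2 you bound the commutator $[\overline{\T}_{\phi[f]},\langle x\rangle^{\cdots}\langle v\rangle^{\cdots}\partial_x^{\alpha_x}\partial_v^{\alpha_v}](g-\mathbf{g})$, $|\alpha|\leq N-1$, by Proposition \ref{ProboundednessGeneral} with $\Lambda_t=2\mathbf{\Lambda}$. That means you use only $(\phi[f],\Phi_f)\in\mathcal{W}^{N-1}$ and $\E_{N-1}[g-\mathbf{g}]$. Run at order $M$, that proposition needs $M\geq 2p_\infty$, because its proof puts one factor of each product pointwise (or in $L^\infty_vL^p_x$) and the other in $L^p$.

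Take $M=N-1=2p_\infty-1$ and the term $\log\langle t\rangle\nabla_v\Phi_f\cdot\nabla_x\phi[f](t,z_f)\cdot\nabla_x$ of $\overline{\T}_{\phi[f]}$. Letting $\partial_v^\xi$ with $|\xi|=p_\infty$ fall on $\nabla_v\Phi_f$ produces $\log\langle t\rangle\,\nabla_v\partial_v^\xi\Phi_f\cdot\nabla_x\phi[f](t,z_f)\cdot\nabla_x\partial_{x,v}^\beta(g-\mathbf{g})$ with $|\beta|=p_\infty-1$. Putting the $\Phi_f$ factor in $L^\infty_v$ requires $\mathcal{E}_{2p_\infty}[\Phi_f]$. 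Putting the $g-\mathbf{g}$ factor in $L^\infty_vL^p_x$ requires $\E_{2p_\infty}[g-\mathbf{g}]$. Neither is available under your bootstrap. For $p>3$ and $N=2$ the term is $\nabla_v^2\Phi_f\cdot\nabla_x(g-\mathbf{g})$, with $\nabla_v^2\Phi_f$ only in $L^p_v$ and $\nabla_x(g-\mathbf{g})$ only in $L^p_{x,v}$, so it cannot be bounded in $L^p_{x,v}$.

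The missing step is the paper's Lemma \ref{Lemtoporder}: carry out your Step 3 \emph{inside} the bootstrap, before the order-$(N-1)$ energy estimate. Your assumption $(\phi[f],\Phi_f)\in\mathcal{W}^{N-1}(2\mathbf{\Lambda})$, together with Step 1 on $[0,T_0]$, already gives the hypotheses of Proposition \ref{Proappendix1} on $[0,T[$. This yields $\sup_{[0,T[}\E_N[g]\leq\mathbf{C}_{\mathrm{top}}$ and $(\phi[f],\Phi_f)\in\mathcal{W}^N([0,T[,\mathbf{C}_{\mathrm{top}})$. With that, Proposition \ref{ProboundednessGeneral} controls the commutator, with a constant depending on $\mathbf{C}_{\mathrm{top}}$. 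As the paper remarks, \eqref{eq:justrq} alone suffices only when $N-1\geq 2p_\infty$.

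There is also a smaller issue in Step 1. Bounding $\mathcal{E}_{N-1}[\Phi_{f-\mathbf{f}}](T_0)$ through Proposition \ref{ProAppC} is circular there. That proposition bounds it by $\E_{N-1}[g-\mathbf{g}]$, which you obtain via Lemma \ref{CorderivgtofDIFF} at the cost of $\log^{2N+6}\langle T_0\rangle\,\mathcal{E}_{N-1}[\Phi_{f-\mathbf{f}}]$, and this cannot be absorbed. Instead, bound $\mathcal{E}_{N-1}[\Phi_{f-\mathbf{f}}]$ on $[0,T_0]$ directly from $\E_{N-1}[f_\circ-\mathbf{f}_\circ]$ via Corollary \ref{Rqclass} with $\Phi=0$, as the paper does, and only then deduce $\E_{N-1}[g-\mathbf{g}]$.
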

\begin{Rq}
The analysis carried out in Section \ref{Subsecmodscat} below will show that $\mathbf{f}(0,\cdot , \cdot) \in \mathscr{W}_+ \big( \mathcal{O}_+^{N,n} \big)$. Conversely, if there exists $\mathbf{f}_\infty \in \mathcal{B}_N^n$ such that $\mathbf{f} (0,\cdot , \cdot) = \mathscr{W}_+ ( \mathbf{f}_\infty )$, then $\mathbf{f}$ satisfies \eqref{EQU:assumpPhibff} for some $\mathbf{\Lambda} \geq 0$. Indeed, we have
\[\mathcal{E}_N \big[\nabla_v \phi_\infty [\mathbf{f}_\infty] \big] <+\infty , \qquad \qquad  \sup_{t \geq 0} \E_N \big[ \mathbf{G} \big](t) <+ \infty , \quad  \mathbf{G} (t,x,v) \coloneqq \mathbf{f} \big( t,x+tv+\lambda \nabla_v \phi_\infty [ \mathbf{f}_\infty ] \log \langle t \rangle , v \big). \]
Corollary \ref{Rqclass} then implies that $\big( \phi [\mathbf{f} ], \Phi_{\mathbf{f}} \big) \in \mathcal{W}^N \big( \R_+, \mathbf{\Lambda} \big)$, for some $\mathbf{\Lambda} \geq 0$. The uniform bound on $\E_N[\mathbf{f}]$ then follows from Proposition \ref{ProboundednessGeneral}, together with Proposition \ref{Proenergy} and Grönwall's inequality.
\end{Rq}
In addition, several important estimates for $f$ will be derived throughout the proof of Proposition \ref{ProforwardLarge}. To this end, we introduce a time $T_0[N,p,\mathbf{\Lambda}]\geq 0$, which will be chosen sufficiently large below.

\subsubsection{Estimates for a compact domain of times} We start by studying the solution on the time interval $[0,T_0]$. Recall the notation $\mathfrak{f}_\circ (t,x,v)=\mathfrak{f}(t,x+tv,v)$. By Cauchy stability, there exist $\varepsilon_1 [N,p,\mathbf{\Lambda},T_0]>0$ and $C_{T_0}[N,p,\mathbf{\Lambda},T_0]\geq 1$ such that, if $\varepsilon \leq \varepsilon_1$, then\footnote{One may show that $C_{T_0}$ grows only polynomially in $T_0$, rather than exponentially as predicted by a standard Grönwall argument.}
\begin{equation}\label{eq:trucmachin}
 \sup_{0 \leq t \leq T_0} \E_{N-1}[f_\circ - \mathbf{f}_{\circ}] (t) \leq C_{T_0} \varepsilon . 
 \end{equation}
We now need to estimate $g$ in order to prepare the analysis for $t \geq T_0$. Recall that $N-1 \geq 2p_\infty -1$, and apply Corollary \ref{Rqclass} with $\Phi=0$. It yields
\[ \big( \phi[f-\mathbf{f}], \Phi_{f-\mathbf{f}} \big) \in \mathcal{W}^{N-1} \big( [0,T_0], \mathbf{C}_{T_0} \varepsilon \big), \]
for some $\mathbf{C}_{T_0}[N,p,\mathbf{\Lambda},T_0]>0$. Let us finally show that, up to redefining $\mathbf{C}_{T_0}$ to be a larger constant,
\[ \forall \, t \in [0,T_0], \qquad \qquad  \E_{N-1}[g - \mathbf{g}] (t) \leq \mathbf{C}_{T_0} \varepsilon. \]
For this, we write
\[ \E_{N-1}[g-\mathbf{g}](t) \leq \E_{N-1}\big[(f_\circ-\mathbf{f}_\circ) (t,x+\lambda \Phi_{f}(t,v) \log \langle t \rangle , v) \big]+\E_{N-1} \big[\mathbf{g}(t,x+\lambda \Phi_{f-\mathbf{f}} (t,v)\log \langle t \rangle , v)-\mathbf{g}(t,x,v) \big]. \]
We handle the first term using Remark \ref{RqCorderivgtof}, applied with $\Phi=\Phi_f$, and $\mathcal{E}_{N-1}[\Phi_f](t) \leq \mathbf{C}_{T_0} \varepsilon + \mathbf{\Lambda}$. We deal with the second one by combining Lemma \ref{CorderivgtofDIFF} with the bound $\mathcal{E}_{N-1}[\Phi_{f-\mathbf{f}}](t) \leq \mathbf{C}_{T_0} \varepsilon$.

\subsubsection{Estimates for large times} Let $T \in [T_0,+\infty ]$ be the maximal time such that
\begin{align}
\big(\phi[f-\mathbf{f}], \Phi_{f-\mathbf{f}} \big) & \in \mathcal{W}^{N-1} \big( [0,T[,C_{\mathrm{boot}}\varepsilon \big) ,   \label{BA1} \tag{BA3}  
\end{align}
where $C_{\mathrm{boot}}[N,p,\mathbf{\Lambda},T_0] \geq \mathbf{C}_{T_0}$ is a constant which will be fixed below. 
\begin{Lem}\label{Lemtoporder}
If $\varepsilon$ is small enough, there exists $\mathbf{C}_{\mathrm{top}}[N,p,\mathbf{\Lambda}]>0$ such that 
\[ \sup_{0 \leq t <T} \E_N[g](t) \leq \mathbf{C}_{\mathrm{top}}, \qquad \qquad \big(\phi[f],\Phi_f \big) \in \mathcal{W}^N \big([0,T[,\mathbf{C}_{\mathrm{top}} \big) . \]
\end{Lem}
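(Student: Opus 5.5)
The plan is to go through the hierarchy sketched in Section \ref{Subsecstrat}. First close the order $N-1$ estimates using the smallness from \eqref{BA1}, then control top-order derivatives of $f_\circ$ with logarithmic growth, then convert this into a logarithmically growing $\mathcal{W}^N$ bound for $(\phi[f],\Phi_f)$, and finally remove the growth through an energy estimate on $g$.

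\textbf{Step 1 (order $N-1$).} By linearity of $\phi[\cdot]$ and $\psi\mapsto\Phi_\psi$, combine \eqref{BA1} with \eqref{EQU:assumpPhibff}. If $C_{\mathrm{boot}}\varepsilon\le\mathbf{\Lambda}$, this gives $(\phi[f],\Phi_f)\in\mathcal{W}^{N-1}([0,T[,2\mathbf{\Lambda})$, so \eqref{EQUA:Hfbff} holds with $\omega_t=C_{\mathrm{boot}}\varepsilon$. Next apply the energy inequality of Proposition \ref{Proenergy} to $(g-\mathbf{g})_\alpha$ for $|\alpha|\le N-1$ on $[T_0,t]$. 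The commutator is handled by Proposition \ref{ProboundednessGeneral} at order $N-1$, and the source by Corollary \ref{CorforenergypertGeneral}. Grönwall's inequality, together with the bound $\E_{N-1}[g-\mathbf{g}](T_0)\le\mathbf{C}_{T_0}\varepsilon$ obtained on the compact time interval, yields
\[ \E_{N-1}[g-\mathbf{g}](t)\lesssim \mathbf{C}_{T_0}\varepsilon+C_{\mathrm{boot}}\varepsilon \langle T_0\rangle^{-1/3}. \]
For $\varepsilon$ small, this implies $\E_{N-1}[g](t)\le 2\mathbf{\Lambda}$ on $[0,T[$.

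\textbf{Step 2 (top order for $f_\circ$).} From the $\E_{N-1}[g]$ bound, Proposition \ref{Prodecayintvbis} at order $N-1$ and the elliptic estimates of Corollary \ref{Corellip} control $\nabla_x\partial_x^\gamma\phi[f]$ for all $|\gamma|\le N$. The top case $|\gamma|=N$ is obtained from $\nabla_x^2\partial_x^{\gamma'}\phi$ with $|\gamma'|=N-1$, which costs only $N-1$ derivatives of $\rho$. This is precisely the observation of Remark \ref{Rqindexp}. These bounds are exactly \eqref{eq:phiassump1}--\eqref{eq:phiassump2} for $\phi[f]$, so Proposition \ref{Proappendix1} applies on $[0,T[$ and gives
\[ \E_N[f_\circ](t)\lesssim \E_N[f](0)\log^{K}\langle t+1\rangle\lesssim \mathbf{\Lambda}\log^K\langle t+1\rangle \]
for some $K[N]$.

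\textbf{Step 3 (force field with log growth, then removal of the growth).} Apply Corollary \ref{Rqclass} with $\Phi\equiv0$, i.e.\ $g=f_\circ$. This gives $(\phi[f],\Phi_f)\in\mathcal{W}^N([0,T[,\Lambda)$ with $\Lambda_t\le C\mathbf{\Lambda}\log^K\langle t+1\rangle$. In particular $\mathcal{E}_N[\Phi_f](t)\lesssim\log^K\langle t+1\rangle$, which is admissible in \eqref{EQ:AssumpPhi} with $a=K$. The framework of Section \ref{Secprep} therefore applies with $\Phi=\Phi_f$, and Proposition \ref{ProboundednessGeneral} gives the commutator bound $C_0\Lambda_t\langle t\rangle^{-4/3}\E_N[g]$, where $C_0$ depends polynomially on the constants by Remark \ref{Rqpoly}. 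Since $\log^K\langle t\rangle\,\langle t\rangle^{-4/3}\in L^1_t$, Proposition \ref{Proenergy} applied to $g_\alpha$ for $|\alpha|\le N$, together with Grönwall's inequality, gives
\[ \E_N[g](t)\le C[N,p,\mathbf{\Lambda}]\,\E_N[g](0)\lesssim\mathbf{\Lambda}. \]
Here $\E_N[g](0)=\E_N[f](0)$, because $\log\langle 0\rangle=0$. Applying Corollary \ref{Rqclass} once more, now with $\Phi=\Phi_f$ and the uniform bound on $\E_N[g]$, gives $(\phi[f],\Phi_f)\in\mathcal{W}^N([0,T[,\mathbf{C}_{\mathrm{top}})$ with a constant independent of $T$, $T_0$ and $C_{\mathrm{boot}}$.

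\textbf{Main obstacle.} The delicate point is Step 3: the constant must not depend on $C_{\mathrm{boot}}$, otherwise the bootstrap on \eqref{BA1} cannot be closed, and top-order derivatives of $\Phi_f$ have no small parameter. I would ensure this by letting every constant in Steps 2--3 depend only on $\mathbf{\Lambda}$, through $\E_N[f](0)\le5\mathbf{\Lambda}$ and the $2\mathbf{\Lambda}$ bound of Step 1. The smallness of $\varepsilon$ is used only to get $C_{\mathrm{boot}}\varepsilon\le\mathbf{\Lambda}$. One should also check that the exponent $K$ produced by Proposition \ref{Proappendix1} is finite and depends only on $N$, so that $\log^K$ is harmlessly integrable against $\langle t\rangle^{-4/3}$.
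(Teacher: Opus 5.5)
Your Steps 2 and 3 follow the paper's proof step for step. You get Proposition \ref{Proappendix1} with $N_0=N$, $a=0$; then Corollary \ref{Rqclass} with $\Phi=0$; then Propositions \ref{Proenergy} and \ref{ProboundednessGeneral} with Grönwall, using $\E_N[g](0)=\E_N[f](0)\le 5\mathbf{\Lambda}$; and finally Corollary \ref{Rqclass} with $\Phi=\Phi_f$. All constants depend only on $\mathbf{\Lambda}$.

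\textbf{The problem is the energy estimate in Step 1.}
\begin{itemize}
\item There you apply Proposition \ref{ProboundednessGeneral} at order $N-1$ while only knowing $(\phi[f],\Phi_f)\in\mathcal{W}^{N-1}$.
\item At order $M$, that proof closes each error term by putting all but one factor in an $L^\infty$-type norm via Sobolev embeddings. Each such factor costs $p_\infty$ extra derivatives, so the proof needs $M\ge 2p_\infty$.
\item At $M=N-1$ this fails in the admissible case $N=2p_\infty$. For example, take the term $\log\langle t\rangle\,\nabla_v\partial_v^{\xi}\Phi_f\cdot\nabla_x\phi[f](t,z)\cdot\nabla_x\partial_v^{\beta_v}g$ with $|\xi|=p_\infty$ and $|\beta_v|=p_\infty-1$. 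Neither the $\Phi_f$-factor nor $g_\beta$ can be put in $L^\infty_v$.
\item This is exactly why the paper proves the order-$(N-1)$ bound on $g-\mathbf{g}$ (Lemma \ref{LemENh}) only after the present lemma, and uses its $\mathcal{W}^N$ conclusion there (see the remark preceding Lemma \ref{LemENh}).
\end{itemize}
So for $N=2p_\infty$ your Step 1 presupposes what you are proving. When $N-1\ge 2p_\infty$ it is valid but redundant.

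\textbf{The fix is to drop the energy estimate.} The first sentence of Step 1 already gives $(\phi[f],\Phi_f)\in\mathcal{W}^{N-1}([0,T[,2\mathbf{\Lambda})$. That membership contains \eqref{eq:phiassump1} with $\kappa=0$ and \eqref{eq:phiassump2} for all $|\gamma|\le N-1$. These are precisely the two hypotheses of Proposition \ref{Proappendix1} with $N_0=N$. You need no bound on $\E_{N-1}[g]$, and no detour through Proposition \ref{Prodecayintvbis} and Corollary \ref{Corellip}. With this shortcut your argument coincides with the paper's.
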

\begin{proof}
In view of \eqref{EQU:assumpPhibff} and the bootstrap assumption \eqref{BA1}, one has, for $\varepsilon$ small enough,
\begin{equation}\label{eq:justrq}
 \big(\phi[f], \Phi_f \big) \in \mathcal{W}^{N-1} \big([0,T[,2\mathbf{\Lambda} \big). 
 \end{equation}
As a consequence, we control $f_\circ$ through Proposition \ref{Proappendix1} proved below. Applied with $N_0=N$ and $a=0$, it yields
\[\forall \, t \in [0,T[, \qquad  \E_N [f_\circ ](t) \lesssim  \log^{7+(15+2N) N} \langle t+1 \rangle . \]
Applying Corollary \ref{Rqclass} with $\Phi=0$ yields 
\[  \big(\phi[f], \Phi_f \big) \in \mathcal{W}^N \big([0,T[, C \E_N[f_\circ] \big) , \qquad C[N,p] >0 . \]
One then obtains boundedness for $\E_N[g]$ by combining the energy estimate of Proposition \ref{Proenergy}, applied to $g_\alpha$ for any $|\alpha| \leq N$, Proposition \ref{ProboundednessGeneral}, and the Grönwall inequality. The proof is completed by applying Corollary \ref{Rqclass} with
$\Phi=\Phi_f$ and $a=7+(15+2N)N$.
\end{proof}

We are now able to bound $\E_N[g-\mathbf{g}]$. Note that if $N-1 \geq 2p_\infty$, we can simply use \eqref{eq:justrq} instead of Lemma \ref{Lemtoporder}.

\begin{Lem}\label{LemENh}
There exists $T_1[N,p,\mathbf{\Lambda}] \geq 0$ such that, if $T_0 \geq T_1$, then
\[ \forall \, t \in [T_0,T[, \qquad \qquad \E_{N-1}[g-\mathbf{g}](t) \leq \big(2 \mathbf{C}_{T_0}+T_0^{-1/6} C_{\mathrm{boot}} \big) \varepsilon  . \]
\end{Lem}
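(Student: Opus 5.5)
We run an energy estimate for $g-\mathbf{g}$ on $[T_0,T[$, started from the data at $T_0$ given by the compact-in-time analysis, $\E_{N-1}[g-\mathbf{g}](T_0) \leq \mathbf{C}_{T_0}\varepsilon$. For $|\alpha| \leq N-1$ we apply the energy inequality of Proposition \ref{Proenergy} to $(g-\mathbf{g})_\alpha$ (Definition \ref{Defgalpha}, with $n=(7,7)$), and we split
\[ \overline{\T}_{\phi[f]}\big((g-\mathbf{g})_\alpha\big) = \big[\,\overline{\T}_{\phi[f]}, \langle x \rangle^{7+|\alpha_x|}\langle v \rangle^{7+N-|\alpha_x|}\partial_x^{\alpha_x}\partial_v^{\alpha_v}\big](g-\mathbf{g}) + \langle x \rangle^{7+|\alpha_x|}\langle v \rangle^{7+N-|\alpha_x|}\partial_x^{\alpha_x}\partial_v^{\alpha_v}\overline{\T}_{\phi[f]}(g-\mathbf{g}). \]

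For the first term we apply Proposition \ref{ProboundednessGeneral} at order $N-1$. Its hypothesis $(\phi[f],\Phi_f) \in \mathcal{W}^{N-1}([0,T[,2\mathbf{\Lambda})$ holds by \eqref{eq:justrq}, or we use Lemma \ref{Lemtoporder} if $N-1<2p_\infty$. This gives a bound $C[N,p,\mathbf{\Lambda}]\langle t\rangle^{-4/3}\E_{N-1}[g-\mathbf{g}](t)$. Since $\mathbf{\Lambda}$ only enters polynomially (Remark \ref{Rqpoly}), the constant is independent of $C_{\mathrm{boot}}$.

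For the second term we apply Corollary \ref{CorforenergypertGeneral}, with assumptions \eqref{EQU:assumpPhibff}--\eqref{EQUA:Hfbff} supplied by \eqref{BA1} with $\omega_t = C_{\mathrm{boot}}\varepsilon$. The crucial point is that $\mathfrak{C}$ depends only on $(N,p,\mathbf{\Lambda})$, not on $C_{\mathrm{boot}}$, because the bound is linear in $\omega$. We get
\[ \E_{N-1}[g-\mathbf{g}](t) \leq \mathbf{C}_{T_0}\varepsilon + C\int_{T_0}^t \frac{\E_{N-1}[g-\mathbf{g}](s)}{\langle s\rangle^{4/3}}\,\dr s + \mathfrak{C}\,C_{\mathrm{boot}}\varepsilon\int_{T_0}^{\infty}\frac{\dr s}{\langle s\rangle^{4/3}}. \]
The last integral is at most $3\mathfrak{C}T_0^{-1/3}C_{\mathrm{boot}}\varepsilon$. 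Grönwall then yields
\[ \E_{N-1}[g-\mathbf{g}](t) \leq e^{3CT_0^{-1/3}}\big(\mathbf{C}_{T_0}+3\mathfrak{C}T_0^{-1/3}C_{\mathrm{boot}}\big)\varepsilon. \]
Choosing $T_1$ so that $e^{3CT_1^{-1/3}}\leq 2$ and $6\mathfrak{C}T_1^{-1/3}\leq T_1^{-1/6}$ gives the claim for all $T_0\geq T_1$.

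The main obstacle is the bookkeeping of constants. We have to check that neither the constant in Proposition \ref{ProboundednessGeneral} nor $\mathfrak{C}$ picks up a dependence on $C_{\mathrm{boot}}\varepsilon$. Wherever $\omega_t$ appears quadratically, or inside $\langle\mathbf{\Lambda}\rangle$-type factors such as the bound $\|\Phi_{f-\mathbf{f}}\|_{L^\infty}\leq 3\mathbf{\Lambda}$ used for $\mathfrak{K}_4$, we need $C_{\mathrm{boot}}\varepsilon\leq \mathbf{\Lambda}$. We guarantee this by taking $\varepsilon$ small depending on $C_{\mathrm{boot}}$, which is admissible because $C_{\mathrm{boot}}$ is fixed before $\varepsilon_0$.

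We also need that $(g-\mathbf{g})_\alpha$ has finite norm, so that Proposition \ref{Proenergy} applies. This follows from the bounds on $\E_N[g]$ from Lemma \ref{Lemtoporder} and on $\E_N[\mathbf{g}]$ from \eqref{EQU:assumpPhibff}.
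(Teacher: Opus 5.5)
Your proposal is correct and follows essentially the same route as the paper. You apply the energy inequality of Proposition \ref{Proenergy} to $(g-\mathbf{g})_\alpha$ on $[T_0,T[$, control the commutator term by Proposition \ref{ProboundednessGeneral} and the source term by Corollary \ref{CorforenergypertGeneral}, and close with Grönwall; the paper takes both constants to depend on $\mathbf{C}_{\mathrm{top}}$ from Lemma \ref{Lemtoporder}, which, like your $\mathbf{\Lambda}$-dependence, is independent of $T_0$ and $C_{\mathrm{boot}}$, and your explicit choice of $T_1$ via $e^{3CT_1^{-1/3}}\leq 2$ and $6\mathfrak{C}T_1^{-1/3}\leq T_1^{-1/6}$ simply spells out the final step the paper leaves implicit.
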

\begin{proof}
We apply the energy estimate of Proposition \ref{Proenergy} to $(g-\mathbf{g})_\alpha$, for any $|\alpha| \leq N-1$. In view of the previous Lemma \ref{Lemtoporder}, \eqref{BA1}, and \eqref{EQU:assumpPhibff}, one can estimate $\overline{\T}_{\phi [f]}(g_\alpha-\mathbf{g}_\alpha)$ by applying Proposition \ref{ProboundednessGeneral} and Corollary \ref{CorforenergypertGeneral}. Assuming without loss of generality that $\mathbf{C}_{\mathrm{top}} \geq \mathbf{\Lambda}$, we obtain
\[ \E_{N-1} [g-\mathbf{g}](\tau) \leq \E_{N-1} [g-\mathbf{g}](T_0) + p \! \int_{T_0}^\tau  \frac{1}{\langle t \rangle^{\frac{4}{3}}}  \Big( C_0[N,p , \mathbf{C}_{\mathrm{top}} ] \E_{N-1}[g-\mathbf{g}](t)+ \mathfrak{C}[N,p, \mathbf{C}_{\mathrm{top}}]   C_{\mathrm{boot}}\varepsilon \Big) \dr t , \]
for all $T_0 \leq \tau <T$. As $\mathbf{C}_{\mathrm{top}}$ does not depend on $T_0$, the Grönwall inequality yields the result.
\end{proof}

Let us now show that, for all $t \geq 0$,
\begin{align}
  \mathcal{E}_{N-1} \big[ \Phi_{f-\mathbf{f}} \big] (t) & \lesssim  \big\| \langle t+|x| \rangle^{2}  \nabla_x  \phi [f-\mathbf{f}](t,x) \big\|_{L^\infty_x}+ \sum_{|\gamma| \leq N-1}  \langle t\rangle^{2- \frac{3}{p}}\big\| \langle t+|x| \rangle^{1+|\gamma|} \nabla_x^2 \partial_x^\gamma \phi [f-\mathbf{f}](t,x) \big\|_{L^p_x} \nonumber \\
  & \lesssim \E_{N-1} \big[ g- \mathbf{g} \big](t)+ \langle t \rangle^{-\frac{1}{2}} \cdot \mathcal{E}_{N-1} \big[ \Phi_{f-\mathbf{f}} \big](t). \label{pourjusteici}
\end{align}
For the first inequality, we use \eqref{toremember} and $\langle v \rangle^2 |\Phi_{f-\mathbf{f}} |(t,v) \leq (t+|tv|)^2 | \nabla_x \phi [f] |(t,tv)$. For the second one, we use the elliptic estimates of Corollary \ref{Corellip}, together with Proposition \ref{ProAppC}. We then deduce that there exists $T_2[N,p,\mathbf{\Lambda}] \geq 0$ such that, if $T_0 \geq T_2$, then
\begin{equation}\label{pourjusteici2}
 \forall \, T_0 \leq t < T, \qquad \qquad  \mathcal{E}_{N-1} \big[ \Phi_{f-\mathbf{f}} \big] (t) \lesssim \E_{N-1} \big[ g- \mathbf{g} \big](t). 
 \end{equation}
Now, we claim that there exists $C[N,p,\mathbf{\Lambda}]>0$, such that
\[ \big(\phi[f-\mathbf{f}], \Phi_{f-\mathbf{f}} \big)  \in \mathcal{W}^{N-1} \big( [T_0,T[,C_{\mathrm{new}}\varepsilon \big), \qquad \qquad C_{\mathrm{new}} \coloneqq C \big(2 \mathbf{C}_{T_0}+T_0^{-1/6} C_{\mathrm{boot}} \big)  . \]
For this, we follow the analysis carried out in the proof of Corollary \ref{Rqclass}. First, note that
\begin{align}
 \E_{N-1} \big[ g(t,x , v)-\mathbf{g} \big(t,x+\lambda \Phi_{f-\mathbf{f}}(t,v)\log \langle t \rangle , v \big) \big] & \lesssim \E_{N-1}[g-\mathbf{g}](t)+\log^{2N+6} \langle t+1 \rangle \mathcal{E}_{N-1} \big[ \phi_{f-\mathbf{f}}\big](t)  \\
 & \lesssim \big(2 \mathbf{C}_{T_0}+T_0^{-1/6} C_{\mathrm{boot}} \big) \varepsilon \log^{2N+6} \langle t+1 \rangle . \label{equa:last}
 \end{align}
The first inequality is exactly \eqref{14589}, while the second follows from Lemma \ref{LemENh} and \eqref{pourjusteici2}. To establish \eqref{eq:phiassump1}--\eqref{eq:phiassump2} and \eqref{EQU:HPhi}, we combine \eqref{pourjusteici}--\eqref{equa:last} with Lemma \ref{LemENh}. To control $\partial_t \Phi_{f-\mathbf{f}}$, we apply Lemma \ref{Lemdpphismall} together with \eqref{equa:last}. Finally, \eqref{eq:phiassump3}--\eqref{eq:phiassump4} follow from \eqref{eq:521479} and \eqref{eq:phiassump1}--\eqref{eq:phiassump2}.

To conclude the proof of global existence, it remains to improve \eqref{BA1}. We deal with the time interval $[0,T_0[$ by choosing $C_{\mathrm{boot}} \geq 2\mathbf{C}_{T_0}$. For $[T_0,T[$, we have $C_{\mathrm{new}} \leq 2C_{\mathrm{boot}}/3$, provided that $T_0$ and $C_{\mathrm{boot}}$ are chosen such that
\[T_0 \geq \max (T_1,T_2), \qquad \qquad CT_0^{-1/6}  \leq \frac{1}{3}, \qquad \qquad C_{\mathrm{boot}} \geq 6C \mathbf{C}_{T_0} .\]
Finally, let us note that if $\E_N^n [f](0) <+\infty$, higher moments can be propagated by combining the energy inequality of Proposition \ref{Proenergy} with Proposition \ref{ProboundednessGeneral} and Corollary \ref{CorforenergypertGeneral}.

\subsubsection{Modified scattering}\label{Subsecmodscat}

We first prove that $g$ converges.
\begin{Pro}\label{Proscatt0}
There exists two functions $f_\infty, \, \mathbf{f}_\infty : \R^3_x \times \R^3_v \to \R$ such that,
\[ \forall \, t \geq 0, \qquad \qquad \E_{N-1}^n \big[ g(t,\cdot,\cdot) - f_\infty \big]+\E_{N-1}^n \big[ \mathbf{g}(t,\cdot,\cdot) - \mathbf{f}_\infty \big] \lesssim \langle t \rangle^{-\frac{1}{3}} . \]
Moreover, $\E_N^n[\mathbf{f}_\infty] \leq \mathbf{\Lambda}$, $\E_N^n [f_\infty] \leq \mathbf{C}_{\mathrm{top}}^n\E_N^n [f](0)$, $\E_{N-1}^n[f_\infty - \mathbf{f}_\infty] \leq \mathbf{C}_{\mathrm{Lip}}^n\E_N^n [f - \mathbf{f}](0)$, and
\[ \lim_{t \to +\infty}\E_N^n \big[ g(t,\cdot,\cdot) - f_\infty \big]+\E_{N}^n \big[ \mathbf{g}(t,\cdot,\cdot) - \mathbf{f}_\infty \big] =0. \]
\end{Pro}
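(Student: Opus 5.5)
The plan is to apply the general transport-operator machinery of Section \ref{SecTphi} separately to $g$ and $\mathbf{g}$, and then to the difference $g-\mathbf{g}$. By Lemma \ref{Lemtoporder} together with the bootstrap conclusion of Proposition \ref{ProforwardLarge}, which now holds on $[0,+\infty[$, we have $(\phi[f],\Phi_f)\in\mathcal{W}^N(\R_+,\mathbf{C}_{\mathrm{top}})$ and $\sup_{t}\E_N^n[g](t)\leq \mathbf{C}^n_{\mathrm{top}}\E_N^n[f](0)$. Likewise, \eqref{EQU:assumpPhibff} gives $(\phi[\mathbf{f}],\Phi_{\mathbf{f}})\in\mathcal{W}^N(\R_+,\mathbf{\Lambda})$ and $\sup_t\E_N^n[\mathbf{g}]\leq\mathbf{\Lambda}$. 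Since $\overline{\T}_{\phi[f]}(g)=0$ and $\overline{\T}_{\phi[\mathbf{f}]}(\mathbf{g})=0$, with constant bounds $\Lambda_t$ (so $a=0$), Corollary \ref{Corscatstrong} applies directly on $I=[0,+\infty[$. It produces limits $f_\infty$ and $\mathbf{f}_\infty$ with $\E_N^n[g(t)-f_\infty]\to0$, $\E_N^n[\mathbf{g}(t)-\mathbf{f}_\infty]\to0$, and the $\langle t\rangle^{-1/3}$ rate in $\E_{N-1}^n$. The implicit constant depends on $\mathbf{C}_{\mathrm{top}}$, $\mathbf{\Lambda}$ and $\E_N^n[f](0)$.

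The bounds on the limits come from lower semicontinuity. Because $g(t)\to f_\infty$ in $\E_N^n$, we get $\E_N^n[f_\infty]=\lim_t\E_N^n[g](t)\leq\mathbf{C}^n_{\mathrm{top}}\E_N^n[f](0)$, and similarly $\E_N^n[\mathbf{f}_\infty]\leq\mathbf{\Lambda}$. Alternatively, Banach--Alaoglu applied to the weak limit gives the same bounds.

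For the Lipschitz bound, we use $\E_{N-1}^n[f_\infty-\mathbf{f}_\infty]\leq \E_{N-1}^n[g-\mathbf{g}](t)+\E_{N-1}^n[g(t)-f_\infty]+\E_{N-1}^n[\mathbf{g}(t)-\mathbf{f}_\infty]$. Letting $t\to\infty$, the last two terms vanish. The first is bounded, uniformly in $t$, by $\mathbf{C}^n_{\mathrm{Lip}}\E_{N-1}^n[f-\mathbf{f}](0)\leq \mathbf{C}^n_{\mathrm{Lip}}\E_N^n[f-\mathbf{f}](0)$ by Proposition \ref{ProforwardLarge}.

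The main obstacle is checking that the constant-in-time class membership really holds globally for $(\phi[f],\Phi_f)$ at order $N$. It is needed because Corollary \ref{Corscatstrong} is stated under $a=0$, whereas Lemma \ref{Lemtoporder} passed through logarithmically growing bounds on $f_\circ$. I would first invoke Corollary \ref{Rqclass} with $\Phi=\Phi_f$ together with the uniform bound on $\E_N[g]$ from Lemma \ref{Lemtoporder}. Its output $\mathcal{W}^N(\R_+,C\E_N[g])$ has constant $\Lambda_t$, which settles the obstacle. The weighted $n$-norm version requires only Proposition \ref{ProboundednessGeneral} with general $n$, as already noted at the end of the proof of Proposition \ref{ProforwardLarge}.
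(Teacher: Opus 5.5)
Your proposal is correct and follows the paper's argument. The limits and convergence statements come from Corollary \ref{Corscatstrong} applied to $g$ and $\mathbf{g}$, with constant-in-time class membership supplied by Lemma \ref{Lemtoporder} and \eqref{EQU:assumpPhibff}, while the bounds on $f_\infty$, $\mathbf{f}_\infty$ and $f_\infty-\mathbf{f}_\infty$ follow by letting $t\to+\infty$ in the uniform energy bounds of Proposition \ref{ProforwardLarge}; you simply spell out the norm-continuity and triangle-inequality steps that the paper leaves implicit.
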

\begin{proof}
The existence of $f_\infty$ and $\mathbf{f}_\infty$, as well as the corresponding convergence results, follows from Corollary \ref{Corscatstrong}. The bounds for $f_\infty$ and $\mathbf{f}_\infty$ follow from $\E_N^n[\mathbf{g}](t) \leq \mathbf{\Lambda}$ and the energy bounds in Proposition \ref{ProforwardLarge}. 
\end{proof}

To conclude this section, let us show that $G(t, \cdot , \cdot ) \to  f_\infty$ in $L^p_{x,v}$ as $t \to +\infty$, where
\[ G(t,x,v) \coloneqq f \big( t,x+tv+\lambda \nabla_v \phi_\infty [f_\infty](v) \log \langle t \rangle , v \big)= g \big(t,x+\lambda \big(\nabla_v \phi_\infty [f_\infty](v) - \Phi_f (t,v) \big) \log \langle t \rangle , v \big). \]
For this, we use the mean value theorem in $x$, $\sup_{t \geq 0}\E_N[g](t)<+\infty$, and $\|\nabla_v \phi_\infty [f_\infty] - \Phi_f (t,\cdot)\|_{L^\infty_v} \lesssim \langle t \rangle^{-1/2}$, which follows from Corollary \ref{Corphiphiinfty}. Finally, arguing exactly as for the small data solutions in Section \ref{Subsecsmall}, one can prove that the convergences in Proposition \ref{Proscatt0} remain valid with $g$ replaced by $G$.

\section{Continuity of the wave operator}\label{Sec5}

The goal of this section is to complete the proof of Theorem \ref{Th2}. Note first that by the time-reversibility of the Vlasov-Poisson system\footnote{The function $f(t,x,v)$ is a solution to \eqref{VP} if and only if the same holds for $f(-t,x,-v)$.}, it suffices to study $\mathscr{W}_+$. In view of the results in Section \ref{Subsecbackward}, as well as Propositions \ref{ProforwardLarge}, \ref{Proscatt0}, and \ref{Prospher}, it only remains to show that $\mathscr{W}_+ \colon \mathcal{O}_+^{N,n} \to \mathscr{W}_+ \big( \mathcal{O}_+^{N,n} \big)$ is bicontinous for the norm $\E_N^n [\cdot ]$.

\begin{Pro}\label{Pro1}
Let $N \geq 2p_\infty$ and $n =(n_x,n_v) \in \mathbb{Z}^2$ be such that $n_x, \, n_v \geq 7$. The inverse modified scattering operator $\mathscr{W}_+^{-1} \colon \mathscr{W}_+ \big( \mathcal{O}_+^{N,n} \big) \to \mathcal{O}_+^{N,n} $ is continuous with respect to the norm $\E_N^n [\cdot ]$.
\end{Pro}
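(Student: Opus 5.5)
We fix $\mathbf{f}_0 = \mathscr{W}_+(\mathbf{f}_\infty)$ with $\mathbf{f}_\infty \in \mathcal{O}_+^{N,n}$, and a sequence $f_0^k \to \mathbf{f}_0$ in $\E_N^n[\cdot]$. We must show $f_\infty^k \to \mathbf{f}_\infty$ in $\E_N^n[\cdot]$, where $f_\infty^k$ denotes the scattering state of the solution $f^k$ given by Propositions \ref{ProforwardLarge} and \ref{Proscatt0}.

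By Proposition \ref{ProforwardLarge}, applied with $\mathbf{f}$ as reference solution, for $k$ large we have uniform bounds $\sup_t \E_N^n[g^k](t) \leq \mathbf{C}_{\mathrm{top}}^n \E_N^n[f_0^k]$ and $\sup_t \E_{N-1}^n[g^k-\mathbf{g}](t) \lesssim \E_{N-1}^n[f^k_0-\mathbf{f}_0] \to 0$, in the dynamical coordinates $g^k(t,x,v)=f^k(t,z_{f^k},v)$. By Proposition \ref{Proscatt0}, $\E_{N-1}^n[f_\infty^k-\mathbf{f}_\infty]\to 0$ with a uniform $\langle t\rangle^{-1/3}$ rate. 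Only the top-order convergence remains. For this I would use an $\epsilon/3$ argument in time. Fix $\epsilon>0$. The quantity $\E_N^n[g^k(t)-f^k_\infty]$ is controlled, via the proof of Corollary \ref{Corscatstrong}, by the Duhamel tail $\int_t^\infty \|\overline{\T}_{\phi[f^k]}(g^k_\alpha)\|_{L^p}$, which is $\lesssim \langle t\rangle^{-1/3}$ uniformly in $k$ by Proposition \ref{ProboundednessGeneral}, together with the linear part $g^k_\alpha(T,\cdot)\circ\varphi^k_{T,t}-g^k_\alpha(T,\cdot)\circ\varphi^k_{T,\infty}$. The latter is not uniformly small a priori, since its smallness relies on an approximation of the data by a $C^1_c$ function. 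This is where the \emph{main obstacle} lies: the top-order derivatives of $g^k$ are only controlled, not compact. I would handle it as in the standard quasilinear Cauchy-stability argument (Bona–Smith type, \cite[Section~4.5]{Bahouri}), as follows.

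Regularize the data: let $\mathbf{f}_0^\delta$, $f_0^{k,\delta}$ be mollified (and truncated) data, with $\E_N^n[f_0^{k,\delta}-f_0^k] \leq \E_N^n[f_0^{k}-\mathbf{f}_0] + \E_N^n[\mathbf{f}_0^\delta-\mathbf{f}_0]$ small uniformly in $k$ for $\delta$ small. These data lie in $\mathcal{B}_{N+1}^n$, with $\E_{N+1}^n$ norm $\lesssim \delta^{-1}$. The key estimate is the linear difference estimate at top order,
\[
\sup_t \E_N^n[g^{k}-g^{k,\delta}](t) \lesssim \E_N^n[f^k_0-f^{k,\delta}_0] + \sup_t \E_{N-1}^n[g^k-g^{k,\delta}](t)\cdot \sup_t\E_{N+1}^n[g^{k,\delta}](t).
\]
It is obtained by commuting $N$ derivatives through the difference equation of Lemma \ref{LemTphi}, using Corollaries \ref{CorboundTh} and \ref{CorforenergypertGeneral} at one order higher, with the $N+1$-th derivatives falling on $\mathbf{g}$ replaced by $g^{k,\delta}$. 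The $\E_{N+1}^n$ bound for $g^{k,\delta}$ follows from Proposition \ref{ProforwardLarge} at regularity $N+1$, with constant $\lesssim\delta^{-1}$. The lower-order difference is bounded by the Lipschitz estimate of Proposition \ref{ProforwardLarge}, giving $\E_{N-1}^n[f_0^k-f_0^{k,\delta}]\lesssim \delta\, o_\delta(1)$ by the usual mollifier gain. The product is therefore $o(1)$ as $\delta\to0$, uniformly in $k$.

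Passing to $t\to\infty$ transfers these bounds to the scattering states: $\E_N^n[f_\infty^k-f_\infty^{k,\delta}]$ and $\E_N^n[\mathbf{f}_\infty-\mathbf{f}_\infty^{\delta}]$ are small for $\delta$ small, uniformly in $k$. For fixed $\delta$, the solutions have $N+1$ derivatives, so the $\E_{N}^n$ Lipschitz estimate (Proposition \ref{ProforwardLarge} and Proposition \ref{Proscatt0}, applied with $N+1$ in place of $N$) gives $\E_N^n[f^{k,\delta}_\infty-\mathbf{f}^\delta_\infty]\lesssim_\delta \E_N^n[f_0^{k,\delta}-\mathbf{f}_0^\delta]\to0$ as $k\to\infty$. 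Combining the three pieces via the triangle inequality concludes the proof. The delicate step is the top-order difference estimate, in particular tracking the dynamical coordinates $z_{f^k}$ versus $z_{f^{k,\delta}}$, which produces the $\Phi$-difference terms handled by \eqref{pourjusteici} and Lemma \ref{CorderivgtofDIFF} at one order higher.
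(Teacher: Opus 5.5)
Your proposal misdiagnoses the obstacle, and that is the missing idea. The linear part $g^k_\alpha(T,\cdot)\circ\varphi^k_{T,t}-g^k_\alpha(T,\cdot)\circ\varphi^k_{T,\infty}$ can be made uniformly small in $k$ by taking the base time $T=0$. There $g^k_\alpha(0,\cdot)=[f^k_0]_\alpha$ converges in $L^p$ to $[\mathbf f_0]_\alpha$, so this family is precompact and no regularization is needed.

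This is the paper's proof. Fix one $h\in C^1_c$ with $\|[\mathbf f_0]_\alpha-h\|_{L^p}\le\epsilon$. Then $\|[f^k_0]_\alpha-h\|_{L^p}\le\epsilon+\E_N^n[f^k_0-\mathbf f_0]$. Moreover, $\|h\circ\varphi^k_{0,t}-h\circ\varphi^k_{0,\infty}\|_{L^p}\le C\langle t\rangle^{-1/3}$, where by Proposition \ref{Procharac} the constant $C$ depends only on $h$ and on the uniform force-field bounds. Together with the uniform Duhamel tail, this makes the constant in \eqref{eq:forconti} independent of $k$. Hence $\E_N^n[g^k(t)-f^k_\infty]\le 3\epsilon$ for all $t\ge T_\epsilon$ and all large $k$. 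Finite-time Cauchy stability on $[0,T_\epsilon]$ then gives $\E_N^n[g^k-\mathbf g](T_\epsilon)\le\epsilon$ for $k$ large. The triangle inequality through time $T_\epsilon$ concludes.

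The Bona--Smith route you use instead is not supported by the results you cite, and it fails at its central step as stated.

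First, mollified (even truncated) data do not have the bounds you claim. For the same multi-index, $\E_{N+1}^n$ carries one more $\langle v\rangle$ weight than $\E_N^n$, and $\langle x\rangle^{n_x+N+1}$ on top $x$-derivatives; mollification cannot produce these weights. Truncation at radius $R$ supplies them at a cost of $R$. However, $\E_{N-1}^n$ carries the same $x$-weights as $\E_N^n$, so the truncation error in $\E_{N-1}^n$ is only $o(1)$, not $\delta\,o(1)$. Your product $\E_{N-1}^n\cdot\E_{N+1}^n$ is therefore not small without a finer choice of weights.

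Second, and more seriously, Bona--Smith needs tame bounds. You need $\sup_t\E_{N+1}^n[g^{k,\delta}]\lesssim\E_{N+1}^n[f^{k,\delta}_0]$, and a top-order difference estimate whose constants depend only on $N$-level bounds. Proposition \ref{ProforwardLarge} at regularity $N+1$ gives neither:
\begin{itemize}
\item It requires a reference solution bounded in $\E_{N+1}^n$ by some $\mathbf\Lambda\sim\delta^{-1}$.
\item Its threshold $\varepsilon_0[N+1,p,\mathbf\Lambda]$ and its constant $\mathbf C^n_{\mathrm{top}}[N+1,n,p,\mathbf\Lambda]$ depend non-linearly on $\mathbf\Lambda$: through Gr\"onwall factors with $C_0=C_1\langle\mathbf\Lambda\rangle^q$ (Remark \ref{Rqpoly}), through $T_0(\mathbf\Lambda)$, and through $C_{T_0}$.
\item The bounds of Corollary \ref{CorforenergypertGeneral} are polynomial in $\mathbf\Lambda$, e.g. $\mathfrak K_5\lesssim\langle\mathbf\Lambda\rangle\mathbf\Lambda^2\omega_t\langle t\rangle^{-2}$. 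With $\mathbf\Lambda\sim\delta^{-1}$ and $\omega\sim\delta\,o(1)$, this error is not small.
\end{itemize}
Obtaining tame versions would mean redoing Section \ref{Sec3} one order higher, tracking which factor carries the top derivative. That includes the small-time Cauchy stability and the absorption \eqref{pourjusteici} at level $N$. This is the whole content of your argument, and it is not provided. The paper uses a regularization scheme only for the continuity of $\mathscr W_+$ itself (Lemma \ref{LemconvM}); for $\mathscr W_+^{-1}$ the compactness observation above makes it unnecessary.
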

\begin{proof}
For $(f_0^1,f_0^2) \in \mathscr{W}_+\big(\mathcal{O}_+^{N,n} \big)^2$, we denote, for any $i \in \{ 1,2 \}$, $f_\infty^i \coloneqq \mathscr{W}_+^{-1}(f_0^i)$, and let $f^i$ be the global solution to \eqref{VP} arising from the initial data $f_0^i$. We also set
\[ g^i(t,x,v) \coloneqq f^i \big(t,x+tv+\lambda \nabla_v \phi_\infty [f_\infty^i] \log \langle t \rangle ,v \big). \]
We fix $f_0^1 \in \mathscr{W}_+\big(\mathcal{O}_+^{N,n} \big)$, and we consider $\mathbf{\Lambda} \geq 0$ such that 
$\sup_{t \geq 0} \E_N^n[g^1](t) = \mathbf{\Lambda}$. In what follows, the constant $C \geq \mathbf{\Lambda}$ may vary from line to line, but depends only on $(N,n,p,\mathbf{\Lambda})$. Let $\varepsilon >0$ and $f_0^2 \in \mathscr{W}_+\big(\mathcal{O}_+^{N,n} \big)$ be such that $\E_N^n[f_0^1-f_0^2] \leq \delta$, where $0 <\delta \leq \varepsilon$ is a constant to be chosen later. If $\varepsilon$ is small enough, then
\begin{itemize}
\item $\E_{N-1}^n[f_\infty^1-f^2_\infty] \leq C \varepsilon$ and $\E_N^n[f_\infty^2]\leq C$ by Proposition \ref{Proscatt0},
\item $\E_N^n[g^2](t) \leq C$ for all $t \geq 0$ according to Proposition \ref{ProglobalLarge},
\item $\big(\phi [f^i],\nabla_v \phi_\infty[f_\infty^i] \big) \in \mathcal{W}^N(\R_+,C)$, for any $i \in \{1,2\}$, by Corollary \ref{Rqclassteleo}.
\end{itemize}
Note now that we have, for all $t \geq 0$,
\[ \E_N^n \big[ f_\infty^1-f_\infty^2 \big] \leq \E_N^n \big[f^1_\infty - g^1 (t,\cdot , \cdot) \big]+\E_N^n \big[ g^1-g^2 \big](t) + \E_N^n \big[ g^2 (t, \cdot , \cdot) - f^2_\infty \big]  . \]
Let us now prove that there exists $T[N,p,n,\mathbf{\Lambda}, \varepsilon]>0$ such that, for all $t \geq T$,
\[ \E_N^n \big[f^1_\infty - g^1 (t,\cdot , \cdot) \big] \leq \varepsilon, \qquad \quad \E_N^n \big[g^2 (t,\cdot , \cdot) -f^2_\infty \big] \leq 2\varepsilon . \]
For this, we need to refine Corollary \ref{Corscatstrong}. In fact, the result follows from a slight adaptation of the last step of its proof. More precisely, we need to show that the constant $C$ in \eqref{eq:forconti} can be chosen independently of $g^2$ whenever
\[ \E_N^n[g^1-g^2](0) = \E_N^n[f^1_0-f^2_0]\leq \varepsilon. \]
This follows from the estimate
\[ \big\|g^2_\alpha(0,\cdot,\cdot)-h\big\|_{L^p_{x,v}} \leq  \big\|g^1_\alpha(0,\cdot,\cdot)-h\big\|_{L^p_{x,v}} + \E_N^n[g^1-g^2](0). \]
We conclude the proof by applying Cauchy stability. There exists $\delta[N,p,n,\mathbf{\Lambda},T,\varepsilon]>0$ such that, if $\E_{N}^n [f^1_0-f_0^2] \leq \delta$, then $ \E_N^n [ g^1-g^2 ](T) \leq \varepsilon$.
\end{proof}

We now show that $\mathscr{W}_+$ is continuous. In other words, we need to establish Cauchy stability for \eqref{VP} with asymptotic data. Fix $N \geq 2p_\infty$ and $n=(n_x,n_v) \in \mathbb{Z}^2$ with $n_x , \, n_v \geq 7$. Let $(f^k_\infty)_{k \geq 1}$ be a sequence in $\mathcal{O}_+^{N,n}$, and assume that there exists $f_\infty^\infty \in \mathcal{O}_+^{N,n}$ such that $\E_N^n [f^k_\infty - f_\infty^\infty] \to 0$ as $k \to +\infty$. Let, for $k \in \mathbb{N}^* \cup \{ \infty \}$, $f_0^k \coloneqq \mathscr{W}_+(f_\infty^k)$, $f^k$ be the global solution to \eqref{VP} arising from the initial data $f_0^k$, and
\[ g^k(t,x,v) \coloneqq f^k \big(t,x+tv+\lambda \nabla_v \phi_\infty \big[ f_\infty^k \big](v) \log \langle t \rangle ,v \big). \]
In view of the assumptions and Proposition \ref{Prolocalwp}, there exists $\mathbf{\Lambda} \geq 0$ such that, for all $k \in \mathbb{N}^* \cup \{ \infty \}$,
 \begin{equation}\label{eq:decayphik} \forall \, t \geq 0, \quad \E_N^n [g^k](t) \leq \mathbf{\Lambda}, \qquad \qquad \qquad \big( \phi [f^k], \nabla_v \phi_\infty \big[ f_\infty^k \big] \big) \in \mathcal{W}^N \big(  \R_+, \mathbf{\Lambda}  \big)   .  \tag{decay,  $\phi[f^k]$}
 \end{equation}
The strategy is the following. By Corollary \ref{Corunique}, we already have $\sup_{t \geq 0} \E_{N-1}^n[g^k-g^\infty](t) \lesssim \E_{N-1}^n[f^k_\infty -f^\infty_\infty]$, so we only need to show convergence at top order. For any $|\alpha_x|+|\alpha_v| =N-1$, we write 
\[g^k_\alpha=M^k_\alpha+R^k_\alpha, \qquad \qquad g_\alpha^k (t,x,v) \coloneqq \langle x \rangle^{n_x+|\alpha_x|} \langle v \rangle^{n_v+N-|\alpha_x| }\partial_x^{\alpha_x} \partial_v^{\alpha_v} g^k (t,x,v) , \] 
where
\begin{alignat*}{2}
 &\overline{\T}_{\phi [f^k]} \big( M^k_\alpha \big)= \overline{\T}_{\phi[f^\infty]} \big(  g^\infty_\alpha \big), \qquad \qquad \qquad \qquad && \overline{\T}_{\phi [f^k]} \big( R^k_\alpha \big)=\overline{\T}_{\phi[f^k]} \big( g^k_\alpha \big) - \overline{\T}_{\phi[f^\infty]} \big( g^\infty_\alpha \big)  \\
 & M_\alpha^k(t,\cdot , \cdot) \xrightarrow[t \to+\infty]{} \big[ f_\infty^\infty \big]_\alpha , \qquad \qquad && R^k_\alpha (t, \cdot , \cdot ) \xrightarrow[t \to+\infty]{} \big[ f^k_\infty- f_\infty^\infty \big]_\alpha .
\end{alignat*}

\begin{Rq} 
Note that $M^\infty_\alpha=g^\infty_\alpha$ and $R^\infty_\alpha=0$. Moreover, since the source terms decays sufficiently fast in time according to Proposition \ref{ProboundednessGeneral}, a slight adaptation of Proposition \ref{ProAsympCau} shows that $M^k_\alpha$ and $R^k_\alpha$ are well-defined. 
\end{Rq}
For convenience, we define the norms
\begin{align*}
 \mathbf{E}_1 [h](t) & \coloneqq \| h(t,x,v)\|_{L^p(\R^3_x \times \R^3_v)}+ \big\| \nabla_v h(t,x,v) \big\|_{L^p(\R^3_x \times \R^3_v)}+\big\| \langle x \rangle \langle v \rangle^{-1} \nabla_x h(t,x,v) \big\|_{L^p(\R^3_x \times \R^3_v)} , \\
  \mathbf{E}_2 [h](t) & \coloneqq \mathbf{E}_1[h](t)+  \big\| \nabla_v^2 h(t,x,v) \big\|_{L^p_{x,v}}+\big\| \langle x \rangle \langle v \rangle^{-1} \nabla_x \nabla_v h(t,x,v) \big\|_{L^p_{x,v}} + \big\| \langle x \rangle^2 \langle v \rangle^{-2} \nabla^2_x h(t,x,v) \big\|_{L^p_{x,v}}.
 \end{align*}
Then, $\E_N^n [f_0^k-f_0^\infty] \to 0$, as $k \to +\infty$, would follow if we could show, for any $|\alpha|=N-1$,
\begin{align*}
 \lim_{k \to \infty} \sup_{t \geq 0} \mathbf{E}_1 \big[ M^k_\alpha -g^\infty_\alpha \big](t) =0, \qquad \qquad \lim_{k \to \infty} \sup_{t \geq 0} \mathbf{E}_1 \big[ R^k_\alpha \big](t) =0   .
\end{align*}
This is the purpose of the next two results, which will conclude the proof of Theorem \ref{Th2}. 
\begin{Lem}\label{LemconvM}
For any $|\alpha|=N-1$, we have $\sup_{t \geq 0} \mathbf{E}_1 \big[ M^k_\alpha -g^\infty_\alpha \big](t) \to 0$ as $k \to +\infty$.
\end{Lem}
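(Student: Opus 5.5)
The plan is to write the difference $D^k \coloneqq M^k_\alpha - g^\infty_\alpha$ as the solution of a linear transport problem with a source that vanishes as $k\to\infty$, and then to control $\mathbf{E}_1[D^k]$ by an energy estimate run backward from $t=+\infty$.

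Since $\overline{\T}_{\phi[f^k]}(M^k_\alpha)=\overline{\T}_{\phi[f^\infty]}(g^\infty_\alpha)$, we have
\[ \overline{\T}_{\phi[f^k]}\big(D^k\big) = \big(\overline{\T}_{\phi[f^\infty]}-\overline{\T}_{\phi[f^k]}\big)\big(g^\infty_\alpha\big), \qquad D^k(t,\cdot,\cdot)\xrightarrow[t\to+\infty]{}0 . \]
The right-hand side only involves first-order derivatives of $g^\infty_\alpha$, and these are controlled in $\mathbf{E}_1$-type norms by $\E_N^n[g^\infty]\le \mathbf{\Lambda}$. Commuting $\overline{\T}_{\phi[f^k]}$ with $\nabla_v$ and with $\langle x\rangle\langle v\rangle^{-1}\nabla_x$ (Lemma \ref{LemCom}, Corollary \ref{Corenergy} at first order) produces terms of the form $\Lambda \langle t\rangle^{-4/3}\mathbf{E}_1[D^k]$, exactly as in Proposition \ref{ProboundednessGeneral}, together with $\nabla_{x,v}$ applied to the source. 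This forces us to estimate the source in the $\mathbf{E}_1$ norm, and therefore to control \emph{second} derivatives of $g^\infty_\alpha$, that is, $\mathbf{E}_2[g^\infty_\alpha]$. This quantity involves derivatives of order $N+1$ of $g^\infty$, which are not available.

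The resolution is the standard quasilinear trick, in the spirit of \cite[Section~4.5]{Bahouri}. I would regularize the asymptotic data: choose $f^{\infty,\eta}_\infty$ smooth and compactly supported, with $\E_N^n[f^{\infty,\eta}_\infty-f^\infty_\infty]\le\eta$. The corresponding solution $g^{\infty,\eta}$ of \eqref{eq:VPasymp} has $\E_{N+1}^n[g^{\infty,\eta}]\le C_\eta$ by Corollary \ref{Corhighnorm}; here large times are taken care of and finite times follow from propagation of regularity. We then split
\[ D^k=(M^k_\alpha-M^{k,\eta}_\alpha)+(M^{k,\eta}_\alpha-g^{\infty,\eta}_\alpha)+(g^{\infty,\eta}_\alpha-g^\infty_\alpha), \]
where $M^{k,\eta}_\alpha$ solves $\overline{\T}_{\phi[f^k]}(M^{k,\eta}_\alpha)=\overline{\T}_{\phi[f^{\infty,\eta}]}(g^{\infty,\eta}_\alpha)$ and tends to $[f^{\infty,\eta}_\infty]_\alpha$ as $t\to+\infty$. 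The three pieces are handled as follows.

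\textbf{The first piece.} It solves a linear equation with the same operator $\overline{\T}_{\phi[f^k]}$. Its source is $\overline{\T}_{\phi[f^\infty]}(g^\infty_\alpha)-\overline{\T}_{\phi[f^{\infty,\eta}]}(g^{\infty,\eta}_\alpha)$, which is of size $\langle t\rangle^{-4/3}\eta$ in the $\mathbf{E}_1$ norm, uniformly in $k$. This follows from Corollary \ref{CorforenergypertGeneral} applied to $g^\infty-g^{\infty,\eta}$, using $\sup_t\E_N^n[g^\infty-g^{\infty,\eta}]\lesssim \eta$. That bound holds at top order because the pair is compared at $N$ derivatives with an $\E_{N+1}$-regular reference; if this turns out to be too lossy, one can instead use the continuity already established in Proposition \ref{Pro1}-type arguments. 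The energy estimate of Proposition \ref{Proenergy}, integrated from $+\infty$, together with Grönwall, then gives $\lesssim\eta$.

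\textbf{The third piece.} It is $\lesssim \eta$ by the same stability.

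\textbf{The middle piece.} Its source is $(\overline{\T}_{\phi[f^{\infty,\eta}]}-\overline{\T}_{\phi[f^k]})(g^{\infty,\eta}_\alpha)$. The coefficient differences involve $\nabla\phi[f^k-f^{\infty,\eta}]$ and $\Phi_{f^k}-\Phi_{f^{\infty,\eta}}$, which are small only up to $\eta$ plus the $\E_{N-1}$ distance of the data, by Corollary \ref{Corunique}. These coefficients multiply up to two derivatives of $g^{\infty,\eta}$, which are bounded by $C_\eta$. So the middle piece is $\le C_\eta(\eta+\E_{N-1}^n[f^k_\infty-f^\infty_\infty])$ in $\mathbf{E}_1$, uniformly in $t$.

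\textbf{Main obstacle.} The main obstacle is exactly this balancing. The loss $C_\eta$ multiplies a coefficient difference that is small only in the $N-1$ norm plus an $O(\eta)$ term. This is fixed by taking the regularized reference $f^{\infty,\eta}$ as the \emph{same} regularization applied to $f^k_\infty$ as well, comparing $f^{k,\eta}$ with $f^{\infty,\eta}$. Their difference tends to $0$ as $k\to\infty$ for fixed $\eta$ in the $\E_{N+1}$ norm, so one takes first $k\to\infty$ and then $\eta\to0$. Finally, the time-uniformity is ensured by the integrable rate $\langle t\rangle^{-4/3}$ and by starting all energy estimates at $t=+\infty$, where every difference vanishes.
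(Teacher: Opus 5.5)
Your diagnosis is right: bounding the source of $M^k_\alpha-g^\infty_\alpha$ in $\mathbf{E}_1$ costs one derivative more than $\E_N^n[g^\infty]$ provides. The cure you propose, however, does not close.

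\textbf{The first and third pieces are circular.} The first piece $M^k_\alpha-M^{k,\eta}_\alpha$ has source $\overline{\T}_{\phi[f^\infty]}(g^\infty_\alpha)-\overline{\T}_{\phi[f^{\infty,\eta}]}(g^{\infty,\eta}_\alpha)$. Estimating it in $\mathbf{E}_1$ requires $N$ derivatives of $g^\infty-g^{\infty,\eta}$ and $N+1$ derivatives of $\phi[f^\infty-f^{\infty,\eta}]$. In other words, you need the top-order bound $\sup_t\E_N^n[g^\infty-g^{\infty,\eta}]\lesssim\eta$ for the \emph{nonlinear} asymptotic problem, and the third piece needs the same bound. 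The paper does not provide this:
\begin{itemize}
\item Corollary~\ref{Corunique} is Lipschitz only at order $N-1$.
\item Corollary~\ref{CorforenergypertGeneral} only covers $|\alpha|\le N-1$.
\item Top-order continuity of $f_\infty\mapsto g$ is precisely what Lemma~\ref{LemconvM} is a step towards, so invoking it is circular.
\item Proposition~\ref{Pro1} concerns $\mathscr{W}_+^{-1}$, which is the opposite direction.
\end{itemize}

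\textbf{The smooth reference does not rescue the bound.} Comparing with an $\E_{N+1}$-regular reference puts the $(N+1)$-st derivatives of $g^{\infty,\eta}$ against low-order coefficient differences. What you actually get is at best
\[ \sup_{t\ge0}\E_N^n[g^\infty-g^{\infty,\eta}](t)\lesssim \E_N^n[f^\infty_\infty-f^{\infty,\eta}_\infty]+C_\eta\,\E_{N-1}^n[f^\infty_\infty-f^{\infty,\eta}_\infty] , \]
and for an arbitrary approximant the product $C_\eta\,\E_{N-1}^n[\cdot]$ need not be small. The same product survives in your middle piece after $k\to\infty$. Regularizing $f^k_\infty$ as well does not remove it. It only shifts the problem to comparing $f^k$ with $f^{k,\eta}$ at top order, uniformly in $k$, because $M^k_\alpha$ is transported by $\overline{\T}_{\phi[f^k]}$. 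Closing this would need a full Bona--Smith argument, which you do not set up and the paper does not supply. That argument requires two things:
\begin{itemize}
\item mollification at scale $\epsilon$ with $\E_{N+1}\lesssim\epsilon^{-1}$ and an $\E_{N-1}$-error of $o(\epsilon)$;
\item \emph{tame} bounds for $\E_{N+1}^n[g^{\infty,\eta}]$ that are linear in $\E_{N+1}^n[f^{\infty,\eta}_\infty]$.
\end{itemize}

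\textbf{The missing idea} is to regularize the \emph{linear} problem solved by $M^k_\alpha$, not the nonlinear solution.
\begin{itemize}
\item The paper keeps $f^k$ and $f^\infty$ fixed. It mollifies and truncates the source $F_\infty\coloneqq\overline{\T}_{\phi[f^\infty]}(g^\infty_\alpha)$ and the data $m_\infty\coloneqq[f^\infty_\infty]_\alpha$, getting $F_j,m_j$. It defines $M^k_{\alpha,j}$ by $\overline{\T}_{\phi[f^k]}(M^k_{\alpha,j})=F_j$ with $M^k_{\alpha,j}\to m_j$ as $t\to+\infty$.
\item $M^k_\alpha-M^k_{\alpha,j}$ is transported by the \emph{same} operator $\overline{\T}_{\phi[f^k]}$, with source $F_\infty-F_j$ and data $m_\infty-m_j$. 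Propositions~\ref{Proenergy} and \ref{ProboundednessGeneral} with Gr\"onwall bound it, uniformly in $k$, by $\mathbf{E}_1[m_j-m_\infty]+\int_0^{\infty}\mathbf{E}_1[F_j-F_\infty]\,\dr t$.
\item This tends to $0$ by dominated convergence, since $\mathbf{E}_1[F_\infty](t)\lesssim\langle t\rangle^{-4/3}$. Only the mollification error of a fixed function appears, never a top-order nonlinear difference.
\item The extra derivative falls only on $M^\infty_{\alpha,j}$. It satisfies $\mathbf{E}_2[M^\infty_{\alpha,j}]\le C_j$ because its source and data are smooth and compactly supported. Hence $\mathbf{E}_1[M^k_{\alpha,j}-M^\infty_{\alpha,j}]\lesssim C_j\,\E_N^n[f^k_\infty-f^\infty_\infty]$.
\item Choosing first $j$ (uniformly in $k$) and then $k$ requires no balance between $C_j$ and a regularization error.
\end{itemize}
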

\begin{proof}
Fix $|\alpha|=N-1$. Let $(\rho_\epsilon)_{0 < \epsilon \leq 1}$ be a mollifier, and $\chi \in C^\infty(\R,[0,1])$ be a cutoff function such that $\chi (s)=1$ for $s \leq 1$, and $\chi (s) =0$ for $s \geq 2$. We define $F_\infty \coloneqq \overline{\T}_{\phi[f^\infty]} (  g^\infty_\alpha )$, and
\[ F_j \coloneqq \chi \big(  |x,v|/j\big) \cdot F_\infty \ast_{x,v} \rho_{1/j}, \qquad \qquad m_j \coloneqq \chi \big( |x,v|/j \big) \cdot \big[ f_\infty^\infty \big]_\alpha   \ast_{x,v} \rho_{1/j}, \qquad \qquad m_\infty = \big[ f_\infty^\infty \big]_\alpha . \]
Then, we define, for $j, \, k \in \mathbb{N}^* \cup \{ \infty \}$, $M^k_{\alpha,j}$ as the unique solution to
\[ \overline{\T}_{\phi [f^k]} \big( M^k_{\alpha,j} \big) = F_j , \qquad \qquad M_{\alpha,j}^k(t,\cdot , \cdot) \xrightarrow[t \to+\infty]{} m_j, \]
so that $M_{\alpha , \infty}^k=M_{\alpha}^k$. Next, note that for any $j \in \mathbb{N}^*$,
\[ \mathbf{E}_1 \big[ m_j- m_\infty \big] \lesssim \mathbf{E}_1 \big[ m_\infty \big] \lesssim \E_N^n \big[ f_\infty^\infty \big], \qquad \qquad \mathbf{E}_2  \big[ m_j \big] \leq C_j  , \]
for some constant $C_j >0$. Let us also prove that, for any $j \in \mathbb{N}^*$,
\begin{equation}\label{decaysource}
 \mathbf{E}_1 \big[ F_j- F_\infty \big] (t)   \lesssim  \mathbf{E}_1 \big[  F_\infty \big] (t)  \lesssim \langle t \rangle^{-4/3}, \qquad \qquad \mathbf{E}_2 \big[ F_j \big] (t) \leq C_j \langle t \rangle^{-4/3} . 
 \end{equation}
For the first inequality, we use in particular Proposition \ref{ProboundednessGeneral} and \eqref{eq:Txv} to control $F_\infty$. For the second one, we use the compact support of $F_j$ to handle the $\langle x \rangle$-weights associated with the second-order derivatives. By the dominated convergence theorem, we then get
\begin{equation}\label{limites}
 \lim_{j \to +\infty}  \int_{t=0}^{+\infty} \mathbf{E}_1 \big[ F_j- F_\infty \big] (t) \dr t =0, \qquad \qquad \lim_{j \to +\infty} \mathbf{E}_1 \big[ m_j-m_\infty \big] =0 . 
 \end{equation}
In view of \eqref{eq:decayphik} and \eqref{decaysource}, the energy estimate of Proposition \ref{Proenergy}, together with Proposition \ref{ProboundednessGeneral}, yields
\[ \sup_{1 \leq k , \, j \leq \infty}  \mathbf{E}_1 \big[ M^k_{\alpha,j} \big](t) \leq C [N,n,p,\mathbf{\Lambda} ] , \qquad \qquad \sup_{1 \leq k \leq \infty} \sup_{j \in \mathbb{N}^*} \mathbf{E}_2 \big[ M^k_{\alpha,j} \big](t) \leq C_j [N,n,p,\mathbf{\Lambda},j ]. \]
Write next, for $t \geq 0$, that
\[ \mathbf{E}_1 \big[ M_\alpha^k - M_\alpha^\infty \big](t) \leq \mathbf{E}_1 \big[ M_\alpha^k - M_{\alpha,j}^k \big] (t) + \mathbf{E}_1 \big[ M_{\alpha,j}^k - M_{\alpha,j}^\infty \big](t) + \mathbf{E}_1 \big[ M_{\alpha,j}^\infty - M_{\alpha}^\infty \big](t) . \] 
Let $\varepsilon >0$. We shall prove that for all sufficiently large $j$, we have
\[ \forall \, k \in \mathbb{N}^*\cup \{ \infty \}, \qquad \sup_{t \geq 0} \mathbf{E}_1 \big[ M_\alpha^k - M_{\alpha,j}^k \big] (t) \leq \varepsilon. \]
For this, note that $\overline{\T}_{\phi [f^k]} ( M^k_{\alpha}-M^k_{\alpha,j})=F_\infty - F_j$. Then, the energy estimate of Proposition \ref{Proenergy} yields
\[ \mathbf{E}_1 \big[ M_\alpha^k - M_{\alpha,j}^k \big] (\tau)  \lesssim \mathbf{E}_1 \big[ m_j-m_\infty \big] +\int_{t=\tau}^{+\infty} \mathbf{E}_1 \big[F_j- F_\infty \big] (t)+ \sum_{|\kappa|=1}\big\| \big[ \overline{\T}_{\phi [f^k]}, \langle x \rangle^{|\kappa_x|} \langle v \rangle^{-|\kappa_x|} \partial_{x,v}^\kappa \big] \big(M_\alpha^k - M_{\alpha,j}^k \big) \big\|_{L^p_{x,v}}  \dr t . \]
The last term in the integral, on the right hand side, can be bounded by $\langle t \rangle^{-4/3} \mathbf{E}_1[M_\alpha^k - M_{\alpha,j}^k]$ using Proposition \ref{ProboundednessGeneral} and \eqref{eq:decayphik}. Then, it remains to use the Grönwall inequality, together with \eqref{limites}. 

Next, we observe that
\[ \mathbf{E}_1 \big[ M_{\alpha,j}^k - M_{\alpha,j}^\infty \big](\tau) \lesssim \sum_{|\kappa| \leq 1}\int_{t=\tau}^{+\infty} \Big\| \overline{\T}_{\phi[f^k]}\Big( \langle x \rangle^{|\kappa_x|} \langle v \rangle^{-|\kappa_x|} \partial_{x,v}^\kappa \big( M_{\alpha,j}^k - M_{\alpha,j}^\infty\big) \Big)(t,x,v) \Big\|_{L^p (\R^3_x \times \R^3_v)}  \dr t . \]
In view of the assumptions on $(f_\infty^k)_{k \geq 1}$, and Corollary \ref{Corunique}, we have
\[ \sup_{t \geq 0} \E_{N-1}^n \big[g^k-g^\infty \big] \lesssim \E_{N-1}^n \big[f_\infty^k - f_\infty^\infty \big], \qquad \qquad \big( \phi [f^k-f^\infty],\nabla_v \phi_\infty \big[ f_\infty^k-f^\infty_\infty \big] \big) \in \mathcal{W}^{N-1} \big( \R_+ , C \E_{N} \big[f_\infty^k - f_\infty^\infty \big] \big), \]
where $C [N,p,\mathbf{\Lambda}]>0$. We can then apply Proposition \ref{ProboundednessGeneral} and Corollary\footnote{Strictly speaking, since $M^k_{\alpha,j}$ and $M^\infty_{\alpha,j}$ are not solutions of \eqref{VP}, Corollary \ref{CorforenergypertGeneral} does not apply directly. However, as $\overline{\T}_{\phi [f^k]} (M^k_{\alpha,j})=\overline{\T}_{\phi [f^\infty]}(M^\infty_{\alpha,j})$, analogues results of Lemma \ref{LemTphi} and Corollaries \ref{CorboundTh}--\ref{CorforenergypertGeneral} in the present setting are obtained by the same arguments.} \ref{CorforenergypertGeneral}, together with \eqref{eq:decayphik} and $\mathbf{E}_2 [ M^\infty_{\alpha,j} ](t) \leq C_j$, to obtain
\[\forall \, \tau \geq 0, \qquad  \mathbf{E}_1 \big[ M_{\alpha,j}^k - M_{\alpha,j}^\infty \big](\tau) \lesssim C_j \E_{N}^n \big[f_\infty^k - f_\infty^\infty \big]. \]
Hence, by choosing first $j$ large enough, and then $k$ large enough, we get $\sup_{t \geq 0} \mathbf{E}_1 \big[ M_\alpha^k - M_\alpha^\infty \big](t) \leq 3 \varepsilon$. This concludes the proof since $M_\alpha^\infty=g_\alpha^\infty$.
\end{proof}

We now give the key ideas to show that the remainder $R^k_\alpha$ goes to $0$, as $k \to +\infty$. 

\begin{Lem}
For any $|\alpha|=N-1$, we have $\sup_{t \geq 0} \mathbf{E}_1 \big[ R^k_\alpha \big](t) \to 0$ as $k \to +\infty$.
\end{Lem}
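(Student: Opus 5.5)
The plan is to run an energy estimate in the norm $\mathbf{E}_1$ directly on the equation for $R^k_\alpha$. By definition,
\[ \overline{\T}_{\phi [f^k]} \big( R^k_\alpha \big)=\overline{\T}_{\phi[f^k]} \big( g^k_\alpha \big) - \overline{\T}_{\phi[f^\infty]} \big( g^\infty_\alpha \big) \eqqcolon S^k_\alpha , \qquad R^k_\alpha (t,\cdot,\cdot) \to \big[f^k_\infty - f^\infty_\infty\big]_\alpha . \]
The asymptotic data already satisfy $\mathbf{E}_1\big[[f^k_\infty - f^\infty_\infty]_\alpha\big] \lesssim \E_N^n[f^k_\infty - f^\infty_\infty] \to 0$. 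The statement therefore reduces to showing that $\int_0^{+\infty}\mathbf{E}_1[S^k_\alpha](t)\,\dr t \to 0$ as $k\to\infty$.

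Once that is available, I will apply Proposition \ref{Proenergy} backward from $t=+\infty$ to $\langle x\rangle^{|\kappa_x|}\langle v\rangle^{-|\kappa_x|}\partial^\kappa_{x,v} R^k_\alpha$ for $|\kappa|\le 1$. The commutator terms $[\overline{\T}_{\phi[f^k]},\langle x\rangle^{|\kappa_x|}\langle v\rangle^{-|\kappa_x|}\partial^\kappa_{x,v}]R^k_\alpha$ are bounded by $C\langle t\rangle^{-4/3}\mathbf{E}_1[R^k_\alpha]$, uniformly in $k$, using Proposition \ref{ProboundednessGeneral} (with $N=1$ and weights absorbed) together with \eqref{eq:decayphik}. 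Grönwall's inequality then gives
\[ \sup_{t\ge0}\mathbf{E}_1[R^k_\alpha](t) \lesssim \E_N^n[f^k_\infty-f^\infty_\infty] + \int_0^{+\infty}\mathbf{E}_1[S^k_\alpha]\,\dr t . \]
The well-posedness of $R^k_\alpha$ and the justification of the energy identity at infinity go through as in Proposition \ref{ProAsympCau}, after an approximation by compactly supported data.

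The main obstacle is the source term $S^k_\alpha$. Since $g^k_\alpha$ involves $N-1$ derivatives and $\mathbf{E}_1$ adds one more, $\mathbf{E}_1[S^k_\alpha]$ requires top order, $N$-th derivative information on $g^k-g^\infty$. Corollary \ref{Corunique} only gives this information at order $N-1$. I would expand $S^k_\alpha$ using Corollary \ref{CorComm} and Corollary \ref{Corenergy} as $\big[\overline{\T}_{\phi[f^k]},\langle\cdot\rangle\partial^\alpha\big]g^k-\big[\overline{\T}_{\phi[f^\infty]},\langle\cdot\rangle\partial^\alpha\big]g^\infty$, and write each difference of bilinear terms as
\[ (\text{coefficient}_k-\text{coefficient}_\infty)\cdot g^k + \text{coefficient}_\infty\cdot(g^k-g^\infty) . \]

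For the terms where the difference falls on the field or on $\Phi$, namely on $\phi[f^k-f^\infty]$, $\nabla_v\phi_\infty[f^k_\infty-f^\infty_\infty]$, and on the shift $z_{f^k}-z_{f^\infty}$, I would follow the proof of Corollary \ref{CorforenergypertGeneral}. At top order these terms require $\mathcal{W}^N$ control of $(\phi[f^k-f^\infty],\nabla_v\phi_\infty[f^k_\infty-f^\infty_\infty])$. The $\Phi$ part follows from Proposition \ref{Prophiinfty} and $\E_N^n[f^k_\infty-f^\infty_\infty]\to0$. The $\phi$ part requires top order derivatives of $\rho[f^k-f^\infty]$, that is $N$-th derivatives of $g^k-g^\infty$, which are not yet known to be small. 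I would close this by coupling the argument with Lemma \ref{LemconvM} and a bootstrap: decompose $g^k-g^\infty=(M^k-g^\infty)+R^k$ at top order, and use that these terms are borderline integrable with a factor $\langle t\rangle^{-4/3}$ multiplying $\sup_t\mathbf{E}_1[R^k_\alpha]+\sup_t\mathbf{E}_1[M^k_\alpha-g^\infty_\alpha]$.

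I expect a simpler route to work for the terms where $\partial^{N}$ lands on the coefficient. Those terms are controlled using only $\E_{N-1}[g^k-g^\infty]$ and the bound $\E_N[g^k]\le\mathbf{\Lambda}$, via elliptic estimates (Corollary \ref{Corellip}), whereas the terms with a top order derivative of $g^k-g^\infty$ come with a decaying coefficient bounded by $\mathbf{\Lambda}\langle t\rangle^{-4/3}$. The latter can be absorbed into the Grönwall argument once $S^k_\alpha$ is rewritten as $\overline{\T}$ acting on $g^k_\alpha-g^\infty_\alpha=M^k_\alpha-g^\infty_\alpha+R^k_\alpha$. The remaining contribution $\langle t\rangle^{-4/3}\mathbf{E}_1[M^k_\alpha-g^\infty_\alpha]$ tends to $0$ by Lemma \ref{LemconvM}. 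Finally, for the terms that are linear in quantities of order $N-1$, I would use dominated convergence in time, with the uniform $\langle t\rangle^{-4/3}$ bound, together with the pointwise-in-time convergence given by Corollary \ref{Corunique}.
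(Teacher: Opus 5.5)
Your architecture matches the paper's: a backward energy estimate for $R^k_\alpha$ in $\mathbf{E}_1$, commutators absorbed via Proposition \ref{ProboundednessGeneral}, top-order derivatives of $g^k-g^\infty$ split as $(M^k-g^\infty)+R^k$, then Grönwall together with Lemma \ref{LemconvM}. The gap is the top-order control of $\phi[f^k-f^\infty]$.

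The source $\partial^\kappa_{x,v}S^k_\alpha$ needs $(\phi[f^k-f^\infty],\nabla_v\phi_\infty[f^k_\infty-f^\infty_\infty])$ to be small in $\mathcal{W}^N$, not just $\mathcal{W}^{N-1}$. For example, the mean-value treatment of the type-1 terms (as for $\mathfrak{I}_1^1$) and \eqref{eq:phiassump4} at $|\alpha|=N$ both use $N$ derivatives of $\rho[f^k-f^\infty]$. So your ``simpler route'', which uses only $\E_{N-1}[g^k-g^\infty]$ and $\E_N[g^k]\le\mathbf{\Lambda}$, gives no smallness there.

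Your other claim is also wrong: $N$ derivatives of $\rho[f^k-f^\infty]$ are not the same as $N$ derivatives of $g^k-g^\infty$, because $g^k$ and $g^\infty$ are written in different coordinates. In the single coordinate system $z_{f^k}$ one has
\[ \rho[f^k-f^\infty](t,x)=\int_{\R^3_v}\big(g^k-\widetilde g^\infty\big)\big(t,x-tv-\lambda\nabla_v\phi_\infty[f^k_\infty](v)\log\langle t\rangle,v\big)\,\dr v,\qquad \widetilde g^\infty(t,x,v)\coloneqq g^\infty\big(t,x+\lambda\nabla_v\phi_\infty[f^k_\infty-f^\infty_\infty](v)\log\langle t\rangle,v\big). \]
Hence Corollary \ref{Rqclassteleo} gives $\mathcal{W}^N$ control only in terms of $\E_N[f^k_\infty-f^\infty_\infty]+\E_N[g^k-g^\infty]+\E_N[g^\infty-\widetilde g^\infty]$.

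The last term is the missing ingredient, and your scheme cannot absorb it. Bounding it quantitatively, as in Lemma \ref{CorderivgtofDIFF} or the coordinate-mismatch step of Proposition \ref{ProAppC}, costs one derivative: it would require $\E_{N+1}[g^\infty]$. This loss is exactly why the result is continuity rather than a Lipschitz bound at top order.

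The paper keeps the extra term $\langle t\rangle^{-4/3}\E_N[g^\infty-\widetilde g^\infty](t)$ in the source bound. It then proves $\int_0^{\infty}\langle t\rangle^{-4/3}\E_N[g^\infty-\widetilde g^\infty](t)\,\dr t\to0$ by dominated convergence:
\begin{itemize}
\item the uniform bound $\E_N[g^\infty-\widetilde g^\infty](t)\lesssim\log^{2N+7}\langle t+1\rangle$ follows from Lemma \ref{Corderivgtof};
\item convergence to $0$ for each fixed $t$ follows from $\mathcal{E}_N[\nabla_v\phi_\infty[f^k_\infty-f^\infty_\infty]]\to0$ together with an approximation of $g^\infty(t)$ by smoother functions.
\end{itemize}
The convergence is not uniform in $t$, since the shift grows like $\log\langle t\rangle$, so the integrable weight is essential.

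You apply dominated convergence to the order-$(N-1)$ terms instead, where Corollary \ref{Corunique} already gives uniform-in-time bounds. As written, your Grönwall step closes on a source estimate that has not been established.
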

\begin{proof}
We apply first Proposition \ref{Proenergy}. It allows to bound $\mathbf{E}_1 \big[ R^k_\alpha \big](\tau)$ by
\[  \mathbb{E}_N^n \big[f_\infty^k-f_\infty^\infty \big]+p \sum_{|\kappa| \leq 1} \int_{t = \tau}^{+\infty}  \bigg\| \bigg[ \overline{\T}_{\phi[f^k]} , \frac{\langle x \rangle^{|\kappa_x|}}{ \langle v \rangle^{|\kappa_x|}} \partial_{x,v}^\kappa \bigg]\big( R^k_\alpha \big) \bigg\|_{L^p_{x,v}} \! +\bigg\| \frac{\langle x \rangle^{|\kappa_x|}}{ \langle v \rangle^{|\kappa_x|}} \partial_{x,v}^\kappa \big[ \, \overline{\T}_{\phi [f^k]} (g^k_\alpha)- \overline{\T}_{\phi [f^\infty]} (g_\alpha^\infty) \big] \bigg\|_{L^p_{x,v}}  \dr t    . \]
By applying once again Proposition \ref{ProboundednessGeneral}, we control the first term in the integral by $\langle t \rangle^{-4/3} \mathbf{E}_1[R^k_\alpha]$. For the second one, we claim that for all $t \geq 0$,
\begin{align*}
& \bigg\| \frac{\langle x \rangle^{|\kappa_x|}}{ \langle v \rangle^{|\kappa_x|}} \partial_{x,v}^\kappa \big[ \, \overline{\T}_{\phi [f^k]} (g^k_\alpha)- \overline{\T}_{\phi [f^\infty]} (g_\alpha^\infty) \big] (t,x,v) \bigg\|_{L^p_{x,v}} \\
& \qquad \qquad \qquad \lesssim \frac{1}{\langle t \rangle^{\frac{4}{3}}} \Big( \mathbb{E}_{N-1}^n \big[ f_\infty^k-f^\infty_\infty \big]+ \mathbf{E}_1 \big[ M^k_\alpha - g^\infty_\alpha \big](t)+\mathbf{E}_1 \big[ R^k_\alpha \big](t)+\E_N \big[ g^\infty - \widetilde{g}^\infty \big](t) \Big) ,
\end{align*}
where $\widetilde{g}^\infty (t,x,v) \coloneqq g^\infty \big(t,x+\lambda \nabla_x \phi_\infty [f^k_\infty - f_\infty^\infty] \log \langle t \rangle,v \big)$. For this, the main steps are the following.
\begin{itemize}
\item We compute $ \langle x \rangle^{|\kappa_x|} \langle v \rangle^{-|\kappa_x|} \partial_{x,v}^\kappa \big[ \, \overline{\T}_{\phi [f^k]} (g^k_\alpha)- \overline{\T}_{\phi [f^\infty]} (g_\alpha^\infty) \big] $ by applying Corollary \ref{CorComm}. Then, we estimate it by performing an analysis similar to the one carried out in Lemma \ref{LemTphi} and Corollary \ref{CorboundTh}, when we controlled $\big[ \, \overline{\T}_{\phi [f^k]} (g^k-g^\infty) \big]_\alpha$. The main difference here is that $g^\infty_\alpha$ is not differentiated more than the other factors.
\item Then, in the error terms, apart from the top order derivatives of $\phi [f^k-f^\infty]$ and $g^k-g^\infty$, we control all the factors using, for $k \in \mathbb{N}^* \cup \{ \infty \}$ large enough,
\[ \sup_{t \geq 0} \mathbb{E}_N^n [g^k] \leq \mathbf{\Lambda} , \qquad \sup_{t \geq 0} \mathbb{E}_{N-1}^n [g^k-g^\infty] \lesssim \E_{N-1}^n \big[ f^k_\infty - f_\infty^\infty \big] ,  \qquad \mathcal{E}_N \big[ \nabla_v \phi_\infty [f^k_\infty -f_\infty^\infty ] \big] \lesssim \mathbb{E}_N \big[ f^k_\infty - f_\infty^\infty \big], \]
and that the force fields satisfy
 \[ \big(\phi [f^k],\nabla_v \phi_\infty[f^k_\infty] \big) \in \mathcal{W}^N \big(\R_+,\mathbf{\Lambda} \big), \qquad \big(\phi [f^k-f^\infty],\nabla_v \phi_\infty[f^k_\infty - f_\infty^\infty]\big) \in \mathcal{W}^{N-1} \big(\R_+,C\E_{N-1}[ f^k_\infty - f_\infty^\infty ]  \big), \]
 for some constant $C[N,p,\mathbf{\Lambda}]>0$. To deal with the top order derivatives, we observe that for any $|\beta|=N$, there exists $|\kappa|=1$ and $|\alpha|=N-1$ such that
 \[ \Big| \big[ g^k-g^\infty \big]_\beta \Big| \lesssim   \frac{\langle x \rangle^{|\kappa_x|}}{\langle v \rangle^{|\kappa_x|}} \Big| \partial_{x,v}^\kappa \big[ g^k-g^\infty \big]_\alpha \Big| + \Big| \big[ g^k-g^\infty \big]_\alpha \Big| \leq \frac{\langle x \rangle^{|\kappa_x|}}{\langle v \rangle^{|\kappa_x|}} \Big( \big| \partial_{x,v}^\kappa \big( M^k_\alpha- g^\infty_\alpha \big) \big|+\big| \partial_{x,v}^\kappa R^k_\alpha \big|\Big)+ \Big| \big[ g^k-g^\infty \big]_\alpha \Big| , \]
where the last term is lower order. In particular, it yields
\[ \E_N^n [g^k-g^\infty](t) \lesssim \E_{N-1}^n\big[ f^k_\infty - f_\infty^\infty \big]+\sum_{|\gamma|=N-1} \mathbf{E}_1 \big[ M^k_\gamma - M^\infty_\gamma \big](t)+\mathbf{E}_1 \big[ R^k_\gamma \big](t). \] 
For the force field, introducing $\widetilde{g}^\infty (t,x,v) \coloneqq g^\infty \big(t,x+\lambda \nabla_x \phi_\infty [f^k_\infty - f_\infty^\infty] \log \langle t \rangle,v \big)$, Corollary \ref{Rqclassteleo} yields \[ \big(\phi [f^k-f^\infty],\nabla_v \phi_\infty[f^k_\infty - f_\infty^\infty]\big) \in \mathcal{W}^N \big(\R_+,C (\E_N[f^k_\infty - f_\infty^\infty]+\E_N [g^k-\widetilde{g}^\infty]) \big), \qquad \qquad C[N,p,\mathbf{\Lambda} ] >0 .\] 
It remains to note $\E_N [g^k-\widetilde{g}^\infty] \leq \E_N [g^k-g^\infty]+\E_N [g^\infty-\widetilde{g}^\infty]$.
\end{itemize}
We conclude by applying the Grönwall inequality, together with the previous Lemma \ref{LemconvM} and
\[ \lim_{k \to + \infty} \int_{t=0}^{+\infty} \frac{1}{\langle t \rangle^{\frac{4}{3}}} \E_N [g^\infty-\widetilde{g}^\infty](t) \dr t = 0. \]
This last property follows from the dominated convergence theorem since
\begin{itemize}
\item $\E_N [g^\infty-\widetilde{g}^\infty](t) \lesssim \log^{2N+7}\langle t+1 \rangle$, by Lemma \ref{Corderivgtof} applied with $f_\circ =\widetilde{g}^\infty$, $g=g^\infty$ and $\Phi = -\nabla_v \phi_\infty [f^k_\infty-f^\infty_\infty]$,
\item $\E_N [g^\infty-\widetilde{g}^\infty](t) \to 0$ as $k \to + \infty$ for almost all $t \geq 0$. This follows from the two estimates established in the proof of Lemma \ref{CorderivgtofDIFF}, that $\mathcal{E}_N \big[ \nabla_v \phi_\infty [f^k_\infty-f^\infty_\infty] \big] \to 0$ as $k \to +\infty$, and an approximation argument to handle the top order derivatives.
\end{itemize}
\end{proof}

\appendix

\section{A top order energy estimate and applications}\label{AppA}

We begin with an energy estimate which, although not optimal with respect to the logarithmic growth, is sufficient for our purposes. In particular, it will allow us to control higher-order derivatives of spherically symmetric solutions to \eqref{VP} in the repulsive case.

\begin{Pro}\label{Proappendix1}
Let $N_0 \geq 2 p_\infty$, $\mathbf{\Lambda}, \, a \geq 0$, and $f \colon [0,T] \times \R^3_x \times \R^3_v \to \R$ be a solution to \eqref{VP} such that
\begin{itemize}
\item $\langle t+|x| \rangle^{2} \big| \nabla_x \phi [  f] \big|(t,x) \leq 2 \mathbf{\Lambda}$ for all $(t,x) \in [0,T] \times \R^3_x$,
\item $ \langle t\rangle^{2- \frac{3}{p}}\big\| \langle t+|x| \rangle^{1+|\gamma|} \nabla_x^2 \phi [ \partial_x^\gamma f ](t,x) \big\|_{L^p(\R^3_x)}\leq 2 \mathbf{\Lambda} \log^a \langle t+1 \rangle$ for all $t \in [0,T]$ and any $|\gamma| \leq N_0-1$,
\end{itemize} 
Assume $\E_{N_0}[f](0)<+\infty$ and let $f_\circ (t,x,v) \coloneqq f(t,x+tv,v)$. Then, there exists $\mathbf{C}[N_0,p,\mathbf{\Lambda}]>0$ such that,
\[ \forall \, t \in [0,T], \qquad \qquad  \E_N[f_\circ](t)   \leq \mathbf{C}\E_{N_0}[f](0) \log^{7+(a+15+2N_0) N_0} \langle t+1 \rangle . \]
Similarly, if $\E_{N_0}[f](T)<+\infty$, we have
\[ \forall \, t \in [0,T], \qquad \qquad  \E_N[f_\circ](t)   \leq \mathbf{C}\E_{N_0}[f](T) \log^{7+(a+15+2N_0) N_0} \bigg(1+\frac{\langle T \rangle}{\langle t \rangle} \bigg) . \]
\end{Pro}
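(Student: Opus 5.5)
We work directly with $f_\circ(t,x,v)=f(t,x+tv,v)$, i.e.\ the coordinate system of Section \ref{Secprep} with $\Phi\equiv 0$, so that $z=x+tv$ and no assumption on any coefficient $\Phi$ is needed. The transport operator becomes $\overline{\T}_{\phi[f]}=\partial_t+\lambda t\nabla_x\phi(t,x+tv)\cdot\nabla_x-\lambda\nabla_x\phi(t,x+tv)\cdot\nabla_v$. Since $\overline{\T}_{\phi[f]}(f_\circ)=0$, we will apply the energy inequality of Proposition \ref{Proenergy} to each weighted derivative $[f_\circ]_\alpha$ of Definition \ref{Defgalpha}, for $|\alpha|\leq N_0$ and $n=(7,7)$. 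This reduces the problem to bounding $\big\|[\overline{\T}_{\phi[f]},\langle x\rangle^{7+|\alpha_x|}\langle v\rangle^{7+N_0-|\alpha_x|}\partial_x^{\alpha_x}\partial_v^{\alpha_v}](f_\circ)\big\|_{L^p}$.

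The weights are handled as in \eqref{eq:Txv}. We have $|\overline{\T}(\langle v\rangle)|\lesssim\mathbf{\Lambda}\langle t\rangle^{-2}$, and $|\overline{\T}(\langle x\rangle)|\lesssim t|\nabla_x\phi|(t,x+tv)\lesssim \mathbf{\Lambda}t\langle t+|x+tv|\rangle^{-2}$. The latter is only $\langle t\rangle^{-1}\langle x\rangle^{0}$, which is fine up to a logarithm: we will bound it by $\langle t\rangle^{-1}$ times the norm, so it contributes a $\log$ power.

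For the commutator, we use Corollary \ref{CorComm} with $\Phi=0$. Only \eqref{type2} terms survive, with $r=q=0$, namely
\[ t^{\delta+|\alpha_v|-|\beta_v|}\,\partial_{x^i}\partial_x^\gamma\phi(t,x+tv)\,\partial^{\delta}_{x^k}\partial^{1-\delta}_{v^k}\partial_x^{\beta_x}\partial_v^{\beta_v}f_\circ, \qquad |\gamma|+|\beta|=|\alpha|. \]
We then argue as in Corollary \ref{Corenergy}, using $\langle x\rangle\lesssim\langle t+|x+tv|\rangle\langle v\rangle$. The terms with $|\beta_x|+\delta\leq|\alpha_x|$ are then bounded by $\langle t+|z|\rangle^{|\gamma|}|\nabla_x\partial^\gamma_x\phi(t,z)|\,|[f_\circ]_{\beta'}|$, exactly as $\mathfrak{I}_2$. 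In the low-order case we use the pointwise bound: the first hypothesis gives $|\gamma|=0$, and higher $|\gamma|\leq N_0-p_\infty$ come from the second hypothesis through Proposition \ref{ProSob}. In the high-order case we use the $L^p$ bound together with the change of variables $v\mapsto x+tv$ (Jacobian $t^3$, Lemma \ref{Lemcdv} is trivial here) and the vector-valued Sobolev embedding in $v$. This gives a rate $\mathbf{\Lambda}\log^a\langle t+1\rangle\langle t\rangle^{-2}$.

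\textbf{The main obstacle} is the terms with $\delta=1$, $|\beta_x|=|\alpha_x|$, where the condition $r\geq1$ of Corollary \ref{CorComm} fails because $\Phi=0$. These are the terms $t\,\nabla_x\partial_x^{\gamma}\phi\cdot\nabla_x\partial_x^{\alpha_x}\partial_v^{\beta_v}f_\circ$ with $|\gamma|=|\alpha_v|-|\beta_v|\geq1$. They have borderline decay $t^{1+|\gamma|}\langle t+|z|\rangle^{-2-|\gamma|}\sim t^{-1}$ and cost a factor $\langle v\rangle/\langle x\rangle$ in the hierarchy. We first try to bound them by $\langle t\rangle^{-1}$ times the energy of lower $v$-order. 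Since $|\beta_v|<|\alpha_v|$, the $v$-derivative count strictly decreases, so we close by induction on $|\alpha_v|$ (then on $|\alpha_x|$) rather than a single Grönwall. Each induction level costs integrating $\langle t\rangle^{-1}$ times a $\log^k$ bound, which adds one $\log$ power plus the $\log^a$ from the hypothesis. The $\langle v\rangle/\langle x\rangle$ factor is absorbed into the extra $\langle v\rangle^{N_0-|\alpha_x|}$ weight of $\E_{N_0}$ (losing one $v$-weight but gaining one $x$-weight), which is the reason for the hierarchised norm. Tracking the exponents over the at most $2N_0+7$ steps accounts for the claimed exponent $7+(a+15+2N_0)N_0$; we will not optimise it.

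Finally, at equal $v$-order we run Grönwall with the integrable rate $\log^a\langle t\rangle\langle t\rangle^{-2}$ and with the $\langle t\rangle^{-1}$ coefficients from the $\langle x\rangle$ weights, which gives polynomial-in-$\log$ growth. The backward statement follows identically by running Proposition \ref{Proenergy} from $T$ down to $t$. There the integrals $\int_t^T\langle s\rangle^{-1}\,ds=\log(\langle T\rangle/\langle t\rangle)$ yield the factor $\log(1+\langle T\rangle/\langle t\rangle)$.
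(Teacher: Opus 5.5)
Your handling of the spatial weight is where the argument breaks. With $\Phi=0$ one has $\overline{\T}_{\phi[f]}\big(\langle x\rangle^{7+|\alpha_x|}\big)=(7+|\alpha_x|)\langle x\rangle^{6+|\alpha_x|}\,\lambda t\nabla_x\phi[f](t,x+tv)\cdot\frac{x}{\langle x\rangle}$. Here $t|\nabla_x\phi[f]|(t,x+tv)$ is genuinely of size $\langle t\rangle^{-1}$: for bounded $x$ it is about $t^{-1}|\Phi_f(t,v)|$, which is precisely the logarithmic drift of the characteristics. You propose to bound this term by $C\mathbf{\Lambda}\langle t\rangle^{-1}$ times the same quantity $\|[f_\circ]_\alpha\|_{L^p}$. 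That puts a $\langle t\rangle^{-1}$ coefficient on the diagonal, at the same $v$-order, so the triangular structure you rely on is lost. Grönwall then gives $\exp\big(C\mathbf{\Lambda}\int_0^t\langle s\rangle^{-1}\mathrm{d}s\big)=\langle t\rangle^{C\mathbf{\Lambda}}$, not a power of $\log\langle t+1\rangle$. Your final claim, that Grönwall ``with the $\langle t\rangle^{-1}$ coefficients from the $\langle x\rangle$ weights'' gives polynomial-in-log growth, is therefore false. The loss is not harmless. The proposition is used in Lemma~\ref{Lemtoporder} and Proposition~\ref{ProglobalLarge} precisely to produce $\Lambda_t\lesssim\log^{a}\langle t+1\rangle$ for Proposition~\ref{ProboundednessGeneral}. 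A growth $\langle t\rangle^{C\mathbf{\Lambda}}$ with large $\mathbf{\Lambda}$ would destroy the integrability of $\Lambda_t\langle t\rangle^{-4/3}$.

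The missing idea is the paper's modified position $\mathbf{x}_{\mathrm{mod}}=x+\psi$, where $\widetilde{\T}_{\phi[f]}(\psi)=-\widetilde{\T}_{\phi[f]}(x)$ and $\psi(0,\cdot,\cdot)=0$. It is exactly transported, so the weight $\langle\mathbf{x}_{\mathrm{mod}}\rangle^{7+|\alpha_x|}$ commutes with the operator. Meanwhile Duhamel and $|\widetilde{\T}_{\phi[f]}(x)|\lesssim\langle t\rangle^{-1}$ give $\langle x\rangle\lesssim\langle\mathbf{x}_{\mathrm{mod}}\rangle\log\langle t+1\rangle$ and conversely. The logarithms are then paid only as fixed factors when converting weights inside the bounds of Corollary~\ref{Corenergy}, which is the origin of the $\log^{a+14+|\alpha_x|+|\kappa_x|}$ in \eqref{eq:biduletorpove}. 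They never enter through a diagonal Grönwall. Alternatively, you could keep the factor $\langle x\rangle^{6+|\alpha_x|}$, one power lower, and run an additional triangular induction on the $x$-weight exponent, but that would have to be set up and checked against every commutator term.

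With this change, the rest of your scheme is the paper's argument: type-1 terms at rate $\langle t\rangle^{-1}$ couple only to strictly lower $v$-order, all other terms come at integrable rate, and one runs a triangular Grönwall in $|\alpha_v|$. Two steps still need to be made explicit.
\begin{itemize}
\item The factor $\langle v\rangle/\langle x\rangle$ in those terms is not absorbed by the weights. It is controlled through $t\langle v\rangle\leq t+|x|+|x+tv|$, cf.\ \eqref{eq:biduletruc}.
\item When $|\gamma|\geq N_0+1-p_\infty$, the Sobolev embedding in $v$ adds up to $p_\infty$ velocity derivatives. Triangularity then requires $|\beta_v|+p_\infty<|\alpha_v|$, which is where $N_0\geq 2p_\infty$ is used.
\end{itemize}
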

\begin{proof}
Since the two cases are similar, we only treat the forward-in-time estimate. We will apply the results of Section \ref{Secprep} with $\Phi=0$, so that $g=f_\circ$. To avoid any confusion, let
\begin{equation}\label{eq:deftildeT}
 \widetilde{\T}_{\phi[f]} \coloneqq \partial_t + \lambda t \nabla_x \phi [f] (t,x+tv) \cdot  \nabla_x  -\lambda \nabla_x \phi [f] (t,x+tv) \cdot \nabla_v  , 
 \end{equation}
which coincides with $\overline{\T}_{\phi [f]}$ when $\Phi=0$. It will be convenient to introduce the function
\[ \mathbf{x}_{\mathrm{mod}}(t,x,v) \coloneqq x+\psi(t,x,v), \qquad \qquad \qquad \widetilde{\T}_{\phi[f]}(\psi)=-\widetilde{\T}_{\phi[f]}(x), \quad \psi (0,\cdot , \cdot)=0, \]
as well as the notation
\[f_\circ^\alpha (t,x,v) \coloneqq \langle \mathbf{x}_{\mathrm{mod}} \rangle^{7+|\alpha_x|} \langle v \rangle^{7+N_0-|\alpha_x|} \partial_x^{\alpha_x} \partial_v^{\alpha_v} f_\circ (t,x,v) , \qquad \qquad |\alpha|\leq N_0. \]
In particular, $\widetilde{\T}(\mathbf{x}_{\mathrm{mod}})=0$. To propagate and exploit moments in $v$ and $\mathbf{x}_{\mathrm{mod}}$, observe that
\begin{equation}\label{eq:moment}
 \big| \widetilde{\T}_{\phi[f]} \big( \langle v \rangle \big) \big| \lesssim \langle t \rangle^{-2} , \qquad \qquad  \big| \widetilde{\T}_{\phi [f]} ( x ) \big| \lesssim \langle t \rangle^{-1}.
 \end{equation}
 Duhamel's principle then allows to estimate $\psi$, which yields
 \begin{equation}\label{eq:xmod}
\langle x \rangle \lesssim \langle  \mathbf{x}_{\mathrm{mod}}  \rangle \log \langle t+1 \rangle, \qquad \qquad \langle \mathbf{x}_{\mathrm{mod}} \rangle \lesssim \langle x \rangle \log \langle t+1 \rangle . 
\end{equation}
We will also use that, for all $(t,x) \in [0,T] \times \R^3$,
\begin{equation}\label{eq:estitopLinfty}
\langle t\rangle^{2- \frac{3}{p}}\langle t+|x| \rangle^{\frac{3}{p}+|\kappa|} \big| \nabla_x \phi [ \partial_x^\kappa f] \big|(t,x) \lesssim \mathbf{\Lambda} \log^a \langle t+1 \rangle, \qquad \qquad 1 \leq |\kappa|\leq N_0-p_\infty,
\end{equation}
which follows from the assumptions and Proposition \ref{ProSob}. Let us show that, for any $|\alpha_x|+|\alpha_v|\leq N_0$,
\begin{align}
 \nonumber & \big\| f_{\circ}^\alpha (\tau,x,v) \big\|_{L^p_{x,v}} - \big\| f_\circ^\alpha (0,x,v) \big\|_{L^p_{x,v}}  \lesssim \int_{t=0}^t \big\|  \widetilde{\T}\big(  f_\circ^\alpha \big) (t,x,v) \big\|_{L^p_{x,v}} \dr t   \\
 &  \quad  \lesssim  \sum_{|\beta| \leq N_0}\int_{t=0}^\tau \frac{1}{\langle t \rangle^{\frac{3}{2}}} \big\| f_\circ^\beta (t,x,v) \big\|_{L^p_{x,v}}  \dr t  + \sum_{|\kappa| \leq N_0, \, |\kappa_v|<|\alpha_v|}\int_{t=0}^\tau \frac{\log^{a+14+|\alpha_x|+|\kappa_x|}\langle t+1 \rangle}{\langle t \rangle} \big\| f_\circ^\kappa (t,x,v) \big\|_{L^p_{x,v}} \dr t .  \label{eq:biduletorpove}
  \end{align}
The first inequality follows from Proposition \ref{Proenergy}. One cannot directly apply the commutation formula of Corollary \ref{Corenergy} since it is stated with the weight $\langle x \rangle$ rather than $\langle \mathbf{x}_{\mathrm{mod}} \rangle$. We deal with this issue using \eqref{eq:xmod}, which gives rise to the $\log^{14+|\alpha_x|+|\kappa_x|}\langle t+1 \rangle$ losses in \eqref{eq:biduletorpove}. Then, using \eqref{eq:moment}, $\widetilde{\T}_{\phi[f]}(\mathbf{x}_{\mathrm{mod}})=0$ and Corollary \ref{Corenergy}, applied with $\Phi=0$, we reduce the proof of \eqref{eq:biduletorpove} to showing that
\begin{align*}
\mathcal{I}_1 & \coloneqq \bigg\| \frac{\langle v \rangle t}{  \langle x \rangle}   t^{|\gamma|} \nabla_x \phi \big[ \partial_{x}^{\gamma} f \big]  (t,x+tv) \cdot f_{\circ}^\beta \bigg\|_{L^p(\R^3_x \times \R^3_v)} \lesssim \frac{\log^a \langle t+1 \rangle}{\langle t \rangle} \big\| f_\circ^\beta (t,x,v) \big\|_{L^p (\R^3_x \times \R^3_v)}  , \qquad \gamma + \beta_v=\alpha_v, \\
\mathcal{I}_2 & \coloneqq \Big\| \langle t+|x+tv| \rangle^{|\gamma|} \nabla_x \phi \big[ \partial_{x}^{\gamma} f \big] (t,x+tv) \cdot f_{\circ}^\beta \Big\|_{L^p(\R^3_x \times \R^3_v)} \lesssim \frac{\log^a \langle t+1 \rangle}{\langle t \rangle^2} \big\| f_\circ^\beta (t,x,v) \big\|_{L^p (\R^3_x \times \R^3_v)} , 
\end{align*}
where $ 1 \leq  |\gamma|  \leq N_0$ and $\big( |\gamma| - 1 \big)+|\beta| \leq N_0$. Observe for this that, as $t \langle v \rangle \leq t+t|v| \leq t+|x|+|x-tv|$, we have
\begin{equation}\label{eq:biduletruc}
\forall\, (t,x,v) \in \R_+ \times \R^3_x \times \R^3_v, \qquad \qquad \frac{\langle v \rangle t}{\langle x \rangle \, \langle t+| x+tv| \rangle^{1+\frac{3}{p}} \langle t \rangle^{1-\frac{3}{p}}} \lesssim \frac{1}{\langle t \rangle}.
\end{equation}
Assume first that $|\gamma| \leq N_0-p_\infty$. Then, we estimate pointwise the force field using the assumptions and \eqref{eq:estitopLinfty}. It yields, using also \eqref{eq:biduletruc} for $\mathcal{I}_1$, that
\[ \mathcal{I}_1 \lesssim \frac{\log^a \langle t+1 \rangle}{\langle t \rangle} \big\| f_\circ^{\beta} (t,x,v) \big\|_{L^p(\R^3_x \times \R^3_v)}, \qquad \qquad  \mathcal{I}_2 \lesssim  \frac{\log^a \langle t+1 \rangle}{\langle t \rangle^2} \big\| f_\circ^{\beta} (t,x,v) \big\|_{L^p(\R^3_x \times \R^3_v)} . \]
Otherwise, we have $|\gamma| \geq N_0+1-p_\infty$ and $|\beta| \leq p_\infty$. Using again \eqref{eq:biduletruc}, and the change of variables $y(x)=x+tv$, we get
\[ \mathcal{I}_1+\langle t \rangle \cdot \mathcal{I}_2 \lesssim \langle t \rangle \Big\| \langle t+|y| \rangle^{|\gamma|} \nabla_x \phi \big[ \partial_{x}^{\gamma} f \big] (t,y) \cdot f_{\circ}^\beta (t,y-tv,v) \Big\|_{L^p(\R^3_y \times \R^3_v)} \lesssim \frac{\log^a \langle t+1 \rangle}{ \langle t \rangle^{1-\frac{3}{p}}} \big\| f_{\circ}^\beta (t,y-tv,v) \big\|_{L^\infty \big(\R^3_y,L^p(\R^3_v) \big)}. \]
If $t \leq 1$, we use the Sobolev embedding for Banach-valued functions $W^{p_\infty,p}(\R^3_y, L^p(\R^3_v)) \hookrightarrow L^\infty (\R^3_y , L^p(\R^3_v))$. If $t \geq 1$, we perform the change of variables $y'(v)=y-tv$, and we use a Sobolev embedding in the velocity variable. It yields
\[  \big\| f_{\circ}^\beta (t,y-tv,v) \big\|_{L^\infty \big(\R^3_y,L^p(\R^3_v) \big)} \lesssim \frac{1}{\langle t \rangle^{\frac{3}{p}}} \sum_{|\kappa| \leq p_\infty} \big\| \partial_{x,v}^\kappa f_\circ^\beta (t,x,v) \big\|_{L^p_{x,v}}  \lesssim \frac{1}{\langle t \rangle^{\frac{3}{p}}} \sum_{ |\kappa| \leq |\beta|+p_\infty, \, \kappa_v \leq \beta_v+p_\infty} \big\| f_\circ^\kappa (t,x,v) \big\|_{L^p_{x,v}}  . \]
For $\mathcal{I}_1$, since $|\gamma| \geq N_0+1-p_\infty$, $|\gamma|+|\beta_v|=|\alpha_v|$, and $N_0 \geq 2p_\infty$, we have $|\beta_v|+p_\infty < |\alpha_v|$. This concludes the proof of \eqref{eq:biduletorpove}. Using $14+|\alpha_x|+|\kappa_x| \leq 14+2N_0$, and a triangular Grönwall inequality, we obtain
\[  \big\| \langle \mathbf{x}_{\mathrm{mod}} \rangle^{7+|\alpha_x|} \langle v \rangle^{N_0+7-|\alpha_x|} \partial_x^{\alpha_x} \partial_v^{\alpha_v} f_\circ (t,x,v) \big\|_{L^p(\R^3_x \times \R^3_v)}   \leq \mathbf{C}\E_{N_0}[f](0) \log^{(a+15+2N_0) |\alpha_v|} \langle t+1 \rangle . \]
The result then follows by bounding $\langle \mathbf{x}_{\mathrm{mod}} \rangle$ using \eqref{eq:xmod}.
\end{proof}

\subsection{Boundedness of spherically symmetric solutions for the repulsive Vlasov-Poisson system}

We assume here that $\lambda=-1$. The purpose of this section is to show the following result.

\begin{Pro}\label{Prospher}
Let $N \geq 1$, and $f_0 \in C_c^N(\R^3_x \times \R^3_v, \R_+)$ be a spherically symmetric initial distribution function. Then, the unique classical solution $f$ to \eqref{VP}, arising from the initial data $f_0$ satisfies the following properties.
\begin{enumerate}[label = (\textbf{\Roman*})]
\item \label{traou:UNAN} $f \in C^N(\R_+ \times \R^3_x \times \R^3_v)$ is global in time and there exists $K \geq 0$ such that
\[ \mathrm{supp} \, f_\circ (t,\cdot , \cdot ) \subset \big\{ (x,v) \in \R^3_x \times \R^3_v \; \big| \; |x| \leq K \log \langle t+1 \rangle, \; |v| \leq K \big\}, \qquad f_\circ (t,x,v) \coloneqq f(t,x+tv,v) . \]
\item \label{traou:DAOU} For all $(t,x) \in \R_+ \times \R^3_x$, we have $ |\nabla_x \phi [f](t,x) \big| \lesssim \langle t+|x| \rangle^{-2}$.
\item \label{traou:TRI} There exists $q \geq 0$ such that, for any $|\alpha|, \, |\gamma| \leq N$ and all $t \in \R_+$, there holds
\[  \big\| \partial_x^{\alpha_x} \partial_v^{\alpha_v} f_\circ (t,x,v) \big\|_{L^p(\R^3_x \times \R^3_v)} \lesssim \log^q \langle t+1\rangle , \qquad   \langle t \rangle^{|\gamma|+3-\frac{3}{p}}\big\| \nabla_x^2 \phi [\partial_x^\gamma f ](t,x) \big\|_{L^p(\R^3_x)} \lesssim \log^q \langle t+1 \rangle . \]
\item \label{traou:PEVAR} Let $\Phi_f (t,v) \coloneqq t^2 \nabla_x \phi [f] (t,tv)$ and $g(t,x,v) \coloneqq f(t,x+tv+\lambda \Phi_f (t,v) \log \langle t \rangle , v)$. There exists $\mathbf{\Lambda} \geq 0$ such that, for any $n \in \mathbb{Z}^2$,
\[ \sup_{t \geq 0} \E_N^n[g](t) \leq \mathbf{\Lambda}, \qquad \qquad \big( \phi [f], \Phi_f \big) \in \mathcal{W}^N \big(  \R_+, \mathbf{\Lambda} \big) . \]
\end{enumerate}
\end{Pro}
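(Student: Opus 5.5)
We argue in four steps matching \ref{traou:UNAN}--\ref{traou:PEVAR}. Global existence of the spherically symmetric classical solution is classical \cite{Horst,Pfa,LionsPerthame}, and $C^N$ regularity propagates from $f_0 \in C^N_c$. By spherical symmetry, the field is $\nabla_x\phi[f](t,x)=\frac{m(t,|x|)}{|x|^2}\frac{x}{|x|}$, where $m(t,r)$ is the mass of $\rho[f]$ inside the ball of radius $r$. Since $f \geq 0$ and $\lambda=-1$, this force is repulsive, so along characteristics $\frac{\dr}{\dr t}(X\cdot V)=|V|^2+X\cdot \dot V \geq |V|^2$. Combined with conservation of energy, $|V|\le K$, this gives the dispersion $|X(t)|\gtrsim t$ for all characteristics in the support. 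This yields $|\nabla_x\phi|\lesssim \|f_0\|_{L^1}\langle t+|x|\rangle^{-2}$ on the support of $\rho$ and outside it, by Newton's theorem. This is \ref{traou:DAOU} (cf.\ Horst).

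For the support bound in \ref{traou:UNAN}, we integrate $\dot V=\nabla_x\phi(t,X)$ with $|\nabla_x\phi|\lesssim \langle t\rangle^{-2}$. This gives $|V(t)-V_\infty|\lesssim\langle t\rangle^{-1}$ and hence $|X-tV|\lesssim\log\langle t+1\rangle$. Since the support of $f_\circ(t)$ is the image of $\mathrm{supp}\,f_0$ under $(X,V)\mapsto(X-tV,V)$, we obtain \ref{traou:UNAN}.

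The main step is \ref{traou:TRI}, and we plan to obtain it from Proposition \ref{Proappendix1}, applied with $N_0=N$ (after first raising $N$ to $\max(N,2p_\infty)$ if needed, or running the same argument at lower order). Its first hypothesis is \ref{traou:DAOU}. The second requires $\langle t\rangle^{2-3/p}\|\langle t+|x|\rangle^{1+|\gamma|}\nabla_x^2\phi[\partial_x^\gamma f]\|_{L^p}\lesssim \log^a$. We close this through a bootstrap on $[0,T]$. Assume the hypothesis with constant $2\mathbf{\Lambda}$ and a large power $a$. Proposition \ref{Proappendix1} then gives $\E_N[f_\circ](t)\lesssim \log^{q}\langle t+1\rangle$. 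Because of the compact support in \ref{traou:UNAN}, the weights $\langle x\rangle^{7+|\alpha_x|}$ cost only extra logarithms, so $\E_N[f_\circ]$ is controlled by unweighted norms up to $\log$ factors. Proposition \ref{Prodecayintvbis} with $\Phi=0$ and Corollary \ref{Corellip} then return the field bound with $\log^{q'}$.

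The difficulty is that the exponent returned is not a priori smaller than $a$. We therefore expect the main obstacle to lie here. To handle it, we would instead run a finite induction on $|\gamma|$. At order $k$, the top-order field $\nabla_x^2\phi[\partial^\gamma_x f]$ with $|\gamma|=k$ depends, via elliptic estimates, on $\rho[\partial_x^\gamma f]$. In the spherically symmetric setting, this density can be bounded by derivatives of $f_\circ$ of order $k$, whose energy estimate in \eqref{eq:biduletorpove} only involves fields of order $\le k$ multiplied by $\langle t\rangle^{-1}$. For this reason the induction is triangular rather than circular, and the losses remain finite powers of $\log$. This gives \ref{traou:TRI} with some $q$.

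Finally, for \ref{traou:PEVAR}, we use \ref{traou:TRI} and Corollary \ref{Rqclass} with $\Phi=0$, $g=f_\circ$, and logarithmic growth $a=q$. This gives $(\phi[f],\Phi_f)\in\mathcal{W}^N(\R_+,C\log^q\langle t+1\rangle)$. To remove the logs, we use the sharper structure: $\partial_t\Phi_f$ decays like $\langle t\rangle^{-2}\log^b$ by Lemma \ref{Lemdpphismall}, and the same holds for the quantities in \eqref{eq:phiassump3}--\eqref{eq:phiassump4}. The energy estimate of Proposition \ref{Proenergy} combined with Proposition \ref{ProboundednessGeneral} and Remark \ref{Rqpoly} then gives a right-hand side $\lesssim \log^{q''}\langle t\rangle\,\langle t\rangle^{-4/3}\E_N^n[g]$. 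This rate is integrable, so Grönwall yields $\sup_t\E_N^n[g]<\infty$; finiteness for every $n$ follows from the compact support. Corollary \ref{Rqclass} with $\Phi=\Phi_f$, $a=0$ then gives the uniform class membership. As a fallback, if the $\log$ losses in $\mathcal{W}$ obstruct this, we would redo Proposition \ref{ProboundednessGeneral} with $\Lambda_t$ growing logarithmically, since $\int\log^c\langle t\rangle\langle t\rangle^{-4/3}\dr t<\infty$.
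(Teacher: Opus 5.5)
\textbf{Gap in (III): the induction has no base case.} Your architecture for \ref{traou:TRI}--\ref{traou:PEVAR} is the paper's: iterate Proposition~\ref{Proappendix1} against the bound of $\rho[\partial_x^\gamma f]$ by derivatives of $f_\circ$ (Lemma~\ref{Lemdecaynablatwo}), then conclude with Corollary~\ref{Rqclass} and Proposition~\ref{ProboundednessGeneral}, using a logarithmically growing $\Lambda_t$. The missing base case is exactly where spherical symmetry is needed.

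Proposition~\ref{Proappendix1}, and the estimate \eqref{eq:biduletorpove} inside it, require $N_0\geq 2p_\infty$. Each error term is closed by putting either the field or $f_\circ^\beta$ in $L^\infty$ through a Sobolev embedding, which costs $p_\infty$ derivatives on that factor. Consequently, its hypotheses at $N_0=2p_\infty$ already require $\nabla_x^2\phi[\partial_x^\gamma f]$ for $|\gamma|\leq 2p_\infty-1$, hence derivatives of $f_\circ$ of positive order as input.

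``Running the same argument at lower order'' fails. At first order, $\widetilde{\T}_{\phi[f]}(\partial_v f_\circ)$ contains $t^2\nabla_x^2\phi[f](t,x+tv)\cdot\nabla_x f_\circ$. Closing this term needs $\|\nabla_x^2\phi[f](t)\|_{L^\infty}\lesssim\langle t\rangle^{-3}$, up to logarithms. That does not follow from $|\rho[f]|\lesssim\langle t\rangle^{-3}$ by elliptic estimates, since Calder\'on--Zygmund fails at $L^\infty$, and obtaining it from $\nabla_x\rho[f]$ is circular.

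The paper supplies the base by importing \eqref{eq:base2} from \cite{PankavichSpheri}. You could also derive it from the radial identity $\nabla_x^2\phi=\rho\,\frac{x\otimes x}{|x|^2}+\frac{m(t,|x|)}{4\pi|x|^3}\big(I-3\frac{x\otimes x}{|x|^2}\big)$, with $m(t,r)=\int_{|y|\leq r}\rho[f](t,y)\,\dr y$. This gives $|\nabla_x^2\phi[f]|\lesssim\|\rho[f](t)\|_{L^\infty}\lesssim\langle t\rangle^{-3}$. A Gr\"onwall argument on the first-order commuted equations then yields $|\nabla_x f_\circ|\lesssim 1$ and $|\nabla_v f_\circ|\lesssim\log\langle t+1\rangle$. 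Your appeal to symmetry for bounding $\rho[\partial_x^\gamma f]$ is therefore misplaced, since that bound holds in general.

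Even with \eqref{eq:base2}, the induction only starts when $p_\infty=1$. For $3/2<p\leq 3$, the paper first runs the whole argument with the auxiliary exponent $p=7$. It then uses $W^{1,7}(\R^6)\hookrightarrow L^\infty$ and the compact support to control derivatives of order at most $N-1$ pointwise, which gives the hypotheses of Proposition~\ref{Proappendix1} for the original $p$. Raising $N$ is not available, because $f_0$ is only $C^N$.

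\textbf{Your argument for (II) does not work as written.} The inequality $\frac{\dr}{\dr t}(X\cdot V)\geq |V|^2$ gives no lower bound on $|X(t)|$ without a lower bound on $|V|$. For example, the characteristic issued from $(0,0)$ stays at the origin by symmetry. Also, conservation of total energy does not give a pointwise bound $|V|\leq K$. The decay $|\rho[f]|\lesssim\langle t\rangle^{-3}$, $|\nabla_x\phi[f]|\lesssim\langle t\rangle^{-2}$ reflects dispersion of phase-space volume, not of individual characteristics. It must be taken from the literature, as the paper does with \cite{PankavichSpheri}. With that input, \ref{traou:UNAN} follows as you say, and \ref{traou:DAOU} follows from the support of $\rho[f]$ and Proposition~\ref{Proellip}.

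\textbf{Minor point in (IV).} In your final application of Corollary~\ref{Rqclass} with $\Phi=\Phi_f$, take $a$ equal to the logarithmic growth exponent of $\mathcal{E}_N[\Phi_f]$ rather than $a=0$; the conclusion is unaffected.
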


The global existence and $C^N$ regularity of $f$ follow from \cite{Pfa,LionsPerthame}. According to \cite[Lemmata~3.1 and~5.1]{PankavichSpheri}, 
\[ |\nabla_x \phi [f](t,x) \big| \lesssim \langle t \rangle^{-2}, \qquad \qquad |\rho [f]|(t,x) \lesssim \langle t \rangle^{-3}. \] 
It yields $|\T_{\phi[f]}(|v|)| \lesssim \langle t \rangle^{-2}$ and $|\T(|x-tv|)| \lesssim \langle t \rangle^{-1}$, which implies \ref{traou:UNAN}. Hence, there exists $K_0$ such that $\rho[f](t,x)=0$ for $|x| \geq K_0\langle t \rangle$. As a result, we get \ref{traou:DAOU} from Proposition \ref{Proellip}. To show \ref{traou:TRI}, we will require
\begin{equation}
\forall \, (t,x,v) \in \R_+\times \R^3_x \times \R^3_v, \qquad \qquad \big| \nabla_x f_\circ \big|(t,x,v) \lesssim 1,  \qquad \big| \nabla_v f_\circ \big|(t,x,v) \lesssim \log \langle t+1 \rangle , \label{eq:base2}
\end{equation}
which have been proved in from \cite[Lemma~8.1]{PankavichSpheri}, and in the proof of \cite[Lemma~9.3]{PankavichSpheri}. To derive $L^p$-estimates for the force field, we will use the next result.
\begin{Lem}\label{Lemdecaynablatwo}
Let $|\alpha| \leq N$. Then, for all $t \in \R_+$, 
\[ \langle t \rangle^{3\frac{p-1}{p}+|\alpha|} \big\| \nabla^2_x \phi \big[ \partial_x^\alpha f \big](t,x) \big\|_{L^p(\R^3_x)} \lesssim \big\| \langle x \rangle^4 \langle v \rangle^4 \big[ |\partial_x^\alpha f_\circ|+|\partial_v^\alpha f_\circ| \big](t,x,v) \big\|_{L^p(\R^3_x \times \R^3_v)}. \]
\end{Lem}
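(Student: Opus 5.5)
We reduce the estimate to the $L^p$ elliptic bound of Proposition \ref{Proellip} and a weighted bound on $\rho[\partial_x^\alpha f]$ in terms of $f_\circ$. Since $\Delta \nabla^2_x\phi[\partial_x^\alpha f]$ is controlled by $\partial_x^\alpha \rho[f]$, Proposition \ref{Proellip} with $a=0$ gives
\[ \big\| \nabla^2_x \phi [\partial_x^\alpha f](t,\cdot)\big\|_{L^p_x} \lesssim \big\| \partial_x^\alpha \rho[f](t,\cdot)\big\|_{L^p_x}. \]
It therefore suffices to prove that $\langle t\rangle^{3\frac{p-1}{p}+|\alpha|}\|\partial_x^\alpha\rho[f](t)\|_{L^p_x}$ is bounded by the right-hand side. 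For $t\le 1$ this follows directly: $\partial_x^\alpha f(t,x,v)=\partial_x^\alpha f_\circ(t,x-tv,v)$, Hölder in $v$ (using $\langle v\rangle^{-4}\in L^{p'}_v$), and the change of variables $x\mapsto x-tv$.

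For $t\ge 1$ we gain the factor $t^{-|\alpha|}$ by trading spatial derivatives for velocity derivatives. The identity $t\partial_x f(t,x,v) = [\partial_v f_\circ - \ldots]$, more precisely $(t\partial_x+\partial_v)^\alpha f(t,x+tv,v)=\partial_v^\alpha f_\circ$, gives
\[ t^{|\alpha|}\partial_x^\alpha f(t,x,v) = \sum_{\beta\le \alpha} c_\beta\, (-1)^{|\alpha-\beta|}\,\partial_v^{\alpha-\beta}\big[(t\partial_x+\partial_v)^{\beta} f\big](t,x,v). \]
After integrating in $v$, every term with $\beta\neq\alpha$ is a total $v$-derivative and vanishes. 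This yields the key identity
\[ t^{|\alpha|}\rho[\partial_x^\alpha f](t,x)=\rho\big[(t\partial_x+\partial_v)^\alpha f\big](t,x)=\int_{\R^3_v}\partial_v^\alpha f_\circ(t,x-tv,v)\,\dr v . \]
Note that the terms $\partial_v^{\alpha-\beta}[\cdot]$ are genuine derivatives of functions that decay in $v$, so integration by parts is legitimate.

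It then remains to show
\[ \Big\|\int_{\R^3_v} h(x-tv,v)\,\dr v\Big\|_{L^p_x} \lesssim t^{-3\frac{p-1}{p}}\|\langle x\rangle^4\langle v\rangle^4 h\|_{L^p_{x,v}} \]
for $h=\partial_v^\alpha f_\circ$. We substitute $y=x-tv$, so that $\dr v=t^{-3}\dr y$, and rewrite the integral as $t^{-3}\int h(y,(x-y)/t)\,\dr y$. Applying Hölder in $y$ with weight $\langle y\rangle^{-4}\in L^{p'}_y$ gives the pointwise bound $t^{-3}\|\langle y\rangle^4 h(y,(x-y)/t)\|_{L^p_y}$. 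Taking the $L^p_x$ norm and substituting $x=y+tw$ contributes a factor $t^{3/p}$. This produces $t^{-3+3/p}=t^{-3\frac{p-1}{p}}$ times $\|\langle y\rangle^4 h\|_{L^p}$, and the $\langle v\rangle^4$ weight is not even needed here. This is the same computation as in Lemma \ref{Lem236458}.

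The $\partial_x^\alpha f_\circ$ term on the right-hand side only serves the small-time case. For mixed orders, one could alternatively invoke the identity \eqref{eq:cdv} together with the $L^p$ version of Proposition \ref{Proellip}.

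The main obstacle is the velocity integration by parts. It requires $f$ to have enough decay in $v$ for the boundary terms to vanish, which holds here because the support is compact by \ref{traou:UNAN}. It also requires checking that the binomial expansion of $(t\partial_x+\partial_v)^\alpha$ leaves only total $v$-derivatives apart from the term $t^{|\alpha|}\partial_x^\alpha$; this follows from the commutation $[\partial_x,\partial_v]=0$ and an induction on $|\alpha|$.
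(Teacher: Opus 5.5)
Your proposal is correct and is essentially the paper's proof. It uses the $L^p$ elliptic estimate of Proposition \ref{Proellip} with $a=0$, then the identity $t^{|\alpha|}\rho[\partial_x^\alpha f]=\rho[(t\partial_x+\partial_v)^\alpha f]=\int_{\R^3_v}\partial_v^\alpha f_\circ(t,x-tv,v)\,\dr v$ from \eqref{eq:deffcirc} for large times (with $\partial_x^\alpha f_\circ$ and the $\langle v\rangle^{-4}$ weight for small times), and finally H\"older's inequality with $\int_{\R^3_v}\langle x-tv\rangle^{-4p'}\dr v\lesssim t^{-3}$; the only difference is that you spell out why the total $v$-derivatives integrate to zero, which the paper leaves implicit.
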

\begin{proof}
Note, using first an elliptic estimate from Proposition \ref{Proellip} and then the identity \eqref{eq:deffcirc}, that
 \begin{align*}
 \langle t \rangle^{|\alpha|} \big\| \nabla^2_x \phi \big[ \partial_x^\alpha f \big](t,x) \big\|_{L^p(\R^3_x)}  \lesssim  \langle t \rangle^{|\alpha|}\big\| \partial_x^\alpha \rho [f] (t,x) \big\|_{L^p( \R^3_x)} & \leq \bigg\| \int_{\R^3_v} \big[ |\partial_x^\alpha f_\circ|+|\partial_v^{\alpha} f_{\circ} | \big] (t,x-tv,v) \dr v \bigg\|_{L^p(\R^3_x)}.
  \end{align*}
It remains to apply the Hölder inequality, and to note
\[ \int_{\R^3_v} \frac{\dr v}{\langle x-tv \rangle^4} \leq \frac{2}{t^3}, \qquad \qquad \int_{\R^3_v} \frac{\dr v}{\langle v \rangle^4} \leq 2 . \]
\end{proof}
Then, in view of \eqref{eq:base2}, the support of $f_\circ (t,\cdot , \cdot)$, and the previous Lemma \ref{Lemdecaynablatwo}, 
\begin{equation}\label{eq:toiterate} \forall \, t \in \R_+, \qquad \sum_{|\gamma| \leq q} \langle t \rangle^{|\gamma|+3-\frac{3}{p}}\big\| \nabla_x^2 \phi [\partial_x^\gamma f ](t,x) \big\|_{L^p(\R^3_x)} \lesssim \log^{a_q} \langle t+1 \rangle , 
\end{equation}
holds for $q=1$, with $a_1=5+3/p$. 

Let us first show \ref{traou:TRI} for $p>3$, so that $p_\infty=1$. Then, by \ref{traou:DAOU} and \eqref{eq:toiterate}, the assumptions of Proposition \ref{Proappendix1} are satisfied for $N_0=2p_\infty$. We then obtain \ref{traou:TRI} by iterating Proposition \ref{Proappendix1} and Lemma \ref{Lemdecaynablatwo}.

We now consider the case $3/2< p \leq 3$, so that $p_\infty =2$. Using \ref{traou:TRI} for $p=7$, together with the Sobolev embedding $W^{1,7}(\R^3_x \times \R^3_v) \hookrightarrow L^\infty (\R^3_x \times \R^3_v)$, we control $\partial_{x,v}^\alpha f_\circ$ pointwise for any $|\alpha| \leq N-1$. Then, by applying Lemma \ref{Lemdecaynablatwo}, we get that the assumptions of Proposition \ref{Proappendix1} are satisfied for $N_0=N-1$, allowing to derive \ref{traou:TRI}.

We finally prove \ref{traou:PEVAR}. Applying Corollary \ref{Rqclass} with $\Phi=0$, we get $(\phi [f],\Phi_f ) \in \mathcal{W}^N(\R_+,C \E_N[f_\circ])$ for some constant $C \geq 0$. Note further that $\E_N[f_\circ](t) \lesssim \log^{q+N+7}\langle t+1 \rangle$ by support considerations and \ref{traou:TRI}. Then, the energy estimate of Proposition \ref{Proenergy}, together with Propostion \ref{ProboundednessGeneral}, yields to $\sup_{t \geq 0} \E_N^{(n_x,n_v)}[g](t) <+\infty$. We conclude by applying Corollary \ref{Rqclass} with $\Phi=\Phi_f$ and $a=q+N+7$.

\renewcommand{\refname}{References}
\bibliographystyle{abbrv}
\bibliography{biblio}

\end{document}